\documentclass[10pt,reqno]{amsart}
\usepackage{charter}
\usepackage{mathrsfs}
\usepackage{enumitem}
\usepackage{amsmath, amsfonts, amssymb, amsthm, amscd}
\usepackage{microtype}
\usepackage[all]{xy}
\usepackage[pagebackref]{hyperref}

\usepackage[misc]{ifsym}
\makeatletter

\makeatother
\usepackage{array} 
\usepackage{booktabs} 
\usepackage{multirow}
\usepackage{listings}

\numberwithin{equation}{section}
\newtheorem{theorem} {Theorem} [section]
\newtheorem{proposition}[theorem]{Proposition}
\newtheorem{corollary}  [theorem]     {Corollary}
\newtheorem{lemma}  [theorem]     {Lemma}

\newtheorem{remark}  [theorem]     {Remark}

\newcommand{\comment}[1]{}

\usepackage{fancyhdr}
\pagestyle{fancy}
\pagestyle{myheadings}

\setlength{\hoffset}{0in} \setlength{\voffset}{0in}
\setlength{\oddsidemargin}{0in} \setlength{\evensidemargin}{0in}
\setlength{\marginparsep}{0in} \setlength{\topmargin}{0in}
\setlength{\headheight}{0in} \setlength{\headsep}{0.3in}
\setlength{\footskip}{0.375in} \setlength{\textwidth}{6.3in}
\setlength{\textheight}{9.15in}


\allowdisplaybreaks[2]

\begin{document}
\title{Extension formulae on almost complex manifolds}
\author{Jixiang Fu}
\address{Institute of Mathematics, Fudan University, Shanghai 200433, China}
\email{majxfu@fudan.edu.cn}
\author{Haisheng Liu$^\ast$}
\thanks{This paper has been accepted for publication in SCIENCE CHINA Mathematics.}
\thanks{\today}
\address{School of Mathematical Sciences, Fudan University, Shanghai 200433, China}
\email{haishengliu2014@163.com}
\subjclass[2010]{32G07, 53C15, 13D10, 14D15.}
\keywords{deformations of special structures, almost complex structures,  deformations and infinitesimal methods, formal methods; deformations}


\begin{abstract}
We give the extension formulae on almost complex manifolds and give decompositions of the extension formulae. As applications, we study $(n,0)$-forms, the $(n,0)$-Dolbeault cohomology group and $(n,q)$-forms on almost complex manifolds.
\end{abstract}
\maketitle






%

\tableofcontents
\section{Introduction}
We would like to introduce some background knowledge in complex geometry. Let $(M,J)$ be a compact complex manifold of complex dimension $n$. Vector-valued differential forms in $A^{0,1}(M,T^{1,0}M)$ are usually called the Beltrami differentials. A Beltrami differential $\phi$ defines a linear map
\begin{equation*}
\phi:A^{1,0}(M)\rightarrow A^{0,1}(M).
\end{equation*}
Let $\{\theta^{i}\}_{i=1}^{n}$ be a local basis of $A^{1,0}(M)$. If $I+\phi$ is an isomorphism, we know that
\begin{equation*}
\bigl\{(I+\phi)\theta^{i},\ \overline{(I+\phi)\theta^{i}}\bigr\}_{i=1}^{n}
\end{equation*}
forms a local basis of $A^{1}(M)$, and meanwhile $\phi$ defines an almost complex structure $J_{\phi}$ over $M$. $J_{\phi}$ is integrable (i.e. $J_{\phi}$ is induced by some complex structure on $M$) if and only if $\phi$ satisfies the well-known Maurer-Cartan equation
\begin{equation*}
\bar\partial\phi=\frac{1}{2}[\phi,\phi],
\end{equation*}
where $[\ ,\ ]$ is the Fr\"olicher-Nijenhuis bracket (see (\ref{fnbracket})).

The exponential operator for the contraction operator $i_{\phi}$ of a Beltrami differential $\phi$ is defined as
\begin{equation*}
e^{i_{\phi}}:=\sum_{k=0}^{\infty}\frac{1}{k!}i_{\phi}^{k},
\end{equation*}
where $i_{\phi}^{k}=\underbrace{i_{\phi}\circ\cdots\circ i_{\phi}}_{k}$ and $i_{\phi}^{0}=I$. The sum is actually finite, since the dimension of $M$ is finite. The application of this operator to the deformation problem can be least traced back to Todorov \cite{todorov1989weil} as far as the authors know. One may also see \cite{clemens2005geometry} for more discussion on this operator.

Liu-Rao-Yang \cite[Theorem 3.4]{MR3302118} proved a so-called extension formula
\begin{equation*}
e^{i_{\phi}}\circ\nabla\circ e^{i_{\phi}}=\nabla-\mathcal{L}^{\nabla}_{\phi}-i_{\frac{1}{2}[\phi,\phi]}
\end{equation*}
for the Chern connection $\nabla$ on a hermitian holomorphic vector bundle $(E,h)$ of $M$, where $\mathcal{L}^{\nabla}_{\phi}$ is the generalized Lie derivative (see (\ref{eq301})).
By this formula, the authors in \cite{MR3302118} constructed relations of the $\bar\partial$-operators and studied $(n,0)$-forms under varying complex structures.

To deal with $(p,q)$-forms, Rao-Zhao \cite{zhao2015extension} introduced an extended exponential operator $e^{i_{\phi}|i_{\bar\phi}}$,
\begin{equation*}
e^{i_{\phi}|i_{\bar\phi}}:A^{p,q}_{J}(M)\rightarrow A^{p,q}_{\phi}(M).
\end{equation*}
Rao-Zhao \cite{rao2018several} obtained the extension formula for the extended exponential operator. Moreover, the authors in \cite{rao2018several} used the extension formula to study the properties of Hodge numbers along the small deformation of complex structures.

The extension formula on complex manifolds plays an important role in the deformation of complex structures, especially in describing the decomposition of complex differential forms on a complex manifold with respect to different complex structures.
In this paper, we generalize the extension formula from complex manifolds to almost complex manifolds and give two kinds of decompositions. We expect that our extension formulae will also play an important role in the deformation of almost complex structures.

The problem of deformations of almost complex structures is interesting for its own sake.
Let $M$ be a differentiable manifold.
An interesting problem is how (or under what conditions) one can deform two given almost complex structures $J_{1}$ and $J_{2}$ on $M$.
This problem becomes more interesting if one of these two almost complex structures is integrable.
Another interesting problem is what one can say about the geometry (and topology) of the moduli space of all almost complex structures on $M$. McDuff \cite{McDuff2000almostcomplexstructures} obtained many interesting results on $S^{2}\times S^{2}$.

This paper is arranged as follows.

In Section \ref{sec3}, we give a description of Beltrami differentials on almost complex manifolds.

In Section \ref{sec1}, we recall some basic knowledge on contraction operators and generalized Lie derivatives.

In Section \ref{sec2}, firstly we give and prove the commutator of the connection $\nabla$ and the multi-contraction $i^{k}_{\phi}$.
\begin{proposition}\textup{(= Proposition \ref{proposition1})}
Let $(M,J)$ be an almost complex manifold, $\phi\in A^{0,1}_{J}(M,T^{1,0}M)$ be a Beltrami differential,
$E$ be a vector bundle on $M$ and $\nabla$ be a connection of $E$.  Then
\begin{equation*}
[\nabla,i^{k}_{\phi}]
=ki^{k-1}_{\phi}\circ[\nabla,i_{\phi}]
-\frac{k(k-1)}{2}i^{k-2}_{\phi}\circ i_{[\phi,\phi]}
-\frac{k(k-1)(k-2)}{3!}i^{k-3}_{\phi}\circ i_{(i_{[\phi,\phi]}\phi-i_{\phi}[\phi,\phi])}.
\end{equation*}
\end{proposition}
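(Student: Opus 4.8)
The plan is to argue by induction on $k$, peeling off one contraction at a time and pushing the freed factor of $i_{\phi}$ to the left by means of three commutator identities. Since $i_{\phi}$ preserves the total degree, $[\nabla,\,\cdot\,]$ is an (ungraded) derivation for composition, $[\nabla,AB]=[\nabla,A]B+A[\nabla,B]$, so I would start from
\begin{equation*}
[\nabla,i^{k}_{\phi}]=[\nabla,i^{k-1}_{\phi}]\circ i_{\phi}+i^{k-1}_{\phi}\circ[\nabla,i_{\phi}]
\end{equation*}
and substitute the inductive hypothesis for $[\nabla,i^{k-1}_{\phi}]$. The binomial coefficients $\binom{k}{1},\binom{k}{2},\binom{k}{3}$ in the statement are the tell-tale sign that the bookkeeping is governed by Pascal's rule; conceptually the whole formula is the truncation of the general operator identity
\begin{equation*}
[\nabla,i^{k}_{\phi}]=\sum_{m\ge 0}(-1)^{m}\binom{k}{m+1}\,i^{k-1-m}_{\phi}\circ\mathrm{ad}^{m}_{i_{\phi}}\bigl([\nabla,i_{\phi}]\bigr),
\end{equation*}
valid for any pair of operators by the hockey-stick identity $\sum_{i=m}^{k-1}\binom{i}{m}=\binom{k}{m+1}$.

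The real content is to evaluate $\mathrm{ad}^{m}_{i_{\phi}}([\nabla,i_{\phi}])$ for $m=0,1,2$ and to prove it vanishes for $m=3$. Writing $D:=[\nabla,i_{\phi}]$, I would establish the three identities: (i) $[i_{\phi},D]=i_{[\phi,\phi]}$; (ii) $[i_{\phi},i_{[\phi,\phi]}]=-i_{\eta}$ with $\eta:=i_{[\phi,\phi]}\phi-i_{\phi}[\phi,\phi]$; and (iii) $[i_{\phi},i_{\eta}]=0$. Identities (ii) and (iii) are instances of the Nijenhuis--Richardson insertion-bracket formula $[i_{K},i_{L}]=i_{[K,L]^{\wedge}}$ recalled in Section \ref{sec1}, since here $[\phi,[\phi,\phi]]^{\wedge}=i_{\phi}[\phi,\phi]-i_{[\phi,\phi]}\phi=-\eta$. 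Identity (i) is the Cartan-type characterization of the Fr\"olicher--Nijenhuis bracket combined with the vanishing $i_{\phi}\phi=0$; to pass from $d$ to an arbitrary connection I would write $\nabla=d+A$ locally and note that the connection contributes $\mathrm{ad}^{2}_{i_{\phi}}(e_{A})=e_{i^{2}_{\phi}A}=0$ to the double commutator, where $e_{A}$ denotes left multiplication by the endomorphism-valued $1$-form $A$.

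With (i)--(iii) in hand the inductive step is pure bookkeeping: the rewriting rules $Di_{\phi}=i_{\phi}D-i_{[\phi,\phi]}$, $i_{[\phi,\phi]}i_{\phi}=i_{\phi}i_{[\phi,\phi]}+i_{\eta}$ and $i_{\eta}i_{\phi}=i_{\phi}i_{\eta}$ move the trailing $i_{\phi}$ past each of the three terms of the inductive hypothesis, and then $\binom{k-1}{1}+\binom{k-1}{2}=\binom{k}{2}$ together with $\binom{k-1}{2}+\binom{k-1}{3}=\binom{k}{3}$ assembles the stated coefficients. The cases $k=1,2,3$ are verified directly and serve as the base, the missing terms for $k<3$ being absorbed by the vanishing binomial coefficients.

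I expect the main obstacle to be identity (iii), i.e. the truncation ensuring that no $i^{k-4}_{\phi}$ term survives. By the insertion-bracket formula this reduces to $[\phi,\eta]^{\wedge}=i_{\phi}\eta-i_{\eta}\phi=0$, and the point is that each summand vanishes for bidegree reasons: a contraction by $\phi$, or by $\eta$ (whose values lie in $T^{1,0}M$), needs a $(1,0)$ form-index to absorb, and neither $\phi$, which is of type $(0,1)$, nor $\eta$ supplies one once $\eta$ is seen to be of form-type $(0,2)$. Verifying this last bidegree claim is the technical heart, because on an almost complex manifold $[\phi,\phi]$ is \emph{not} of pure type $(0,2)$---this is exactly where non-integrability enters, and is what makes the third term genuinely present---so I would use the description of $[\phi,\phi]$ from Section \ref{sec3} to show that only the $(0,2)$-part of the combination $\eta=i_{[\phi,\phi]}\phi-i_{\phi}[\phi,\phi]$ survives. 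Since the same annihilation mechanism ($i_{\phi}$ kills forms of type $(0,\ast)$) also trivializes the connection term in (i), isolating these vanishings cleanly is the crux of the argument.
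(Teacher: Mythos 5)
Your proposal is correct and is essentially the paper's own proof: the same induction via the Leibniz rule $[\nabla,i_{\phi}^{k}]=[\nabla,i_{\phi}^{k-1}]\circ i_{\phi}+i_{\phi}^{k-1}\circ[\nabla,i_{\phi}]$, driven by the same three commutation rules---your (i), (ii), (iii) are exactly the paper's (\ref{eq50}), Lemma \ref{lemma11} and Lemma \ref{lemma3}/(\ref{eq45})---and the same crucial observation that $i_{[\phi,\phi]}\phi-i_{\phi}[\phi,\phi]$ lies in $A^{0,2}_{J}(M,T^{1,0}M)$, which truncates the expansion after the third term. The differences are only in packaging: you state the Pascal-rule bookkeeping as a closed binomial $\mathrm{ad}$-expansion and justify the commutator identities by the Nijenhuis--Richardson insertion formula together with a local splitting $\nabla=d+A$, whereas the paper obtains them by direct computation (Lemma \ref{lemma7}/Corollary \ref{corollary2} and Lemmas \ref{lemma5}, \ref{lemma11}, \ref{lemma3}).
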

By this lemma, we give the extension formula in the almost complex background.
\begin{theorem}\textup{(= Theorem \ref{theorem1})}
Let $(M,J)$ be an almost complex manifold and $\phi\in A_{J}^{0,1}(M,T^{1,0}M)$. Let $E$ be a vector bundle over $M$ and $\nabla$ be a connection of $E$. Then on $A_{J}^{\ast,\ast}(M,E)$,
\begin{equation*}
e^{-i_{\phi}}\circ\nabla\circ e^{i_{\phi}}
=\nabla-\mathcal{L}^{\nabla}_{\phi}-i_{\frac{1}{2}[\phi,\phi]}-i_{\frac{1}{3!}(i_{[\phi,\phi]}\phi-i_{\phi}[\phi,\phi])}.
\end{equation*}
\end{theorem}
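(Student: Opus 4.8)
The plan is to expand both exponentials as (finite, since $\dim M<\infty$) power series and reduce everything to the single commutator already computed in Proposition \ref{proposition1}. Writing $e^{i_{\phi}}=\sum_{k\geq 0}\frac{1}{k!}i_{\phi}^{k}$, I first move $\nabla$ past $e^{i_{\phi}}$ term by term using the elementary identity $\nabla\circ i_{\phi}^{k}=i_{\phi}^{k}\circ\nabla+[\nabla,i_{\phi}^{k}]$. This yields
\begin{equation*}
\nabla\circ e^{i_{\phi}}=e^{i_{\phi}}\circ\nabla+\sum_{k\geq 0}\frac{1}{k!}[\nabla,i_{\phi}^{k}],
\end{equation*}
so that after composing with $e^{-i_{\phi}}$ on the left and using $e^{-i_{\phi}}\circ e^{i_{\phi}}=I$ (valid because all powers of $i_{\phi}$ commute among themselves), the problem reduces to evaluating the single series $\sum_{k\geq 0}\frac{1}{k!}[\nabla,i_{\phi}^{k}]$.

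Next I substitute the commutator formula of Proposition \ref{proposition1} into this series. Each of its three summands has the shape $c_{k}\,i_{\phi}^{k-j}\circ T$ for a fixed operator $T$ independent of $k$ and a combinatorial coefficient $c_{k}$; the key point is that the factors $\frac{k}{k!}$, $\frac{k(k-1)}{k!}$, $\frac{k(k-1)(k-2)}{k!}$ collapse to $\frac{1}{(k-1)!}$, $\frac{1}{(k-2)!}$, $\frac{1}{(k-3)!}$ respectively. After reindexing each sum (shifting $k\mapsto k-1,\,k-2,\,k-3$) the three series each resum to $e^{i_{\phi}}$ times their respective $T$, giving
\begin{equation*}
\sum_{k\geq 0}\frac{1}{k!}[\nabla,i_{\phi}^{k}]
=e^{i_{\phi}}\circ\Bigl([\nabla,i_{\phi}]-\tfrac12\,i_{[\phi,\phi]}-\tfrac{1}{3!}\,i_{(i_{[\phi,\phi]}\phi-i_{\phi}[\phi,\phi])}\Bigr).
\end{equation*}
It is essential here that in Proposition \ref{proposition1} the constant operator $T$ sits to the \emph{right} of the power $i_{\phi}^{k-j}$, so that $e^{i_{\phi}}$ factors out cleanly on the left in every term.

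Composing with $e^{-i_{\phi}}$ and cancelling then kills the stray exponential, leaving
\begin{equation*}
e^{-i_{\phi}}\circ\nabla\circ e^{i_{\phi}}
=\nabla+[\nabla,i_{\phi}]-\tfrac12\,i_{[\phi,\phi]}-\tfrac{1}{3!}\,i_{(i_{[\phi,\phi]}\phi-i_{\phi}[\phi,\phi])}.
\end{equation*}
The final step is to identify $[\nabla,i_{\phi}]=-\mathcal{L}^{\nabla}_{\phi}$, which is exactly the Cartan-type definition of the generalized Lie derivative recalled in (\ref{eq301}): since $i_{\phi}$ has even total form degree, the graded commutator $[i_{\phi},\nabla]$ reduces to the ordinary commutator $i_{\phi}\circ\nabla-\nabla\circ i_{\phi}=-[\nabla,i_{\phi}]$. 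Substituting this gives the stated formula on all of $A^{\ast,\ast}_{J}(M,E)$.

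I expect the only genuinely delicate points to be bookkeeping rather than ideas: one must keep the operator $T$ on the correct side throughout so that $e^{i_{\phi}}$ factors out on the left in each of the three terms, and one must check that the sign convention in the definition of $\mathcal{L}^{\nabla}_{\phi}$ matches $[\nabla,i_{\phi}]=-\mathcal{L}^{\nabla}_{\phi}$. Everything else is the routine resummation of exponential series, and no analytic subtlety arises because $M$ is finite dimensional, so all sums truncate.
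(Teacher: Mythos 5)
Your proof is correct and follows essentially the same route as the paper: both substitute Proposition \ref{proposition1} into the series $\sum_k \frac{1}{k!}[\nabla,i_{\phi}^{k}]$, collapse the coefficients to factor $e^{i_{\phi}}$ out on the left, cancel against $e^{-i_{\phi}}$, and identify $[\nabla,i_{\phi}]=-\mathcal{L}^{\nabla}_{\phi}$ via (\ref{eq301}). The only difference is that you spell out the final cancellation and sign identification, which the paper leaves implicit.
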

In \cite{xia2019derivation}, Xia tried to prove similar result using a different method only without noting Lemma \ref{lemma17}. In the newest version of \cite{xia2018derivation}, Xia gives a correction of the results in \cite{xia2019derivation}.

As a special case of Theorem \ref{theorem1}, when the vector bundle $E$ is the trivial bundle and the connection is the ordinary exterior differential operator $d$, we have the following corollary.
\begin{corollary}\textup{(= Corollary \ref{corollary3})}
Let $(M,J)$ be an almost complex manifold. Then
\begin{equation*}
e^{-i_{\phi}}\circ d\circ e^{i_{\phi}}
=d-\mathcal{L}_{\phi}-i_{\frac{1}{2}[\phi,\phi]}-i_{\frac{1}{3!}(i_{[\phi,\phi]}\phi-i_{\phi}[\phi,\phi])}.
\end{equation*}
\end{corollary}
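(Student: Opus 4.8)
The plan is to read off this corollary as a direct specialization of Theorem \ref{theorem1}. First I would take $E=M\times\mathbb{C}$ to be the trivial line bundle over $M$, so that $E$-valued forms in $A_{J}^{\ast,\ast}(M,E)$ are simply scalar-valued forms in $A_{J}^{\ast,\ast}(M)$. On this bundle the ordinary exterior differential $d$ is a legitimate (flat) connection, so Theorem \ref{theorem1} applies word for word with $\nabla$ replaced by $d$, giving
\begin{equation*}
e^{-i_{\phi}}\circ d\circ e^{i_{\phi}}
=d-\mathcal{L}^{d}_{\phi}-i_{\frac{1}{2}[\phi,\phi]}-i_{\frac{1}{3!}(i_{[\phi,\phi]}\phi-i_{\phi}[\phi,\phi])}.
\end{equation*}

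The only substantive point is to verify that the generalized Lie derivative $\mathcal{L}^{d}_{\phi}$ coincides with the ordinary Lie derivative $\mathcal{L}_{\phi}$. I would check this straight from the definition (\ref{eq301}) of $\mathcal{L}^{\nabla}_{\phi}$: the generalized Lie derivative is assembled from $\nabla$ and $i_{\phi}$ by the same Cartan-type commutator prescription that defines the classical Lie derivative $\mathcal{L}_{\phi}=[i_{\phi},d]$ in the Fr\"olicher-Nijenhuis calculus. Substituting $\nabla=d$ therefore collapses $\mathcal{L}^{d}_{\phi}$ onto $\mathcal{L}_{\phi}$, and the stated identity follows at once.

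I do not anticipate a genuine obstacle, since all of the real work already resides in Theorem \ref{theorem1} and the present statement is a bookkeeping specialization. The only care needed is notational: I would confirm that the identification of $E$-valued forms with scalar forms is compatible with the operators $i_{\phi}$ and $e^{i_{\phi}}$ (which it is, as these are defined purely from the almost complex structure and are blind to the bundle factor), and that the reduction $\mathcal{L}^{d}_{\phi}=\mathcal{L}_{\phi}$ is valid on all of $A_{J}^{\ast,\ast}(M)$ rather than only in a fixed bidegree.
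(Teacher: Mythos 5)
Your proposal is correct and matches the paper's own treatment exactly: the paper obtains Corollary \ref{corollary3} precisely by taking the trivial bundle with the exterior differential $d$ as connection in Theorem \ref{theorem1}, under which the generalized Lie derivative $\mathcal{L}^{d}_{\phi}$ of (\ref{eq301}) reduces to the ordinary $\mathcal{L}_{\phi}=[i_{\phi},d]$. The only (immaterial) difference is your choice of $M\times\mathbb{C}$ versus the paper's $M\times\mathbb{R}$.
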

A key observation in this paper is the following lemma.
\begin{lemma}\textup{(= Lemma \ref{lemma17})}
If $\varphi=\rho\otimes X,\psi=\tau\otimes Y\in A^{0,1}(M,T^{1,0}M)$, then
\begin{equation*}
[\varphi,\psi]=
\rho\wedge\tau\otimes[X,Y]+\rho\wedge\mathcal{L}_{X}\tau\otimes Y-\mathcal{L}_{Y}\rho\wedge\tau\otimes X,
\end{equation*}
and
\begin{equation*}
[\varphi,\psi]\in A^{0,2}(M,T^{1,0}M)\oplus A^{1,1}(M,T^{1,0}M)\oplus A^{0,2}(M,T^{0,1}M).
\end{equation*}
\end{lemma}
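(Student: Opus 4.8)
The plan is to read both assertions off the defining formula of the Frölicher--Nijenhuis bracket on decomposable vector-valued forms, the whole point being that here the form factors have type $(0,1)$ while the vector factors have type $(1,0)$, so that the two "contraction" terms in the general formula drop out for type reasons.

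First I would write down the general expression for the bracket of decomposable vector-valued forms. For $1$-forms $\rho,\tau$ and vector fields $X,Y$ the Frölicher--Nijenhuis bracket has the shape
\[
[\rho\otimes X,\tau\otimes Y]=\rho\wedge\tau\otimes[X,Y]+\rho\wedge\mathcal{L}_{X}\tau\otimes Y-\mathcal{L}_{Y}\rho\wedge\tau\otimes X-d\rho\wedge i_{X}\tau\otimes Y+i_{Y}\rho\wedge d\tau\otimes X,
\]
where the precise form and signs of the last two terms are immaterial for what follows, since each is proportional to one of the contractions $i_{X}\tau$ or $i_{Y}\rho$. The key observation is that both of these contractions vanish: because $X,Y\in T^{1,0}M$ and $\rho,\tau\in A^{0,1}(M)$, a $(0,1)$-form paired with a $(1,0)$-vector is zero, so $i_{X}\tau=0$ and $i_{Y}\rho=0$. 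Deleting the two vanishing terms gives exactly the first displayed identity of the lemma.

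For the bidegree statement I would then analyze the three surviving terms one at a time, using the splitting $d=\mu+\partial+\bar\partial+\bar\mu$ of the exterior derivative on an almost complex manifold, the four operators having bidegrees $(2,-1),(1,0),(0,1),(-1,2)$. In the first term, $\rho\wedge\tau\in A^{0,2}(M)$, while the Lie bracket of two $(1,0)$-fields decomposes as $[X,Y]\in T^{1,0}M\oplus T^{0,1}M$, the $(0,1)$-component being precisely the obstruction to integrability; hence this term lies in $A^{0,2}(M,T^{1,0}M)\oplus A^{0,2}(M,T^{0,1}M)$. For the second term I would invoke Cartan's formula together with $i_{X}\tau=0$ to write $\mathcal{L}_{X}\tau=i_{X}d\tau$; since $\tau$ is of type $(0,1)$ we have $d\tau\in A^{2,0}(M)\oplus A^{1,1}(M)\oplus A^{0,2}(M)$, and contracting with the $(1,0)$-vector $X$ lowers the holomorphic degree by one while annihilating the $(0,2)$-part, so $\mathcal{L}_{X}\tau\in A^{1,0}(M)\oplus A^{0,1}(M)$. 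Wedging with $\rho\in A^{0,1}(M)$ and tensoring with $Y\in T^{1,0}M$ then places the second term in $A^{1,1}(M,T^{1,0}M)\oplus A^{0,2}(M,T^{1,0}M)$, and the third term is identical after interchanging the roles. Collecting the three contributions produces exactly the asserted direct sum $A^{0,2}(M,T^{1,0}M)\oplus A^{1,1}(M,T^{1,0}M)\oplus A^{0,2}(M,T^{0,1}M)$.

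The bookkeeping of bidegrees is routine; the one genuinely substantive point---and the reason this is flagged as a key observation---is the type-forced vanishing $i_{X}\tau=i_{Y}\rho=0$, which both collapses the bracket to three terms and ultimately controls the decomposition. A good consistency check is the integrable case: there $\mu=0$ forces $\mathcal{L}_{X}\tau\in A^{0,1}(M)$ and $[X,Y]$ is again of type $(1,0)$, so all three terms fall into $A^{0,2}(M,T^{1,0}M)$ alone, recovering the classical picture on complex manifolds and isolating the two "extra" summands as the genuinely almost-complex contributions.
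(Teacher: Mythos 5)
Your proposal is correct and follows essentially the same route as the paper: both start from the Fr\"olicher--Nijenhuis bracket formula for decomposable vector-valued forms, observe that the two contraction terms $i_{X}\tau$ and $i_{Y}\rho$ vanish because $(0,1)$-forms annihilate $(1,0)$-vectors, and then obtain the bidegree decomposition by writing $\mathcal{L}_{X}\tau=i_{X}d\tau=X\lrcorner\,\mu\tau+X\lrcorner\,\partial\tau$ via Cartan's formula and the splitting $d=\mu+\partial+\bar\partial+\bar\mu$, together with $[X,Y]=[X,Y]^{1,0}+[X,Y]^{0,1}$. Your integrable-case consistency check is a nice addition but not part of the paper's argument.
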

We denote the $A^{0,2}(M,T^{1,0}M)$-component of $[\varphi,\psi]$ by $\mathscr{A}(\varphi,\psi)$, the $A^{1,1}(M,T^{1,0}M)$-component by $\mathscr{B}(\varphi,\psi)$ and the $A^{0,2}(M,T^{0,1}M)$-component by $\mathscr{C}(\varphi,\psi)$.

At first we introduce the following \emph{almost complex Maurer-Cartan equation} (see (\ref{mcequation})) on almost complex manifolds.
\begin{equation*}
\mathrm{MC}(\phi):=\bar\partial\phi-\frac{1}{2}\mathscr{A}(\phi,\phi)-\frac{1}{3!}(i_{[\phi,\phi]}\phi-i_{\phi}[\phi,\phi])=0.
\end{equation*}
The almost complex Maurer-Cartan equation reduces to the ordinary Maurer-Cartan equation on complex manifolds.
Next, we deal with the extended exponential operator $e^{i_{\phi}|i_{\bar\phi}}$. As preparations, we give the following analogy for almost complex structures of the fundamental lemmas proved by Rao-Zhao on complex manifolds in \cite{rao2018several}.
\begin{lemma}\textup{(= Lemma \ref{lemma6})}\cite[Lemma 2.12]{rao2018several}
\begin{equation*}
e^{i_{\phi}}((I-\bar\phi\cdot\phi+\bar\phi)\lrcorner\,\theta^{k})
=(I+\phi)\lrcorner\,\theta^{k}=e^{i_{\phi}}(\theta^{k}),
\end{equation*}
and
\begin{equation*}
e^{i_{\phi}}((I-\bar\phi\cdot\phi+\bar\phi)\lrcorner\,\theta^{\bar k})
=(I+\bar\phi)\lrcorner\,\theta^{\bar k}=e^{i_{\bar\phi}}(\theta^{\bar k}).
\end{equation*}
\end{lemma}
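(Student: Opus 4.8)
The plan is to reduce the whole statement to a pointwise computation in a local coframe, since every operator involved ($i_\phi$, $i_{\bar\phi}$, and the contractions by $\bar\phi$ and by $\bar\phi\cdot\phi$) is algebraic and tensorial, with no derivatives entering; in particular no integrability of $J$ is used, so the complex-manifold computation of Rao--Zhao carries over verbatim. Fix a local coframe $\{\theta^i,\theta^{\bar i}\}$ dual to $\{\partial_i,\partial_{\bar i}\}$, and write $\phi=\phi^i_{\bar j}\,\theta^{\bar j}\otimes\partial_i$ and $\bar\phi=\bar\phi^{\bar i}_j\,\theta^{j}\otimes\partial_{\bar i}$.

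First I would record the two elementary contraction rules that drive everything: $i_\phi\theta^k=\phi^k_{\bar j}\theta^{\bar j}$ while $i_\phi$ annihilates every $(0,1)$-form, and dually $i_{\bar\phi}\theta^{\bar k}=\bar\phi^{\bar k}_j\theta^{j}$ while $i_{\bar\phi}$ annihilates every $(1,0)$-form. Consequently $i_\phi^2=0$ and $i_{\bar\phi}^2=0$ on $1$-forms, so that on $A^{1,0}_J(M)\oplus A^{0,1}_J(M)$ one has $e^{i_\phi}=I+i_\phi$ and $e^{i_{\bar\phi}}=I+i_{\bar\phi}$. This immediately yields the two rightmost equalities, namely $e^{i_\phi}(\theta^k)=\theta^k+\phi^k_{\bar j}\theta^{\bar j}=(I+\phi)\lrcorner\,\theta^k$ and $e^{i_{\bar\phi}}(\theta^{\bar k})=\theta^{\bar k}+\bar\phi^{\bar k}_j\theta^{j}=(I+\bar\phi)\lrcorner\,\theta^{\bar k}$. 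For the first display it then suffices to check $(I-\bar\phi\cdot\phi+\bar\phi)\lrcorner\,\theta^k=\theta^k$: both $\bar\phi$ and the composite $\bar\phi\cdot\phi$, read through their contraction action, begin with an interior product in a $T^{0,1}$-direction, which kills the pure $(1,0)$-form $\theta^k$, so both correction terms vanish on $\theta^k$ and the identity follows from the previous sentence.

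The only computation with genuine content is the second display. Here I would first identify $\bar\phi\cdot\phi$ as the element of $A^{0,1}(M,T^{0,1}M)$ obtained by composing $\phi$ and $\bar\phi$, so that $(\bar\phi\cdot\phi)\lrcorner\,\theta^{\bar k}=\phi^j_{\bar l}\bar\phi^{\bar k}_j\,\theta^{\bar l}$, a $(0,1)$-form. Then
$$(I-\bar\phi\cdot\phi+\bar\phi)\lrcorner\,\theta^{\bar k}=\theta^{\bar k}-\phi^j_{\bar l}\bar\phi^{\bar k}_j\,\theta^{\bar l}+\bar\phi^{\bar k}_j\theta^{j},$$
and applying $e^{i_\phi}=I+i_\phi$ leaves the two $(0,1)$-pieces untouched while sending $\bar\phi^{\bar k}_j\theta^j$ to $\bar\phi^{\bar k}_j\theta^j+\bar\phi^{\bar k}_j\phi^j_{\bar l}\theta^{\bar l}$. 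The newly produced $(0,1)$-form $\bar\phi^{\bar k}_j\phi^j_{\bar l}\theta^{\bar l}$ cancels exactly against $-\phi^j_{\bar l}\bar\phi^{\bar k}_j\theta^{\bar l}$, leaving $\theta^{\bar k}+\bar\phi^{\bar k}_j\theta^{j}=(I+\bar\phi)\lrcorner\,\theta^{\bar k}$, which matches the right-hand side computed above.

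I expect the main obstacle to be purely a matter of keeping the bidegrees straight: confirming that $i_\phi$ and $i_{\bar\phi}$ square to zero on $1$-forms, and, more importantly, correctly reading $\bar\phi\cdot\phi$ as an element of $A^{0,1}(M,T^{0,1}M)$ rather than as the composite contraction $i_{\bar\phi}\circ i_\phi$. The latter reading would give $0$ on $\theta^{\bar k}$ (since $i_\phi$ kills $(0,1)$-forms) and would destroy the identity. The whole point of the correction term $-\bar\phi\cdot\phi$ is exactly the cancellation exhibited above: it is tailored to absorb the $(0,1)$-component that $e^{i_\phi}$ generates from the $\bar\phi$-part, and verifying this cancellation is the crux of the lemma.
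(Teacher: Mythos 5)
Your proposal is correct and follows essentially the same route as the paper's own proof: work pointwise in a local coframe, use $e^{i_\phi}=I+i_\phi$ on $1$-forms, observe that $\bar\phi$ and $\bar\phi\cdot\phi$ (read as $i_\phi\bar\phi\in A^{0,1}(M,T^{0,1}M)$, the paper's convention in (\ref{eq201})) annihilate $\theta^k$, and verify that $i_\phi(\bar\phi^{\bar k}_j\theta^j)$ cancels exactly the $-\bar\phi\cdot\phi$ correction term on $\theta^{\bar k}$. Your closing remark about reading $\bar\phi\cdot\phi$ as a vector-valued form rather than as the composite contraction $i_{\bar\phi}\circ i_\phi$ correctly identifies the one point where the computation could go wrong, and it is handled the same way in the paper.
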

\begin{lemma}\textup{(= Lemma \ref{lemma12})}\cite[Lemma 2.12]{rao2018several}
\begin{equation*}
e^{-i_{\phi}}\circ e^{i_{\phi}|i_{\bar\phi}}=(I-\bar\phi\cdot\phi+\bar\phi)\Finv\, ,
\end{equation*}
\begin{equation*}
e^{-i_{\phi}|-i_{\bar\phi}}\circ e^{i_{\phi}}
=((I^{\prime\prime}-\bar\phi\cdot\phi)^{-1}-\bar\phi\cdot(I^{\prime}-\phi\cdot\bar\phi)^{-1})\Finv\,.
\end{equation*}
\end{lemma}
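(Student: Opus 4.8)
The plan is to reduce both identities to linear algebra on the rank-$2n$ bundle $A^1(M)=A^{1,0}\oplus A^{0,1}$. The key point is that every operator in the statement is an algebra homomorphism of $A^{\ast,\ast}(M)$, and is therefore determined by its action on the degree-one generators $\theta^k,\theta^{\bar k}$: since $i_\phi$ is a degree-zero derivation of the wedge product, $e^{i_\phi}$ and $e^{-i_\phi}$ are mutually inverse automorphisms; the extended operator $e^{i_\phi|i_{\bar\phi}}$ is by construction the homomorphism extending $\theta^k\mapsto e^{i_\phi}\theta^k$, $\theta^{\bar k}\mapsto e^{i_{\bar\phi}}\theta^{\bar k}$, and $e^{-i_\phi|-i_{\bar\phi}}$ is its inverse; and the simultaneous contraction $L\,\Finv$ of a bundle endomorphism $L$ of $A^1(M)$ is the homomorphism extending $L$. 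Moreover $L\mapsto L\,\Finv$ is multiplicative, so $(L\,\Finv)^{-1}=(L^{-1})\,\Finv$. Hence it suffices to verify each identity for the induced endomorphisms of $A^1(M)$, which I would record in block form with respect to the splitting $A^{1,0}\oplus A^{0,1}$.

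For the first identity I would read the action on generators directly off Lemma \ref{lemma6}. By definition $e^{i_\phi|i_{\bar\phi}}\theta^k=e^{i_\phi}\theta^k$ and $e^{i_\phi|i_{\bar\phi}}\theta^{\bar k}=e^{i_{\bar\phi}}\theta^{\bar k}$, and Lemma \ref{lemma6} rewrites each right-hand side as $e^{i_\phi}\bigl((I-\bar\phi\cdot\phi+\bar\phi)\lrcorner\,\theta^{k}\bigr)$, respectively $e^{i_\phi}\bigl((I-\bar\phi\cdot\phi+\bar\phi)\lrcorner\,\theta^{\bar k}\bigr)$. Thus on generators $e^{i_\phi|i_{\bar\phi}}=e^{i_\phi}\circ(I-\bar\phi\cdot\phi+\bar\phi)\,\Finv$, and since both sides are algebra homomorphisms this holds on all of $A^{\ast,\ast}(M)$. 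Composing on the left with $e^{-i_\phi}$ and cancelling $e^{-i_\phi}\circ e^{i_\phi}=\mathrm{id}$ yields the first identity at once.

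For the second identity I would invert the factorization just obtained. Using the multiplicativity of $\Finv$, the relation $e^{i_\phi|i_{\bar\phi}}=e^{i_\phi}\circ(I-\bar\phi\cdot\phi+\bar\phi)\,\Finv$ gives $e^{-i_\phi|-i_{\bar\phi}}=(I-\bar\phi\cdot\phi+\bar\phi)^{-1}\,\Finv\circ e^{-i_\phi}$, and therefore $e^{-i_\phi|-i_{\bar\phi}}\circ e^{i_\phi}=(I-\bar\phi\cdot\phi+\bar\phi)^{-1}\,\Finv$. It then remains to invert the endomorphism $I-\bar\phi\cdot\phi+\bar\phi$ of $A^1(M)$. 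In block form this endomorphism is triangular—it fixes $A^{1,0}$ and sends $A^{0,1}$ into $A^{0,1}\oplus A^{1,0}$ by $I-\bar\phi\cdot\phi$ and $\bar\phi$—so its inverse is computed by the usual triangular formula, producing the resolvent $(I-\bar\phi\cdot\phi)^{-1}$ together with an off-diagonal block $-\bar\phi(I-\bar\phi\cdot\phi)^{-1}$. Rewriting this off-diagonal block with the push-through identity $\bar\phi(I-\bar\phi\cdot\phi)^{-1}=(I-\phi\cdot\bar\phi)^{-1}\bar\phi$, which follows from $\bar\phi(I-\bar\phi\cdot\phi)=(I-\phi\cdot\bar\phi)\bar\phi$, turns it into $\bar\phi\cdot(I'-\phi\cdot\bar\phi)^{-1}$ and brings the inverse into the stated form $(I''-\bar\phi\cdot\phi)^{-1}-\bar\phi\cdot(I'-\phi\cdot\bar\phi)^{-1}$.

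The step I expect to be the main obstacle is the bidegree and composition bookkeeping in the second identity: for each product $\bar\phi\cdot\phi$ and $\phi\cdot\bar\phi$ one must keep straight which factor is applied first, hence on which summand $A^{1,0}$ or $A^{0,1}$ the corresponding resolvent lives and which identity $I'$ or $I''$ appears, and one must apply the push-through identity in the correct direction. One must also know that $I-\bar\phi\cdot\phi$ and $I-\phi\cdot\bar\phi$ are invertible; this is precisely the condition that the frame change $I+\phi+\bar\phi$ be an isomorphism, which holds under the standing hypothesis that $\phi$ determines the almost complex structure $J_\phi$. By contrast, the first identity is essentially a one-line consequence of Lemma \ref{lemma6} once the reduction to generators is in place.
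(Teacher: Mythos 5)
Your proposal is correct. For the first identity your argument is essentially the paper's: the paper also expands a general $(p,q)$-form in a local frame, applies Lemma \ref{lemma6} factor by factor, and implicitly uses exactly the fact you make explicit, namely that $e^{i_{\phi}}$ distributes over wedge products (it is an algebra automorphism because $i_{\phi}$ is an even derivation), so the two proofs differ only in phrasing (reduction to degree-one generators versus writing out coefficients). For the second identity you take a genuinely different route. The paper first proves a separate computational statement, Lemma \ref{lemma9}, expressing $\theta^{\bar j}$ as $e^{i_{\phi}|i_{\bar\phi}}$ of a contraction built from the blocks of $\Phi^{-1}$ computed in Section \ref{sec3}, and then substitutes this into $e^{i_{\phi}}(\alpha)$ and regroups. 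You instead observe that the first identity is the factorization $e^{i_{\phi}|i_{\bar\phi}}=e^{i_{\phi}}\circ\bigl((I-\bar\phi\cdot\phi+\bar\phi)\Finv\,\bigr)$, invert it using the multiplicativity of $L\mapsto L\,\Finv$ under composition (legitimate, since $L\,\Finv$ is the unique pointwise-linear algebra-homomorphism extension of $L$ from $A^{1}(M)$), and compute the inverse of the block-triangular endomorphism $I-\bar\phi\cdot\phi+\bar\phi$ by the triangular formula together with the push-through identity, which the paper supplies as the complex conjugate of (\ref{neq3}). Your route is shorter, needs no analogue of Lemma \ref{lemma9}, and makes transparent that the two identities of the lemma are inverse to one another; the paper's route has the merit of exhibiting the answer directly in terms of the frame-change matrix $\Phi^{-1}$. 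Your use of $e^{-i_{\phi}|-i_{\bar\phi}}$ as the inverse of $e^{i_{\phi}|i_{\bar\phi}}$ is also consistent with how the paper uses that symbol (e.g.\ in the proof of Theorem \ref{theorem3}).

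Two caveats. First, your triangular inversion correctly produces the summand fixing $A^{1,0}$, so what you actually prove is
\begin{equation*}
e^{-i_{\phi}|-i_{\bar\phi}}\circ e^{i_{\phi}}
=\bigl(I^{\prime}+(I^{\prime\prime}-\bar\phi\cdot\phi)^{-1}-\bar\phi\cdot(I^{\prime}-\phi\cdot\bar\phi)^{-1}\bigr)\Finv\,,
\end{equation*}
which is the form stated and proved in the body of the paper (Lemma \ref{lemma12}); the version quoted to you (from the Introduction) omits $I^{\prime}$, and without $I^{\prime}$ the operator annihilates every $\theta^{k}$, hence kills all forms with $p>0$ — since $\Finv\,$ is not additive (Remark \ref{remark2}), the $I^{\prime}$ cannot be dropped. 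Trust your computation rather than the quoted display. Second, your justification of the push-through identity, written as $\bar\phi(I-\bar\phi\cdot\phi)=(I-\phi\cdot\bar\phi)\bar\phi$, is correct only if juxtaposition means composition of contraction operators while $\bar\phi\cdot\phi$, $\phi\cdot\bar\phi$ keep the paper's matrix meaning; read uniformly as matrix products it is false, the correct matrix identity being $\bar\phi\cdot(I^{\prime}-\phi\cdot\bar\phi)=(I^{\prime\prime}-\bar\phi\cdot\phi)\cdot\bar\phi$, equivalently $(I^{\prime\prime}-\bar\phi\cdot\phi)^{-1}\cdot\bar\phi=\bar\phi\cdot(I^{\prime}-\phi\cdot\bar\phi)^{-1}$. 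You flagged this bookkeeping as the delicate point yourself; it is the only place where your write-up needs tightening, not a gap in the argument.
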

The \emph{simultaneous contraction operator} $\Finv\,$ is introduced in \cite[Equation 2.8]{rao2016power} (see (\ref{neweq12})). By these two lemmas, we can prove the following theorem. Let $I:A^{1}(M)\rightarrow A^{1}(M)$, $I^{\prime}:A^{1,0}(M)\rightarrow A^{1,0}(M)$ and $I^{\prime\prime}:A^{0,1}(M)\rightarrow A^{0,1}(M)$ be the identity maps of the corresponding spaces. Clearly $I=I^{\prime}+I^{\prime}$.
\begin{theorem}\textup{(= Theorem \ref{theorem3})}
Let $e^{i_{\phi}|i_{\bar\phi}}$ be the extended exponential operator. Then for any $\alpha\in A^{p,q}_{J}(M)$,
\begin{align}
d(e^{i_{\phi}|i_{\bar\phi}}(\alpha))
=&\,e^{i_{\phi}|i_{\bar\phi}}
\bigl\{(I^{\prime}+(I^{\prime\prime}-\bar\phi\cdot\phi)^{-1}-\bar\phi\cdot(I^{\prime}-\phi\cdot\bar\phi)^{-1})\nonumber\\
&\Finv\,(d+[\mu,i_{\phi}]+[\partial,i_{\phi}]-i_{\frac{1}{2}(\mathscr{B}(\phi,\phi)+\mathscr{C}(\phi,\phi))}
+i_{\mathrm{MC}(\phi)})\circ(I-\bar\phi\cdot\phi+\bar\phi)\Finv\,\alpha\bigl\}.\nonumber
\end{align}
\end{theorem}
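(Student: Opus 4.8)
The plan is to reduce the statement to the ordinary exponential operator $e^{i_{\phi}}$, for which the extension formula is already available (Corollary \ref{corollary3}), and to pass back and forth using the two transfer identities of Lemma \ref{lemma12}. From the first identity of Lemma \ref{lemma12} one has $e^{i_{\phi}|i_{\bar\phi}}=e^{i_{\phi}}\circ(I-\bar\phi\cdot\phi+\bar\phi)\Finv$, so with the abbreviation $\beta:=(I-\bar\phi\cdot\phi+\bar\phi)\Finv\,\alpha$ the left-hand side becomes $d(e^{i_{\phi}|i_{\bar\phi}}(\alpha))=d(e^{i_{\phi}}(\beta))$. First I would apply Corollary \ref{corollary3} to push $d$ through $e^{i_{\phi}}$:
\[
d(e^{i_{\phi}}(\beta))=e^{i_{\phi}}\Bigl[\bigl(d-\mathcal{L}_{\phi}-i_{\frac12[\phi,\phi]}-i_{\frac{1}{3!}(i_{[\phi,\phi]}\phi-i_{\phi}[\phi,\phi])}\bigr)\beta\Bigr].
\]

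The next step is to reorganise the conjugated operator $\widetilde R:=d-\mathcal{L}_{\phi}-i_{\frac12[\phi,\phi]}-i_{\frac{1}{3!}(i_{[\phi,\phi]}\phi-i_{\phi}[\phi,\phi])}$ into the operator $R:=d+[\mu,i_{\phi}]+[\partial,i_{\phi}]-i_{\frac12(\mathscr{B}(\phi,\phi)+\mathscr{C}(\phi,\phi))}+i_{\mathrm{MC}(\phi)}$ that occurs in the statement, where $d=\mu+\partial+\bar\partial+\bar\mu$ is the four-term decomposition of $d$ on $(M,J)$ and $\mathcal{L}_{\phi}=[i_{\phi},d]$. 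Expanding $\mathcal{L}_{\phi}$ by bidegree, the holomorphic commutators give $-[i_{\phi},\mu]-[i_{\phi},\partial]=[\mu,i_{\phi}]+[\partial,i_{\phi}]$; splitting $[\phi,\phi]=\mathscr{A}(\phi,\phi)+\mathscr{B}(\phi,\phi)+\mathscr{C}(\phi,\phi)$ by Lemma \ref{lemma17} turns $-i_{\frac12[\phi,\phi]}$ into $-i_{\frac12(\mathscr{B}+\mathscr{C})}-i_{\frac12\mathscr{A}}$; and inserting the definition of $\mathrm{MC}(\phi)$ rewrites $-i_{\frac12\mathscr{A}}-i_{\frac{1}{3!}(i_{[\phi,\phi]}\phi-i_{\phi}[\phi,\phi])}$ as $i_{\mathrm{MC}(\phi)}-i_{\bar\partial\phi}$. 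Collecting the terms yields the clean identity
\[
\widetilde R=R-\bigl([i_{\phi},\bar\partial]+[i_{\phi},\bar\mu]+i_{\bar\partial\phi}\bigr),
\]
so that everything except the residual operator $\Delta:=[i_{\phi},\bar\partial]+[i_{\phi},\bar\mu]+i_{\bar\partial\phi}$, which lowers the holomorphic and raises the anti-holomorphic degree, is already in the desired form.

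Finally I would reintroduce $e^{i_{\phi}|i_{\bar\phi}}$ on the output side. Applying $e^{-i_{\phi}|-i_{\bar\phi}}$ to both sides of the asserted equality and using the second identity of Lemma \ref{lemma12}, namely $e^{-i_{\phi}|-i_{\bar\phi}}\circ e^{i_{\phi}}=P_{0}\Finv$ with $P_{0}:=(I^{\prime\prime}-\bar\phi\cdot\phi)^{-1}-\bar\phi\cdot(I^{\prime}-\phi\cdot\bar\phi)^{-1}$ (together with the fact that $e^{-i_{\phi}|-i_{\bar\phi}}$ inverts $e^{i_{\phi}|i_{\bar\phi}}$), reduces the theorem to the operator identity $P_{0}\Finv\,\widetilde R\,\beta=Q\Finv\,R\,\beta$, where $Q=I^{\prime}+P_{0}$ is the coefficient in the statement. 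Since the simultaneous contraction is additive in its operator argument, $Q\Finv=I^{\prime}\Finv+P_{0}\Finv$, and substituting $\widetilde R=R-\Delta$ collapses this to
\[
P_{0}\Finv\,\Delta\,\beta=-\,I^{\prime}\Finv\,R\,\beta .
\]
The hard part will be this last identity: one must show that the residual $\Delta$, once transported through the conversion coefficient $P_{0}$ and the simultaneous-contraction calculus, produces exactly the extra identity term $I^{\prime}$ that distinguishes $Q$ from $P_{0}$. I expect this to rest on a Cartan-type commutator formula evaluating $[i_{\phi},\bar\partial]$ and $[i_{\phi},\bar\mu]$ against $i_{\bar\partial\phi}$, combined with Lemma \ref{lemma6}, which dictates how $e^{i_{\phi}}$ acts on the deformed bases $(I-\bar\phi\cdot\phi+\bar\phi)\lrcorner\,\theta^{k}$ and $(I-\bar\phi\cdot\phi+\bar\phi)\lrcorner\,\theta^{\bar k}$, and on careful tracking of bidegrees via Lemma \ref{lemma17}. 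Every remaining step is a formal manipulation of the exponential and contraction operators.
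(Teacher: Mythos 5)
Your skeleton---factor $e^{i_{\phi}|i_{\bar\phi}}=e^{i_{\phi}}\circ(I-\bar\phi\cdot\phi+\bar\phi)\Finv$ by Lemma \ref{lemma12}, push $d$ through $e^{i_{\phi}}$ by the extension formula, and transfer back with the second identity of Lemma \ref{lemma12}---is exactly the paper's route, and your bookkeeping up to $\widetilde{R}=R-\Delta$ with $\Delta=[i_{\phi},\bar\partial]+[i_{\phi},\bar\mu]+i_{\bar\partial\phi}$ is correct. The idea you are missing is that $\Delta$ vanishes identically: Lemma \ref{nlemma1} gives $[\bar\partial,i_{\phi}]=i_{\bar\partial\phi}$, i.e.\ $[i_{\phi},\bar\partial]=-i_{\bar\partial\phi}$, and $[i_{\phi},\bar\mu]=0$ (noted just before Corollary \ref{corollary4}; it also follows from Lemma \ref{lemma5}, since $\bar\mu$ acts by contraction with an element of $A^{0,2}_{J}(M,T^{1,0}M)$). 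Hence $\widetilde{R}=R$ on the nose---this is precisely Corollary \ref{corollary4}, which is the operator the paper conjugates---and there is no residual ``hard part'' at all.

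Because you missed this, your final reduction is not merely incomplete but wrong, and the identity you pose as the remaining work is false. Two errors enter. First, $\Finv$ is \emph{not} additive in its operator argument: Remark \ref{remark2} states $(\varphi+\psi)\Finv\,\alpha\neq\varphi\Finv\,\alpha+\psi\Finv\,\alpha$ in general (additivity holds only on $1$-forms), so $Q\Finv\neq I^{\prime}\Finv+P_{0}\Finv$; indeed $P_{0}\Finv$ annihilates every form carrying a holomorphic factor, since $P_{0}\lrcorner\,\theta^{i}=0$, whereas $Q\Finv$ leaves such factors intact. Second, the version of Lemma \ref{lemma12} actually proved in Section \ref{sec2} reads $e^{-i_{\phi}|-i_{\bar\phi}}\circ e^{i_{\phi}}=(I^{\prime}+P_{0})\Finv=Q\Finv$; the statement without the $I^{\prime}$ (which does appear in the paper's introduction) must be a typo, for exactly the reason just given. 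With both points corrected, your reduction becomes $Q\Finv\,\widetilde{R}\,\beta=Q\Finv\,R\,\beta$, which is trivially true once $\widetilde{R}=R$, and you recover the paper's proof. As written, however, your residual identity $P_{0}\Finv\,\Delta\,\beta=-I^{\prime}\Finv\,R\,\beta$ has vanishing left-hand side (since $\Delta=0$) and generically nonvanishing right-hand side---e.g.\ for $q=0$ the $(\ast,0)$-component $\partial\alpha+[\mu,i_{\phi}]\alpha$ of $R\alpha$ survives $I^{\prime}\Finv$---so you would be stuck trying to prove a false statement.
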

In Section \ref{sec4}, we give the decompositions of the extension formula on almost complex manifolds. Precisely, we prove the following theorem.
\begin{theorem}\textup{(= Theorem \ref{theorem4})}
Let $(M,J)$ be an almost complex manifold, $\phi\in A^{0,1}_{J}(M,T^{1,0}M)$ be a Beltrami differential, $E\rightarrow M$ be a vector bundle on $M$ and $\nabla$ be a connection of $E$. We have
\begin{align*}
e^{-i_{\phi}}\circ\mu\circ e^{i_{\phi}}
&=\mu-\mathcal{L}^{\mu}_{\phi}
-i_{\frac{1}{2}(\mathscr{B}(\phi,\phi)+\mathscr{C}(\phi,\phi))}
-i_{\frac{1}{3!}(i_{\mathscr{C}(\phi,\phi)}\phi-i_{\phi}\mathscr{B}(\phi,\phi))},\\
e^{-i_{\phi}}\circ\nabla^{1,0}\circ e^{i_{\phi}}
&=\nabla^{1,0}-\mathcal{L}^{\nabla^{1,0}}_{\phi}-i_{\frac{1}{2}\mathscr{A}(\phi,\phi)},\\
e^{-i_{\phi}}\circ\nabla^{0,1}\circ e^{i_{\phi}}
&=\nabla^{0,1}-\mathcal{L}^{\nabla^{0,1}}_{\phi},\\
e^{-i_{\phi}}\circ\bar\mu\circ e^{i_{\phi}}
&=\bar\mu-\mathcal{L}^{\bar\mu}_{\phi}.
\end{align*}
\end{theorem}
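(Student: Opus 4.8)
The plan is to deduce the four formulae by splitting the single extension formula of Theorem~\ref{theorem1} according to the grading $\delta:=p-q$ on $A^{\ast,\ast}_{J}(M,E)$. Decompose the connection into its bidegree-homogeneous parts $\nabla=\mu+\nabla^{1,0}+\nabla^{0,1}+\bar\mu$, where the summands raise the bidegree by $(2,-1)$, $(1,0)$, $(0,1)$, $(-1,2)$ and are therefore $\delta$-homogeneous of the distinct pure degrees $3,1,-1,-3$. Because $\phi\in A^{0,1}_{J}(M,T^{1,0}M)$, the contraction $i_{\phi}$ sends $A^{p,q}$ to $A^{p-1,q+1}$ and so is $\delta$-homogeneous of degree $-2$; by linearity $\mathcal{L}^{\nabla}_{\phi}=[i_{\phi},\nabla]$ splits as $\mathcal{L}^{\mu}_{\phi}+\mathcal{L}^{\nabla^{1,0}}_{\phi}+\mathcal{L}^{\nabla^{0,1}}_{\phi}+\mathcal{L}^{\bar\mu}_{\phi}$ with $\mathcal{L}^{D}_{\phi}:=[i_{\phi},D]$ homogeneous of degree $\deg_{\delta}(D)-2$.

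First I would isolate the two commutator identities already contained in Proposition~\ref{proposition1}: comparing $e^{-i_{\phi}}\circ\nabla\circ e^{i_{\phi}}=\sum_{k\ge 0}\frac{(-1)^{k}}{k!}(\mathrm{ad}_{i_{\phi}})^{k}\nabla$ with Theorem~\ref{theorem1} (equivalently, reading off the cases $k=2,3$ of Proposition~\ref{proposition1}) gives
\[
[i_{\phi},[i_{\phi},\nabla]]=-\,i_{[\phi,\phi]},\qquad
[i_{\phi},[i_{\phi},[i_{\phi},\nabla]]]=i_{\,i_{[\phi,\phi]}\phi-i_{\phi}[\phi,\phi]},
\]
while the absence of a fourth term in Proposition~\ref{proposition1} forces $(\mathrm{ad}_{i_{\phi}})^{k}\nabla=0$ for $k\ge 4$. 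Next I would analyse types: by Lemma~\ref{lemma17}, $[\phi,\phi]=\mathscr{A}(\phi,\phi)+\mathscr{B}(\phi,\phi)+\mathscr{C}(\phi,\phi)$, and since $i_{\phi}$ kills the $A^{0,2}$-valued parts $\mathscr{A}(\phi,\phi),\mathscr{C}(\phi,\phi)$ while $i_{(\cdot)}\phi$ kills the $T^{1,0}$-valued parts $\mathscr{A}(\phi,\phi),\mathscr{B}(\phi,\phi)$, one obtains
\[
i_{[\phi,\phi]}\phi-i_{\phi}[\phi,\phi]=i_{\mathscr{C}(\phi,\phi)}\phi-i_{\phi}\mathscr{B}(\phi,\phi)\in A^{0,2}(M,T^{1,0}M).
\]
Here $i_{\mathscr{A}(\phi,\phi)},i_{\mathscr{B}(\phi,\phi)},i_{\mathscr{C}(\phi,\phi)}$ and $i_{\,i_{\mathscr{C}(\phi,\phi)}\phi-i_{\phi}\mathscr{B}(\phi,\phi)}$ are $\delta$-homogeneous of degrees $-3,-1,-1,-3$.

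The key point is that both identities separate cleanly under the $\delta$-projection. The operator $[i_{\phi},[i_{\phi},D]]$ is homogeneous of degree $\deg_{\delta}(D)-4$, so as $D$ ranges over $\mu,\nabla^{1,0},\nabla^{0,1},\bar\mu$ it occupies the four distinct degrees $-1,-3,-5,-7$; matching against $-i_{[\phi,\phi]}$, which lives in degrees $-1$ and $-3$ only, yields
\[
[i_{\phi},[i_{\phi},\mu]]=-i_{\mathscr{B}(\phi,\phi)+\mathscr{C}(\phi,\phi)},\quad
[i_{\phi},[i_{\phi},\nabla^{1,0}]]=-i_{\mathscr{A}(\phi,\phi)},\quad
[i_{\phi},[i_{\phi},\nabla^{0,1}]]=[i_{\phi},[i_{\phi},\bar\mu]]=0.
\]
The triple commutator has degree $\deg_{\delta}(D)-6$, occupying degrees $-3,-5,-7,-9$, so matching against $i_{\,i_{\mathscr{C}(\phi,\phi)}\phi-i_{\phi}\mathscr{B}(\phi,\phi)}$ (degree $-3$) isolates $[i_{\phi},[i_{\phi},[i_{\phi},\mu]]]=i_{\,i_{\mathscr{C}(\phi,\phi)}\phi-i_{\phi}\mathscr{B}(\phi,\phi)}$ and makes the triple commutators of $\nabla^{1,0},\nabla^{0,1},\bar\mu$ vanish.

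Finally I would substitute these into the conjugation expansion applied componentwise,
\[
e^{-i_{\phi}}\circ D\circ e^{i_{\phi}}=D-\mathcal{L}^{D}_{\phi}+\tfrac12[i_{\phi},[i_{\phi},D]]-\tfrac{1}{3!}[i_{\phi},[i_{\phi},[i_{\phi},D]]]+\cdots,
\]
which terminates for each $D$ by the vanishing just established; taking $D=\mu,\nabla^{1,0},\nabla^{0,1},\bar\mu$ reproduces the four asserted formulae verbatim. I expect the main obstacle to be the type bookkeeping feeding the separation: one must pin down the exact bidegrees of $\mu,\nabla^{1,0},\nabla^{0,1},\bar\mu$, confirm via Lemma~\ref{lemma17} that the three pieces of $[\phi,\phi]$ carry $\delta$-degrees $-3,-1,-1$, and verify the two annihilation statements that collapse $i_{[\phi,\phi]}\phi-i_{\phi}[\phi,\phi]$ to its single $A^{0,2}(M,T^{1,0}M)$-valued summand. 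Granting these, the separation is forced, because the four components land in distinct residues under the shifts by $-4$ and $-6$.
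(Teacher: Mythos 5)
Your argument is correct, but it takes a genuinely different route from the paper's. The paper never extracts the four formulae from Theorem \ref{theorem1}; it instead decomposes the commutator identity (\ref{eq52}) by types into four separate identities, re-runs the induction of Proposition \ref{proposition1} on each component to obtain the multi-contraction formulas $[\mu,i^{k}_{\phi}]$, $[\nabla^{1,0},i^{k}_{\phi}]$, $[\nabla^{0,1},i^{k}_{\phi}]$, $[\bar\mu,i^{k}_{\phi}]$ of Lemma \ref{lemma8}, and then sums the exponential series as in the proof of Theorem \ref{theorem1}. You take the already-proved total formula, recast it as the iterated-commutator identities $(\mathrm{ad}_{i_{\phi}})^{2}\nabla=-i_{[\phi,\phi]}$, $(\mathrm{ad}_{i_{\phi}})^{3}\nabla=i_{i_{[\phi,\phi]}\phi-i_{\phi}[\phi,\phi]}$ and $(\mathrm{ad}_{i_{\phi}})^{k}\nabla=0$ for $k\geq4$ --- which are precisely Corollary \ref{corollary2}, Lemma \ref{lemma11} and (\ref{eq45}) of the paper --- and then separate them a posteriori by the grading $\delta=p-q$, using that $\mu,\nabla^{1,0},\nabla^{0,1},\bar\mu$ are homogeneous of the distinct degrees $3,1,-1,-3$ while $\mathrm{ad}_{i_{\phi}}$ shifts degree uniformly by $-2$; the matching is then forced and the componentwise conjugation expansion gives the theorem. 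This degree-separation is sound (every operator involved is bidegree-homogeneous) and it buys a real economy: no second induction and no need for Lemma \ref{lemma8} at all. What the paper's route buys instead is Lemma \ref{lemma8} itself, the explicit per-component analogues of Proposition \ref{proposition1}, as an intermediate result of independent use. Two minor tightenings for your write-up: the vanishing $(\mathrm{ad}_{i_{\phi}})^{k}\nabla=0$ for $k\geq4$ is cleanest proved directly as $[i_{\phi},i_{(i_{[\phi,\phi]}\phi-i_{\phi}[\phi,\phi])}]=0$, which is Lemma \ref{lemma3} (equation (\ref{eq45})), rather than inferred from the ``absence of a fourth term''; and your two annihilation claims collapsing $i_{[\phi,\phi]}\phi-i_{\phi}[\phi,\phi]$ to $i_{\mathscr{C}(\phi,\phi)}\phi-i_{\phi}\mathscr{B}(\phi,\phi)$ are exactly the last equality recorded in Lemma \ref{lemma11}, so they are available in the paper and are not genuine obstacles.
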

The exterior differential $d$ on an almost complex manifold has a natural decomposition according to types,
\begin{equation}\label{eq67}
d=\mu+\partial+\bar\partial+\bar\mu.
\end{equation}
The symbols $\mu$ and $\bar\mu$ denote the $(2,-1)$-part and the $(-1,2)$-part of $d$ respectively (as in \cite{cirici2018dolbeault}). For more details on the operators $\mu$ and $\bar\mu$, see Section \ref{sec3}. Considering (\ref{eq67}), we have the following corollary.
\begin{corollary}\textup{(= Corollary \ref{corollary5})}
Let $(M,J)$ be an almost complex manifold and $\phi\in A_{J}^{0,1}(M,T^{1,0}M)$ be a Beltrami differential. We have
\begin{align*}
e^{-i_{\phi}}\circ\mu\circ e^{i_{\phi}}
&=\mu-\mathcal{L}^{\mu}_{\phi}-i_{\frac{1}{2}(\mathscr{B}(\phi,\phi)+\mathscr{C}(\phi,\phi))}
-i_{\frac{1}{3!}(i_{\mathscr{C}(\phi,\phi)}\phi-i_{\phi}\mathscr{B}(\phi,\phi))},\\
e^{-i_{\phi}}\circ\partial\circ e^{i_{\phi}}
&=\partial-\mathcal{L}^{\partial}_{\phi}-i_{\frac{1}{2}\mathscr{A}(\phi,\phi)},\\
e^{-i_{\phi}}\circ\bar\partial\circ e^{i_{\phi}}
&=\bar\partial-\mathcal{L}^{\bar\partial}_{\phi},\\
e^{-i_{\phi}}\circ\bar\mu\circ e^{i_{\phi}}
&=\bar\mu-\mathcal{L}^{\bar\mu}_{\phi}.
\end{align*}
\end{corollary}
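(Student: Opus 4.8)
The plan is to derive the corollary as the specialization of Theorem~\ref{theorem4} to the trivial line bundle $E=M\times\mathbb{C}$ carrying the exterior differential $\nabla=d$. Granting this reduction, the four displayed formulae are obtained simply by rewriting the four equations of Theorem~\ref{theorem4} in the notation of the type decomposition (\ref{eq67}).

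First I would appeal to (\ref{eq67}), $d=\mu+\partial+\bar\partial+\bar\mu$, which splits $d$ into its components of bidegree $(2,-1)$, $(1,0)$, $(0,1)$ and $(-1,2)$. The only thing to observe is that, for $\nabla=d$ on the trivial bundle, the bidegree-$(1,0)$ and bidegree-$(0,1)$ parts of the connection are precisely $\partial$ and $\bar\partial$, while the $(2,-1)$- and $(-1,2)$-parts coincide with the operators $\mu$ and $\bar\mu$ recalled in Section~\ref{sec3}. This is a matter of matching the abstract connection components against the classical type decomposition, and it passes immediately to the generalized Lie derivatives, giving $\mathcal{L}^{\nabla^{1,0}}_\phi=\mathcal{L}^\partial_\phi$ and $\mathcal{L}^{\nabla^{0,1}}_\phi=\mathcal{L}^{\bar\partial}_\phi$, with the $\mu$- and $\bar\mu$-pieces unchanged. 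Substituting into Theorem~\ref{theorem4} then yields the asserted identities.

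I would also record the consistency check that summing the four equations returns Corollary~\ref{corollary3}; this is the one place where a brief computation is warranted. By Lemma~\ref{lemma17} we have $[\phi,\phi]=\mathscr{A}(\phi,\phi)+\mathscr{B}(\phi,\phi)+\mathscr{C}(\phi,\phi)$, so the linear terms add up to $\mathcal{L}_\phi$ and the quadratic terms to $i_{\frac{1}{2}[\phi,\phi]}$. The cubic term survives only in the $\mu$-equation, and a type count shows $i_{\mathscr{A}(\phi,\phi)}\phi=i_{\mathscr{B}(\phi,\phi)}\phi=0$ and $i_\phi\mathscr{A}(\phi,\phi)=i_\phi\mathscr{C}(\phi,\phi)=0$, each vanishing because contracting a $T^{1,0}$-vector into a form of pure type $(0,\ast)$ gives zero. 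Hence $i_{[\phi,\phi]}\phi-i_\phi[\phi,\phi]=i_{\mathscr{C}(\phi,\phi)}\phi-i_\phi\mathscr{B}(\phi,\phi)$, matching the cubic term of Corollary~\ref{corollary3} exactly.

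The obstacle here is conceptual, not computational: one must be satisfied that the connection-component decomposition underlying Theorem~\ref{theorem4} genuinely reduces, on the trivial bundle with $\nabla=d$, to the type decomposition (\ref{eq67}), so that the tensorial operators $\mu$ and $\bar\mu$ are the same objects on both sides. With Section~\ref{sec3} in hand this identification is immediate, and nothing further is required.
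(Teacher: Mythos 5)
Your proposal is correct and matches the paper's own route: the paper states Corollary \ref{corollary5} as an immediate specialization of Theorem \ref{theorem4} to the trivial bundle with $\nabla=d$, under exactly the identification $\nabla^{1,0}=\partial$, $\nabla^{0,1}=\bar\partial$ coming from (\ref{eq67}). Your added consistency check against Corollary \ref{corollary3}, including the type-count showing $i_{[\phi,\phi]}\phi-i_{\phi}[\phi,\phi]=i_{\mathscr{C}(\phi,\phi)}\phi-i_{\phi}\mathscr{B}(\phi,\phi)$, is correct (it reproduces the paper's (\ref{eq62})) but not needed for the proof itself.
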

By comparing types with respect to $J$ and $J_{\phi}$ and noting that the extended exponential operator $e^{i_{\phi}|i_{\bar\phi}}$ preserves types (i.e. $e^{i_{\phi}|i_{\bar\phi}}:A^{p,q}_{J}(M)\rightarrow A^{p,q}_{J_{\phi}}(M)$), we can prove the following theorem.
\begin{theorem} \textup{(= Theorem \ref{theorem2})}
Let $(M,J)$ be an almost complex manifold and $d=\mu+\partial+\bar\partial+\bar\mu$ be the decomposition of the exterior differential operator with respect to $J$. Let $\phi\in A^{0,1}_{J}(M,T^{1,0}M)$ be a Beltrami differential on $M$ that generates a new almost complex structure $J_{\phi}$ on $M$ and $d=\mu_{\phi}+\partial_{\phi}+\bar\partial_{\phi}+\bar\mu_{\phi}$ be the decomposition of the exterior differential operator with respect to $J_{\phi}$. Then we have the following equations.
\begin{align*}
\mu_{\phi}\circ e^{i_{\phi}|i_{\bar\phi}}
=&\,e^{i_{\phi}|i_{\bar\phi}}((O_{1}\circ O_{2}\circ O_{3})_{J}^{2,-1}),\\
\partial_{\phi}\circ e^{i_{\phi}|i_{\bar\phi}}
=&\,e^{i_{\phi}|i_{\bar\phi}}((O_{1}\circ O_{2}\circ O_{3})_{J}^{1,0}),\\
\bar\partial_{\phi}\circ e^{i_{\phi}|i_{\bar\phi}}
=&\,e^{i_{\phi}|i_{\bar\phi}}((O_{1}\circ O_{2}\circ O_{3})_{J}^{0,1}),\\
\bar\mu_{\phi}\circ e^{i_{\phi}|i_{\bar\phi}}
=&\,e^{i_{\phi}|i_{\bar\phi}}((O_{1}\circ O_{2}\circ O_{3})_{J}^{-1,2}).
\end{align*}
\end{theorem}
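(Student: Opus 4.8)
The plan is to derive Theorem \ref{theorem2} from the global extension formula of Theorem \ref{theorem3} by a pure bookkeeping of bidegrees, exploiting that $e^{i_{\phi}|i_{\bar\phi}}$ carries $J$-types to $J_{\phi}$-types. Write $T:=O_{1}\circ O_{2}\circ O_{3}$ for the composite operator on the right-hand side of Theorem \ref{theorem3}, so that that theorem reads $d\circ e^{i_{\phi}|i_{\bar\phi}}=e^{i_{\phi}|i_{\bar\phi}}\circ T$ on $A^{\ast,\ast}_{J}(M)$. Since $d$ raises total degree by one and the operators $O_{1},O_{3}$ (simultaneous contractions) and the type-preserving factor of $O_{2}$ all preserve total degree, $T$ raises total degree by one.

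First I would fix $\alpha\in A^{p,q}_{J}(M)$ and record the two relevant type decompositions. On the image side, $T\alpha\in A^{p+q+1}(M)$ splits with respect to $J$ into at most four pieces, of $J$-types $(p+2,q-1)$, $(p+1,q)$, $(p,q+1)$ and $(p-1,q+2)$; by definition these are precisely $(O_{1}\circ O_{2}\circ O_{3})^{2,-1}_{J}\alpha$, $(O_{1}\circ O_{2}\circ O_{3})^{1,0}_{J}\alpha$, $(O_{1}\circ O_{2}\circ O_{3})^{0,1}_{J}\alpha$ and $(O_{1}\circ O_{2}\circ O_{3})^{-1,2}_{J}\alpha$. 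On the source side, type-preservation gives $e^{i_{\phi}|i_{\bar\phi}}(\alpha)\in A^{p,q}_{J_{\phi}}(M)$, so applying $d=\mu_{\phi}+\partial_{\phi}+\bar\partial_{\phi}+\bar\mu_{\phi}$ produces four pieces of $J_{\phi}$-types $(p+2,q-1)$, $(p+1,q)$, $(p,q+1)$ and $(p-1,q+2)$ respectively.

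The key step is to match these two decompositions through $e^{i_{\phi}|i_{\bar\phi}}$. Applying $e^{i_{\phi}|i_{\bar\phi}}$ to $T\alpha=(O_{1}\circ O_{2}\circ O_{3})^{2,-1}_{J}\alpha+\cdots+(O_{1}\circ O_{2}\circ O_{3})^{-1,2}_{J}\alpha$ and using type-preservation once more, the four summands are sent into $A^{p+2,q-1}_{J_{\phi}}(M)$, $A^{p+1,q}_{J_{\phi}}(M)$, $A^{p,q+1}_{J_{\phi}}(M)$ and $A^{p-1,q+2}_{J_{\phi}}(M)$, i.e. into pairwise distinct $J_{\phi}$-bidegree spaces. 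Theorem \ref{theorem3} equates $d(e^{i_{\phi}|i_{\bar\phi}}\alpha)$ with this sum, and since the $J_{\phi}$-bigraded decomposition of a form of total degree $p+q+1$ is unique, I would simply equate components of equal $J_{\phi}$-bidegree: the $(p+2,q-1)$-parts give $\mu_{\phi}(e^{i_{\phi}|i_{\bar\phi}}\alpha)=e^{i_{\phi}|i_{\bar\phi}}((O_{1}\circ O_{2}\circ O_{3})^{2,-1}_{J}\alpha)$, and the $(p+1,q)$-, $(p,q+1)$- and $(p-1,q+2)$-parts give the remaining three identities. As $\alpha$ and $(p,q)$ were arbitrary, the four operator equations follow.

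The step carrying the real content is the type-preservation $e^{i_{\phi}|i_{\bar\phi}}\colon A^{r,s}_{J}(M)\to A^{r,s}_{J_{\phi}}(M)$, since everything else is the formal uniqueness of bidegree splitting; this property is, however, already available (it underlies Lemma \ref{lemma6} and the construction of the extended operator), so within the proof of Theorem \ref{theorem2} itself the argument is genuinely just a comparison of bidegrees with respect to the two structures. The one point I would verify carefully is that the four $J$-projections applied to $T\alpha$ are well defined term by term, which holds because $T$ is degree-raising by one and so lands in a single total-degree space whose $J$-type splitting is canonical.
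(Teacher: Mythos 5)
Your argument is correct and is essentially the paper's own proof: the paper likewise starts from Theorem \ref{theorem3}, decomposes the left-hand side with respect to $J_{\phi}$ via $d=\mu_{\phi}+\partial_{\phi}+\bar\partial_{\phi}+\bar\mu_{\phi}$ (using that $e^{i_{\phi}|i_{\bar\phi}}(\alpha)\in A^{p,q}_{J_{\phi}}(M)$), decomposes $O_{1}\circ O_{2}\circ O_{3}(\alpha)$ with respect to $J$, and equates components through the type-preserving isomorphism $e^{i_{\phi}|i_{\bar\phi}}$ (Lemma \ref{lemma18}). The only difference is that the paper also computes the $J$-type pieces of $O_{1}\circ O_{2}\circ O_{3}$ explicitly via the projections $P_{J}^{\cdot,\cdot}$; that explicit computation is what gets used later in Section \ref{sec6}, but it is not needed for the theorem itself.

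One point should be repaired in your write-up: the assertion that $T\alpha$ (with your $T=O_{1}\circ O_{2}\circ O_{3}$) splits \emph{a priori} into ``at most four pieces'' of $J$-types $(p+2,q-1),(p+1,q),(p,q+1),(p-1,q+2)$ is unjustified, and it is not obvious. Since $O_{3}\Finv$ spreads $\alpha\in A^{p,q}_{J}(M)$ over the types $(p+s,q-s)$, $0\le s\le q$, and $O_{1}\Finv$ likewise shifts by $(t,-t)$ with $t\ge0$, the composite can a priori produce components of further bidegrees; indeed the paper's Table \ref{tub1} records possible total types $(3,-2)$ and $(4,-3)$. The vanishing of these extra components is a \emph{consequence} of your comparison, not an input to it, so as written the four-term expansion of $T\alpha$ that you feed into $e^{i_{\phi}|i_{\bar\phi}}$ is an unproven identity. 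The clean version of your own argument is: decompose $T\alpha$ over \emph{all} $J$-bidegrees of total degree $p+q+1$, push this through $e^{i_{\phi}|i_{\bar\phi}}$, and match $J_{\phi}$-bidegrees against the four-term left-hand side; the four matching bidegrees give the stated identities, and injectivity of $e^{i_{\phi}|i_{\bar\phi}}$ forces all remaining components of $T\alpha$ to vanish. With that reordering your proof is complete and coincides with the paper's.
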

The operators $O_{1}$, $O_{2}$ and $O_{3}$ in the above theorem are
\begin{align*}
O_{1}&=I^{\prime}+(I^{\prime\prime}-\bar\phi\cdot\phi)^{-1}-\bar\phi\cdot(I^{\prime}-\phi\cdot\bar\phi)^{-1},\\
O_{2}&=d+[\mu,i_{\phi}]+[\partial,i_{\phi}]-i_{\frac{1}{2}(\mathscr{B}(\phi,\phi)+\mathscr{C}(\phi,\phi))}
+i_{\mathrm{MC}(\phi)},\\
O_{3}&=I-\bar\phi\cdot\phi+\bar\phi.
\end{align*}
In Section \ref{sec6}, we give some applications of the extension formulae.  We get the following results.
\begin{proposition}\textup{(= Proposition \ref{proposition2})}
Let $(M,J)$ be an almost complex manifold and $\phi$ be a Beltrami differential that satisfies the almost complex Maurer-Cartan equation. For any smooth $J^{n,0}$-form $\Omega$, $e^{i_{\phi}|i_{\bar\phi}}(\Omega)$ is a $\bar\partial_{\phi}$-closed $J_{\phi}^{n,0}$-form if and only if
\begin{equation*}
(-1)^{n}(\bar\partial\Omega+\partial(\phi\lrcorner\Omega)-\frac{1}{2}\mathscr{B}(\phi,\phi)\lrcorner\Omega)
-\bar\phi\cdot(I^{\prime}-\phi\cdot\bar\phi)^{-1}\lrcorner\,\bar\mu\Omega=0.
\end{equation*}
\end{proposition}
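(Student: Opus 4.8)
The plan is to deduce the statement from Theorem \ref{theorem2} together with the type-preservation of the extended exponential operator. First observe that since $e^{i_{\phi}|i_{\bar\phi}}:A^{p,q}_{J}(M)\to A^{p,q}_{J_{\phi}}(M)$ preserves types, the image $e^{i_{\phi}|i_{\bar\phi}}(\Omega)$ of a $J^{n,0}$-form $\Omega$ is automatically a $J_{\phi}^{n,0}$-form; hence the whole content of the proposition is the $\bar\partial_{\phi}$-closedness. By Theorem \ref{theorem2},
\[
\bar\partial_{\phi}\bigl(e^{i_{\phi}|i_{\bar\phi}}(\Omega)\bigr)=e^{i_{\phi}|i_{\bar\phi}}\bigl((O_{1}\circ O_{2}\circ O_{3})_{J}^{0,1}(\Omega)\bigr),
\]
and since $e^{i_{\phi}|i_{\bar\phi}}$ is invertible, $e^{i_{\phi}|i_{\bar\phi}}(\Omega)$ is $\bar\partial_{\phi}$-closed if and only if $(O_{1}\circ O_{2}\circ O_{3})_{J}^{0,1}(\Omega)=0$. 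The proof is then the explicit computation of this $(0,1)$-component.

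I would carry out the computation from the inside out. Since $\Omega$ is of type $(n,0)$ and $i_{\bar\phi}$ annihilates $\Omega$, one has $e^{i_{\phi}|i_{\bar\phi}}(\Omega)=e^{i_{\phi}}(\Omega)$, so Lemma \ref{lemma12} gives $O_{3}\Finv\,\Omega=e^{-i_{\phi}}\circ e^{i_{\phi}|i_{\bar\phi}}(\Omega)=\Omega$; the innermost operator acts trivially. Next I apply $O_{2}$ to $\Omega$. The Maurer--Cartan hypothesis kills the term $i_{\mathrm{MC}(\phi)}$, and degree reasons eliminate most of the rest: $\mu\Omega\in A^{n+2,-1}=0$ and $\partial\Omega\in A^{n+1,0}=0$ force $d\Omega=\bar\partial\Omega+\bar\mu\Omega$, while $[\mu,i_{\phi}]\Omega=0$, $i_{\phi}\partial\Omega=0$ and $i_{\mathscr{C}(\phi,\phi)}\Omega=0$ (the last because $\Omega$ carries no $(0,1)$-factor). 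Hence
\[
O_{2}\Omega=\bigl(\bar\partial\Omega+\partial(\phi\lrcorner\Omega)-\tfrac{1}{2}\mathscr{B}(\phi,\phi)\lrcorner\Omega\bigr)+\bar\mu\Omega,
\]
where the first group lies in $A^{n,1}_{J}$ and the second term $\bar\mu\Omega$ lies in $A^{n-1,2}_{J}$.

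It remains to apply $O_{1}\Finv$ and extract the $A^{n,1}_{J}$-component. The key is that $O_{1}$ splits into a type-preserving part $I'+(I''-\bar\phi\cdot\phi)^{-1}$ and a part $-\bar\phi\cdot(I'-\phi\cdot\bar\phi)^{-1}$ that converts a $(0,1)$-slot into a $(1,0)$-slot, thereby raising the holomorphic degree by one. On the $A^{n,1}_{J}$-summand $\beta:=\bar\partial\Omega+\partial(\phi\lrcorner\Omega)-\frac{1}{2}\mathscr{B}(\phi,\phi)\lrcorner\Omega$, the degree-raising part lands in $A^{n+1,0}_{J}=0$, so only the type-preserving part contributes to $A^{n,1}_{J}$, and I expect it to return $(-1)^{n}\beta$. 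On the $A^{n-1,2}_{J}$-summand $\bar\mu\Omega$, only the degree-raising piece produces an $A^{n,1}_{J}$-term, namely $-\bar\phi\cdot(I'-\phi\cdot\bar\phi)^{-1}\lrcorner\,\bar\mu\Omega$. Collecting the two contributions gives
\[
(O_{1}\circ O_{2}\circ O_{3})_{J}^{0,1}(\Omega)=(-1)^{n}\bigl(\bar\partial\Omega+\partial(\phi\lrcorner\Omega)-\tfrac{1}{2}\mathscr{B}(\phi,\phi)\lrcorner\Omega\bigr)-\bar\phi\cdot(I'-\phi\cdot\bar\phi)^{-1}\lrcorner\,\bar\mu\Omega,
\]
and setting this equal to $0$ is exactly the asserted condition.

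The main obstacle is this last step: making precise, from the definition of the simultaneous contraction $\Finv$, how $O_{1}$ acts slot-by-slot on $A^{n,1}_{J}$ and $A^{n-1,2}_{J}$, and in particular pinning down the sign $(-1)^{n}$ on the first group and checking that the higher-order corrections coming from $(I''-\bar\phi\cdot\phi)^{-1}$ do not leak into the $A^{n,1}_{J}$-component. This is a bookkeeping computation with the contraction conventions rather than a conceptual difficulty; the two substantive inputs---the reduction through Theorem \ref{theorem2} and injectivity, and the collapse of $O_{2}\Omega$ under the almost complex Maurer--Cartan equation and the vanishing of $A^{n+1,0}_{J}$ and $A^{n+2,-1}_{J}$---are already available.
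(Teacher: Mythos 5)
Your strategy coincides with the paper's: reduce via Theorem \ref{theorem2} and the invertibility of $e^{i_{\phi}|i_{\bar\phi}}$ to the vanishing of $(O_{1}\circ O_{2}\circ O_{3})_{J}^{0,1}(\Omega)$, observe $O_{3}\Finv\,\Omega=\Omega$, and collapse $O_{2}\Omega=\beta+\bar\mu\Omega$ with $\beta:=\bar\partial\Omega+\partial(\phi\lrcorner\Omega)-\frac{1}{2}\mathscr{B}(\phi,\phi)\lrcorner\Omega\in A^{n,1}_{J}$ by degree reasons, $\mathscr{C}(\phi,\phi)\lrcorner\Omega=0$, and the almost complex Maurer--Cartan equation; all of that is correct and matches the paper. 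The gap is in the final step, exactly where you flagged uncertainty, and it resolves in the opposite way from what you expect. The type-preserving part of $O_{1}$ is $I^{\prime}+(I^{\prime\prime}-\bar\phi\cdot\phi)^{-1}$, and on the anti-holomorphic slot of an $(n,1)$-form it acts by $(I^{\prime\prime}-\bar\phi\cdot\phi)^{-1}=I^{\prime\prime}+\bar\phi\cdot\phi+(\bar\phi\cdot\phi)^{2}+\cdots$, which is \emph{not} the identity; since this operator preserves type, all of its higher-order corrections land in the $A^{n,1}_{J}$-component --- they do ``leak''. The correct statements are the paper's (\ref{equ51}) and (\ref{equ52}):
\begin{equation*}
P_{J}^{n,1}(O_{1}\Finv\,\beta)=(-1)^{n}(I^{\prime\prime}-\bar\phi\cdot\phi)^{-1}\lrcorner\,\beta,
\qquad
P_{J}^{n,1}(O_{1}\Finv\,\bar\mu\Omega)
=-(I^{\prime\prime}-\bar\phi\cdot\phi)^{-1}\lrcorner\bigl(\bar\phi\cdot(I^{\prime}-\phi\cdot\bar\phi)^{-1}\lrcorner\,\bar\mu\Omega\bigr);
\end{equation*}
in the second formula the $(0,1)$-slot that is not converted to a $(1,0)$-form must still be acted on by $(I^{\prime\prime}-\bar\phi\cdot\phi)^{-1}$. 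Consequently
\begin{equation*}
(O_{1}\circ O_{2}\circ O_{3})_{J}^{0,1}(\Omega)
=(I^{\prime\prime}-\bar\phi\cdot\phi)^{-1}\lrcorner
\Bigl((-1)^{n}\beta-\bar\phi\cdot(I^{\prime}-\phi\cdot\bar\phi)^{-1}\lrcorner\,\bar\mu\Omega\Bigr),
\end{equation*}
which is not the expression you wrote.

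The proposition as stated is still what comes out, but to get there you need one more observation that your write-up never makes: $(I^{\prime\prime}-\bar\phi\cdot\phi)$ is pointwise invertible (this is part of $\phi$ being a Beltrami differential inducing $J_{\phi}$, since $\det(I^{\prime}-\phi\cdot\bar\phi)\neq0$), so $(I^{\prime\prime}-\bar\phi\cdot\phi)^{-1}\lrcorner\,\gamma=0$ if and only if $\gamma=0$, and the invertible factor can be cancelled from the closedness condition. This cancellation is precisely the closing step of the paper's proof of Proposition \ref{proposition2}: it derives (\ref{equ40}) and then invokes the pointwise invertibility of $(I^{\prime}-\phi\cdot\bar\phi)$ (equivalently of $(I^{\prime\prime}-\bar\phi\cdot\phi)$). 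Without either the corrected computation of the two projections or this invertibility argument, your passage from ``$(O_{1}\circ O_{2}\circ O_{3})_{J}^{0,1}(\Omega)=0$'' to the displayed condition does not go through.
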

To distinguish the case of almost complex manifolds and complex manifolds, in this manuscript, we denote an integrable almost complex manifold (i.e. a complex manifold) by $(M,J,N_{J}=0)$. In the integrable case, Proposition \ref{proposition2} reduces to the following known proposition.
\begin{proposition}\cite[Proposition 5.1]{MR3302118}
Let $(M,J,N_{J}=0)$ be a complex manifold and $\phi$ be a Beltrami differential that satisfies the Maurer-Cartan equation. For any smooth $J^{n,0}$-form $\Omega$, $e^{i_{\phi}|i_{\bar\phi}}(\Omega)$ is a $\bar\partial_{\phi}$-closed $J_{\phi}^{n,0}$-form if and only if
\begin{equation*}
\bar\partial\Omega+\partial(\phi\lrcorner\,\Omega)=0.
\end{equation*}
\end{proposition}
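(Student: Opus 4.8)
The plan is to obtain this proposition purely as the integrable specialization of Proposition \ref{proposition2}, so the only real content is to identify which terms of the almost complex criterion collapse when $N_{J}=0$. First I would record the two standard structural facts about complex manifolds that drive everything. Integrability $N_{J}=0$ is equivalent to the vanishing of the $(2,-1)$-part and the $(-1,2)$-part of $d$, that is $\mu=0$ and $\bar\mu=0$, so that $d=\partial+\bar\partial$. This alone annihilates the last term $\bar\phi\cdot(I^{\prime}-\phi\cdot\bar\phi)^{-1}\lrcorner\,\bar\mu\Omega$ in the criterion of Proposition \ref{proposition2}, since $\bar\mu\Omega=0$.

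Next I would analyze the Fr\"olicher-Nijenhuis bracket of Beltrami differentials in the integrable setting. By Lemma \ref{lemma17}, $[\phi,\phi]$ a priori splits as $\mathscr{A}(\phi,\phi)\in A^{0,2}(M,T^{1,0}M)$, $\mathscr{B}(\phi,\phi)\in A^{1,1}(M,T^{1,0}M)$ and $\mathscr{C}(\phi,\phi)\in A^{0,2}(M,T^{0,1}M)$. On a complex manifold, precisely because $\mu=\bar\mu=0$, the mixing responsible for the $\mathscr{B}$ and $\mathscr{C}$ components is absent, so $\mathscr{B}(\phi,\phi)=0$ and $\mathscr{C}(\phi,\phi)=0$ and $[\phi,\phi]=\mathscr{A}(\phi,\phi)$ lies entirely in $A^{0,2}(M,T^{1,0}M)$. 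This is exactly what makes the almost complex Maurer-Cartan equation $\mathrm{MC}(\phi)=0$ collapse to the classical $\bar\partial\phi=\tfrac{1}{2}[\phi,\phi]$ (as already noted after the definition of $\mathrm{MC}(\phi)$), so the hypothesis that $\phi$ satisfies the ordinary Maurer-Cartan equation matches the hypothesis of Proposition \ref{proposition2}.

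With these reductions in hand I would substitute directly into the criterion of Proposition \ref{proposition2}. The term $-\tfrac{1}{2}\mathscr{B}(\phi,\phi)\lrcorner\Omega$ vanishes since $\mathscr{B}(\phi,\phi)=0$, and the term carrying $\bar\mu\Omega$ vanishes as observed above. What survives is $(-1)^{n}(\bar\partial\Omega+\partial(\phi\lrcorner\Omega))=0$, which is equivalent to $\bar\partial\Omega+\partial(\phi\lrcorner\Omega)=0$ because $(-1)^{n}\neq0$. This yields precisely the claimed equivalence for $e^{i_{\phi}|i_{\bar\phi}}(\Omega)$ to be a $\bar\partial_{\phi}$-closed $J_{\phi}^{n,0}$-form.

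I do not anticipate a genuine obstacle here, since the argument is a specialization rather than a new computation. The single point demanding care is the clean vanishing of $\mathscr{B}(\phi,\phi)$, $\mathscr{C}(\phi,\phi)$, and $\bar\mu$ on an integrable manifold; this is classical but should be invoked explicitly, as the entire simplification of the Proposition \ref{proposition2} criterion rests on it.
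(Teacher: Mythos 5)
Your proposal is correct and follows essentially the same route as the paper: the paper's own (one-line) proof also obtains the statement by specializing Proposition \ref{proposition2}, noting that integrability forces $\mu=0$ (equivalently $\bar\mu=0$) and $\mathscr{B}(\phi,\phi)=0$, so that the almost complex criterion collapses to $\bar\partial\Omega+\partial(\phi\lrcorner\,\Omega)=0$. Your additional observations — that $\mathscr{C}(\phi,\phi)=0$ as well, so $[\phi,\phi]=\mathscr{A}(\phi,\phi)$ and the almost complex Maurer-Cartan hypothesis matches the classical one — are exactly the details the paper leaves implicit (having noted the reduction of $\mathrm{MC}(\phi)$ earlier), so your write-up is simply a more explicit version of the same argument.
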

As an application of the decomposition formulas, we get the following result concerning the $(n,0)$-Dolbeault cohomology on almost complex manifolds.
\begin{theorem}\textup{(= Theorem \ref{theorem5})}
Let $(M,J)$ be an almost complex manifold and $\phi\in A^{0,1}_{J}(M,T^{1,0}M)$ be a Beltrami differential which generates a new almost complex structure $J_{\phi}$ on $M$. Assume that $\phi$ satisfies the almost complex Maurer-Cartan equation. Then for any $[[\Omega]]\in H^{n,0}_{\mathrm{Dol},J}(M)$, $[[e^{i_{\phi}|i_{\bar\phi}}(\Omega)]]\in H^{n,0}_{\mathrm{Dol},J_{\phi}}(M)$ if and only if
\begin{equation*}
\partial(\phi\lrcorner\Omega)-\frac{1}{2}\mathscr{B}(\phi,\phi)\lrcorner\,\Omega=0.
\end{equation*}
\end{theorem}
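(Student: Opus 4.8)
The plan is to obtain Theorem \ref{theorem5} as a cohomological refinement of Proposition \ref{proposition2}. Because $e^{i_{\phi}|i_{\bar\phi}}$ preserves bidegree, $e^{i_{\phi}|i_{\bar\phi}}(\Omega)$ is a $J_{\phi}^{n,0}$-form, and since there are no $J_{\phi}$-forms of bidegree $(n,-1)$, the assertion $[[e^{i_{\phi}|i_{\bar\phi}}(\Omega)]]\in H^{n,0}_{\mathrm{Dol},J_{\phi}}(M)$ amounts to saying that this form is a Dolbeault cocycle. At the top holomorphic degree $\partial_{\phi}$ and $\mu_{\phi}$ annihilate every $J_{\phi}^{n,0}$-form for dimension reasons, so being a cocycle means being $d$-closed, i.e. simultaneously $\bar\partial_{\phi}$- and $\bar\mu_{\phi}$-closed. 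Proposition \ref{proposition2} already rewrites the $\bar\partial_{\phi}$-closedness as
\[
(-1)^{n}\bigl(\bar\partial\Omega+\partial(\phi\lrcorner\Omega)-\tfrac{1}{2}\mathscr{B}(\phi,\phi)\lrcorner\Omega\bigr)-\bar\phi\cdot(I^{\prime}-\phi\cdot\bar\phi)^{-1}\lrcorner\,\bar\mu\Omega=0,
\]
so the main content of Theorem \ref{theorem5} is that, for $\Omega$ representing a class in $H^{n,0}_{\mathrm{Dol},J}(M)$, this equation collapses to $\partial(\phi\lrcorner\Omega)-\tfrac12\mathscr{B}(\phi,\phi)\lrcorner\Omega=0$ while the $\bar\mu_{\phi}$-closedness becomes automatic.

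The simplification rests on what membership in $H^{n,0}_{\mathrm{Dol},J}(M)$ provides for $\Omega$. Again at the top holomorphic degree $\partial\Omega\in A^{n+1,0}=0$ and $\mu\Omega\in A^{n+2,-1}=0$, so $d\Omega=\bar\partial\Omega+\bar\mu\Omega$ and a representative of an $(n,0)$-Dolbeault class is $d$-closed: $\bar\partial\Omega=0$ and $\bar\mu\Omega=0$. The first identity deletes the $\bar\partial\Omega$ term, and the second makes the entire contraction $\bar\phi\cdot(I^{\prime}-\phi\cdot\bar\phi)^{-1}\lrcorner\,\bar\mu\Omega$ vanish, leaving exactly $(-1)^{n}\bigl(\partial(\phi\lrcorner\Omega)-\tfrac12\mathscr{B}(\phi,\phi)\lrcorner\Omega\bigr)=0$ in both directions. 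As a sanity check, in the integrable case $\bar\mu=0$ and $\mathscr{B}(\phi,\phi)=0$, and one recovers $\partial(\phi\lrcorner\Omega)=0$, in agreement with \cite[Proposition 5.1]{MR3302118}.

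The step I expect to be most delicate is verifying that being a $J_{\phi}$-Dolbeault cocycle really reduces to this single $(n,1)$-equation rather than to a larger system; concretely, one must check that the $\bar\mu_{\phi}$-closedness of $e^{i_{\phi}|i_{\bar\phi}}(\Omega)$ is automatic under the standing hypotheses. By Theorem \ref{theorem2} this amounts to the vanishing of the bidegree $(n-1,2)$ part of $(O_{1}\circ O_{2}\circ O_{3})_{J}\Omega$, and the almost complex Maurer-Cartan equation is what makes this work: with $\mathrm{MC}(\phi)=0$ one may replace the cubic term $\tfrac{1}{3!}(i_{[\phi,\phi]}\phi-i_{\phi}[\phi,\phi])$ by $\bar\partial\phi-\tfrac12\mathscr{A}(\phi,\phi)$, whose contraction into $\Omega$ lives only in bidegree $(n-1,2)$ and is arranged to cancel the other bidegree-$(n-1,2)$ terms, while the elementary bidegree facts $i_{\mathscr{C}(\phi,\phi)}\Omega=0$ and $\mathcal{L}^{\mu}_{\phi}\Omega=0$ remove the rest. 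Carrying out this cancellation, keeping the three components $\mathscr{A}$, $\mathscr{B}$, $\mathscr{C}$ of $[\phi,\phi]$ separated by Lemma \ref{lemma17} and using $\bar\partial\Omega=0$ and $\bar\mu\Omega=0$ once more, is the technical heart of the proof; granting it, the asserted equivalence follows in both directions.
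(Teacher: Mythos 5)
Your proposal is correct and takes essentially the same route as the paper: the paper likewise combines the type-decomposition of the extension formula specialized to $(n,0)$-forms (its equations (\ref{equ40})--(\ref{equ41}), the same computations that underlie Proposition \ref{proposition2}) with the observation that $[[\Omega]]\in H^{n,0}_{\mathrm{Dol},J}(M)$ forces $\bar\partial\Omega=0$ and $\bar\mu\Omega=0$, so that $\bar\mu_{\phi}$-closedness becomes automatic and $\bar\partial_{\phi}$-closedness collapses to $\partial(\phi\lrcorner\Omega)-\frac{1}{2}\mathscr{B}(\phi,\phi)\lrcorner\,\Omega=0$. The step you flag as the technical heart is in fact immediate: by the paper's computation (\ref{equ50}), the $J^{-1,2}$-part applied to $\Omega$ equals $(I^{\prime}+(I^{\prime\prime}-\bar\phi\cdot\phi)^{-1})\Finv\,(\bar\mu\Omega+\mathrm{MC}(\phi)\lrcorner\Omega)$, so $\mathrm{MC}(\phi)=0$ and $\bar\mu\Omega=0$ make it vanish term by term, with no cancellation among terms required.
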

For $(n,q)$-forms, we have the following theorem.
\begin{theorem}\textup{(= Theorem \ref{theorem6})}
Let $(M,J)$ be an almost complex manifold and $\phi$ be a Beltrami differential that induces a new almost complex structure. For any smooth $\Xi\in A^{n,q}_{J}(M)$, $e^{i_{\phi}|i_{\bar\phi}}(\Xi)\in A^{n,q}_{\phi}(M)$ is $\bar\partial_{\phi}$-closed if and only if
\begin{small}
\begin{equation*}
(I^{\prime}+(I^{\prime\prime}-\bar\phi\cdot\phi)^{-1})\Finv
\bigl((\bar\partial+[\partial,i_{\phi}]-i_{\frac{1}{2}(\mathscr{B}(\phi,\phi)+\mathscr{C}(\phi,\phi))}
-i_{\bar\phi\cdot(I^{\prime}-\phi\cdot\bar\phi)^{-1}}\circ(\bar\mu+i_{\mathrm{MC}(\phi)}))(I-\bar\phi\cdot\phi)\Finv\Xi\bigr)=0.
\end{equation*}
\end{small}
\end{theorem}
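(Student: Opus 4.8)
The plan is to derive Theorem \ref{theorem6} directly from the type-decomposed extension formula of Theorem \ref{theorem2}, exploiting that an $(n,q)$-form already has maximal holomorphic degree. First I would invoke that $e^{i_{\phi}|i_{\bar\phi}}:A^{p,q}_{J}(M)\to A^{p,q}_{J_{\phi}}(M)$ is a type-preserving isomorphism, so that the third line of Theorem \ref{theorem2} gives $\bar\partial_{\phi}(e^{i_{\phi}|i_{\bar\phi}}(\Xi))=e^{i_{\phi}|i_{\bar\phi}}\big((O_{1}\circ O_{2}\circ O_{3})_{J}^{0,1}(\Xi)\big)$, and hence
\begin{equation*}
\bar\partial_{\phi}(e^{i_{\phi}|i_{\bar\phi}}(\Xi))=0\quad\Longleftrightarrow\quad (O_{1}\circ O_{2}\circ O_{3})_{J}^{0,1}(\Xi)=0.
\end{equation*}
The entire problem then reduces to simplifying the $J$-bidegree-$(0,1)$ part of $O_{1}\circ O_{2}\circ O_{3}$ when the input is restricted to $A^{n,q}_{J}(M)$.

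The key device is bidegree bookkeeping together with the vanishing $A^{p,\ast}_{J}(M)=0$ for $p>n$. Using $i_{\phi}\!:(p,q)\mapsto(p-1,q+1)$, $i_{\bar\phi}\!:(p,q)\mapsto(p+1,q-1)$, and the types $\mathscr{B}(\phi,\phi)\in A^{1,1}(M,T^{1,0}M)$, $\mathscr{C}(\phi,\phi)\in A^{0,2}(M,T^{0,1}M)$, $\mathrm{MC}(\phi)\in A^{0,2}(M,T^{1,0}M)$, I would record the bidegree-shift decompositions $O_{3}=(I-\bar\phi\cdot\phi)+\bar\phi$ (shifts $(0,0)$ and $(1,-1)$), $O_{1}=(I'+(I''-\bar\phi\cdot\phi)^{-1})-\bar\phi\cdot(I'-\phi\cdot\bar\phi)^{-1}$ (shifts $(0,0)$ and $(1,-1)$), and
\begin{equation*}
O_{2}=\underbrace{\mu}_{(2,-1)}+\underbrace{(\partial+[\mu,i_{\phi}])}_{(1,0)}+\underbrace{(\bar\partial+[\partial,i_{\phi}]-i_{\frac{1}{2}(\mathscr{B}(\phi,\phi)+\mathscr{C}(\phi,\phi))})}_{(0,1)}+\underbrace{(\bar\mu+i_{\mathrm{MC}(\phi)})}_{(-1,2)}.
\end{equation*}

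Next I would run the composite on $\Xi\in A^{n,q}_{J}(M)$ and discard every term whose intermediate or final holomorphic degree exceeds $n$. Under $O_{3}$ the $\bar\phi$-summand lands in $A^{n+1,q-1}_{J}=0$, so $O_{3}\Finv\Xi=(I-\bar\phi\cdot\phi)\Finv\Xi=:\Xi_{0}\in A^{n,q}_{J}(M)$. Under $O_{2}$ the $(2,-1)$ and $(1,0)$ parts land in $A^{n+2,q-1}_{J}$ and $A^{n+1,q}_{J}$, both zero, so only $O_{2}^{0,1}\Xi_{0}\in A^{n,q+1}_{J}$ and $O_{2}^{-1,2}\Xi_{0}\in A^{n-1,q+2}_{J}$ survive. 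Finally, selecting total shift $(0,1)$ after applying $O_{1}$, the $(1,-1)$-summand of $O_{1}$ annihilates $O_{2}^{0,1}\Xi_{0}$ (it would produce degree $n+1$), while the $(0,0)$-summand sends $O_{2}^{-1,2}\Xi_{0}$ into $A^{n-1,q+2}_{J}$, which belongs to the $\bar\mu_{\phi}$-part rather than the target; hence exactly two cross terms remain,
\begin{equation*}
(O_{1}\circ O_{2}\circ O_{3})_{J}^{0,1}(\Xi)=(I'+(I''-\bar\phi\cdot\phi)^{-1})\Finv\,O_{2}^{0,1}\Xi_{0}-i_{\bar\phi\cdot(I'-\phi\cdot\bar\phi)^{-1}}\,O_{2}^{-1,2}\Xi_{0},
\end{equation*}
and substituting the explicit $O_{2}^{0,1}=\bar\partial+[\partial,i_{\phi}]-i_{\frac{1}{2}(\mathscr{B}(\phi,\phi)+\mathscr{C}(\phi,\phi))}$ and $O_{2}^{-1,2}=\bar\mu+i_{\mathrm{MC}(\phi)}$ produces the displayed expression.

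The main obstacle is the final reassembly. The stated formula places the outer operator $(I'+(I''-\bar\phi\cdot\phi)^{-1})\Finv$ in front of \emph{both} surviving contributions, whereas the naive bidegree count produces it in front of the first term only. Reconciling the two amounts to proving, within the $\Finv$-calculus of the simultaneous contraction, that $(I'+(I''-\bar\phi\cdot\phi)^{-1})\Finv$ acts as the identity on the second contribution $-i_{\bar\phi\cdot(I'-\phi\cdot\bar\phi)^{-1}}(\bar\mu+i_{\mathrm{MC}(\phi)})\Xi_{0}$, which lies in the image of the contraction $i_{\bar\phi\cdot(I'-\phi\cdot\bar\phi)^{-1}}$. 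I expect this to follow from the defining relations of $\Finv$ together with Lemmas \ref{lemma6} and \ref{lemma12}, which govern how the factors $(I''-\bar\phi\cdot\phi)^{-1}$ and $\bar\phi\cdot(I'-\phi\cdot\bar\phi)^{-1}$ interact on forms of maximal holomorphic degree; carefully tracking these contraction identities, rather than any differential-geometric input, is where the real work lies.
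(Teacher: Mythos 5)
Your overall strategy is exactly the paper's: use Theorem \ref{theorem2} together with the fact that $e^{i_{\phi}|i_{\bar\phi}}$ is an isomorphism (Lemma \ref{lemma18}) to reduce the claim to $(O_{1}\circ O_{2}\circ O_{3})_{J}^{0,1}(\Xi)=0$, then use the vanishing $A^{p,\ast}_{J}(M)=0$ for $p>n$ to kill the contributions of the $(2,-1)$- and $(1,0)$-parts of $O_{2}$ and of the $\bar\phi$-part of $O_{3}$. Those steps agree with the paper's computation leading to (\ref{equa1}). The gap is in your handling of $O_{1}$. The simultaneous contraction $O_{1}\Finv$ is not additive in the summands of $O_{1}$ --- this is precisely the warning of Remark \ref{remark2}: its bidegree-$(s,-s)$ component is obtained by letting $-\bar\phi\cdot(I^{\prime}-\phi\cdot\bar\phi)^{-1}$ act on exactly $s$ of the antiholomorphic factors \emph{while simultaneously} letting $(I^{\prime\prime}-\bar\phi\cdot\phi)^{-1}$ act on all remaining antiholomorphic factors; see the explicit formula for $P_{J}^{p+s,q-s}$ of $O_{1}\Finv\alpha$ in Section \ref{sec4} and its special case (\ref{equ52}). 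Hence for $\Theta\in A^{n-1,q+2}_{J}(M)$ one has
\begin{equation*}
P_{J}^{n,q+1}(O_{1}\Finv\,\Theta)
=-(I^{\prime}+(I^{\prime\prime}-\bar\phi\cdot\phi)^{-1})\Finv\,
\bigl(i_{\bar\phi\cdot(I^{\prime}-\phi\cdot\bar\phi)^{-1}}\Theta\bigr),
\end{equation*}
and not $-i_{\bar\phi\cdot(I^{\prime}-\phi\cdot\bar\phi)^{-1}}\Theta$ as in your displayed formula: the outer operator rescales every remaining antiholomorphic leg by $(I^{\prime\prime}-\bar\phi\cdot\phi)^{-1}=I^{\prime\prime}+\bar\phi\cdot\phi+(\bar\phi\cdot\phi)^{2}+\cdots$.

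For the same reason your proposed repair cannot succeed: $(I^{\prime}+(I^{\prime\prime}-\bar\phi\cdot\phi)^{-1})\Finv$ is \emph{not} the identity on the image of $i_{\bar\phi\cdot(I^{\prime}-\phi\cdot\bar\phi)^{-1}}$, since a form in that image lies in $A^{n,q+1}_{J}(M)$ and still carries $q+1$ antiholomorphic factors on which the operator acts nontrivially whenever $\bar\phi\cdot\phi\neq0$. So no identity in the $\Finv$-calculus reconciles your expression with the stated one; moreover, since the two expressions differ by an invertible operation applied to only one of the two summands, the vanishing conditions they define are genuinely different, so adopting your formula would change the theorem. The correct bookkeeping of the simultaneous contraction produces the stated expression directly, with $(I^{\prime}+(I^{\prime\prime}-\bar\phi\cdot\phi)^{-1})\Finv$ factored in front of both terms, which is exactly how the paper concludes in (\ref{equa1}); once you replace your projection formula by the correct one, your argument coincides with the paper's proof.
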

In Section \ref{sec7}, for the readers' convenience, we recall some basic knowledge on the Dolbeault cohomology of almost complex manifolds from \cite{cirici2018dolbeault}.
\section{Beltrami differential on almost complex manifolds}\label{sec3}
Let $(M,J)$ be an almost complex manifold of real dimension $2n$. The complexified tangent bundle of $M$ has the decomposition
\begin{equation*}
T^{\mathbb{C}}M\cong T^{1,0}_{J}M\oplus T^{0,1}_{J}M.
\end{equation*}
The complex cotangent bundle also has similar decomposition
\begin{equation*}
T^{\ast}_{\mathbb{C}}M\cong (T^{\ast}_{J}M)^{1,0}\oplus (T^{\ast}_{J}M)^{0,1}.
\end{equation*}
The space of $(p,q)$-forms with respect to $J$ over $M$ (or $J^{p,q}$-forms) is denoted as $A^{p,q}_{J}(M)$. For any almost complex manifold $(M,J)$, the space of $T^{1,0}_{J}M$-valued $(p,q)$-form should be denoted as $A^{p,q}_{J}(M,T^{1,0}_{J}M)$, especially when there is more than one almost complex structure on $M$ concerned. However, here we denote $A^{p,q}_{J}(M,T^{1,0}_{J}M)$ as $A^{p,q}_{J}(M,T^{1,0}M)$ for a little simplicity. Let $\{\theta^{i}\}_{i=1}^{n}$ be a local basis of $A^{1,0}_{J}(M)$ and $\{e_{i}\}_{i=1}^{n}$ be the dual basis of $\{\theta^{i}\}_{i=1}^{n}$ in $T^{1,0}_{J}M$. Denote the complex conjugate of $\{\theta^{i}\}_{i=1}^{n}$ and $\{e_{i}\}_{i=1}^{n}$ as $\bigl\{\theta^{\bar i}\bigr\}_{i=1}^{n}$ and $\{e_{\bar i}\}_{i=1}^{n}$ respectively. The Nijenhuis tensor with respect to $J$ is defined as
\begin{equation*}
N(X,Y):=[JX,JY]-J[JX,Y]-J[X,JY]-[X,Y].
\end{equation*}
One can easily check that
\begin{align*}
N(e_{\bar j},e_{\bar k})
=&\,[Je_{\bar j},Je_{\bar k}]-J[Je_{\bar j},e_{\bar k}]-J[e_{\bar j},Je_{\bar k}]-[e_{\bar j},e_{\bar k}]\\
=&\,-[e_{\bar j},e_{\bar k}]+iJ[e_{\bar j},e_{\bar k}]+iJ[e_{\bar j},e_{\bar k}]-[e_{\bar j},e_{\bar k}]\\
=&\,-2([e_{\bar j},e_{\bar k}]-iJ[e_{\bar j},e_{\bar k}])\\
=&\,-4[e_{\bar j},e_{\bar k}]^{1,0}.
\end{align*}
For the readers' convenience, here we would like to give some explanations on the decomposition (\ref{eq67}). For $\theta^{i}\in A_{J}^{1,0}(M)$, (\ref{eq67}) implies that
\begin{align*}
\bar\mu\theta^{i}(e_{\bar j},e_{\bar k})
=&d\theta^{i}(e_{\bar j},e_{\bar k})
=e_{\bar j}(\theta^{i}(e_{\bar k}))-e_{\bar k}(\theta^{i}(e_{\bar j}))-\theta^{i}([e_{\bar j},e_{\bar k}])\\
=&-\theta^{i}([e_{\bar j},e_{\bar k}]^{1,0})
=\frac{1}{4}\theta^{i}(N_{\bar j\bar k}^{\ell}e_{\ell})
=\frac{1}{4}N_{\bar j\bar k}^{\ell}\theta^{i}(e_{\ell})\\
=&\frac{1}{4}N\lrcorner\,\theta^{i}(e_{\bar j},e_{\bar k}).
\end{align*}
Thus we know that
\begin{equation*}
\bar\mu=\frac{1}{8}N_{\bar j\bar k}^{i}\theta^{\bar j}\wedge\theta^{\bar k}\otimes e_{k},
\end{equation*}
or equivalently
\begin{equation*}
\bar\mu_{\bar j\bar k}^{i}=\frac{1}{4}N^{i}_{\bar j\bar k}.
\end{equation*}
Similarly we have
\begin{equation*}
\mu_{jk}^{\bar i}=\frac{1}{4}N^{\bar i}_{jk}.
\end{equation*}
Any $\phi=\phi^{i}_{\bar j}\theta^{\bar j}\otimes e_{i}\in A^{0,1}_{J}(M,T^{1,0}M)$ defines a map
\begin{equation*}
i_{\phi}:A^{1,0}_{J}(M)\rightarrow A^{0,1}_{J}(M).
\end{equation*}
Denote $\theta^{i}_{\phi}:=\theta^{i}+i_{\phi}\theta^{i}$, or equivalently
\begin{displaymath}
\left(\begin{array}{c}
\theta_{\phi}^{1}\\
\vdots\\
\theta_{\phi}^{n}\\
\bar\theta_{\phi}^{1}\\
\vdots\\
\bar\theta_{\phi}^{n}
\end{array}\right)
=\left(\begin{array}{ccc|ccc}
1 &0 & & & & \\
0 &\ddots &0&&\phi^{i}_{\bar j}& \\
 & 0&1&&&  \\
\hline
 &&&1&0&\\
 &\phi^{\bar i}_{j}&&0&\ddots&0\\
 &&&&0&1
\end{array}\right)
\left(\begin{array}{c}
\theta^{1}\\
\vdots\\
\theta^{n}\\
\bar\theta^{1}\\
\vdots\\
\bar\theta^{n}
\end{array}\right)
\triangleq\Phi\cdot
\left(\begin{array}{c}
\theta^{1}\\
\vdots\\
\theta^{n}\\
\bar\theta^{1}\\
\vdots\\
\bar\theta^{n}
\end{array}\right),
\end{displaymath}
where $\phi^{\bar i}_{j}\triangleq\overline{\phi^{i}_{\bar j}}$, $i,j=1,2,\ldots,n$. The superscript of $\Phi^{i}_{j}$ is the row index, and from now on, we will always denote the row index as the superscript. When there is no danger of confusion, we denote $\phi=\bigl(\phi^{i}_{\bar j}\bigr)_{n\times n}$ and $\bar\phi=\bigl(\phi^{\bar i}_{j}\bigr)_{n\times n}$. Under this notation, the above equation can be shorted as
\begin{equation}\label{eq68}
\left(\begin{array}{c}
\theta_{\phi}\\
\bar\theta_{\phi}
\end{array}\right)
=\left(\begin{array}{cc}
I^{\prime} & \phi\\
\bar\phi& I^{\prime\prime}
\end{array}\right)
\left(\begin{array}{c}
\theta\\
\bar\theta
\end{array}\right),
\end{equation}
where $\theta_{\phi}\triangleq(\theta^{1}_{\phi},\cdots,\theta^{n}_{\phi})^{T}$, $\theta\triangleq(\theta^{1},\cdots,\theta^{n})^{T}$, $\bar\theta_{\phi}\triangleq(\theta^{\bar 1}_{\phi},\cdots,\theta^{\bar n}_{\phi})^{T}$, $\bar\theta\triangleq(\theta^{\bar 1},\cdots,\theta^{\bar n})^{T}$ and
\begin{equation*}
I=I^{\prime}+I^{\prime\prime}=\sum_{i=1}^{n}\theta^{i}\otimes e_{i}+\sum_{i=1}^{n}\theta^{\bar i}\otimes e_{\bar i},
\quad I^{\prime}=\sum_{i=1}^{n}\theta^{i}\otimes e_{i},I^{\prime\prime}=\sum_{i=1}^{n}\theta^{\bar i}\otimes e_{\bar i}.
\end{equation*}
Note that locally $I$ can be considered as a $2n\times 2n$ identity matrix and $I^{\prime},I^{\prime\prime}$ as $n\times n$ identity matrices. From (\ref{eq68}) we can see that $\{\theta_{\phi},\bar\theta_{\phi}\}$ forms a local basis of $A^{1}(M;\mathbb{C})$ if and only if
\begin{equation*}
\det
\left(\begin{array}{cc}
I^{\prime} & \phi\\
\bar\phi& I^{\prime}
\end{array}\right)\neq0.
\end{equation*}
Linear algebra calculations show that
\begin{equation}\label{det1}
\det
\left(\begin{array}{cc}
I^{\prime} & \phi\\
\bar\phi& I^{\prime\prime}
\end{array}\right)=\det(I^{\prime}-\phi\cdot\bar\phi).
\end{equation}
Here we add a dot ``$\cdot$" in $\phi\cdot\bar\phi$ to emphasize that we takes the matrix multiplication. Note that
\begin{align}
i_{\phi}\bar\phi
=&\,\phi^{i}_{\bar j}\theta^{\bar j}\otimes e_{i}\lrcorner\left(\phi^{\bar p}_{q}\theta^{q}\otimes e_{\bar p}\right)\nonumber\\
=&\,\phi^{i}_{\bar j}\phi^{\bar p}_{q}\delta^{q}_{i}\theta^{\bar j}\otimes e_{\bar p}\nonumber\\
=&\,\phi^{\bar p}_{i}\phi^{i}_{\bar j}\theta^{\bar j}\otimes e_{\bar p}\nonumber\\
\sim&\,\bar\phi\cdot\phi,\label{eq201}
\end{align}
and similarly
\begin{equation}
i_{\bar\phi}\phi\sim\phi\cdot\bar\phi.\label{eq202}
\end{equation}
Here the symbol ``$\sim$" relates vector-valued differential forms with their coefficients.
The readers can also see (\ref{eq201}) and (\ref{eq202}) in the paragraph below \cite[Lemma 2.4]{rao2018several}.

By (\ref{det1}) we see that $\{\theta_{\phi},\bar\theta_{\phi}\}$ forms a local basis of $A^{1}(M;\mathbb{C})$ if and only if
\begin{equation*}
\det(I^{\prime}-\phi\cdot\bar\phi)\neq0.
\end{equation*}
When $\det(I^{\prime}-\phi\cdot\bar\phi)\neq0$, one can check that the inverse of the matrix
$\Phi=\left(\begin{array}{cc}
I^{\prime} & \phi\\
\bar\phi& I^{\prime\prime}
\end{array}\right)$ is
\begin{displaymath}
\Phi^{-1}=\left(\begin{array}{cc}
(I^{\prime}-\phi\cdot\bar\phi)^{-1} & -\phi\cdot(I^{\prime\prime}-\bar\phi\cdot\phi)^{-1}\\
&\\
-\bar\phi\cdot(I^{\prime}-\phi\cdot\bar\phi)^{-1}& (I^{\prime\prime}-\bar\phi\cdot\phi)^{-1}
\end{array}\right)_{2n\times 2n}.
\end{displaymath}
Actually, we have
\begin{align*}
\Phi\cdot\Phi^{-1}=&
\left(\begin{array}{cc}
I^{\prime} & \phi\\
\bar\phi& I^{\prime\prime}
\end{array}\right)\cdot
\left(\begin{array}{cc}
(I^{\prime}-\phi\cdot\bar\phi)^{-1} & -\phi\cdot(I^{\prime\prime}-\bar\phi\cdot\phi)^{-1}\\
&\\
-\bar\phi\cdot(I^{\prime}-\phi\cdot\bar\phi)^{-1}& (I^{\prime\prime}-\bar\phi\cdot\phi)^{-1}
\end{array}\right)\\
=&\left(\begin{array}{cc}
(I^{\prime}-\phi\cdot\bar\phi)^{-1}-\phi\cdot\bar\phi\cdot(I^{\prime}-\phi\cdot\bar\phi)^{-1}
&0\\
0&-\bar\phi\cdot\phi\cdot(I^{\prime\prime}-\bar\phi\cdot\phi)^{-1}+(I^{\prime\prime}-\bar\phi\cdot\phi)^{-1}
\end{array}\right)\\
=&\left(\begin{array}{cc}
(I^{\prime}-\phi\cdot\bar\phi)(I^{\prime}-\phi\cdot\bar\phi)^{-1}
&0\\
0&(I^{\prime\prime}-\bar\phi\cdot\phi)(I^{\prime\prime}-\bar\phi\cdot\phi)^{-1}
\end{array}\right)\\
=&\left(\begin{array}{cc}
I^{\prime}&0\\
0&I^{\prime\prime}
\end{array}\right).
\end{align*}
By the uniqueness of the inverse matrix, we know that $\Phi^{-1}\cdot\Phi=I$, i.e. we have
\begin{align*}
\Phi^{-1}\cdot\Phi
=&\left(\begin{array}{cc}
(I^{\prime}-\phi\cdot\bar\phi)^{-1} & -\phi\cdot(I^{\prime\prime}-\bar\phi\cdot\phi)^{-1}\\
&\\
-\bar\phi\cdot(I^{\prime}-\phi\cdot\bar\phi)^{-1}& (I^{\prime\prime}-\bar\phi\cdot\phi)^{-1}
\end{array}\right)\cdot
\left(\begin{array}{cc}
I^{\prime} & \phi\\
\bar\phi& I^{\prime\prime}
\end{array}\right)\\
=&\left(\begin{array}{cc}
(I^{\prime}-\phi\cdot\bar\phi)^{-1}-\phi\cdot(I^{\prime\prime}-\bar\phi\cdot\phi)^{-1}\cdot\bar\phi
&(I^{\prime}-\phi\cdot\bar\phi)^{-1}\cdot\phi-\phi\cdot(I^{\prime\prime}-\bar\phi\cdot\phi)^{-1}\\
-\bar\phi\cdot(I^{\prime}-\phi\cdot\bar\phi)^{-1}+(I^{\prime\prime}-\bar\phi\cdot\phi)^{-1}\cdot\bar\phi
&-\bar\phi\cdot(I^{\prime}-\phi\cdot\bar\phi)^{-1}\cdot\phi+(I^{\prime\prime}-\bar\phi\cdot\phi)^{-1}
\end{array}\right)\\
=&\left(\begin{array}{cc}
I^{\prime}&0\\
0&I^{\prime\prime}
\end{array}\right).
\end{align*}
By the above equation, we know that
\begin{align*}
(I^{\prime}-\phi\cdot\bar\phi)^{-1}-\phi\cdot(I^{\prime\prime}-\bar\phi\cdot\phi)^{-1}\cdot\bar\phi&=I^{\prime},\\
(I^{\prime}-\phi\cdot\bar\phi)^{-1}\cdot\phi-\phi\cdot(I^{\prime\prime}-\bar\phi\cdot\phi)^{-1}&=0,\\
-\bar\phi\cdot(I^{\prime}-\phi\cdot\bar\phi)^{-1}+(I^{\prime\prime}-\bar\phi\cdot\phi)^{-1}\cdot\bar\phi&=0,\\
-\bar\phi\cdot(I^{\prime}-\phi\cdot\bar\phi)^{-1}\cdot\phi+(I^{\prime\prime}-\bar\phi\cdot\phi)^{-1}&=I^{\prime\prime}.
\end{align*}
We summarize these equations as the following remark.
\begin{remark}
Let $\phi\in A^{0,1}(M,T^{1,0}M)$ be a Beltrami differential. If
\begin{equation*}
\det(I^{\prime}-\phi\cdot\bar\phi)\neq0,
\end{equation*}
then we have
\begin{equation}\label{neq3}
(I^{\prime}-\phi\cdot\bar\phi)^{-1}\cdot\phi=\phi\cdot(I^{\prime\prime}-\bar\phi\cdot\phi)^{-1},
\end{equation}
\begin{equation}\label{neq2}
(I^{\prime}-\phi\cdot\bar\phi)^{-1}-\phi\cdot(I^{\prime\prime}-\bar\phi\cdot\phi)^{-1}\cdot\bar\phi=I^{\prime}.
\end{equation}
The complex conjugate of (\ref{neq3}) and (\ref{neq2}) also hold.
\end{remark}
When $\{\theta_{\phi},\bar\theta_{\phi}\}$ forms a local basis of $A^{1}(M)$, we can define a new almost complex structure $J_{\phi}$ on $M$, such that $\{\theta^{i}_{\phi}\}_{i=1}^{n}$ is a local basis of $A^{1,0}_{J_{\phi}}(M)$ and $\{\theta^{\bar i}_{\phi}\}_{i=1}^{n}$ is a local basis of $A^{0,1}_{J_{\phi}}(M)$. Denote the dual basis of $\{\theta^{i}_{\phi}\}_{i=1}^{n}$ and $\{\theta^{\bar i}_{\varphi}\}_{i=1}^{n}$ as $\{e_{\phi,i}\}_{i=1}^{n}$ and $\{e_{\phi,\bar i}\}_{i=1}^{n}$ respectively.

As a sum of the above discussion, we give the following theorem.
\begin{theorem}
Let $(M,J)$ be an almost complex manifold. Assume that $J_{\phi}$ is another almost complex structure on $M$ induced by a Beltrami differential $\phi\in A^{0,1}_{J}(M,T^{1,0}M)$. Let $\bigl\{\theta^{i}\bigr\}_{i=1}^{n}$ be a basis of $A^{1,0}_{J}(M)$, and $\{e_{i}\}_{i=1}^{n}$ its dual basis in $T^{1,0}_{J}M$. By definition $\bigl\{\theta^{i}_{\phi}=\theta^{i}+i_{\phi}\theta^{i}\bigr\}_{i=1}^{n}$ is a basis of $A^{1,0}_{J_{\phi}}(M)$. Then the dual basis of $\theta^{i}_{\phi}$ in $T^{1,0}_{J_{\phi}}M$ is expressed as
\begin{equation*}
e_{\phi,i}=(\Phi^{-1})_{i}^{j}e_{j}+(\Phi^{-1})_{i}^{\bar j}\bar e_{j},
\end{equation*}
or equivalently
\begin{displaymath}
\left(\begin{array}{c}
e_{\phi,1}\\
\vdots\\
e_{\phi,n}\\
\bar e_{\phi,1}\\
\vdots\\
\bar e_{\phi,n}
\end{array}\right)
=\Phi^{-T}\cdot
\left(\begin{array}{c}
e_{1}\\
\vdots\\
e_{n}\\
\bar e_{1}\\
\vdots\\
\bar e_{n}
\end{array}\right),
\quad\quad
\Phi^{T}\cdot\left(\begin{array}{c}
e_{\phi,1}\\
\vdots\\
e_{\phi,n}\\
\bar e_{\phi,1}\\
\vdots\\
\bar e_{\phi,n}
\end{array}\right)
=\left(\begin{array}{c}
e_{1}\\
\vdots\\
e_{n}\\
\bar e_{1}\\
\vdots\\
\bar e_{n}
\end{array}\right),
\end{displaymath}
where
\begin{displaymath}
\Phi=\left(\begin{array}{cc}
I^{\prime} & \phi\\
\bar\phi& I^{\prime\prime}
\end{array}\right)_{2n\times 2n},
\end{displaymath}
and
\begin{displaymath}
\Phi^{-1}=
\left(\begin{array}{cc}
(I^{\prime}-\phi\cdot\bar\phi)^{-1} & -\phi\cdot(I^{\prime\prime}-\bar\phi\cdot\phi)^{-1}\\
&\\
-\bar\phi\cdot(I^{\prime}-\phi\cdot\bar\phi)^{-1}& (I^{\prime\prime}-\bar\phi\cdot\phi)^{-1}
\end{array}\right)_{2n\times 2n}.
\end{displaymath}
Also we have
\begin{equation}\label{neq1}
\left(\begin{array}{c}
\theta^{1}\\
\vdots\\
\theta^{n}\\
\bar\theta^{1}\\
\vdots\\
\bar\theta^{n}
\end{array}\right)
=\left(\begin{array}{cc}
&\\
(I^{\prime}-\phi\cdot\bar\phi)^{-1} & -\phi\cdot(I^{\prime\prime}-\bar\phi\cdot\phi)^{-1}\\
&\\
&\\
-\bar\phi\cdot(I^{\prime}-\phi\cdot\bar\phi)^{-1}& (I^{\prime\prime}-\bar\phi\cdot\phi)^{-1}\\
&
\end{array}\right)
\left(\begin{array}{c}
\theta_{\phi}^{1}\\
\vdots\\
\theta_{\phi}^{n}\\
\bar\theta_{\phi}^{1}\\
\vdots\\
\bar\theta_{\phi}^{n}
\end{array}\right).
\end{equation}
\end{theorem}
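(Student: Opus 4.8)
The plan is to extract both assertions from the single change-of-coframe identity (\ref{eq68}) together with the explicit inverse $\Phi^{-1}$ computed above. I would first dispose of (\ref{neq1}). The hypothesis that $\phi$ genuinely induces an almost complex structure $J_{\phi}$ guarantees $\det(I^{\prime}-\phi\cdot\bar\phi)\neq 0$, so by (\ref{det1}) the matrix $\Phi$ is invertible with the inverse displayed in the statement. Left-multiplying both sides of (\ref{eq68}) by $\Phi^{-1}$ then gives
\[
\begin{pmatrix}\theta\\ \bar\theta\end{pmatrix}
=\Phi^{-1}\begin{pmatrix}\theta_{\phi}\\ \bar\theta_{\phi}\end{pmatrix},
\]
which is precisely (\ref{neq1}). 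Thus this half of the theorem is immediate from the linear algebra already carried out in this section.

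For the dual frame, I would work with the full index set $\{1,\dots,n,\bar 1,\dots,\bar n\}$ and characterize $\{e_{\phi,i},\bar e_{\phi,i}\}$ by the biorthogonality relations $\theta^{a}_{\phi}(e_{\phi,b})=\delta^{a}_{b}$, where $a,b$ range over all $2n$ indices. Writing the unknown frame as $e_{\phi,c}=Q^{d}_{c}e_{d}$ (sum over the full index set) for a $2n\times 2n$ matrix $Q$, and substituting the coframe law $\theta^{a}_{\phi}=\Phi^{a}_{b}\theta^{b}$ from (\ref{eq68}), the pairing reduces by the original biorthogonality $\theta^{b}(e_{d})=\delta^{b}_{d}$ to $(\Phi Q)^{a}_{c}$. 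Hence the duality condition is $\Phi Q=I$, forcing $Q=\Phi^{-1}$; this is exactly the component formula $e_{\phi,i}=(\Phi^{-1})_{i}^{j}e_{j}+(\Phi^{-1})_{i}^{\bar j}\bar e_{j}$.

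It then remains to pass to the matrix form. Because the paper's convention makes the superscript the row index, the coefficient $(\Phi^{-1})^{d}_{c}$ in $e_{\phi,c}=\sum_{d}(\Phi^{-1})^{d}_{c}e_{d}$ is the $(d,c)$-entry of $\Phi^{-1}$; reading this as a product acting on the column of frame vectors transposes the matrix and yields $\vec{e}_{\phi}=\Phi^{-T}\vec{e}$, equivalently $\Phi^{T}\vec{e}_{\phi}=\vec{e}$. I expect the only genuine point requiring care to be bookkeeping of this transpose, namely keeping the component statement (governed by $\Phi^{-1}$) consistent with the matrix statement (governed by $\Phi^{-T}$). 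A secondary point worth checking explicitly is that the single equation $\Phi Q=I$ simultaneously encodes all four blocks of the biorthogonality relations, including the cross terms $\bar\theta^{j}_{\phi}(e_{\phi,i})=0$; these cross terms are exactly what guarantees that the constructed $e_{\phi,i}$ actually lie in $T^{1,0}_{J_{\phi}}M$, and not merely that they pair correctly with the $(1,0)$-coframe.
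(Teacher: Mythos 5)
Your proposal is correct and takes essentially the same route as the paper: the paper proves invertibility of $\Phi$ via (\ref{det1}), verifies the displayed block inverse by direct multiplication, and then states the theorem ``as a sum of the above discussion,'' leaving the standard frame--coframe duality implicit. Your biorthogonality argument $\Phi Q=I$ (including the cross-term blocks that place $e_{\phi,i}$ in $T^{1,0}_{J_{\phi}}M$) and the transpose bookkeeping simply make that implicit duality step explicit.
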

\begin{remark}\label{remark1}
The matrix
\begin{equation*}
\Phi^{-1}=\left(\begin{array}{cc}
(I^{\prime}-\phi\cdot\bar\phi)^{-1} & -\phi\cdot(I^{\prime\prime}-\bar\phi\cdot\phi)^{-1}\\
&\\
-\bar\phi\cdot(I^{\prime}-\phi\cdot\bar\phi)^{-1}& (I^{\prime\prime}-\bar\phi\cdot\phi)^{-1}
\end{array}\right),
\end{equation*}
could be considered as a contraction operator, i.e.
\begin{equation*}
\Phi^{-1}=(\Phi^{-1})^{\alpha}_{\beta}\theta_{\phi}^{\beta}\otimes e_{\phi,\alpha},
\end{equation*}
where $\alpha,\beta\in\{1,\ldots,n,\bar 1,\ldots,\bar n\}$.
\end{remark}
Note that
\begin{align}
\Phi^{-1}=&\,(\Phi^{-1})^{\alpha}_{\beta}\theta_{\phi}^{\beta}\otimes e_{\phi,\alpha}\nonumber\\
=&\,(\Phi^{-1})^{\alpha}_{\beta}\Phi^{\beta}_{\lambda}(\Phi^{-1})^{\mu}_{\alpha}
\theta^{\lambda}\otimes e_{\mu}\nonumber\\
=&\,\delta^{\alpha}_{\lambda}(\Phi^{-1})^{\mu}_{\alpha}\theta^{\lambda}\otimes e_{\mu}\nonumber\\
=&\,(\Phi^{-1})^{\mu}_{\lambda}\theta^{\lambda}\otimes e_{\mu}.\label{eq53}
\end{align}
So when we say that $\Phi^{-1}$ is a contraction operator, we do not need to specify the basis we use between $\{\theta^{\alpha}_{\phi}\}$ and $\{\theta^{\alpha}\}$. Using similar methods, readers can check that similar properties hold for $\Phi$. The coefficients are
\begin{align*}
(\Phi^{-1})^{i}_{j}=&\,((I-\phi\cdot\bar\phi)^{-1})^{i}_{j},\\
(\Phi^{-1})^{i}_{\bar j}=&\,(-\phi\cdot(I^{\prime\prime}-\bar\phi\cdot\phi)^{-1})^{i}_{\bar j}
=(-(I^{\prime\prime}-\phi\cdot\bar\phi)^{-1}\cdot\phi)^{i}_{\bar j}
=\bigl(-(I^{\prime\prime}-\phi\cdot\bar\phi)^{-1}\bigr)^{i}_{k}\phi^{k}_{\bar j},\\
(\Phi^{-1})^{\bar i}_{j}=&\,(-\bar\phi\cdot(I-\phi\cdot\bar\phi)^{-1})^{\bar i}_{j}
=(-(I^{\prime\prime}-\bar\phi\cdot\phi)^{-1}\cdot\bar\phi)^{\bar i}_{j}
=(-(I^{\prime\prime}-\bar\phi\cdot\phi)^{-1})^{\bar i}_{\bar k}\bar\phi^{\bar k}_{j}
,\\
(\Phi^{-1})^{\bar i}_{\bar j}=&\,((I^{\prime\prime}-\bar\phi\cdot\phi)^{-1})^{\bar i}_{\bar j}.
\end{align*}
From the above arguments, one can see that $(I^{\prime}-\phi\cdot\bar\phi)^{-1}$, $-(I^{\prime}-\phi\cdot\bar\phi)^{-1}\cdot\phi$, $(I^{\prime\prime}-\bar\phi\cdot\phi)^{-1}\cdot\bar\phi$ and $(I^{\prime\prime}-\bar\phi\cdot\phi)^{-1}$ can also be seen as contractions.
By (\ref{neq1}) or (\ref{eq53}), we know that
\begin{equation*}
\Phi^{-1}\lrcorner\,\theta_{\phi}^{i}=(\Phi^{-1})^{i}_{\beta}\theta_{\phi}^{\beta}
=(\Phi^{-1})^{i}_{\beta}\Phi^{\beta}_{\alpha}\theta^{\alpha}=\theta^{i}.
\end{equation*}
Moreover, using the fact that contraction operators are point-wisely linear, we have
\begin{equation*}
\Phi^{-1}\lrcorner\,\theta^{i}=\Phi^{-1}\bigl((\Psi^{-1})^{i}_{\beta}\theta_{\phi}^{\beta}\bigr)
=(\Psi^{-1})^{i}_{\beta}\Phi^{-1}\bigl(\theta_{\phi}^{\beta}\bigr)
=(\Psi^{-1})^{i}_{\beta}\theta^{\beta}.
\end{equation*}
An interesting operator is $i_{(I^{\prime\prime}-\bar\phi\cdot\phi)^{-1}\cdot\bar\phi}\circ i_{(I^{\prime\prime}-\bar\phi\cdot\phi)}$. For any $\theta^{\bar i}\in A^{0,1}_{J}(M)$, we have
\begin{align}
&(I^{\prime\prime}-\bar\phi\cdot\phi)^{-1}\cdot\bar\phi\lrcorner\bigl((I^{\prime\prime}-\bar\phi\cdot\phi)\lrcorner\theta^{\bar i}\bigr)\nonumber\\
=&\,(I^{\prime\prime}-\bar\phi\cdot\phi)^{-1}\cdot\bar\phi\lrcorner\bigl((I-\bar\phi\cdot\phi)^{\bar i}_{\bar j}\theta^{\bar j}\bigr)\nonumber\\
=&\,(I^{\prime\prime}-\bar\phi\cdot\phi)^{\bar i}_{\bar j}(I^{\prime\prime}-\bar\phi\cdot\phi)^{-1}\cdot\bar\phi\lrcorner(\theta^{\bar j})\quad\textup{by the point-wisely linear property}\nonumber\\
=&\,(I^{\prime\prime}-\bar\phi\cdot\phi)^{\bar i}_{\bar j}((I^{\prime\prime}-\bar\phi\cdot\phi)^{-1}\cdot\bar\phi)^{\bar j}_{ k}\theta^{k}\nonumber\\
=&\,(I^{\prime\prime}-\bar\phi\cdot\phi)^{\bar i}_{\bar j}((I-\bar\phi\cdot\phi)^{-1})^{\bar j}_{ \bar\ell}\bar\phi^{\bar\ell}_{k}\theta^{k}\nonumber\\
=&\,\bar\phi^{\bar i}_{k}\theta^{k}\nonumber\\
=&\,\bar\phi\lrcorner\,\theta^{\bar i}.\nonumber
\end{align}
For the definition of ``$\lrcorner$", see (\ref{contraction1}) and (\ref{contraction3}). Note that here we have used the fact that $e^{i_{\phi}|i_{\bar\phi}}$ is a point-wisely linear map.

Let $(M,J,g)$ be an almost hermitian manifold, i.e., $J$ is compatible with $g$. If $J_{\phi}$ is also compatible with $g$, or equivalently if $J_{\phi}$ and $J$ belong to \textbf{the same compatible class}, then
\begin{align}
\bigl\langle \theta^{i}_{\phi},\theta^{\bar j}_{\phi}\bigr\rangle_{\mathbb{C}}
=&\,\bigl\langle \theta^{i}+\phi^{i}_{\bar k}\theta^{\bar k},
\theta^{\bar j}+\phi^{\bar j}_{\ell}\theta^{\ell}\bigr\rangle_{\mathbb{C}}\nonumber\\
=&\,\bigl\langle\theta^{i},\phi^{\bar j}_{\ell}\theta^{\ell}\bigr\rangle_{\mathbb{C}}
+\bigl\langle\phi^{i}_{\bar k}\theta^{\bar k},\theta^{\bar j}\bigr\rangle_{\mathbb{C}}\nonumber\\
=&\,\phi^{j}_{\bar\ell}\bigl\langle\theta^{i},\theta^{\ell}\bigr\rangle_{\mathbb{C}}
+\phi^{i}_{\bar k}\bigl\langle\theta^{\bar k},\theta^{\bar j}\bigr\rangle_{\mathbb{C}}\nonumber\\
=&\,\phi^{j}_{\bar\ell}\delta^{i\ell}+\phi^{i}_{\bar k}\delta^{kj}\nonumber\\
=&\,\phi^{j}_{\bar i}+\phi^{i}_{\bar j}\nonumber\\
=&\,0,\label{eq91}
\end{align}
where $\langle-,-\rangle_{\mathbb{C}}$ denotes the complex conjugate extension of $g$ over the complexified tangent bundle $T^{\mathbb{C}}M$. (\ref{eq91}) implies that $\phi^{j}_{\bar i}=-\phi^{i}_{\bar j}$, or equivalently
\begin{equation*}
\phi=-\phi^{T}.
\end{equation*}
Then we see that $\bar\phi=-\bar\phi^{T}=-\phi^{\ast}$ and thus
\begin{equation*}
\det(I^{\prime}-\phi\cdot\bar\phi)=\det(I^{\prime}+\phi\cdot\phi^{\ast}).
\end{equation*}
A matrix $A$ is hermitian if and only if there is a unitary $U$ and a real diagonal $\Lambda$ such that $A=U\Lambda U^{\ast}$. See for example \cite[Theorem 4.1.5, p.229]{horn1990matrix}.
Since $\phi\cdot\phi^{\ast}$ is hermitian, its eigenvalues are all real. Assume that $\phi\cdot\phi^{\ast}=U\Lambda U^{\ast}$, then
\begin{align*}
\det(I^{\prime}-\phi\cdot\bar\phi)
=&\,\det(I^{\prime}+\phi\cdot\phi^{\ast})\\
=&\,\det(I^{\prime}+U\Lambda U^{\ast})\\
=&\,\det U(I^{\prime}+\Lambda)U^{\ast}\\
=&\,\det (I^{\prime}+\Lambda).
\end{align*}
Now we give the following remark.
\begin{remark}
If $J$ and $J_{\phi}$ are both compatible with $g$, then $-1$ is not an eigenvalue of $\phi\cdot\phi^{\ast}$.
\end{remark}
For more detailed information of Beltrami differentials on complex manifolds, one may try \cite{kodaira1981complex,morrow2006complex}.
\section{Generalized Lie derivative}\label{sec1}
Let $M$ be a differentiable manifold. For $X,Y_{1},\ldots,Y_{r}\in\Gamma(M,TM)$, $\varphi\in A^{r+1}(M)$, the contraction operator $\lrcorner\,:A^{r+1}(M)\rightarrow A^{r}(M)$ is defined as
\begin{equation}\label{contraction3}
(X\lrcorner\,\varphi)(Y_{1},\ldots,Y_{r}):=\varphi(X,Y_{1},\ldots,Y_{r}).
\end{equation}
Usually, we also denote $X\lrcorner\,\varphi$ as $i_{X}\varphi$. For $\rho\in A^{p}(M,TM)$, the contraction operator can be extended to
\begin{equation*}
\rho\lrcorner\,:A^{r}(M)\rightarrow A^{r+p-1}(M).
\end{equation*}
Explicitly, for $\rho=\eta\otimes X$ with $\eta\in A^{p}(M)$,
\begin{equation}\label{contraction1}
\rho\lrcorner\,\varphi:=\eta\wedge(X\lrcorner\,\varphi).
\end{equation}
The contraction operator ``$\lrcorner$" is a (anti-)derivation of the exterior algebra $A^{\ast}(M)$, i.e.
\begin{equation*}
X\lrcorner(\varphi\wedge\psi)=X\lrcorner\varphi\wedge\psi+(-1)^{|\varphi|}\varphi\wedge X\lrcorner\psi,\quad
\varphi,\psi\in A^{\ast}(M).
\end{equation*}
Moreover, if $\tilde X:=\langle-,X\rangle\in T^{\ast}M$, the operator $X\lrcorner-$ is the adjoint map of $\tilde X\wedge-$, i.e.
\begin{equation*}
\langle X\lrcorner\,\varphi,\psi\rangle=\langle \varphi,\tilde X\wedge \psi\rangle,\quad \varphi,\psi\in A^{\ast}(M).
\end{equation*}
For a more general description of the derivation theory, one may consult an excellent book \cite{kolar1999natural}.
\begin{lemma}\label{commutator7}\cite[Lemma 3.1]{MR3302118}
For $\varphi\in A^{0,q}(M,T^{1,0}M)$, $\psi\in A^{0,s}(M,T^{1,0}M)$, we have
\begin{equation*}
i_{\varphi}\circ i_{\psi}=(-1)^{(q+1)(s+1)}i_{\psi}\circ i_{\varphi}.
\end{equation*}
\end{lemma}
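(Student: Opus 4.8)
The plan is to reduce to decomposable elements and then unwind the definitions while carefully tracking signs. By the bilinearity of the assignment $\varphi\mapsto i_{\varphi}$, it suffices to treat $\varphi=\rho\otimes X$ and $\psi=\tau\otimes Y$ with $\rho\in A^{0,q}(M)$, $\tau\in A^{0,s}(M)$ and $X,Y\in\Gamma(T^{1,0}M)$; the general statement then follows by summing over such terms. Recall from (\ref{contraction1}) that on a decomposable vector-valued form the contraction acts by $i_{\rho\otimes X}\sigma=\rho\wedge(X\lrcorner\,\sigma)$.

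First I would apply both composites to an arbitrary form $\sigma$ and expand. Directly from the definition,
\[
i_{\varphi}(i_{\psi}\sigma)=\rho\wedge\bigl(X\lrcorner(\tau\wedge(Y\lrcorner\,\sigma))\bigr).
\]
The crucial observation, and the point that makes the whole computation collapse, is that $X\lrcorner\,\tau=0$: since $X$ is of type $(1,0)$ while $\tau$ is a $(0,s)$-form built from the $\theta^{\bar j}$'s, each pairing $\theta^{\bar j}(X)$ vanishes. Invoking the anti-derivation property of $X\lrcorner$ recorded above, the summand containing $X\lrcorner\,\tau$ disappears and only the sign $(-1)^{|\tau|}=(-1)^{s}$ survives, giving
\[
i_{\varphi}(i_{\psi}\sigma)=(-1)^{s}\,\rho\wedge\tau\wedge(X\lrcorner\,Y\lrcorner\,\sigma).
\]
Running the identical computation with the roles of $\varphi$ and $\psi$ interchanged yields $i_{\psi}(i_{\varphi}\sigma)=(-1)^{q}\,\tau\wedge\rho\wedge(Y\lrcorner\,X\lrcorner\,\sigma)$.

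The remaining step is pure sign bookkeeping, which is precisely where the exponent $(q+1)(s+1)$ emerges. I would use three elementary facts: that contraction by ordinary vector fields anti-commutes, $X\lrcorner\,Y\lrcorner\,\sigma=-\,Y\lrcorner\,X\lrcorner\,\sigma$; that $\rho,\tau$ have degrees $q,s$, so $\rho\wedge\tau=(-1)^{qs}\,\tau\wedge\rho$; and that $(-1)^{-q}=(-1)^{q}$. Substituting these into the expression for $i_{\varphi}\circ i_{\psi}$ rewrites it as $(-1)^{s+qs+1+q}$ times $i_{\psi}\circ i_{\varphi}$, and since $s+qs+1+q=(q+1)(s+1)$ this is exactly the asserted relation. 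The only genuine subtlety is the disciplined accounting of these signs; the geometric content is carried entirely by the vanishing $X\lrcorner\,\tau=0$, which decouples the form factors from the vector factors and allows the two composites to be compared term by term.
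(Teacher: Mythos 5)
Your proof is correct. The paper itself gives no proof of Lemma \ref{commutator7} — it is quoted directly from \cite[Lemma 3.1]{MR3302118} — and your argument (reduction to decomposable $\varphi=\rho\otimes X$, $\psi=\tau\otimes Y$; the type-forced vanishing $X\lrcorner\,\tau=0$ that kills the cross term in the anti-derivation expansion; anti-commutation of the vector contractions; and the sign count $s+qs+1+q=(q+1)(s+1)$) is exactly the standard computation by which that cited lemma is proved, with all signs handled correctly.
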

Generally, for any $\varphi\in A^{k}(M,TM)$, the contraction is defined as
\begin{align}\label{contraction2}
&\varphi\lrcorner\,\omega(X_{1},\ldots,X_{|\varphi|+|\omega|+1})=\frac{1}{|\varphi|!(|\omega|-1)!}\bullet\\
&\sum_{\sigma\in\mathscr{S}(|\varphi|+|\omega|-1)}\mathrm{sign}\sigma\cdot
\omega(\varphi(X_{\sigma(1)},\ldots,X_{\sigma(|\varphi|)}),X_{\sigma(|\varphi|+1)},\ldots,X_{\sigma(|\varphi|+|\omega|-1)})
.\nonumber
\end{align}
Note that (\ref{contraction1}) and (\ref{contraction2}) are actually the same.

Let $M$ be a differentiable manifold, $E\rightarrow M$ be a vector bundle and $\nabla$ be a connection of $E$. The generalized definition of the ordinary Lie derivative is
\begin{equation}\label{eq301}
\mathcal{L}^{\nabla}_{\rho}:=[i_{\rho},\nabla]=i_{\rho}\circ\nabla-(-1)^{|i_{\rho}|\cdot|\nabla|}\nabla\circ i_{\rho}=i_{\rho}\circ\nabla+(-1)^{|\rho|}\nabla\circ i_{\rho},
\end{equation}
where $\rho\in A^{\ast}(M,TM)$.

Liu-Rao \cite{MR2904916} and Liu-Rao-Yang \cite{MR3302118} got some results of the generalized Lie derivative for holomorphic vector bundles over complex manifolds, with a hermitian metric and the Chern connection.
Following \cite[p.70, Lemma 8.6]{kolar1999natural}, we can prove the following identity. One can also find another proof in \cite[Lemma 3.9]{MR923350}.
\begin{lemma}\label{lemma7}
Assume that $\varphi\in A^{k}(M,TM)$ and $\psi\in A^{\ell+1}(M,TM)$. For any $\alpha\in A^{p,q}(M,E)$, we have
\begin{equation*}
[\mathcal{L}^{\nabla}_{\varphi},i_{\psi}]
=i_{[\varphi,\psi]}-(-1)^{|\varphi|(|\psi|-1)}\mathcal{L}^{\nabla}_{i_{\psi}\varphi}.
\end{equation*}
\end{lemma}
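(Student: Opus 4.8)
The plan is to work inside the graded Lie algebra of $\mathbb{C}$-linear endomorphisms of $A^{\ast,\ast}(M,E)$ under the graded commutator, where $i_{\varphi}$ has operator degree $|\varphi|-1$, the connection $\nabla$ has degree $1$, and, by (\ref{eq301}), the generalized Lie derivative is itself the graded commutator $\mathcal{L}^{\nabla}_{\varphi}=[i_{\varphi},\nabla]$, of degree $|\varphi|$. Writing $k=|\varphi|$ and $\ell=|\psi|-1$, all three terms of the asserted identity have operator degree $k+\ell$, and the sign $(-1)^{k\ell}$ is exactly the Koszul sign produced when $i_{\psi}$ (degree $\ell$) is moved past an object of degree $k$. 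I would first settle the model case $\nabla=d$ on the trivial bundle and then account for the connection; the point is that $\nabla$ enters the identity affinely, so replacing $d$ by $\nabla$ can only change things by a tensorial (algebraic) term, which I will show cancels.

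For the model case $\nabla=d$, the identity is precisely the Frölicher--Nijenhuis relation of \cite[p.70, Lemma 8.6]{kolar1999natural}, which I would invoke. If one prefers to re-derive it, one expands $[\mathcal{L}^{d}_{\varphi},i_{\psi}]=[[i_{\varphi},d],i_{\psi}]$ by the graded Jacobi identity, uses that $[i_{\varphi},i_{\psi}]$ is again an algebraic derivation, and identifies the Lie-type part with $\mathcal{L}^{d}_{i_{\psi}\varphi}$ and the algebraic part with $i_{[\varphi,\psi]}$, the latter being the defining property of the Frölicher--Nijenhuis bracket through $\mathcal{L}^{d}_{[\varphi,\psi]}=[\mathcal{L}^{d}_{\varphi},\mathcal{L}^{d}_{\psi}]$. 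Since $d$ acts componentwise in any local frame of $E$, the scalar identity extends verbatim to $A^{\ast,\ast}(M,E)$.

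To pass from $d$ to $\nabla$, the key observation is that the difference of two generalized Lie derivatives is tensorial: if $A=\nabla-\nabla'\in A^{1}(M,\mathrm{End}\,E)$ and $\varepsilon_{B}$ denotes left exterior multiplication by a form $B$, then a one-line computation with the derivation property of $i_{\varphi}$ applied to $A\wedge(-)$ gives $\mathcal{L}^{\nabla}_{\varphi}-\mathcal{L}^{\nabla'}_{\varphi}=[i_{\varphi},\varepsilon_{A}]=\varepsilon_{i_{\varphi}A}$. Taking $\nabla'$ to be a local flat connection (so $\mathcal{L}^{\nabla'}=\mathcal{L}^{d}$ in a frame) and noting that $i_{[\varphi,\psi]}$ does not involve the connection, the $\nabla$-identity reduces, after subtracting the $d$-identity already established, to the purely algebraic statement $[\varepsilon_{i_{\varphi}A},i_{\psi}]=-(-1)^{k\ell}\varepsilon_{i_{i_{\psi}\varphi}A}$. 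Computing the left-hand side once more from the derivation property of $i_{\psi}$ yields $-(-1)^{k\ell}\varepsilon_{i_{\psi}i_{\varphi}A}$, so the whole matter comes down to the contraction identity $i_{\psi}i_{\varphi}A=i_{i_{\psi}\varphi}A$ for the $1$-form $A$. This is the associativity of contraction into a $1$-form and is verified at once on decomposables $\varphi=\alpha\otimes X$, $\psi=\beta\otimes Y$, where both sides equal $a(X)\,\beta\wedge(Y\lrcorner\alpha)$ up to the $\mathrm{End}\,E$-coefficient.

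The main obstacle is not any single computation but the sign bookkeeping in the graded commutators, together with the conceptual point to be verified carefully: because each side is first order in $\nabla$, no curvature term can appear here, in sharp contrast to the companion identity $[\mathcal{L}^{\nabla}_{\varphi},\mathcal{L}^{\nabla}_{\psi}]=\mathcal{L}^{\nabla}_{[\varphi,\psi]}$, which is quadratic in $\nabla$ and does pick up the curvature of $E$. An alternative, more pedestrian route would bypass the $d$-case and evaluate both sides directly on an $E$-valued form after reducing to decomposable $\varphi=\alpha\otimes X$ and $\psi=\beta\otimes Y$ and substituting the explicit Frölicher--Nijenhuis bracket of the shape appearing in Lemma \ref{lemma17}; this works but multiplies the number of sign-laden terms, so I would prefer the commutator argument above.
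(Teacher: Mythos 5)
Your proof is correct, but it takes a genuinely different route from the paper's. The paper argues by direct computation on decomposables for an arbitrary connection from the outset: it establishes $\mathcal{L}^{\nabla}_{\rho\otimes X}\alpha=\rho\wedge\mathcal{L}^{\nabla}_{X}\alpha+(-1)^{|\rho|}d\rho\wedge X\lrcorner\,\alpha$ in (\ref{liederivative2}), expands $\mathcal{L}^{\nabla}_{\varphi}(i_{\psi}\alpha)$ and $i_{\psi}(\mathcal{L}^{\nabla}_{\varphi}\alpha)$ term by term in (\ref{eq4})--(\ref{eq6}), writes out $i_{[\varphi,\psi]}$ from the explicit Fr\"olicher--Nijenhuis formula (\ref{fnbracket}) and $\mathcal{L}^{\nabla}_{i_{\psi}\varphi}$ in (\ref{eq7})--(\ref{eq8}), and matches all resulting terms. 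You instead treat $\nabla$ as an affine perturbation of the flat case: you quote the scalar relation $[\mathcal{L}_{K},i_{L}]=i_{[K,L]}-(-1)^{k\ell}\mathcal{L}_{i_{L}K}$ for $d$ from \cite{kolar1999natural}, extend it componentwise in a local frame of $E$, and absorb the connection through the tensorial correction $\mathcal{L}^{\nabla}_{\varphi}-\mathcal{L}^{d}_{\varphi}=[i_{\varphi},\varepsilon_{A}]=\varepsilon_{i_{\varphi}A}$, where $\varepsilon$ denotes left exterior multiplication and $A$ the local connection form. I checked the two algebraic steps this reduction rests on, and both are right: with $\deg\varepsilon_{i_{\varphi}A}=k$ and $\deg i_{\psi}=\ell$, the derivation property of $i_{\psi}$ gives $[\varepsilon_{i_{\varphi}A},i_{\psi}]=-(-1)^{k\ell}\varepsilon_{i_{\psi}i_{\varphi}A}$, and the identity $i_{\psi}i_{\varphi}A=i_{i_{\psi}\varphi}A$ holds precisely because $A$ has form degree one (on decomposables both sides reduce to the same multiple of $\tau\wedge(Y\lrcorner\,\rho)$, up to the $\mathrm{End}\,E$-coefficient). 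Your route buys brevity, far lighter sign bookkeeping, and a conceptual explanation of why no curvature term can appear (both sides are affine in $\nabla$); its cost is reliance on the scalar Fr\"olicher--Nijenhuis relation as a black box, whereas the paper's longhand computation is self-contained and, as a by-product, produces the explicit expansions --- notably (\ref{liederivative2}) and (\ref{fnbracket}) --- that the paper immediately reuses to deduce Lemma \ref{lemma17} and Corollary \ref{corollary2}. Two cosmetic remarks only: your decomposables $\varphi=\alpha\otimes X$ recycle the letter $\alpha$ already reserved for the $E$-valued form, and since the two sides of the identity are globally defined operators, your frame-by-frame verification is indeed legitimate, but that observation deserves one explicit sentence.
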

\begin{proof}
For simplicity and without loss of generality, we may choose $\varphi=\rho\otimes X$ and $\psi=\tau\otimes Y$. For any $\alpha\in A^{p,q}(M,E)$,
the operator $i_{\varphi}$ is defined as
\begin{equation}\label{eq3}
i_{\varphi}\alpha=\rho\wedge(X\lrcorner\alpha).
\end{equation}
Note that
\begin{equation}\label{eq1}
\nabla(i_{\varphi}\alpha)
=\nabla(\rho\wedge X\lrcorner\alpha)
=d\rho\wedge (X\lrcorner\alpha)+(-1)^{\rho}\rho\wedge\nabla(X\lrcorner\alpha),
\end{equation}
and
\begin{equation}\label{eq2}
i_{\varphi}\nabla\alpha=\rho\wedge(X\lrcorner\nabla\alpha).
\end{equation}
By (\ref{eq1}) and (\ref{eq2}), we get
\begin{align}\label{liederivative2}
\mathcal{L}^{\nabla}_{\varphi}\alpha
=&\,i_{\varphi}\nabla\alpha+(-1)^{|\rho|}\nabla(i_{\varphi}\alpha)\nonumber\\
=&\,\rho\wedge X\lrcorner\nabla\alpha+(-1)^{\rho}(d\rho\wedge X\lrcorner\alpha+(-1)^{\rho}\rho\wedge\nabla(X\lrcorner\alpha))\nonumber\\
=&\,\rho\wedge X\lrcorner\nabla\alpha+(-1)^{\rho}d\rho\wedge X\lrcorner\alpha+\rho\wedge\nabla(X\lrcorner\alpha)\nonumber\\
=&\,\rho\wedge(X\lrcorner\nabla\alpha+\nabla(X\lrcorner\alpha))+(-1)^{\rho}d\rho\wedge X\lrcorner\alpha\nonumber\\
=&\,\rho\wedge\mathcal{L}^{\nabla}_{X}\alpha+(-1)^{\rho}d\rho\wedge X\lrcorner\alpha.
\end{align}
Using (\ref{eq3}) and (\ref{liederivative2}) we get
\begin{align}\label{eq4}
&\mathcal{L}^{\nabla}_{\varphi}(i_{\psi}\alpha)\nonumber\\
=&\,\mathcal{L}^{\nabla}_{\varphi}(\tau\wedge(Y\lrcorner\alpha))\nonumber\\
=&\,\mathcal{L}^{d}_{\varphi}\tau\wedge(Y\lrcorner\alpha)
+(-1)^{\tau\rho}\tau\wedge\mathcal{L}^{\nabla}_{\varphi}(Y\lrcorner\alpha)\nonumber\\
=&\,(\rho\wedge\mathcal{L}^{d}_{X}\tau+(-1)^{\rho}d\rho\wedge X\lrcorner\tau)\wedge(Y\lrcorner\alpha)
+(-1)^{\tau\rho}\tau\wedge\bigl(\rho\wedge\mathcal{L}^{\nabla}_{X}(Y\lrcorner\alpha)
+(-1)^{\rho}d\rho\wedge X\lrcorner(Y\lrcorner\alpha)\bigr)\nonumber\\
=&\,\rho\wedge\mathcal{L}^{d}_{X}\tau\wedge(Y\lrcorner\alpha)
+(-1)^{\rho}d\rho\wedge(X\lrcorner\tau)\wedge(Y\lrcorner\alpha)
+(-1)^{\tau\rho}\tau\wedge\rho\wedge\mathcal{L}^{\nabla}_{X}(Y\lrcorner\alpha)
+(-1)^{\rho+\tau\rho}\tau\wedge d\rho\wedge(X\lrcorner Y\lrcorner\alpha)\nonumber\\
=&\,\rho\wedge\mathcal{L}^{d}_{X}\tau\wedge(Y\lrcorner\alpha)
+(-1)^{\rho}d\rho\wedge(X\lrcorner\tau)\wedge(Y\lrcorner\alpha)
+\rho\wedge\tau\wedge\mathcal{L}^{\nabla}_{X}(Y\lrcorner\alpha)
+(-1)^{\rho+\tau}d\rho\wedge\tau\wedge(X\lrcorner Y\lrcorner\alpha),
\end{align}
and
\begin{align}\label{eq5}
&i_{\psi}(\mathcal{L}^{\nabla}_{\varphi}\alpha)\nonumber\\
=&\tau\wedge Y\lrcorner(\mathcal{L}^{\nabla}_{\varphi}\alpha)\nonumber\\
=&\tau\wedge Y\lrcorner(\rho\wedge\mathcal{L}^{\nabla}_{X}\alpha+(-1)^{\rho}d\rho\wedge X\lrcorner\alpha)\nonumber\\
=&\tau\wedge Y\lrcorner(\rho\wedge\mathcal{L}^{\nabla}_{X}\alpha)+(-1)^{\rho}\tau\wedge Y\lrcorner(d\rho\wedge X\lrcorner\alpha)\nonumber\\
=&\tau\wedge(Y\lrcorner\rho)\wedge\mathcal{L}^{\nabla}_{X}\alpha
+(-1)^{\rho}\tau\wedge\rho\wedge(Y\lrcorner\mathcal{L}^{\nabla}_{X}\alpha)
+(-1)^{\rho}\tau\wedge(Y\lrcorner d\rho)\wedge(X\lrcorner\alpha)
-\tau\wedge d\rho\wedge(Y\lrcorner X\lrcorner\alpha).
\end{align}
Using (\ref{eq4}) and (\ref{eq5}), we get the commutator of $\mathcal{L}^{\nabla}_{\varphi}$ and $i_{\psi}$,
\begin{small}
\begin{align}\label{eq6}
[\mathcal{L}^{\nabla}_{\varphi},i_{\psi}]\alpha
=&\mathcal{L}^{\nabla}_{\varphi}(i_{\psi}\alpha)
-(-1)^{|\mathcal{L}^{\nabla}_{\varphi}|\cdot|i_{\psi}|}i_{\psi}(\mathcal{L}^{\nabla}_{\varphi}\alpha)\nonumber\\
=&\mathcal{L}^{\nabla}_{\varphi}(i_{\psi}\alpha)
-(-1)^{|\varphi|\cdot(|\psi|-1)}i_{\psi}(\mathcal{L}^{\nabla}_{\varphi}\alpha)\nonumber\\
=&\rho\wedge\mathcal{L}^{d}_{X}\tau\wedge(Y\lrcorner\alpha)
+(-1)^{\rho}d\rho\wedge(X\lrcorner\tau)\wedge(Y\lrcorner\alpha)
+(-1)^{\tau\rho}\tau\wedge\rho\wedge\mathcal{L}^{\nabla}_{X}(Y\lrcorner\alpha)
+(-1)^{\rho+\tau\rho}\tau\wedge d\rho\wedge(X\lrcorner Y\lrcorner\alpha)\nonumber\\
&-(-1)^{\rho(\tau-1)}\tau\wedge(Y\lrcorner\rho)\wedge\mathcal{L}^{\nabla}_{X}\alpha
-(-1)^{\rho(\tau-1)}(-1)^{\rho}\tau\wedge\rho\wedge(Y\lrcorner\mathcal{L}^{\nabla}_{X}\alpha)\nonumber\\
&-(-1)^{\rho(\tau-1)}(-1)^{\rho}\tau\wedge(Y\lrcorner d\rho)\wedge(X\lrcorner\alpha)
+(-1)^{\rho(\tau-1)}\tau\wedge d\rho\wedge(Y\lrcorner X\lrcorner\alpha)\nonumber\\
=&\rho\wedge\mathcal{L}^{d}_{X}\tau\wedge(Y\lrcorner\alpha)
+(-1)^{\rho}d\rho\wedge(X\lrcorner\tau)\wedge(Y\lrcorner\alpha)
+\rho\wedge\tau\wedge\mathcal{L}^{\nabla}_{X}(Y\lrcorner\alpha)
+(-1)^{\rho+\tau}d\rho\wedge\tau\wedge(X\lrcorner Y\lrcorner\alpha)\nonumber\\
&-(-1)^{\rho+\tau}(Y\lrcorner\rho)\wedge\tau\wedge\mathcal{L}^{\nabla}_{X}\alpha
-\rho\wedge\tau\wedge(Y\lrcorner\mathcal{L}^{\nabla}_{X}\alpha)
-(Y\lrcorner d\rho)\wedge\tau\wedge(X\lrcorner\alpha)
+(-1)^{\rho+\tau}d\rho\wedge\tau\wedge(Y\lrcorner X\lrcorner\alpha)\nonumber\\
=&\rho\wedge\mathcal{L}^{d}_{X}\tau\wedge(Y\lrcorner\alpha)
+(-1)^{\rho}d\rho\wedge(X\lrcorner\tau)\wedge(Y\lrcorner\alpha)
-(-1)^{\rho+\tau}(Y\lrcorner\rho)\wedge\tau\wedge\mathcal{L}^{\nabla}_{X}\alpha
-(Y\lrcorner d\rho)\wedge\tau\wedge(X\lrcorner\alpha)\nonumber\\
&+\rho\wedge\tau\wedge\mathcal{L}^{\nabla}_{X}(Y\lrcorner\alpha)
-\rho\wedge\tau\wedge(Y\lrcorner\mathcal{L}^{\nabla}_{X}\alpha)\nonumber\\
=&\rho\wedge\mathcal{L}_{X}\tau\wedge(Y\lrcorner\alpha)
+(-1)^{\rho}d\rho\wedge(X\lrcorner\tau)\wedge(Y\lrcorner\alpha)
-(-1)^{\rho+\tau}(Y\lrcorner\rho)\wedge\tau\wedge\mathcal{L}^{\nabla}_{X}\alpha\nonumber\\
&-(Y\lrcorner d\rho)\wedge\tau\wedge(X\lrcorner\alpha)
+\rho\wedge\tau\wedge[X,Y]\lrcorner\alpha.
\end{align}
\end{small}
The (Fr\"olicher-Nijenhuis) bracket of $\varphi=\rho\otimes X\in A^{\ast}(M,TM)$ and $\psi=\tau\otimes Y\in A^{\ast}(M,TM)$ is defined as (\cite[p.70, Equation (6)]{kolar1999natural})
\begin{align}\label{fnbracket}
&[\varphi,\psi]\nonumber\\
=&[\rho\otimes X,\tau\otimes Y]\nonumber\\
=&\rho\wedge\tau\otimes[X,Y]+\rho\wedge\mathcal{L}_{X}\tau\otimes Y-\mathcal{L}_{Y}\rho\wedge\tau\otimes X
+(-1)^{|\rho|}d\rho\wedge(X\lrcorner\tau)\otimes Y+(-1)^{|\rho|}Y\lrcorner\rho\wedge d\tau\otimes X.
\end{align}
As a contraction operator, $i_{[\varphi,\psi]}$ is
\begin{small}
\begin{align}\label{eq7}
&i_{[\varphi,\psi]}\alpha\nonumber\\
=&\,(\rho\wedge\tau\otimes[X,Y]+\rho\wedge\mathcal{L}_{X}\tau\otimes Y-\mathcal{L}_{Y}\rho\wedge\tau\otimes X
+(-1)^{|\rho|}d\rho\wedge(X\lrcorner\tau)\otimes Y
+(-1)^{|\rho|}Y\lrcorner\rho\wedge d\tau\otimes X)\lrcorner\alpha\nonumber\\
=&\,\rho\wedge\tau\wedge[X,Y]\lrcorner\alpha
+\rho\wedge\mathcal{L}_{X}\tau\wedge Y\lrcorner\alpha
-\mathcal{L}_{Y}\rho\wedge\tau\wedge X\lrcorner\alpha
+(-1)^{|\rho|}d\rho\wedge(X\lrcorner\tau)\wedge Y\lrcorner\alpha
+(-1)^{|\rho|}Y\lrcorner\rho\wedge d\tau\wedge X\lrcorner\alpha.
\end{align}
\end{small}
Again, using (\ref{liederivative2}), we have
\begin{align}\label{eq8}
\mathcal{L}^{\nabla}_{i_{\psi}\varphi}\alpha
=&\,[i_{\psi}\varphi,\nabla]\alpha=[\tau\wedge (Y\lrcorner\rho)\otimes X,\nabla]\alpha\nonumber\\
=&\,[\tau\wedge(Y\lrcorner\rho)\otimes X\lrcorner,\nabla]\alpha\nonumber\\
=&\,\tau\wedge(Y\lrcorner\rho)\wedge (X\lrcorner\nabla\alpha)+(-1)^{\tau+\rho-1}\nabla(\tau\wedge(Y\lrcorner\rho)\wedge X\lrcorner\alpha)\nonumber\\
=&\,\tau\wedge(Y\lrcorner\rho)\wedge (X\lrcorner\nabla\alpha)+(-1)^{\tau+\rho-1}d(\tau\wedge(Y\lrcorner\rho))\wedge( X\lrcorner\alpha)
+\tau\wedge(Y\lrcorner\rho)\wedge\nabla(X\lrcorner\alpha)\nonumber\\
=&\,\tau\wedge(Y\lrcorner\rho)\wedge (X\lrcorner\nabla\alpha)
+(-1)^{\tau+\rho-1}d\tau\wedge(Y\lrcorner\rho)\wedge( X\lrcorner\alpha)\nonumber\\
&+(-1)^{\rho-1}\tau\wedge d(Y\lrcorner\rho)\wedge( X\lrcorner\alpha)
+\tau\wedge(Y\lrcorner\rho)\wedge\nabla(X\lrcorner\alpha)\nonumber\\
=&\,\tau\wedge(Y\lrcorner\rho)\wedge\mathcal{L}^{\nabla}_{X}\alpha
+(-1)^{\tau+\rho-1}d\tau\wedge(Y\lrcorner\rho)\wedge( X\lrcorner\alpha)
+(-1)^{\rho-1}\tau\wedge d(Y\lrcorner\rho)\wedge( X\lrcorner\alpha).
\end{align}
Comparing the items of (\ref{eq6}) (\ref{eq7}) and (\ref{eq8}), we have proved the lemma.
\end{proof}
Noting (\ref{fnbracket}), we have the following lemma.
\begin{lemma}\label{lemma17}
If $\varphi=\rho\otimes X,\psi=\tau\otimes Y\in A^{0,1}(M,T^{1,0}M)$, then
\begin{equation*}
[\varphi,\psi]=
\rho\wedge\tau\otimes[X,Y]+\rho\wedge\mathcal{L}_{X}\tau\otimes Y-\mathcal{L}_{Y}\rho\wedge\tau\otimes X,
\end{equation*}
and
\begin{equation*}
[\varphi,\psi]\in A^{0,2}(M,T^{1,0}M)\oplus A^{1,1}(M,T^{1,0}M)\oplus A^{0,2}(M,T^{0,1}M).
\end{equation*}
\end{lemma}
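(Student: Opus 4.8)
The plan is to read off the result from the general Frölicher--Nijenhuis bracket formula (\ref{fnbracket}) by exploiting the special bidegrees of the four factors. Writing $\varphi=\rho\otimes X$ and $\psi=\tau\otimes Y$ with $\rho,\tau\in A^{0,1}(M)$ and $X,Y\in\Gamma(M,T^{1,0}M)$, formula (\ref{fnbracket}) specializes to
\begin{equation*}
[\varphi,\psi]=\rho\wedge\tau\otimes[X,Y]+\rho\wedge\mathcal{L}_{X}\tau\otimes Y-\mathcal{L}_{Y}\rho\wedge\tau\otimes X+(-1)^{|\rho|}d\rho\wedge(X\lrcorner\tau)\otimes Y+(-1)^{|\rho|}Y\lrcorner\rho\wedge d\tau\otimes X.
\end{equation*}
The key observation is that a $(1,0)$-vector contracted against a $(0,1)$-form vanishes; hence $X\lrcorner\tau=0$ and $Y\lrcorner\rho=0$, which kills the last two terms outright and yields the first displayed identity of the lemma.

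For the bidegree statement I would examine the three surviving terms separately, using the type decomposition $d=\mu+\partial+\bar\partial+\bar\mu$ of (\ref{eq67}). For the first term, $\rho\wedge\tau\in A^{0,2}(M)$ since $\rho,\tau\in A^{0,1}(M)$; but on an almost complex manifold the bracket of two $(1,0)$-fields need not be of type $(1,0)$, so one writes $[X,Y]=[X,Y]^{1,0}+[X,Y]^{0,1}$, and this term therefore contributes to $A^{0,2}(M,T^{1,0}M)\oplus A^{0,2}(M,T^{0,1}M)$.

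The main work lies in the two middle terms, where I must pin down the type of $\mathcal{L}_{X}\tau$ (and symmetrically $\mathcal{L}_{Y}\rho$). By Cartan's formula $\mathcal{L}_{X}\tau=d(X\lrcorner\tau)+X\lrcorner d\tau=X\lrcorner d\tau$, using $X\lrcorner\tau=0$ once more. Applying $d=\mu+\partial+\bar\partial+\bar\mu$ to $\tau\in A^{0,1}(M)$ gives $d\tau\in A^{2,0}(M)\oplus A^{1,1}(M)\oplus A^{0,2}(M)$, the $\bar\mu$-part landing in $A^{-1,3}(M)=0$; contracting with the $(1,0)$-vector $X$ then annihilates the $A^{0,2}$-summand and lowers the holomorphic degree by one on the other two, so $\mathcal{L}_{X}\tau\in A^{1,0}(M)\oplus A^{0,1}(M)$. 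Wedging with $\rho\in A^{0,1}(M)$ places $\rho\wedge\mathcal{L}_{X}\tau$ in $A^{1,1}(M)\oplus A^{0,2}(M)$, and tensoring with $Y\in T^{1,0}M$ contributes to $A^{1,1}(M,T^{1,0}M)\oplus A^{0,2}(M,T^{1,0}M)$; the third term is handled identically. Collecting the three contributions yields exactly the claimed sum. The point to handle carefully---and where non-integrability genuinely enters---is that the $\mu$-part of $d\tau$ and the $(0,1)$-part of $[X,Y]$, both governed by the Nijenhuis tensor, are precisely what produce the $A^{1,1}(M,T^{1,0}M)$ and $A^{0,2}(M,T^{0,1}M)$ summands that are absent in the integrable case.
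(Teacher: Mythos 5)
Your proof is correct and takes essentially the same route as the paper: specialize the Fr\"olicher--Nijenhuis formula (\ref{fnbracket}), observe that the last two terms die because a $(1,0)$-field contracts to zero against a $(0,1)$-form, and then read off the types from $\mathcal{L}_{X}\tau=X\lrcorner\, d\tau=X\lrcorner\,\mu\tau+X\lrcorner\,\partial\tau$ together with $[X,Y]=[X,Y]^{1,0}+[X,Y]^{0,1}$. The paper's argument (stated tersely as ``Noting (\ref{fnbracket})'' followed by the displayed type decomposition into $\mathscr{A}$, $\mathscr{B}$, $\mathscr{C}$) is precisely this, so your write-up is just a more explicit version of it.
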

By comparing types, we have a decomposition of $[\varphi,\psi]$ as
\begin{align*}
[\varphi,\psi]
=&\,\rho\wedge\tau\otimes[X,Y]
+\rho\wedge(X\lrcorner\mu\tau+X\lrcorner\partial\tau)\otimes Y
-(Y\lrcorner\mu\rho+Y\lrcorner\partial\rho)\wedge\tau\otimes X&\\
=&\,\rho\wedge\tau\otimes[X,Y]^{1,0}
+\rho\wedge(X\lrcorner\partial\tau)\otimes Y
-(Y\lrcorner\partial\rho)\wedge\tau\otimes X&\in A^{0,2}_{J}(M,T^{1,0}M)\\
&+\rho\wedge(X\lrcorner\mu\tau)\otimes Y
-(Y\lrcorner\mu\rho)\wedge\tau\otimes X &\in A^{1,1}_{J}(M,T^{1,0}M)\\
&+\rho\wedge\tau\otimes[X,Y]^{0,1}.&\in A^{0,2}_{J}(M,T^{0,1}M)
\end{align*}
We denote the decomposition of $[\varphi,\psi]$ as
\begin{align*}
\mathscr{A}(\varphi,\psi)&=\rho\wedge\tau\otimes[X,Y]^{1,0}+\rho\wedge(X\lrcorner\partial\tau)\otimes Y-(Y\lrcorner\partial\rho)\wedge\tau\otimes X,\\
\mathscr{B}(\varphi,\psi)&=\rho\wedge(X\lrcorner\mu\tau)\otimes Y-(Y\lrcorner\mu\rho)\wedge\tau\otimes X,\\
\mathscr{C}(\varphi,\psi)&=\rho\wedge\tau\otimes[X,Y]^{0,1}.
\end{align*}
Without loss of generality, from now on, we will denote the decomposition of $[\varphi,\psi]$ as $\mathscr{A}(\varphi,\psi)$, $\mathscr{B}(\varphi,\psi)$ and $\mathscr{C}(\varphi,\psi)$ for general $\varphi,\psi\in A^{0,1}_{J}(M,T^{1,0}M)$.

As a special case of Lemma \ref{lemma7}, we get the following corollary.
\begin{corollary}\label{corollary2}
If $\varphi,\psi\in A^{0,1}_{J}(M,T^{1,0}M)$, then
\begin{equation*}
[\mathcal{L}^{\nabla}_{\varphi},i_{\psi}]=i_{[\varphi,\psi]}.
\end{equation*}
\end{corollary}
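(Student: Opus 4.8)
The plan is to obtain the statement as a direct specialization of Lemma~\ref{lemma7}, after checking that the extra term on its right-hand side vanishes for type reasons. Since any $\varphi,\psi\in A^{0,1}_{J}(M,T^{1,0}M)$ are, locally, sums of decomposable elements $\rho\otimes X$ with $\rho\in A^{0,1}_{J}(M)$ and $X\in\Gamma(M,T^{1,0}M)$, and since both sides of the claimed identity are bilinear in $(\varphi,\psi)$, I may assume $\varphi=\rho\otimes X$ and $\psi=\tau\otimes Y$ are decomposable.

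First I would record the degrees. An element of $A^{0,1}_{J}(M,T^{1,0}M)$ has form-degree $1$, so in the notation of Lemma~\ref{lemma7} we have $|\varphi|=1$ and $\psi\in A^{\ell+1}(M,TM)$ with $\ell+1=1$, i.e. $\ell=0$ and $|\psi|=1$. Substituting into
\begin{equation*}
[\mathcal{L}^{\nabla}_{\varphi},i_{\psi}]
=i_{[\varphi,\psi]}-(-1)^{|\varphi|(|\psi|-1)}\mathcal{L}^{\nabla}_{i_{\psi}\varphi},
\end{equation*}
the sign becomes $(-1)^{|\varphi|(|\psi|-1)}=(-1)^{1\cdot 0}=1$, so it remains only to show that the correction term $\mathcal{L}^{\nabla}_{i_{\psi}\varphi}$ vanishes.

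The key—and essentially the only—point is that $i_{\psi}\varphi=0$. Indeed, by the definition of the contraction on vector-valued forms (as used in the derivation of (\ref{eq8})) one has $i_{\psi}\varphi=\tau\wedge(Y\lrcorner\rho)\otimes X$, and since $\rho\in A^{0,1}_{J}(M)$ is a $(0,1)$-form while $Y\in T^{1,0}_{J}M$, the scalar $Y\lrcorner\rho=\rho(Y)$ vanishes because a $(0,1)$-form annihilates $(1,0)$-vectors. Hence $i_{\psi}\varphi=0$, so $\mathcal{L}^{\nabla}_{i_{\psi}\varphi}=0$, and the displayed identity collapses to $[\mathcal{L}^{\nabla}_{\varphi},i_{\psi}]=i_{[\varphi,\psi]}$, as desired. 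This is precisely the same type vanishing that trims the Fr\"olicher--Nijenhuis bracket~(\ref{fnbracket}) down to the three surviving terms recorded in Lemma~\ref{lemma17}. There is no genuine obstacle beyond spotting that the sign is trivial and that the $i_{\psi}\varphi$ term drops out; all the analytic content has already been carried out in Lemma~\ref{lemma7}.
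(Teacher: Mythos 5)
Your proposal is correct and takes exactly the paper's route: the paper obtains Corollary \ref{corollary2} simply as a special case of Lemma \ref{lemma7}, and your two checks — that the sign $(-1)^{|\varphi|(|\psi|-1)}=(-1)^{1\cdot 0}=1$ and that $i_{\psi}\varphi=\tau\wedge(Y\lrcorner\,\rho)\otimes X=0$ because the $(0,1)$-form $\rho$ annihilates the $(1,0)$-vector $Y$ — are precisely the details the paper leaves implicit.
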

Corollary \ref{corollary2} implies that for $\varphi,\psi\in A^{0,1}_{J}(M,T^{1,0}M)$,
\begin{align}\label{eq52}
i_{[\varphi,\psi]}\alpha
=&\,\mathcal{L}^{\nabla}_{\varphi}(i_{\psi}\alpha)-i_{\psi}\mathcal{L}^{\nabla}_{\varphi}\alpha\nonumber\\
=&\,[i_{\varphi},\nabla]\circ i_{\psi}\alpha-i_{\psi}\circ[i_{\varphi},\nabla]\alpha\nonumber\\
=&\,\varphi\lrcorner\nabla(\psi\lrcorner\alpha)-\nabla(\varphi\lrcorner\psi\lrcorner\alpha)
-\psi\lrcorner\varphi\lrcorner\nabla\alpha+\psi\lrcorner\nabla(\varphi\lrcorner\alpha).
\end{align}
Equivalently in another version of (\ref{eq52}) we have
\begin{equation*}
[\nabla,i_{\varphi}]\circ i_{\psi}=i_{\psi}\circ[\nabla,i_{\varphi}]-i_{[\varphi,\psi]},
\end{equation*}
i.e. we get
\begin{equation}\label{eq10}
[\varphi,\psi]\lrcorner\,\alpha
=\varphi\lrcorner\,\nabla(\psi\lrcorner\,\alpha)-\nabla(\varphi\lrcorner\,\psi\lrcorner\,\alpha)
-\psi\lrcorner\,\varphi\lrcorner\,\nabla\alpha+\psi\lrcorner\,\nabla(\varphi\lrcorner\,\alpha).
\end{equation}
(\ref{eq10}) can be seen as a generalization of \cite[(3.3), (3.4)]{MR3302118} from the complex case to the almost complex case. Also see \cite[Corollary 4.5, 4.6]{MR2904916}.
A connection $\nabla$ on an almost complex manifold has a decomposition
\begin{equation*}
\nabla=\mu+\nabla^{1,0}+\nabla^{0,1}+\bar\mu.
\end{equation*}
By comparing types, we get the following lemma.
\begin{lemma}
\begin{align}
i_{\mathscr{A}(\varphi,\psi)}&=i^{(-1,2)}_{[\varphi,\psi]}
=-\nabla^{1,0}\circ i_{\varphi} i_{\psi}
-i_{\psi}i_{\varphi}\circ\nabla^{1,0}
+i_{\psi}\circ\nabla^{1,0}\circ i_{\varphi}
+i_{\varphi}\circ\nabla^{1,0}\circ i_{\psi}.\label{neweq10}\\
i_{(\mathscr{B}(\varphi,\psi)+\mathscr{C}(\varphi,\psi))}&=i^{(0,1)}_{[\varphi,\psi]}
=-\mu\circ i_{\varphi}i_{\psi}
-i_{\psi}i_{\varphi}\circ\mu
+i_{\psi}\circ\mu\circ i_{\varphi}
+i_{\varphi}\circ\mu\circ i_{\psi}.\label{neweq11}\\
0
&=-\nabla^{0,1}\circ i_{\varphi} i_{\psi}
-i_{\psi} i_{\varphi}\circ\nabla^{0,1}
+i_{\psi}\circ\nabla^{0,1}\circ i_{\varphi}
+i_{\varphi}\circ\nabla^{0,1}\circ i_{\psi}.\label{eq11}\\
0
&=-\bar\mu\circ i_{\varphi} i_{\psi}
-i_{\psi}i_{\varphi}\circ\bar\mu
+i_{\psi}\circ\bar\mu\circ i_{\varphi}
+i_{\varphi}\circ\bar\mu\circ i_{\psi}.\nonumber
\end{align}
\end{lemma}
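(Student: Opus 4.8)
The plan is to read the Jacobi-type identity (\ref{eq10}) not as an identity applied to a fixed $\alpha$ but as an equality of operators on $A^{\ast,\ast}(M,E)$. In that form it says
\begin{equation*}
i_{[\varphi,\psi]}=i_{\varphi}\circ\nabla\circ i_{\psi}-\nabla\circ i_{\varphi}\circ i_{\psi}-i_{\psi}\circ i_{\varphi}\circ\nabla+i_{\psi}\circ\nabla\circ i_{\varphi}.
\end{equation*}
Into the right-hand side I would substitute the type decomposition $\nabla=\mu+\nabla^{1,0}+\nabla^{0,1}+\bar\mu$ recorded just before the lemma, producing four groups of four terms, one group for each summand of $\nabla$.

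The heart of the argument is a bookkeeping of bidegrees. Since $\varphi,\psi\in A^{0,1}_{J}(M,T^{1,0}M)$, each of $i_{\varphi},i_{\psi}$ lowers the holomorphic degree by one and raises the anti-holomorphic degree by one, i.e. is of type $(-1,1)$; while $\mu,\nabla^{1,0},\nabla^{0,1},\bar\mu$ are of types $(2,-1),(1,0),(0,1),(-1,2)$ respectively. A short computation then shows that each term built from $\mu$ is of total type $(0,1)$, each term built from $\nabla^{1,0}$ is of type $(-1,2)$, each from $\nabla^{0,1}$ is of type $(-2,3)$, and each from $\bar\mu$ is of type $(-3,4)$. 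On the other side, using Lemma \ref{lemma17} I would read off the bidegrees of the contractions of the three pieces of the bracket: $i_{\mathscr{A}(\varphi,\psi)}$ is of type $(-1,2)$ because $\mathscr{A}(\varphi,\psi)\in A^{0,2}(M,T^{1,0}M)$, while $i_{\mathscr{B}(\varphi,\psi)}$ and $i_{\mathscr{C}(\varphi,\psi)}$ are both of type $(0,1)$ because $\mathscr{B}(\varphi,\psi)\in A^{1,1}(M,T^{1,0}M)$ and $\mathscr{C}(\varphi,\psi)\in A^{0,2}(M,T^{0,1}M)$.

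With the bidegrees in hand I would finish by matching types. The four bidegrees $(0,1),(-1,2),(-2,3),(-3,4)$ occurring on the right are pairwise distinct, so both sides decompose into type-homogeneous pieces and equality is forced piece by piece. Matching the $(0,1)$-parts gives $i_{(\mathscr{B}(\varphi,\psi)+\mathscr{C}(\varphi,\psi))}=i^{(0,1)}_{[\varphi,\psi]}$ together with (\ref{neweq11}); matching the $(-1,2)$-parts gives $i_{\mathscr{A}(\varphi,\psi)}=i^{(-1,2)}_{[\varphi,\psi]}$ together with (\ref{neweq10}). Since $i_{[\varphi,\psi]}=i_{\mathscr{A}(\varphi,\psi)}+i_{\mathscr{B}(\varphi,\psi)}+i_{\mathscr{C}(\varphi,\psi)}$ has no component of type $(-2,3)$ or $(-3,4)$, the $\nabla^{0,1}$-group and the $\bar\mu$-group must each vanish identically, which are precisely (\ref{eq11}) and the last displayed equation.

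I expect the only delicate point to be the type bookkeeping itself, specifically checking that a $T^{1,0}M$-valued $(0,1)$-form acts as a $(-1,1)$ operator and that contracting with the $T^{0,1}M$-valued piece $\mathscr{C}(\varphi,\psi)$ lands in the same $(0,1)$ type as the $T^{1,0}M$-valued piece $\mathscr{B}(\varphi,\psi)$; this is exactly why $\mathscr{B}$ and $\mathscr{C}$ are grouped together on the left. Everything else is a mechanical comparison of the four type-homogeneous pieces, using that on an almost complex manifold $\nabla$ has only the four type components $\mu,\nabla^{1,0},\nabla^{0,1},\bar\mu$.
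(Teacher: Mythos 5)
Your proposal is correct and is essentially the paper's own argument: the paper also obtains this lemma by inserting the decomposition $\nabla=\mu+\nabla^{1,0}+\nabla^{0,1}+\bar\mu$ into the operator identity (\ref{eq10}) and comparing types, with the left-hand side split via Lemma \ref{lemma17} into the pieces $\mathscr{A}$, $\mathscr{B}$, $\mathscr{C}$ of types $(-1,2)$, $(0,1)$, $(0,1)$. Your bidegree bookkeeping (each $i_{\varphi},i_{\psi}$ of type $(-1,1)$, hence the four groups of types $(0,1),(-1,2),(-2,3),(-3,4)$, the last two forced to vanish) just makes explicit what the paper leaves as ``by comparing types.''
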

At the same time, we get the following lemma.
\begin{lemma}
If $\varphi,\psi\in A^{0,1}_{J}(M,T^{1,0}_{M})$, then
\begin{align*}
i_{[\varphi,\psi]}=
&-\nabla^{1,0}\circ i_{\varphi} i_{\psi}
-i_{\psi}i_{\varphi}\circ\nabla^{1,0}
+i_{\psi}\circ\nabla^{1,0}\circ i_{\varphi}
+i_{\varphi}\circ\nabla^{1,0}\circ i_{\psi}\\
&-\mu\circ i_{\varphi}i_{\psi}
-i_{\psi}i_{\varphi}\circ\mu
+i_{\psi}\circ\mu\circ i_{\varphi}
+i_{\varphi}\circ\mu\circ i_{\psi}.
\end{align*}
\end{lemma}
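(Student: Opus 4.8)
The plan is to read off this identity as the sum of the two surviving type-components already isolated in the preceding lemma. Recall from Lemma \ref{lemma17} that for $\varphi,\psi\in A^{0,1}_{J}(M,T^{1,0}M)$ the Fr\"olicher--Nijenhuis bracket splits as
\begin{equation*}
[\varphi,\psi]=\mathscr{A}(\varphi,\psi)+\mathscr{B}(\varphi,\psi)+\mathscr{C}(\varphi,\psi),
\end{equation*}
with the three summands lying in $A^{0,2}(M,T^{1,0}M)$, $A^{1,1}(M,T^{1,0}M)$ and $A^{0,2}(M,T^{0,1}M)$ respectively. Since $\rho\mapsto i_{\rho}$ is linear in the vector-valued form $\rho$, this gives
\begin{equation*}
i_{[\varphi,\psi]}=i_{\mathscr{A}(\varphi,\psi)}+i_{\mathscr{B}(\varphi,\psi)}+i_{\mathscr{C}(\varphi,\psi)}.
\end{equation*}

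First I would invoke the two nonzero relations of the preceding lemma. Equation (\ref{neweq10}) identifies $i_{\mathscr{A}(\varphi,\psi)}$ with the block of terms built from $\nabla^{1,0}$, and equation (\ref{neweq11}) identifies $i_{\mathscr{B}(\varphi,\psi)}+i_{\mathscr{C}(\varphi,\psi)}$ with the block built from $\mu$. Adding these two equations and substituting into the displayed decomposition of $i_{[\varphi,\psi]}$ yields exactly the claimed formula, the first line coming from $\nabla^{1,0}$ and the second from $\mu$.

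An equivalent, self-contained route avoids quoting the previous lemma and works directly from (\ref{eq10}) (which rewrites Corollary \ref{corollary2}). Writing $\nabla=\mu+\nabla^{1,0}+\nabla^{0,1}+\bar\mu$ and inserting this decomposition into
\begin{equation*}
i_{[\varphi,\psi]}=i_{\varphi}\circ\nabla\circ i_{\psi}-\nabla\circ i_{\varphi}\circ i_{\psi}-i_{\psi}\circ i_{\varphi}\circ\nabla+i_{\psi}\circ\nabla\circ i_{\varphi}
\end{equation*}
splits the right-hand side into four blocks indexed by the four pieces of $\nabla$. Here I would use that $i_{\varphi}$ and $i_{\psi}$ each carry bidegree $(-1,1)$, so that each block has the bidegree of its $\nabla$-piece shifted by $(-2,2)$, giving total bidegrees $(0,1)$, $(-1,2)$, $(-2,3)$ and $(-3,4)$ for the $\mu$, $\nabla^{1,0}$, $\nabla^{0,1}$ and $\bar\mu$ blocks. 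On the left-hand side, by the first displayed decomposition, only the bidegrees $(-1,2)$ and $(0,1)$ occur, so matching types forces the $\nabla^{0,1}$ and $\bar\mu$ blocks to vanish (these are equation (\ref{eq11}) and the last, unlabeled equation of the previous lemma), leaving precisely the $\nabla^{1,0}$ and $\mu$ blocks, which is the assertion.

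There is no real analytic difficulty: the content of the lemma is entirely bookkeeping of types. The only point requiring care is the bidegree accounting in the second approach, namely verifying that the contractions $i_{\varphi},i_{\psi}$ have bidegree $(-1,1)$ and that the four $\nabla$-blocks land in the four distinct bidegrees listed, so that the type comparison is unambiguous. Once this is confirmed, the vanishing of the $\nabla^{0,1}$ and $\bar\mu$ contributions and hence the stated identity follow immediately.
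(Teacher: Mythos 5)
Your proof is correct and follows the paper's own route: the paper obtains this lemma precisely by adding the two nonvanishing type-components (\ref{neweq10}) and (\ref{neweq11}) of the preceding lemma, which in turn come from decomposing (\ref{eq10}) via $\nabla=\mu+\nabla^{1,0}+\nabla^{0,1}+\bar\mu$ and comparing bidegrees, exactly as in your second, self-contained variant. Both of your routes are sound, and your bidegree bookkeeping (contractions of bidegree $(-1,1)$, blocks of bidegrees $(0,1)$, $(-1,2)$, $(-2,3)$, $(-3,4)$) checks out.
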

The Cauchy-Riemannian operator $\bar\partial$ defined in \cite[(2.7.2)]{gauduchon1997hermitian} is
\begin{equation*}
\bar\partial_{X}Y:=[X,Y]^{1,0},\quad X\in T^{0,1}M, Y\in T^{1,0}M.
\end{equation*}
Note that by definition, under local basis $\{e_{i},e_{\bar i}\}_{i=1}^{n}$ we have
\begin{equation*}
\bar\partial_{e_{\bar j}}e_{i}=[e_{\bar j},e_{i}]^{1,0}=([e_{\bar j},e_{i}]^{1,0})^{k}e_{k}
\end{equation*}
and equivalently
\begin{equation}\label{neweq1}
\bar\partial e_{i}=([e_{\bar j},e_{i}]^{1,0})^{k}\theta^{\bar j}\otimes e_{k}
\end{equation}
For the dual basis $\theta^{i}$ and the $(0,1)$-part of the ordinary exterior differential $d$ we have
\begin{align*}
\bar\partial\theta^{k}(e_{i},e_{\bar j})
=&\,d\theta^{k}(e_{i},e_{\bar j})\\
=&\,e_{i}(\theta^{k}(e_{\bar j}))-e_{\bar j}(\theta^{k}(e_{i}))-\theta^{k}([e_{i},e_{\bar j}])\\
=&\,\theta^{k}([e_{\bar j},e_{i}]^{1,0})\\
=&\,([e_{\bar j},e_{i}]^{1,0})^{k},
\end{align*}
and thus
\begin{equation}\label{neweq2}
\bar\partial\theta^{k}=([e_{\bar j},e_{i}]^{1,0})^{k}\theta^{i}\wedge\theta^{\bar j}.
\end{equation}
Considering (\ref{neweq1}) and (\ref{neweq2}), we see the Cauchy-Riemannian operator and the $(0,1)$-part of the ordinary exterior differential as the same operator. We can naturally extend the operator $\bar\partial$ to $A^{0,p}(M,T^{1,0}M)$ as
\begin{equation*}
\bar\partial(\rho\otimes Y):=\bar\partial\rho\otimes Y+(-1)^{p}\rho\wedge\bar\partial Y,
\end{equation*}
where $\rho_{1}\wedge(\rho_{2}\otimes Y):=(\rho_{1}\wedge\rho_{2})\otimes Y$ for $\rho_{1},\rho_{2}\in A^{0,\ast}(M)$. The extended $\bar\partial$ could be the $(0,1)$-part of some connection on almost complex manifolds, for example the (almost) Chern connection. (In \cite{gauduchon1997hermitian}, this connection is called the second canonical connection. In \cite{MR2018338}, this connection is called the canonical almost hermitian connection. Here we may call it the \emph{almost} Chern connection to emphasise that it is the Chern connection on an \emph{almost} complex manifold.) If we choose $\nabla^{0,1}=\bar\partial$, then (\ref{eq11}) becomes
\begin{equation}\label{eq12}
0=-\bar\partial(\varphi\lrcorner\,\psi\lrcorner\,\alpha)
-\psi\lrcorner\,\varphi\lrcorner\,\bar\partial\alpha
+\psi\lrcorner\,\bar\partial(\varphi\lrcorner\,\alpha)+\varphi\lrcorner\,\bar\partial(\psi\lrcorner\,\alpha).
\end{equation}
(\ref{eq12}) is a generalization of \cite[(3.4)]{MR3302118}.

As a special case, if $\Omega\in A^{n,0}_{J}(M)$, we get a generalization of the so-called Tian-Todorov lemma (see \cite{tian1987smoothness,todorov1989weil}) in the almost complex case.
\begin{lemma}
Let $\varphi,\psi\in A^{0,1}_{J}(M,T^{1,0}M)$, $\Omega\in A^{n,0}_{J}(M)$ and $\mu\in A^{2,0}_{J}(M,T^{0,1}M)$. Then we have
\begin{align*}
[\varphi,\psi]\lrcorner\,\Omega
=-\mu(\varphi\lrcorner\,\psi\lrcorner\,\Omega)
+\varphi\lrcorner\, \partial(\psi\lrcorner\,\Omega)-\partial(\varphi\lrcorner\,\psi\lrcorner\,\Omega)
+\psi\lrcorner\, \partial(\varphi\lrcorner\,\Omega).
\end{align*}
\end{lemma}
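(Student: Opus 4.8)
The plan is to specialize the general identity (\ref{eq12}), which was just derived for a $J^{p,q}$-form $\alpha$, to the case $\alpha=\Omega\in A^{n,0}_{J}(M)$, and then to identify the one term that survives from the $\bar\partial$-operator. The key structural input is that $\Omega$ is an $(n,0)$-form, so its behavior under the type-decomposition $d=\mu+\partial+\bar\partial+\bar\mu$ is heavily constrained. First I would write down (\ref{eq12}) with $\nabla^{0,1}=\bar\partial$ and $\alpha=\Omega$, obtaining
\begin{equation*}
0=-\bar\partial(\varphi\lrcorner\,\psi\lrcorner\,\Omega)
-\psi\lrcorner\,\varphi\lrcorner\,\bar\partial\Omega
+\psi\lrcorner\,\bar\partial(\varphi\lrcorner\,\Omega)+\varphi\lrcorner\,\bar\partial(\psi\lrcorner\,\Omega).
\end{equation*}

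Next I would replace $\bar\partial$ by $d$ wherever the argument is an $(n,0)$-form or a contraction of one, using the type decomposition to discard the components that cannot contribute. Since $\Omega\in A^{n,0}_{J}(M)$ and $\psi\lrcorner\,\Omega\in A^{n-1,1}_{J}(M)$, the image $\varphi\lrcorner\,(\psi\lrcorner\,\Omega)\in A^{n-2,2}_{J}(M)$, and so on; the point is that for each of the three terms $\bar\partial(\varphi\lrcorner\,\psi\lrcorner\,\Omega)$, $\bar\partial(\varphi\lrcorner\,\Omega)$ and $\bar\partial(\psi\lrcorner\,\Omega)$ I can swap $\bar\partial$ for $\partial$ by accounting for the remaining pieces of $d$ on these forms. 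The crucial observation is the term $\bar\partial\Omega$: by the definition of the decomposition $d=\mu+\partial+\bar\partial+\bar\mu$, applied to $\Omega\in A^{n,0}_{J}(M)$, the only components of $d\Omega$ of type $(n,1)$ and $(n+1,-1)$ come from $\bar\partial\Omega$ and $\mu\Omega$ respectively; in top holomorphic degree $n$ the $\partial$ and $\bar\mu$ parts vanish for degree reasons, so $d\Omega=\mu\Omega+\bar\partial\Omega$. This is what introduces the $\mu$-term into the final formula.

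The main bookkeeping step is to combine (\ref{eq12}) with the analogous identity using $d$ in place of $\bar\partial$, i.e. to apply (\ref{eq10}) with $\nabla=d$ and $\alpha=\Omega$, which reads
\begin{equation*}
[\varphi,\psi]\lrcorner\,\Omega
=\varphi\lrcorner\,d(\psi\lrcorner\,\Omega)-d(\varphi\lrcorner\,\psi\lrcorner\,\Omega)
-\psi\lrcorner\,\varphi\lrcorner\,d\Omega+\psi\lrcorner\,d(\varphi\lrcorner\,\Omega).
\end{equation*}
I would then substitute $d=\mu+\partial+\bar\partial+\bar\mu$ into each term and use (\ref{eq12}) to cancel exactly the $\bar\partial$-contributions, leaving only the $\partial$-pieces plus the $\mu$-piece coming from $d\Omega=\mu\Omega+\bar\partial\Omega$. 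Contractions by $\varphi,\psi\in A^{0,1}_J(M,T^{1,0}M)$ raise the total degree by one and shift the $(p,q)$-type in a fixed way, so the terms $\varphi\lrcorner\,\mu(\cdots)$, $\bar\mu(\cdots)$, etc., can be shown to vanish or to reorganize into the asserted right-hand side by matching types against $A^{n,0}_J(M)$.

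The hard part will be the careful type-chasing: verifying which of the many $\mu,\partial,\bar\partial,\bar\mu$ components genuinely vanish when acting on the various contractions of $\Omega$, since on an almost complex (non-integrable) manifold the extra operators $\mu$ and $\bar\mu$ do not vanish and one cannot simply borrow the integrable-case cancellations. In particular I must confirm that the only non-$\partial$, non-$\bar\partial$ contribution that survives is $-\mu(\varphi\lrcorner\,\psi\lrcorner\,\Omega)$, arising from the $\mu\Omega$ term in $\psi\lrcorner\,\varphi\lrcorner\,d\Omega$, and that all $\bar\mu$-terms cancel or vanish by degree. Once the type analysis is pinned down, the identity follows by collecting the surviving $\partial$-terms into $\varphi\lrcorner\,\partial(\psi\lrcorner\,\Omega)-\partial(\varphi\lrcorner\,\psi\lrcorner\,\Omega)+\psi\lrcorner\,\partial(\varphi\lrcorner\,\Omega)$ and appending $-\mu(\varphi\lrcorner\,\psi\lrcorner\,\Omega)$.
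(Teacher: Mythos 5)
Your high-level strategy---apply (\ref{eq10}) with $\nabla=d$ to $\alpha=\Omega$, split $d=\mu+\partial+\bar\partial+\bar\mu$, and match types---is exactly the mechanism behind the paper's proof, which simply invokes the type-decomposed identities (\ref{neweq10}) and (\ref{neweq11}). However, your execution of the type analysis contains a genuine error at the decisive point. You assert that for $\Omega\in A^{n,0}_{J}(M)$ ``the $\partial$ and $\bar\mu$ parts vanish for degree reasons, so $d\Omega=\mu\Omega+\bar\partial\Omega$,'' and that the term $-\mu(\varphi\lrcorner\,\psi\lrcorner\,\Omega)$ arises from the $\mu\Omega$ contribution inside $-\psi\lrcorner\,\varphi\lrcorner\,d\Omega$. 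This is backwards. Since $\mu$ has type $(2,-1)$ and $\partial$ has type $(1,0)$, the components that vanish are $\mu\Omega\in A^{n+2,-1}_{J}(M)=0$ and $\partial\Omega\in A^{n+1,0}_{J}(M)=0$, while $\bar\partial\Omega\in A^{n,1}_{J}(M)$ and $\bar\mu\Omega\in A^{n-1,2}_{J}(M)$ are in general nonzero on a non-integrable manifold. So $d\Omega=\bar\partial\Omega+\bar\mu\Omega$, and the term you designate as the source of the $\mu$-contribution is identically zero (and in any case $\psi\lrcorner\,\varphi\lrcorner\,\mu\Omega$ would not have the operator ordering $\mu(\varphi\lrcorner\,\psi\lrcorner\,\Omega)$ of the asserted formula).

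The $\mu$-term actually comes from the summand $-d(\varphi\lrcorner\,\psi\lrcorner\,\Omega)$: since $\varphi\lrcorner\,\psi\lrcorner\,\Omega\in A^{n-2,2}_{J}(M)$, its $\mu$-component $-\mu(\varphi\lrcorner\,\psi\lrcorner\,\Omega)$ lies in $A^{n,1}_{J}(M)$ and is the entire $(n,1)$-part of the right-hand side, because the competing terms $\varphi\lrcorner\,\mu(\psi\lrcorner\,\Omega)$ and $\psi\lrcorner\,\mu(\varphi\lrcorner\,\Omega)$ involve $\mu$ applied to $(n-1,1)$-forms and hence land in $A^{n+1,0}_{J}(M)=0$; this is precisely (\ref{neweq11}) evaluated on $\Omega$, i.e.\ $\mathscr{B}(\varphi,\psi)\lrcorner\,\Omega=-\mu(\varphi\lrcorner\,\psi\lrcorner\,\Omega)$. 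Likewise, your claim that ``all $\bar\mu$-terms cancel or vanish by degree'' is not a degree argument: the four terms $\varphi\lrcorner\,\bar\mu(\psi\lrcorner\,\Omega)$, $-\bar\mu(\varphi\lrcorner\,\psi\lrcorner\,\Omega)$, $-\psi\lrcorner\,\varphi\lrcorner\,\bar\mu\Omega$, $\psi\lrcorner\,\bar\mu(\varphi\lrcorner\,\Omega)$ are individually nonzero forms of type $(n-3,4)$ in general, and they sum to zero because they constitute the $(-1,2)$-component identity of (\ref{eq10}) (the fourth, unnumbered equation in the paper's lemma following (\ref{eq11})), whose left-hand side vanishes since $[\varphi,\psi]\lrcorner\,\Omega$ has no $(n-3,4)$-part. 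With these two corrections the bookkeeping closes exactly as in the paper: the $(n,1)$-part gives $-\mu(\varphi\lrcorner\,\psi\lrcorner\,\Omega)$, the $(n-1,2)$-part gives $\varphi\lrcorner\,\partial(\psi\lrcorner\,\Omega)-\partial(\varphi\lrcorner\,\psi\lrcorner\,\Omega)+\psi\lrcorner\,\partial(\varphi\lrcorner\,\Omega)$ (using $\partial\Omega=0$), and the $(n-2,3)$-part is your quoted instance of (\ref{eq12}).
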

\begin{proof}
A direct consequence of (\ref{neweq10}) and (\ref{neweq11}).
\end{proof}
By Lemma \ref{lemma7}, $[\varphi,\psi]\in A^{0,2}(M,T^{0,1})\oplus A^{0,2}(M,T^{1,0})\oplus A^{1,1}(M,T^{1,0})$. We know that $[\varphi,\psi]\lrcorner\,\Omega=[\varphi,\psi]^{1,0}\lrcorner\,\Omega$. And we get the following lemma.
\begin{lemma}
Let $\Omega\in A^{n,0}(M)$, then
\begin{align*}
\mathscr{C}(\varphi,\psi)\lrcorner\,\Omega&=0,\\
\mathscr{A}(\varphi,\psi)\lrcorner\,\Omega&=\varphi\lrcorner\, \partial(\psi\lrcorner\,\Omega)-\partial(\varphi\lrcorner\,\psi\lrcorner\,\Omega)
+\psi\lrcorner\,\partial(\varphi\lrcorner\,\Omega),\\
\mathscr{B}(\varphi,\psi)\lrcorner\,\Omega&=-\mu(\varphi\lrcorner\,\psi\lrcorner\,\Omega).
\end{align*}
\end{lemma}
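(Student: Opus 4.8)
The plan is to reduce everything to the Tian--Todorov-type identity just established, namely
\[
[\varphi,\psi]\lrcorner\,\Omega
=-\mu(\varphi\lrcorner\,\psi\lrcorner\,\Omega)
+\varphi\lrcorner\,\partial(\psi\lrcorner\,\Omega)
-\partial(\varphi\lrcorner\,\psi\lrcorner\,\Omega)
+\psi\lrcorner\,\partial(\varphi\lrcorner\,\Omega),
\]
and then to separate the surviving components of $[\varphi,\psi]$ by comparing bidegrees. First I would dispose of the $\mathscr{C}$-term: writing $\mathscr{C}(\varphi,\psi)=\rho\wedge\tau\otimes[X,Y]^{0,1}$ as in Lemma \ref{lemma17}, the vector $[X,Y]^{0,1}$ is of type $(0,1)$, while $\Omega$ is an $(n,0)$-form and hence annihilates every $(0,1)$-tangent vector; thus $[X,Y]^{0,1}\lrcorner\,\Omega=0$ and so $\mathscr{C}(\varphi,\psi)\lrcorner\,\Omega=0$. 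This is precisely the observation $[\varphi,\psi]\lrcorner\,\Omega=[\varphi,\psi]^{1,0}\lrcorner\,\Omega$ recorded above, and it leaves $[\varphi,\psi]\lrcorner\,\Omega=\mathscr{A}(\varphi,\psi)\lrcorner\,\Omega+\mathscr{B}(\varphi,\psi)\lrcorner\,\Omega$.

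Next I would carry out the bidegree bookkeeping on both sides of the identity. On the left, since $\mathscr{A}(\varphi,\psi)\in A^{0,2}(M,T^{1,0}M)$, the $T^{1,0}$-valued contraction lowers the holomorphic degree of $\Omega$ by one and wedges in the $(0,2)$-factor, so $\mathscr{A}(\varphi,\psi)\lrcorner\,\Omega\in A^{n-1,2}_{J}(M)$; likewise $\mathscr{B}(\varphi,\psi)\in A^{1,1}(M,T^{1,0}M)$ yields $\mathscr{B}(\varphi,\psi)\lrcorner\,\Omega\in A^{n,1}_{J}(M)$. On the right, recalling that $\mu$ has type $(2,-1)$ and $\partial$ has type $(1,0)$, one checks that $\varphi\lrcorner\,\psi\lrcorner\,\Omega\in A^{n-2,2}_{J}(M)$, so the single term $-\mu(\varphi\lrcorner\,\psi\lrcorner\,\Omega)$ lies in $A^{n,1}_{J}(M)$, whereas each of the three remaining terms $\varphi\lrcorner\,\partial(\psi\lrcorner\,\Omega)$, $\partial(\varphi\lrcorner\,\psi\lrcorner\,\Omega)$ and $\psi\lrcorner\,\partial(\varphi\lrcorner\,\Omega)$ lies in $A^{n-1,2}_{J}(M)$.

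Since the bidegrees $(n,1)$ and $(n-1,2)$ are distinct, equating the components of each type forces
\[
\mathscr{B}(\varphi,\psi)\lrcorner\,\Omega=-\mu(\varphi\lrcorner\,\psi\lrcorner\,\Omega),
\qquad
\mathscr{A}(\varphi,\psi)\lrcorner\,\Omega
=\varphi\lrcorner\,\partial(\psi\lrcorner\,\Omega)-\partial(\varphi\lrcorner\,\psi\lrcorner\,\Omega)+\psi\lrcorner\,\partial(\varphi\lrcorner\,\Omega),
\]
which together with $\mathscr{C}(\varphi,\psi)\lrcorner\,\Omega=0$ gives all three asserted formulae. The only delicate point is the bidegree bookkeeping: one must keep track of the fact that contracting an $(n,0)$-form by a $T^{1,0}$-valued $(0,2)$-form versus a $T^{1,0}$-valued $(1,1)$-form lands in genuinely different types, and that $\mu$, being the $(2,-1)$-part of $d$, is exactly what pushes $\varphi\lrcorner\,\psi\lrcorner\,\Omega$ back up to holomorphic degree $n$. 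Granting this type analysis, the comparison is automatic and no further computation is required.
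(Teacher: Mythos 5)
Your proposal is correct and follows essentially the same route as the paper: the paper also deduces these three formulae from the generalized Tian--Todorov identity (itself obtained from the typed decompositions (\ref{neweq10}) and (\ref{neweq11})), observing that the $T^{0,1}$-valued component $\mathscr{C}(\varphi,\psi)$ contracts to zero against the $(n,0)$-form $\Omega$ and then separating the remaining terms by bidegree, with $\mathscr{B}(\varphi,\psi)\lrcorner\,\Omega$ and $-\mu(\varphi\lrcorner\,\psi\lrcorner\,\Omega)$ matching in $A^{n,1}_{J}(M)$ and the $\mathscr{A}$-terms matching in $A^{n-1,2}_{J}(M)$. Your bidegree bookkeeping is accurate throughout, so no gap remains.
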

\section{Exponential operator and extension formula}\label{sec2}
\subsection{Exponential operator and extension formula}
By Corollary \ref{corollary2}, for $\phi\in A^{0,1}(M,T^{1,0}M)$, we have
\begin{align}
[\nabla,i_{\phi}]i_{\phi}\alpha
=&\,-\mathcal{L}^{\nabla}_{\phi}i_{\phi}\alpha\nonumber\\
=&\,-i_{\phi}\mathcal{L}^{\nabla}_{\phi}\alpha-i_{[\phi,\phi]}\alpha\nonumber\\
=&\,i_{\phi}[\nabla,i_{\phi}]\alpha-i_{[\phi,\phi]}\alpha.\label{eq50}\\
[\nabla,i^{2}_{\phi}]\alpha
=&\,\nabla(i_{\phi}i_{\phi}\alpha)-i_{\phi}i_{\phi}\nabla\alpha\nonumber\\
=&\,\nabla\circ i_{\phi}(i_{\phi}\alpha)+i_{\phi}([\nabla,i_{\phi}]\alpha-\nabla(i_{\phi}\alpha))\nonumber\\
=&\,[\nabla,i_{\phi}](i_{\phi}\alpha)+i_{\phi}[\nabla,i_{\phi}]\alpha\nonumber\\
=&\,i_{\phi}[\nabla,i_{\phi}]\alpha-i_{[\phi,\phi]}\alpha+i_{\phi}[\nabla,i_{\phi}]\alpha\nonumber\\
=&\,2i_{\phi}[\nabla,i_{\phi}]\alpha-i_{[\phi,\phi]}\alpha.\label{eq48}\\
[\nabla,i^{3}_{\phi}]\alpha
=&\,\nabla\circ i^{3}_{\phi}\alpha-i^{3}_{\phi}\nabla\alpha\nonumber\\
=&\,\nabla\circ i^{2}_{\phi}(i_{\phi}\alpha)+i^{2}_{\phi}([\nabla,i_{\phi}]\alpha-\nabla i_{\phi}\alpha)\nonumber\\
=&\,[\nabla,i^{2}_{\phi}](i_{\phi}\alpha)+i^{2}_{\phi}\circ [\nabla,i_{\phi}]\alpha\nonumber\\
=&\,2i_{\phi}[\nabla,i_{\phi}]i_{\phi}\alpha-i_{[\phi,\phi]}i_{\phi}\alpha+i^{2}_{\phi}[\nabla,i_{\phi}]\alpha\nonumber\\
=&\,2i_{\phi}i_{\phi}[\nabla,i_{\phi}]\alpha-2i_{\phi}i_{[\phi,\phi]}\alpha
-i_{[\phi,\phi]}i_{\phi}\alpha+i^{2}_{\phi}[\nabla,i_{\phi}]\alpha\nonumber\\
=&\,3i^{2}_{\phi}[\nabla,i_{\phi}]\alpha-2i_{\phi}i_{[\phi,\phi]}\alpha
-i_{[\phi,\phi]}i_{\phi}\alpha.\label{eq44}
\end{align}
Now we need the relation of $i_{\phi}i_{[\phi,\phi]}$ and $i_{[\phi,\phi]}i_{\phi}$.

Consider a more general case. Let $\varphi,\psi,\xi\in A^{0,1}(M,T^{1,0}M)$. By Lemma \ref{lemma7}, we know that $[\varphi,\psi]$ has three components. Now we will discuss according to types.

(1) For $A=A_{\bar i\bar j}^{k}\bar\theta^{i}\wedge\bar\theta^{j}\otimes e_{k}$, $\xi=\xi^{q}_{\bar p}\bar\theta^{p}\otimes e_{q}$, we have
\begin{align*}
i_{A}i_{\xi}\alpha
=&\,A_{\bar i\bar j}^{k}\bar\theta^{i}\wedge\bar\theta^{j}\wedge e_{k}\lrcorner\,(\xi^{q}_{\bar p}\bar\theta^{p}\wedge e_{q}\lrcorner\alpha)\\
=&\,-A_{\bar i\bar j}^{k}\bar\theta^{i}\wedge\bar\theta^{j}\wedge\xi^{q}_{\bar p}\bar\theta^{p}\wedge (e_{k}\lrcorner e_{q}\lrcorner\alpha)\\
=&\,-A_{\bar i\bar j}^{k}\xi^{q}_{\bar p}\bar\theta^{i}\wedge\bar\theta^{j}\wedge\bar\theta^{p}\wedge (e_{k}\lrcorner e_{q}\lrcorner\alpha),\\
i_{\xi}i_{A}\alpha
=&\,\xi^{q}_{\bar p}\bar\theta^{p}\wedge e_{q}\lrcorner(A_{\bar i\bar j}^{k}\bar\theta^{i}\wedge\bar\theta^{j}\wedge e_{k}\lrcorner\alpha)\\
=&\,\xi^{q}_{\bar p}\bar\theta^{p}\wedge A_{\bar i\bar j}^{k}\bar\theta^{i}\wedge\bar\theta^{j}\wedge (e_{q}\lrcorner e_{k}\lrcorner\alpha)\\
=&\,-A_{\bar i\bar j}^{k}\xi^{q}_{\bar p}\bar\theta^{i}\wedge\bar\theta^{j}\wedge\bar\theta^{p}\wedge ( e_{k}\lrcorner e_{q}\lrcorner\alpha).
\end{align*}
Thus
\begin{equation*}
i_{A}i_{\xi}=i_{\xi}i_{A},
\end{equation*}
for $A\in A^{0,2}(M,T^{1,0}M)$ and $\xi\in A^{0,1}(M,T^{1,0}M)$.

(2) For $B=B_{i\bar j}^{k}\theta^{i}\wedge\bar\theta^{j}\otimes e_{k}$, we have
\begin{align*}
i_{B}i_{\xi}\alpha
=&\,B_{i\bar j}^{k}\theta^{i}\wedge\bar\theta^{j}\wedge e_{k}\lrcorner\,(\xi^{q}_{\bar p}\bar\theta^{p}\wedge e_{q}\lrcorner\alpha)\\
=&\,-B_{i\bar j}^{k}\xi^{q}_{\bar p}\theta^{i}\wedge\bar\theta^{j}\wedge\bar\theta^{p}\wedge (e_{k}\lrcorner e_{q}\lrcorner\alpha),\\
i_{\xi}i_{B}\alpha
=&\,\xi^{q}_{\bar p}\bar\theta^{p}\wedge e_{q}\lrcorner\,(B_{i\bar j}^{k}\theta^{i}\wedge\bar\theta^{j}\wedge e_{k}\lrcorner\alpha)\\
=&\,B_{i\bar j}^{k}\xi^{q}_{\bar p}\bar\theta^{p}\wedge \delta^{i}_{q}\bar\theta^{j}\wedge( e_{k}\lrcorner\alpha)
-B_{i\bar j}^{k}\xi^{q}_{\bar p}\theta^{i}\wedge\bar\theta^{j}\wedge\bar\theta^{p}\wedge ( e_{k}\lrcorner e_{q}\lrcorner\alpha)\\
=&\,B_{i\bar j}^{k}\xi^{i}_{\bar p}\bar\theta^{p}\wedge\bar\theta^{j}\wedge( e_{k}\lrcorner\alpha)
+i_{B}i_{\xi}\alpha\\
=&\,i_{i_{\xi}B}\alpha+i_{B}i_{\xi}\alpha.
\end{align*}
Thus
\begin{equation*}
i_{\xi}i_{B}\alpha-i_{B}i_{\xi}\alpha=i_{i_{\xi}B}\alpha.
\end{equation*}

(3) For $C=C_{\bar i\bar j}^{\bar k}\bar\theta^{i}\wedge\bar\theta^{j}\otimes \bar e_{k}$, we have
\begin{align*}
i_{\xi}i_{C}\alpha
=&\,\xi^{q}_{\bar p}\bar\theta^{p}\wedge e_{q}\lrcorner\,(C_{\bar i\bar j}^{\bar k}\bar\theta^{i}\wedge\bar\theta^{j}\wedge \bar e_{k}\lrcorner\alpha)\\
=&\,C_{\bar i\bar j}^{\bar k}\xi^{q}_{\bar p}\bar\theta^{p}\wedge\bar\theta^{i}\wedge\bar\theta^{j}\wedge (e_{q}\lrcorner\bar e_{k}\lrcorner\alpha)\\
=&\,-C_{\bar i\bar j}^{\bar k}\xi^{q}_{\bar p}\bar\theta^{i}\wedge\bar\theta^{j}\wedge\bar\theta^{p}\wedge (\bar e_{k}\lrcorner e_{q}\lrcorner\alpha),\\
i_{C}i_{\xi}\alpha
=&\,C_{\bar i\bar j}^{\bar k}\bar\theta^{i}\wedge\bar\theta^{j}\wedge \bar e_{k}\lrcorner\,(\xi^{q}_{\bar p}\bar\theta^{p}\wedge e_{q}\lrcorner\alpha)\\
=&\,C_{\bar i\bar j}^{\bar k}\xi^{q}_{\bar p}\bar\theta^{i}\wedge\bar\theta^{j}\delta^{p}_{k}\wedge (e_{q}\lrcorner\alpha)
-C_{\bar i\bar j}^{\bar k}\xi^{q}_{\bar p}\bar\theta^{i}\wedge\bar\theta^{j}\wedge\bar\theta^{p}\wedge (\bar e_{k}\lrcorner e_{q}\lrcorner\alpha)\\
=&\,C_{\bar i\bar j}^{\bar k}\xi^{q}_{\bar k}\bar\theta^{i}\wedge\bar\theta^{j}\wedge (e_{q}\lrcorner\alpha)
-C_{\bar i\bar j}^{\bar k}\xi^{q}_{\bar p}\bar\theta^{i}\wedge\bar\theta^{j}\wedge\bar\theta^{p}\wedge (\bar e_{k}\lrcorner e_{q}\lrcorner\alpha)\\
=&\,i_{i_{C}\xi}\alpha+i_{\xi}i_{C}\alpha.
\end{align*}
Thus
\begin{equation*}
i_{C}i_{\xi}\alpha-i_{\xi}i_{C}\alpha=i_{i_{C}\xi}\alpha.
\end{equation*}
\begin{lemma}\label{lemma5}
For any $A\in A^{0,2}(M,T^{1,0}M)$, $B\in A^{1,1}(M,T^{1,0}M)$, $C\in A^{0,2}(M,T^{0,1}M)$ and $\xi\in A^{0,1}(M,T^{1,0}M)$, we have
\begin{align*}
[i_{A},i_{\xi}]&=i_{A}\circ i_{\xi}-i_{\xi}\circ i_{A}=0,\\
[i_{B},i_{\xi}]&=i_{B}\circ i_{\xi}-i_{\xi}\circ i_{B}=-i_{i_{\xi}B},\\
[i_{C},i_{\xi}]&=i_{C}\circ i_{\xi}-i_{\xi}\circ i_{C}=i_{i_{C}\xi}.
\end{align*}
\end{lemma}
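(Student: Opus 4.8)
The plan is to verify each of the three identities by expanding both composites on an arbitrary test form $\alpha$ in a local coframe $\{\theta^i,\bar\theta^j\}$ with dual frame $\{e_i,\bar e_j\}$, exactly in the setup of the computations (1)--(3) that precede the statement. Writing $\xi=\xi^q_{\bar p}\bar\theta^p\otimes e_q$ and each of $A,B,C$ in its own local coordinates, one contracts term by term, using the derivation property of $\lrcorner$ and the Koszul sign rule for permuting one-forms past one another and past contractions. The guiding principle is that the commutator $[i_\bullet,i_\xi]$ is detected entirely by the cross-contractions in which the vector part of one factor meets the cotangent part of the other; all remaining terms agree in the two orders (up to sign) and cancel.

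For the first identity I would observe that $A\in A^{0,2}(M,T^{1,0}M)$ and $\xi\in A^{0,1}(M,T^{1,0}M)$ are both $T^{1,0}M$-valued, so Lemma \ref{commutator7} applies with $q=2$ and $s=1$; since $(q+1)(s+1)=6$ is even, we get $i_A\circ i_\xi=i_\xi\circ i_A$ at once. Concretely this records the fact that the $(1,0)$-vector $e_q$ of $\xi$ annihilates the purely anti-holomorphic cotangent factors $\bar\theta^i\wedge\bar\theta^j$ of $A$, so no boundary term is produced and the two orders coincide, as shown in case (1).

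The remaining two identities lie outside the scope of Lemma \ref{commutator7} — in the second $B$ is a $(1,1)$-form, in the third $C$ is valued in $T^{0,1}M$ — so here I would rely on the explicit expansion. In case (2) the only asymmetry is that the $(1,0)$-vector $e_q$ of $\xi$ contracts the $(1,0)$-cotangent factor $\theta^i$ of $B$ nontrivially, via $e_q\lrcorner\theta^i=\delta^i_q$; tracking this single surviving term and carrying the sign incurred by moving $\bar\theta^p$ past the factors of $B$ identifies it with $i_{i_\xi B}\alpha$, yielding $i_\xi i_B-i_B i_\xi=i_{i_\xi B}$, i.e. $[i_B,i_\xi]=-i_{i_\xi B}$. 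In case (3) the asymmetry comes instead from the $(0,1)$-vector $\bar e_k$ of $C$ contracting the $(0,1)$-cotangent factor $\bar\theta^p$ of $\xi$, via $\bar e_k\lrcorner\bar\theta^p=\delta^p_k$, and the analogous bookkeeping gives $i_C i_\xi-i_\xi i_C=i_{i_C\xi}$.

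The calculations are routine once the frame is fixed; the only delicate point is the sign accounting, namely keeping track of the parity incurred each time a one-form is moved past another one-form or past a contraction $e\lrcorner$. I therefore expect the main (indeed the only) obstacle to be organizing these Koszul signs so that, after the common terms cancel, the single surviving cross-contraction appears with the correct sign and is recognized as the stated $i_{i_\xi B}$ or $i_{i_C\xi}$.
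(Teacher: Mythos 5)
Your proposal is correct and follows essentially the same route as the paper: the paper's proof of Lemma \ref{lemma5} consists precisely of the three frame computations (1)--(3) preceding the statement, in which the commutator is detected by the single surviving cross-contraction ($e_{q}\lrcorner\,\theta^{i}=\delta^{i}_{q}$ in the $B$-case, $\bar e_{k}\lrcorner\,\theta^{\bar p}=\delta^{\bar p}_{\bar k}$ in the $C$-case), with the same resulting signs $[i_{B},i_{\xi}]=-i_{i_{\xi}B}$ and $[i_{C},i_{\xi}]=i_{i_{C}\xi}$. Your appeal to Lemma \ref{commutator7} for the $A$-case (with $(q+1)(s+1)=6$ even) is a harmless shortcut; the paper verifies that case by direct computation, but the conclusion is identical.
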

By some simple calculations and applying Lemma \ref{lemma5}, we can prove the following lemma.
\begin{lemma}\label{lemma11}
If $\varphi,\psi,\xi\in A^{0,1}(M,T^{1,0}M)$, then we have
\begin{align}\label{eq51}
[i_{[\varphi,\psi]},i_{\xi}]
=&\,i_{[\varphi,\psi]}\circ i_{\xi}-i_{\xi}\circ i_{[\varphi,\psi]}\nonumber\\
=&\,i_{(i_{[\varphi,\psi]}\xi-i_{\xi}[\varphi,\psi])}\nonumber\\
=&\,i_{(i_{\mathscr{C}(\varphi,\psi)}\xi-i_{\xi}\mathscr{B}(\varphi,\psi))}.
\end{align}
\end{lemma}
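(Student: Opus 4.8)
The plan is to reduce the commutator $[i_{[\varphi,\psi]},i_{\xi}]$ to its three type components and then invoke Lemma \ref{lemma5} on each piece. By Lemma \ref{lemma17} we may write $[\varphi,\psi]=\mathscr{A}(\varphi,\psi)+\mathscr{B}(\varphi,\psi)+\mathscr{C}(\varphi,\psi)$, with the three summands lying in $A^{0,2}(M,T^{1,0}M)$, $A^{1,1}(M,T^{1,0}M)$ and $A^{0,2}(M,T^{0,1}M)$ respectively. Since the assignment $\rho\mapsto i_{\rho}$ is linear in $\rho$ and the commutator $[-,i_{\xi}]$ is linear in its first slot, I would split $[i_{[\varphi,\psi]},i_{\xi}]=[i_{\mathscr{A}(\varphi,\psi)},i_{\xi}]+[i_{\mathscr{B}(\varphi,\psi)},i_{\xi}]+[i_{\mathscr{C}(\varphi,\psi)},i_{\xi}]$ and apply the three identities of Lemma \ref{lemma5} with $A=\mathscr{A}(\varphi,\psi)$, $B=\mathscr{B}(\varphi,\psi)$, $C=\mathscr{C}(\varphi,\psi)$. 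This gives $0-i_{i_{\xi}\mathscr{B}(\varphi,\psi)}+i_{i_{\mathscr{C}(\varphi,\psi)}\xi}$, which is exactly the third line $i_{(i_{\mathscr{C}(\varphi,\psi)}\xi-i_{\xi}\mathscr{B}(\varphi,\psi))}$.

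It then remains to identify this with the second line $i_{(i_{[\varphi,\psi]}\xi-i_{\xi}[\varphi,\psi])}$, which I would do by a type count on the mixed contractions. Expanding by linearity, $i_{[\varphi,\psi]}\xi=i_{\mathscr{A}(\varphi,\psi)}\xi+i_{\mathscr{B}(\varphi,\psi)}\xi+i_{\mathscr{C}(\varphi,\psi)}\xi$. In $i_{\mathscr{A}(\varphi,\psi)}\xi$ and $i_{\mathscr{B}(\varphi,\psi)}\xi$ the vector part being contracted lies in $T^{1,0}M$, while $\xi$ carries a $(0,1)$-form component, and the pairing of a $T^{1,0}$-vector against a $(0,1)$-form vanishes; hence $i_{\mathscr{A}(\varphi,\psi)}\xi=i_{\mathscr{B}(\varphi,\psi)}\xi=0$ and $i_{[\varphi,\psi]}\xi=i_{\mathscr{C}(\varphi,\psi)}\xi$. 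Symmetrically, in $i_{\xi}\mathscr{A}(\varphi,\psi)$ and $i_{\xi}\mathscr{C}(\varphi,\psi)$ one inserts the $T^{1,0}$-vector of $\xi$ into a $(0,2)$-form, which again vanishes, so $i_{\xi}\mathscr{A}(\varphi,\psi)=i_{\xi}\mathscr{C}(\varphi,\psi)=0$ and $i_{\xi}[\varphi,\psi]=i_{\xi}\mathscr{B}(\varphi,\psi)$. Subtracting yields $i_{[\varphi,\psi]}\xi-i_{\xi}[\varphi,\psi]=i_{\mathscr{C}(\varphi,\psi)}\xi-i_{\xi}\mathscr{B}(\varphi,\psi)$, so the second and third lines agree.

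The computation is essentially bookkeeping, and I do not expect a genuine obstacle; the one place demanding care is the convention for the contraction operator (\ref{contraction2}) and the direction in which a vector of a given type is inserted into a form of a given bidegree, since this is precisely what forces the four mixed terms to vanish and what fixes the signs coming out of Lemma \ref{lemma5}. I would double-check these vanishings against the explicit local computations in cases (1)--(3) preceding Lemma \ref{lemma5}, where the same $T^{1,0}$/$T^{0,1}$ type matching already governs which terms survive, so as to be sure that no nonzero contribution has been discarded.
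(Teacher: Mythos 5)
Your proposal is correct and follows essentially the same route as the paper: the paper's own (terse) proof is precisely to decompose $[\varphi,\psi]$ into its $\mathscr{A}$, $\mathscr{B}$, $\mathscr{C}$ components via Lemma \ref{lemma17}, apply the three commutator identities of Lemma \ref{lemma5} componentwise, and identify the result with $i_{(i_{[\varphi,\psi]}\xi-i_{\xi}[\varphi,\psi])}$ through the same type-vanishing observations you make. Your write-up in fact supplies the "simple calculations" the paper omits, including the correct justification that $i_{\mathscr{A}}\xi=i_{\mathscr{B}}\xi=0$ and $i_{\xi}\mathscr{A}=i_{\xi}\mathscr{C}=0$.
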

By Lemma \ref{lemma11}, we can continue the calculation of (\ref{eq44}).
\begin{align}\label{eq47}
[\nabla,i^{3}_{\phi}]\alpha
=&\,3i^{2}_{\phi}[\nabla,i_{\phi}]\alpha-2i_{\phi}\circ i_{[\phi,\phi]}\alpha
-i_{[\phi,\phi]}i_{\phi}\alpha\nonumber\\
=&\,3i^{2}_{\phi}[\nabla,i_{\phi}]\alpha-2i_{\phi}\circ i_{[\phi,\phi]}\alpha
-(i_{\phi}\circ i_{[\phi,\phi]}+i_{(i_{[\phi,\phi]}\xi-i_{\xi}[\phi,\phi])})\alpha\nonumber\\
=&\,3i^{2}_{\phi}[\nabla,i_{\phi}]\alpha-3i_{\phi}\circ i_{[\phi,\phi]}\alpha
-i_{(i_{[\phi,\phi]}\xi-i_{\xi}[\phi,\phi])})\alpha.\\
[\nabla,i^{4}_{\phi}]\alpha
=&\,\nabla(i^{3}_{\phi}i_{\phi}\alpha)-i^{3}_{\phi}i_{\phi}\nabla\alpha\nonumber\\
=&\,\nabla(i^{3}_{\phi}i_{\phi}\alpha)+i^{3}_{\phi}([\nabla,i_{\phi}]-\nabla\circ i_{\phi})\alpha\nonumber\\
=&\,\nabla\circ i^{3}_{\phi}(i_{\phi}\alpha)-i^{3}_{\phi}\circ\nabla( i_{\phi}\alpha)+i^{3}_{\phi}[\nabla,i_{\phi}]\alpha\nonumber\\
=&\,[\nabla,i^{3}_{\phi}](i_{\phi}\alpha)+i^{3}_{\phi}[\nabla,i_{\phi}]\alpha\nonumber\\
=&\,3i^{2}_{\phi}[\nabla,i_{\phi}]\circ i_{\phi}\alpha-3i_{\phi}i_{[\phi,\phi]}i_{\phi}\alpha
-i_{(i_{[\phi,\phi]}\phi-i_{\phi}[\phi,\phi])}\circ i_{\phi}\alpha+i^{3}_{\phi}[\nabla,i_{\phi}]\alpha\nonumber\\
=&\,3i^{2}_{\phi}\circ(i_{\phi}\circ[\nabla,i_{\phi}]-i_{[\phi,\phi]})\alpha
-3i_{\phi}(i_{\phi}i_{[\phi,\phi]}+i_{(i_{[\phi,\phi]}\phi-i_{\phi}[\phi,\phi])})\alpha\nonumber\\
&\,-i_{(i_{[\phi,\phi]}\phi-i_{\phi}[\phi,\phi])}\circ i_{\phi}\alpha
+i^{3}_{\phi}\circ[\nabla,i_{\phi}]\alpha\nonumber\\
=&\,3i^{3}_{\phi}[\nabla,i_{\phi}]\alpha-3i^{2}_{\phi}i_{[\phi,\phi]}\alpha
-3i^{2}_{\phi}i_{[\phi,\phi]}\alpha
-3i_{\phi}\circ i_{(i_{[\phi,\phi]}\phi-i_{\phi}[\phi,\phi])}\alpha\nonumber\\
&\,-i_{(i_{[\phi,\phi]}\phi-i_{\phi}[\phi,\phi])}\circ i_{\phi}\alpha
+i^{3}_{\phi}[\nabla,i_{\phi}]\alpha\nonumber\\
=&\,4i^{3}_{\phi}[\nabla,i_{\phi}]\alpha-6i^{2}_{\phi}i_{[\phi,\phi]}\alpha
-3i_{\phi}\circ i_{(i_{[\phi,\phi]}\phi-i_{\phi}[\phi,\phi])}\alpha
-i_{(i_{[\phi,\phi]}\phi-i_{\phi}[\phi,\phi])}\circ i_{\phi}\alpha.\label{eq46}
\end{align}
Now we need to calculate
\begin{equation*}
i_{\phi}\circ i_{(i_{[\phi,\phi]}\phi-i_{\phi}[\phi,\phi])}-i_{(i_{[\phi,\phi]}\phi-i_{\phi}[\phi,\phi])}\circ i_{\phi}.
\end{equation*}
We give the following lemma.
\begin{lemma}\label{lemma3}
If $\varphi,\xi\in A^{0,1}_{J}(M,T^{1,0}M)$ and $\psi\in A^{0,2}_{J}(M,T^{1,0}M)\oplus A^{1,1}_{J}(M,T^{1,0}M)\oplus A^{0,2}_{J}(M,T^{0,1}M)$, then
\begin{equation*}
i_{\varphi}\circ i_{(i_{\psi}\xi-i_{\xi}\psi)}=i_{(i_{\psi}\xi-i_{\xi}\psi)}\circ i_{\varphi}.
\end{equation*}
\end{lemma}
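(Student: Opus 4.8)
The plan is to reduce the statement to the graded-commutativity of contraction operators recorded in Lemma~\ref{commutator7}. That lemma tells us that $i_{\varphi}\circ i_{\sigma}=(-1)^{(q+1)(s+1)}i_{\sigma}\circ i_{\varphi}$ whenever $\varphi\in A^{0,q}(M,T^{1,0}M)$ and $\sigma\in A^{0,s}(M,T^{1,0}M)$. Since here $\varphi\in A^{0,1}(M,T^{1,0}M)$, so $q=1$, it suffices to show that the vector-valued form $\sigma:=i_{\psi}\xi-i_{\xi}\psi$ actually lies in $A^{0,2}(M,T^{1,0}M)$. For then $s=2$, the relevant sign is $(-1)^{(1+1)(2+1)}=(-1)^{6}=+1$, and the two operators commute, which is exactly the assertion.

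Thus the key step is a bidegree computation. I would write $\xi=\tau\otimes Y$ with $\tau\in A^{0,1}(M)$ and $Y\in T^{1,0}M$, and decompose $\psi=\psi_{A}+\psi_{B}+\psi_{C}$ according to the three summands $A^{0,2}(M,T^{1,0}M)\oplus A^{1,1}(M,T^{1,0}M)\oplus A^{0,2}(M,T^{0,1}M)$. Then I would compute $i_{\psi}\xi$ and $i_{\xi}\psi$ summand by summand, using repeatedly the elementary fact that a $(1,0)$-vector annihilates a $(0,1)$-form (i.e. $Y\lrcorner\tau=\tau(Y)=0$).

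For $i_{\psi}\xi=(i_{\psi}\tau)\otimes Y$, this forces the contributions of $\psi_{A}$ and $\psi_{B}$ to vanish, because their underlying vectors are of type $(1,0)$ and get contracted against $\tau$; only the $\psi_{C}$-term survives, giving $i_{\psi}\xi=i_{\psi_{C}}\xi\in A^{0,2}(M,T^{1,0}M)$ (its form part is the $(0,2)$-form underlying $\psi_{C}$, and its vector part is $Y\in T^{1,0}M$). Dually, in $i_{\xi}\psi$ the operator $i_{\xi}=\tau\wedge(Y\lrcorner-)$ contracts $Y$ against the form part of each component: for $\psi_{A}$ and $\psi_{C}$ (whose form parts are $(0,2)$-forms) this contraction is zero, while for $\psi_{B}$ (form part a $(1,1)$-form) it produces a $(0,1)$-form, so that $i_{\xi}\psi=i_{\xi}\psi_{B}\in A^{0,2}(M,T^{1,0}M)$. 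Hence $\sigma=i_{\psi_{C}}\xi-i_{\xi}\psi_{B}\in A^{0,2}(M,T^{1,0}M)$, and the reduction above completes the proof.

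The only real subtlety is the type bookkeeping that makes $\sigma$ collapse into the single space $A^{0,2}(M,T^{1,0}M)$ rather than spreading across mixed types; once this is verified, Lemma~\ref{commutator7} applies verbatim and the vanishing sign $(-1)^{6}=+1$ does the rest. I do not expect any analytic difficulty, since all the computations are pointwise and multilinear, and the genuine content is precisely that every mixed-type obstruction to commutativity is killed by the degree constraint on $\varphi$, $\xi$ and $\psi$.
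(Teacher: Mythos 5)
Your proof is correct and follows essentially the same route as the paper: the paper's entire proof is the observation that $i_{\psi}\xi-i_{\xi}\psi\in A^{0,2}_{J}(M,T^{1,0}M)$, after which commutativity follows from Lemma \ref{commutator7} with $q=1$, $s=2$ and sign $(-1)^{6}=+1$, exactly as you argue. Your type-by-type verification that only the $\psi_{C}$-term survives in $i_{\psi}\xi$ and only the $\psi_{B}$-term survives in $i_{\xi}\psi$ simply fills in the bookkeeping the paper leaves implicit.
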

\begin{proof}
Note that $i_{\psi}\xi-i_{\xi}\psi\in A^{0,2}_{J}(M,T^{1,0}M)$.
\end{proof}
As a direct consequence of Lemma \ref{lemma3}, for $\phi\in A^{0,1}_{J}(M,T^{1,0}M)$ we have
\begin{equation}\label{eq45}
i_{\phi}\circ i_{(i_{[\phi,\phi]}\phi-i_{\phi}[\phi,\phi])}
=i_{(i_{[\phi,\phi]}\phi-i_{\phi}[\phi,\phi])}\circ i_{\phi}.
\end{equation}
By (\ref{eq45}), we continue the calculation of (\ref{eq46}).
\begin{align}\label{eq461}
[\nabla,i^{4}_{\phi}]\alpha
=&\,4i^{3}_{\phi}[\nabla,i_{\phi}]\alpha-6i^{2}_{\phi}i_{[\phi,\phi]}\alpha
-3i_{\phi}\circ i_{(i_{[\phi,\phi]}\phi-i_{\phi}[\phi,\phi])}\alpha
-i_{(i_{[\phi,\phi]}\phi-i_{\phi}[\phi,\phi])}\circ i_{\phi}\alpha\nonumber\\
=&\,4i^{3}_{\phi}[\nabla,i_{\phi}]\alpha-6i^{2}_{\phi}i_{[\phi,\phi]}\alpha
-4i_{\phi}\circ i_{(i_{[\phi,\phi]}\phi-i_{\phi}[\phi,\phi])}\alpha.
\end{align}
By comparing (\ref{eq48}), (\ref{eq47}), (\ref{eq461}) and considering (\ref{eq45}), we introduce the following proposition.
\begin{proposition}\label{proposition1}
Let $(M,J)$ be an almost complex manifold, $\phi\in A^{0,1}_{J}(M,T^{1,0}M)$ be a Beltrami differential, $E$ be a vector bundle on $M$ and $\nabla$ be a connection of $E$. Then we have
\begin{equation}\label{eq49}
[\nabla,i^{k}_{\phi}]
=ki^{k-1}_{\phi}\circ[\nabla,i_{\phi}]
-\frac{k(k-1)}{2}i^{k-2}_{\phi}\circ i_{[\phi,\phi]}
-\frac{k(k-1)(k-2)}{3!}i^{k-3}_{\phi}\circ i_{(i_{[\phi,\phi]}\phi-i_{\phi}[\phi,\phi])}.
\end{equation}
\end{proposition}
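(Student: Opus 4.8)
The plan is to prove (\ref{eq49}) by induction on $k$, using the graded-Leibniz behaviour of the commutator with $\nabla$ together with the three commutation relations already established. The base cases are precisely the hand computations (\ref{eq48}), (\ref{eq47}) and (\ref{eq461}) for $k=2,3,4$ (and (\ref{eq50}) for $k=1$, where the two correction terms carry vanishing coefficients), so I may launch the induction from there. The structural identity I would isolate first is the recursion
\begin{equation*}
[\nabla,i^{k}_{\phi}]=[\nabla,i^{k-1}_{\phi}]\circ i_{\phi}+i^{k-1}_{\phi}\circ[\nabla,i_{\phi}],
\end{equation*}
which follows immediately from writing $i^{k}_{\phi}=i^{k-1}_{\phi}\circ i_{\phi}$ and expanding $\nabla\circ i^{k-1}_{\phi}\circ i_{\phi}$; this is the same manipulation carried out line-by-line in the derivations of (\ref{eq48})--(\ref{eq461}).

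The inductive step then amounts to substituting the hypothesis for $[\nabla,i^{k-1}_{\phi}]$ into this recursion and normal-ordering each resulting term so that all factors of $i_{\phi}$ stand to the left. Three moves accomplish this. First, $(k-1)i^{k-2}_{\phi}[\nabla,i_{\phi}]\circ i_{\phi}$ is rewritten via (\ref{eq50}) as $(k-1)i^{k-1}_{\phi}[\nabla,i_{\phi}]-(k-1)i^{k-2}_{\phi}\circ i_{[\phi,\phi]}$. Second, the term $i^{k-3}_{\phi}\circ i_{[\phi,\phi]}\circ i_{\phi}$ is reordered by Lemma \ref{lemma11}, which turns $i_{[\phi,\phi]}\circ i_{\phi}$ into $i_{\phi}\circ i_{[\phi,\phi]}+i_{(i_{[\phi,\phi]}\phi-i_{\phi}[\phi,\phi])}$, thereby feeding both an $i^{k-2}_{\phi}\circ i_{[\phi,\phi]}$ contribution and the third-order defect $i^{k-3}_{\phi}\circ i_{(i_{[\phi,\phi]}\phi-i_{\phi}[\phi,\phi])}$. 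Third, the defect term $i^{k-4}_{\phi}\circ i_{(i_{[\phi,\phi]}\phi-i_{\phi}[\phi,\phi])}\circ i_{\phi}$ is collapsed to $i^{k-3}_{\phi}\circ i_{(i_{[\phi,\phi]}\phi-i_{\phi}[\phi,\phi])}$ using the commutation (\ref{eq45}).

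What remains is the bookkeeping of coefficients, and the Pascal-type identities work out exactly: the coefficient of $i^{k-1}_{\phi}[\nabla,i_{\phi}]$ is $(k-1)+1=k$; the coefficient of $i^{k-2}_{\phi}\circ i_{[\phi,\phi]}$ is $-(k-1)-\tfrac{(k-1)(k-2)}{2}=-\tfrac{k(k-1)}{2}$; and the coefficient of the third-order defect is $-\tfrac{(k-1)(k-2)}{2}-\tfrac{(k-1)(k-2)(k-3)}{3!}=-\tfrac{k(k-1)(k-2)}{3!}$, matching (\ref{eq49}) term by term.

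The genuine content — the step I expect to be the real obstacle rather than the arithmetic — is the \emph{closure} of the recursion: one must be certain that pushing $i_{\phi}$ past the third-order defect produces no new fourth-order term that would force a fourth correction in (\ref{eq49}). This is guaranteed precisely by the fact recorded in the proof of Lemma \ref{lemma3}, namely $i_{[\phi,\phi]}\phi-i_{\phi}[\phi,\phi]\in A^{0,2}_{J}(M,T^{1,0}M)$, so that (\ref{eq45}) holds — equivalently, the vanishing bracket $[i_{A},i_{\phi}]=0$ of Lemma \ref{lemma5} for $A\in A^{0,2}(M,T^{1,0}M)$ — and the defect simply commutes through. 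It is this type-purity of the defect that makes the expansion terminate after three correction terms, exactly as stated.
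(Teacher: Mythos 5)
Your proof is correct and follows essentially the same route as the paper's: the same recursion $[\nabla,i^{k}_{\phi}]=[\nabla,i^{k-1}_{\phi}]\circ i_{\phi}+i^{k-1}_{\phi}\circ[\nabla,i_{\phi}]$, the same normal-ordering via (\ref{eq50}), Lemma \ref{lemma11} and (\ref{eq45}), and the identical Pascal-type coefficient bookkeeping. Your closing observation, that the type-purity $i_{[\phi,\phi]}\phi-i_{\phi}[\phi,\phi]\in A^{0,2}_{J}(M,T^{1,0}M)$ (Lemma \ref{lemma3}) is what makes the expansion terminate after three correction terms, is precisely the paper's key point as well.
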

\begin{proof}
For $n=2,3,4$, we already have (\ref{eq47}),(\ref{eq48}), (\ref{eq46}). Now we assume that (\ref{eq49}) holds for $n=k$. For the case $n=k+1$, we have
\begin{align*}
[\nabla,i^{k+1}_{\phi}]\alpha
=&\,\nabla\circ i^{k}_{\phi}(i_{\phi}\alpha)-i^{k}_{\phi}(i_{\phi}\circ \nabla)\alpha\\
=&\,\nabla\circ i^{k}_{\phi}(i_{\phi}\alpha)+i^{k}_{\phi}([\nabla,i_{\phi}]-\nabla\circ i_{\phi})\alpha\\
=&\,\nabla\circ i^{k}_{\phi}(i_{\phi}\alpha)-i^{k}_{\phi}\circ\nabla(i_{\phi}\alpha)+i^{k}_{\phi}\circ [\nabla,i_{\phi}]\alpha\\
=&\,[\nabla,i^{k}_{\phi}](i_{\phi}\alpha)+i^{k}_{\phi}\circ [\nabla,i_{\phi}]\alpha\\
=&\,ki^{k-1}_{\phi}\circ[\nabla,i_{\phi}](i_{\phi}\alpha)
-\frac{k(k-1)}{2}i^{k-2}_{\phi}\circ i_{[\phi,\phi]}(i_{\phi}\alpha)\\
&-\frac{k(k-1)(k-2)}{3!}i^{k-3}_{\phi}\circ i_{(i_{[\phi,\phi]\phi}
-i_{\phi}[\phi,\phi])}(i_{\phi}\alpha)
+i^{k}_{\phi}\circ [\nabla,i_{\phi}]\alpha\\
=&\,ki^{k-1}_{\phi}\circ(i_{\phi}\circ[\nabla,i_{\phi}]-i_{[\phi,\phi]})\alpha\quad( \textup{By}\quad(\ref{eq50}))\\
&-\frac{k(k-1)}{2}i^{k-2}_{\phi}\circ\bigl(i_{\phi}i_{[\phi,\phi]}
+i_{(i_{[\phi,\phi]}\phi-i_{\phi}[\phi,\phi])}\bigr)\alpha\quad (\textup{By}\quad(\ref{eq51}))\\
&-\frac{k(k-1)(k-2)}{3!}i^{k-3}_{\phi}\circ i_{\phi}i_{(i_{[\phi,\phi]\phi}
-i_{\phi}[\phi,\phi])}\alpha\quad(\textup{By}\quad (\ref{eq45}))\\
&+i^{k}_{\phi}\circ [\nabla,i_{\phi}]\alpha\\
=&\,ki^{k}_{\phi}\circ[\nabla,i_{\phi}]\alpha-ki^{k-1}_{\phi}i_{[\phi,\phi]}\alpha
-\frac{k(k-1)}{2}i^{k-1}_{\phi}\circ i_{[\phi,\phi]}\alpha
-\frac{k(k-1)}{2}i^{k-2}_{\phi}\circ i_{(i_{[\phi,\phi]}\phi-i_{\phi}[\phi,\phi])}\alpha\\
&-\frac{k(k-1)(k-2)}{3!}i^{k-2}_{\phi}\circ i_{(i_{[\phi,\phi]\phi}
-i_{\phi}[\phi,\phi])}\alpha
+i^{k}_{\phi}\circ [\nabla,i_{\phi}]\alpha\\
=&\,(k+1)i^{k}_{\phi}\circ[\nabla,i_{\phi}]\alpha
-\frac{(k+1)k}{2}i^{k-1}_{\phi}\circ i_{[\phi,\phi]}\alpha
-\frac{(k+1)k(k-1)}{3!}i^{k-2}_{\phi}\circ i_{(i_{[\phi,\phi]}\phi-i_{\phi}[\phi,\phi])}\alpha.
\end{align*}
We have proved the proposition.
\end{proof}
On the base of Proposition \ref{proposition1}, we can prove the following theorem.
\begin{theorem}\label{theorem1}
Let $(M,J)$ be an almost complex manifold, $E\rightarrow M$ be a vector bundle over $M$, $\nabla$ be a connection of $E$ and $\phi\in A^{0,1}_{J}(M,T^{1,0}M)$ be a Beltrami differential that generates a new almost complex structure $J_{\phi}$. Then on $A^{\ast,\ast}_{J}(M,E)$, we have
\begin{equation}\label{eq64}
e^{-i_{\phi}}\circ\nabla\circ e^{i_{\phi}}
=\nabla-\mathcal{L}^{\nabla}_{\phi}-i_{\frac{1}{2}[\phi,\phi]}
-i_{\frac{1}{3!}(i_{[\phi,\phi]}\phi-i_{\phi}[\phi,\phi])}.
\end{equation}
\end{theorem}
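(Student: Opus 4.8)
The plan is to expand the exponentials as the (finite) sums $e^{\pm i_{\phi}}=\sum_{k\ge 0}\frac{(\pm1)^{k}}{k!}i_{\phi}^{k}$ and to push $\nabla$ through $e^{i_{\phi}}$ term by term, using Proposition \ref{proposition1} to control each commutator $[\nabla,i_{\phi}^{k}]$. Since $i_{\phi}$ preserves total degree, $i_{\phi}^{k}$ is an even operator, so $\nabla\circ i_{\phi}^{k}=i_{\phi}^{k}\circ\nabla+[\nabla,i_{\phi}^{k}]$ with the \emph{ungraded} commutator that appears in (\ref{eq49}). Hence
\[
\nabla\circ e^{i_{\phi}}=\sum_{k\ge0}\frac{1}{k!}\bigl(i_{\phi}^{k}\circ\nabla+[\nabla,i_{\phi}^{k}]\bigr)=e^{i_{\phi}}\circ\nabla+\sum_{k\ge0}\frac{1}{k!}[\nabla,i_{\phi}^{k}].
\]
Pre-composing with $e^{-i_{\phi}}$ and using that all operators in sight are polynomials in the single operator $i_{\phi}$ (so they commute and $e^{-i_{\phi}}\circ e^{i_{\phi}}=I$), the first summand collapses to $\nabla$, reducing the theorem to evaluating $e^{-i_{\phi}}\circ\sum_{k}\frac{1}{k!}[\nabla,i_{\phi}^{k}]$.

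For the remaining sum I would substitute the three-term expression (\ref{eq49}) of Proposition \ref{proposition1} and resum each piece separately. The combinatorial heart is the family of index-shift identities
\[
\sum_{k\ge1}\frac{k}{k!}\,i_{\phi}^{k-1}=\sum_{k\ge2}\frac{k(k-1)}{k!}\,i_{\phi}^{k-2}=\sum_{k\ge3}\frac{k(k-1)(k-2)}{k!}\,i_{\phi}^{k-3}=e^{i_{\phi}},
\]
each of which follows from $\frac{k(k-1)\cdots(k-j+1)}{k!}=\frac{1}{(k-j)!}$ and relabelling. Because in (\ref{eq49}) the operators $[\nabla,i_{\phi}]$, $i_{[\phi,\phi]}$ and $i_{(i_{[\phi,\phi]}\phi-i_{\phi}[\phi,\phi])}$ all stand to the \emph{right} of the powers of $i_{\phi}$, I can factor them out on the right after resumming, obtaining
\[
\sum_{k\ge0}\frac{1}{k!}[\nabla,i_{\phi}^{k}]=e^{i_{\phi}}\circ\Bigl([\nabla,i_{\phi}]-\tfrac{1}{2}i_{[\phi,\phi]}-\tfrac{1}{3!}i_{(i_{[\phi,\phi]}\phi-i_{\phi}[\phi,\phi])}\Bigr).
\]

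Finally I would pre-compose with $e^{-i_{\phi}}$, cancel $e^{-i_{\phi}}\circ e^{i_{\phi}}=I$, and rewrite $[\nabla,i_{\phi}]=\nabla\circ i_{\phi}-i_{\phi}\circ\nabla=-\mathcal{L}^{\nabla}_{\phi}$ via the definition (\ref{eq301}) of the generalized Lie derivative (note $|\phi|=1$). Pulling the scalars $\tfrac12$ and $\tfrac{1}{3!}$ inside the contractions by the linearity of $\varphi\mapsto i_{\varphi}$ then produces exactly (\ref{eq64}). The only delicate points are bookkeeping ones: keeping the three correction operators on the right throughout, so that the factorization above is legitimate, and justifying the rearrangement of the sums, which is immediate since $\dim M<\infty$ forces $i_{\phi}^{k}=0$ for all large $k$ and every sum is finite. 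I do not anticipate a substantive obstacle here: the entire analytic content of the theorem has been front-loaded into Proposition \ref{proposition1}, and what remains is this resummation.
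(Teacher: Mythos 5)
Your proposal is correct and follows essentially the same route as the paper: both proofs substitute the three-term formula of Proposition \ref{proposition1} into $\sum_{k}\frac{1}{k!}[\nabla,i_{\phi}^{k}]$, resum via $\frac{k(k-1)\cdots(k-j+1)}{k!}=\frac{1}{(k-j)!}$ to factor out $e^{i_{\phi}}$ on the left, cancel against $e^{-i_{\phi}}$, and identify $[\nabla,i_{\phi}]=-\mathcal{L}^{\nabla}_{\phi}$ from (\ref{eq301}). The only difference is presentational: you resum after dividing the whole commutator identity by $k!$, while the paper divides each (\ref{eq49}) by $k!$ term by term before summing, which is the same computation.
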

\begin{proof}
A direct consequence of $\nabla\circ i^{k}_{\phi}-i^{k}_{\phi}\circ\nabla=[\nabla,i^{k}_{\phi}]$ and Proposition \ref{proposition1}, we have
\begin{align*}
&\nabla\circ \frac{1}{k!}i^{k}_{\phi}-\frac{1}{k!}i^{k}_{\phi}\circ\nabla\\
=&\,\frac{1}{k!}ki^{k-1}_{\phi}\circ[\nabla,i_{\phi}]
-\frac{1}{k!}\frac{k(k-1)}{2}i^{k-2}_{\phi}\circ i_{[\phi,\phi]}
-\frac{1}{k!}\frac{k(k-1)(k-2)}{3!}i^{k-3}_{\phi}\circ i_{(i_{[\phi,\phi]}\phi-i_{\phi}[\phi,\phi])}\\
=&\,\frac{1}{(k-1)!}i^{k-1}_{\phi}\circ[\nabla,i_{\phi}]
-\frac{1}{(k-2)!}i^{k-2}_{\phi}\circ i_{\frac{1}{2}[\phi,\phi]}
-\frac{1}{(k-3)!}i^{k-3}_{\phi}\circ i_{\frac{1}{3!}(i_{[\phi,\phi]}\phi-i_{\phi}[\phi,\phi])}.
\end{align*}
Thus
\begin{equation*}
\nabla\circ e^{i_{\phi}}-e^{i_{\phi}}\circ\nabla
=e^{i_{\phi}}\circ[\nabla,i_{\phi}]-e^{i_{\phi}}\circ i_{\frac{1}{2}[\phi,\phi]}-e^{i_{\phi}}\circ i_{\frac{1}{3!}(i_{[\phi,\phi]}\phi-i_{\phi}[\phi,\phi])}.
\end{equation*}
\end{proof}
If we choose the vector bundle to be the trivial bundle $M\times\mathbb{R}$ and the connection to be the exterior differential $d$, then we have the following corollary.
\begin{corollary}\label{corollary3}
Let $(M,J)$ be an almost complex manifold, $d$ be the ordinary exterior differential operator over $M$. Then
\begin{equation*}
e^{-i_{\phi}}\circ d\circ e^{i_{\phi}}
=d-\mathcal{L}_{\phi}-i_{\frac{1}{2}[\phi,\phi]}
-i_{\frac{1}{3!}(i_{[\phi,\phi]}\phi-i_{\phi}[\phi,\phi])}.
\end{equation*}
\end{corollary}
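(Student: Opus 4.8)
The plan is to obtain this identity as an immediate specialization of Theorem \ref{theorem1}, so essentially no new computation is required. First I would take the vector bundle in Theorem \ref{theorem1} to be the trivial line bundle $E = M\times\mathbb{R}$, whose smooth sections are exactly the smooth functions on $M$. Under the resulting identification $A^{\ast,\ast}_J(M,E)\cong A^{\ast,\ast}_J(M)$, the extension formula (\ref{eq64}) acts directly on ordinary (complex-valued) differential forms on $M$, which is the setting of the corollary.

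Next I would verify the purely bookkeeping point that the exterior differential $d$ is a connection on this trivial bundle: writing sections against the constant frame $s\equiv 1$, the operator $d$ satisfies the Leibniz rule defining a connection. Taking $\nabla = d$ in the definition (\ref{eq301}) of the generalized Lie derivative, $\mathcal{L}^{\nabla}_{\phi}=[i_{\phi},\nabla]$ collapses to $\mathcal{L}_{\phi}=[i_{\phi},d]$, which is precisely the symbol written in the statement of the corollary. Thus the notation $\mathcal{L}_\phi$ of the corollary matches $\mathcal{L}^d_\phi$ with no ambiguity.

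Finally I would substitute $\nabla = d$ into equation (\ref{eq64}). The right-hand side $\nabla-\mathcal{L}^{\nabla}_{\phi}-i_{\frac{1}{2}[\phi,\phi]}-i_{\frac{1}{3!}(i_{[\phi,\phi]}\phi-i_{\phi}[\phi,\phi])}$ becomes $d-\mathcal{L}_{\phi}-i_{\frac{1}{2}[\phi,\phi]}-i_{\frac{1}{3!}(i_{[\phi,\phi]}\phi-i_{\phi}[\phi,\phi])}$, which is exactly the claimed expression, completing the argument. I do not expect any genuine obstacle: every structural ingredient feeding Theorem \ref{theorem1}---Proposition \ref{proposition1}, the commutator identities of Lemma \ref{lemma5} and Lemma \ref{lemma11}, and the exponential resummation in the proof of the theorem---was established for an \emph{arbitrary} connection, so the case $\nabla = d$ is covered verbatim. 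The only thing to confirm is the notational reduction $\mathcal{L}^{d}_{\phi}=\mathcal{L}_{\phi}$, which is immediate from (\ref{eq301}).
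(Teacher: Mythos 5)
Your proposal is correct and coincides with the paper's own argument: the paper likewise obtains Corollary \ref{corollary3} by taking $E=M\times\mathbb{R}$ and $\nabla=d$ in Theorem \ref{theorem1}, with $\mathcal{L}^{d}_{\phi}$ written simply as $\mathcal{L}_{\phi}$. Your extra remarks (checking $d$ is a connection on the trivial bundle and that every ingredient of Theorem \ref{theorem1} holds for an arbitrary connection) are exactly the bookkeeping the paper leaves implicit.
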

When $\nabla^{0,1}=\bar\partial$, we have the following useful lemma.
\begin{lemma}\label{nlemma1}
Let $\phi$ be a Beltrami differential, $\bar\partial$ be the (extended) Cauchy-Riemannian operator and $\alpha\in A^{p,q}_{J}(M)$, then
\begin{equation*}
\bar\partial(\phi\lrcorner\,\alpha)=(\bar\partial\phi)\lrcorner\,\alpha+\phi\lrcorner\,\bar\partial\alpha.
\end{equation*}
Equivalently
\begin{equation*}
[\bar\partial,i_{\phi}]=i_{\bar\partial\phi}.
\end{equation*}
\end{lemma}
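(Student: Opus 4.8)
The two displayed identities are equivalent: reading $\bar\partial(\phi\lrcorner\alpha)=(\bar\partial\phi)\lrcorner\alpha+\phi\lrcorner\bar\partial\alpha$ as an operator equation gives $\bar\partial\circ i_\phi-i_\phi\circ\bar\partial=i_{\bar\partial\phi}$ (the bracket is the \emph{ordinary} commutator, since $i_\phi$ sends $A^{p,q}_J$ to $A^{p-1,q+1}_J$ and so preserves total degree), so it suffices to prove $[\bar\partial,i_\phi]=i_{\bar\partial\phi}$. The plan is to reduce first to a decomposable Beltrami differential and then to an identity for a single $(1,0)$-vector field. Because $\bar\partial$ and contraction are local operators and all three terms are $\mathbb{C}$-linear in $\phi$, I may assume $\phi=\rho\otimes X$ locally, with $\rho\in A^{0,1}_J(M)$ and $X$ a local section of $T^{1,0}M$; then $i_\phi\alpha=\rho\wedge(X\lrcorner\alpha)$ by (\ref{contraction1}).

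Using that $\bar\partial$ is an anti-derivation on $A^{\ast,\ast}(M)$ (being the $(0,1)$-part of $d$) and that $|\rho|=1$, I would expand
\[
\bar\partial(i_\phi\alpha)-i_\phi(\bar\partial\alpha)=\bar\partial\rho\wedge(X\lrcorner\alpha)-\rho\wedge\bigl(\bar\partial(X\lrcorner\alpha)+X\lrcorner\bar\partial\alpha\bigr).
\]
On the other hand, the extended $\bar\partial$ on $A^{0,\ast}(M,T^{1,0}M)$ gives $\bar\partial\phi=\bar\partial\rho\otimes X-\rho\wedge\bar\partial X$, whence
\[
i_{\bar\partial\phi}\alpha=\bar\partial\rho\wedge(X\lrcorner\alpha)-\rho\wedge\bigl((\bar\partial X)\lrcorner\alpha\bigr).
\]
Comparing the two expressions, the lemma reduces to the single-vector-field identity
\[
\bar\partial(X\lrcorner\alpha)+X\lrcorner\bar\partial\alpha=(\bar\partial X)\lrcorner\alpha,
\]
that is, $[\bar\partial,i_X]=i_{\bar\partial X}$ as the graded commutator $\bar\partial i_X+i_X\bar\partial$ of the two odd anti-derivations $\bar\partial$ and $i_X$.

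To prove this last identity I would observe that both sides are \emph{even derivations} of the bigraded algebra $A^{\ast,\ast}(M)$: the left side is the graded commutator of two odd anti-derivations, hence an even derivation, while $i_{\bar\partial X}$ is a derivation because contraction by a $(0,1)$-form-valued vector field satisfies $i_\psi(\alpha\wedge\beta)=i_\psi\alpha\wedge\beta+\alpha\wedge i_\psi\beta$ for $\psi\in A^{0,1}(M,T^{1,0}M)$. Two derivations agreeing on algebra generators agree everywhere, so it suffices to test the identity on functions and on $1$-forms $\theta^i,\theta^{\bar i}$, taking $X=e_m$. On functions and on $\theta^{\bar i}$ both sides vanish, since $X\in T^{1,0}M$ annihilates $(0,\ast)$-forms; on $\theta^i$ the identity becomes $([e_{\bar j},e_m]^{1,0})^i\theta^{\bar j}=([e_{\bar j},e_m]^{1,0})^i\theta^{\bar j}$, which is exactly what one gets by matching $\bar\partial e_m=([e_{\bar j},e_m]^{1,0})^\ell\theta^{\bar j}\otimes e_\ell$ from (\ref{neweq1}) against $\bar\partial\theta^i=([e_{\bar j},e_k]^{1,0})^i\theta^k\wedge\theta^{\bar j}$ from (\ref{neweq2}).

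The bulk of the work is bookkeeping: tracking the $(-1)^{|\rho|}$ factors in the graded Leibniz rule and confirming that the reduction to a decomposable $\phi$ and to generators is legitimate. The conceptual core — and the step I would verify most carefully — is the derivation identity $[\bar\partial,i_X]=i_{\bar\partial X}$ together with the correct sign in the graded anticommutator $\bar\partial i_X+i_X\bar\partial$; once that is in place, the assembly above is immediate.
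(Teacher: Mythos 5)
Your proof is correct, and its skeleton coincides with the paper's: both arguments use the graded Leibniz rule for $\bar\partial$ together with the definition of the extended $\bar\partial$ on $A^{0,1}(M,T^{1,0}M)$ to reduce the lemma to the single-vector-field identity $\bar\partial(X\lrcorner\,\alpha)+X\lrcorner\,\bar\partial\alpha=(\bar\partial X)\lrcorner\,\alpha$, which is exactly the paper's equation (\ref{neweq9}). The genuine difference is how that identity is proved. The paper establishes it by brute force: it expands $\bar\partial(e_{k}\lrcorner\,\alpha)$, $(\bar\partial e_{k})\lrcorner\,\alpha$ and $e_{k}\lrcorner\,\bar\partial\alpha$ on a general $(p,q)$-form in multi-index notation — the long displays (\ref{neweq6}), (\ref{neweq7}), (\ref{neweq8}) — and matches the sums term by term, nearly two pages of sign bookkeeping. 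You instead observe that $\bar\partial\circ i_{X}+i_{X}\circ\bar\partial$, being the graded commutator of two odd anti-derivations, is an even derivation of the exterior algebra, that $i_{\bar\partial X}$ is likewise an even derivation (the sign from moving the $(0,1)$-form factor past $\alpha$ cancels the sign of the interior product), and that two local derivations agreeing on the generators $f,\theta^{i},\theta^{\bar i}$ agree everywhere; the generator check is a three-line computation whose only nontrivial case, $\theta^{i}$, is precisely the comparison of (\ref{neweq1}) with (\ref{neweq2}). This replaces the paper's computation with a structural argument that also explains \emph{why} the identity holds; what the paper's route buys is only explicitness. One small point of bookkeeping: your reduction uses a decomposable $\phi=\rho\otimes X$ with arbitrary $X$, but your generator check takes $X=e_{m}$. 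Either run the check for a general $X=X^{m}e_{m}$ (it goes through: on $\theta^{i}$ both sides acquire the same extra term $\bar\partial X^{i}$), or first decompose $\phi=\phi^{r}\otimes e_{r}$ over the local frame, as the paper in effect does; either way this is notational, not a gap.
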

\begin{proof}
For $\alpha=\alpha_{i_{1}\ldots i_{p}\bar J}\theta^{i_{1}}\wedge\cdots\wedge\theta^{i_{p}}\wedge\theta^{\bar J}$, $|\bar J|=q$,
\begin{small}
\begin{align}
\bar\partial(e_{k}\lrcorner\alpha)
=&\,\bar\partial(\alpha_{i_{1}\ldots i_{p}\bar J}e_{k}\lrcorner(\theta^{i_{1}}\wedge\cdots\wedge\theta^{i_{p}}\wedge\theta^{\bar J}))\nonumber\\
=&\,\bar\partial\alpha_{i_{1}\ldots i_{p}\bar J}\wedge
(e_{k}\lrcorner(\theta^{i_{1}}\wedge\cdots\wedge\theta^{i_{p}}\wedge\theta^{\bar J}))\nonumber\\
&+\alpha_{i_{1}\ldots i_{p}\bar J}
\sum_{\ell=1}^{p}(-1)^{\ell-1}\bar\partial(\theta^{i_{1}}\wedge\cdots\wedge\theta^{i_{\ell-1}}\wedge(e_{k}\lrcorner\theta^{i_{\ell}})
\wedge\theta^{i_{\ell+1}}\wedge\cdots\wedge\theta^{i_{p}}\wedge\theta^{\bar J})\nonumber\\
=&\,\bar\partial\alpha_{i_{1}\ldots i_{p}\bar J}\wedge
e_{k}\lrcorner(\theta^{i_{1}}\wedge\cdots\wedge\theta^{i_{p}}\wedge\theta^{\bar J})\nonumber\\
&+\alpha_{i_{1}\ldots i_{p}\bar J}
\sum_{\ell=1}^{p}(-1)^{\ell-1}\sum_{t=1}^{\ell-1}(-1)^{t-1}\theta^{i_{1}}\wedge\cdots\wedge\bar\partial\theta^{i_{t}}\wedge\cdots\wedge
\theta^{i_{\ell-1}}\wedge(e_{k}\lrcorner\theta^{i_{\ell}})\wedge\theta^{i_{\ell+1}}\wedge\cdots\wedge\theta^{i_{p}}\wedge\theta^{\bar J}\nonumber\\
&+\alpha_{i_{1}\ldots i_{p}\bar J}
\sum_{\ell=1}^{p}(-1)^{\ell-1}\sum_{t=\ell+1}^{p}(-1)^{t}\theta^{i_{1}}\wedge\cdots\wedge\theta^{i_{\ell-1}}\wedge(e_{k}\lrcorner\theta^{i_{\ell}})
\wedge\theta^{i_{\ell+1}}\wedge\cdots\wedge\bar\partial\theta^{i_{t}}\wedge\cdots\wedge\theta^{i_{p}}\wedge\theta^{\bar J}\nonumber\\
&+\alpha_{i_{1}\ldots i_{p}\bar J}
(-1)^{p+1}e_{k}\lrcorner(\theta^{i_{1}}\wedge\cdots\wedge\theta^{i_{p}}\wedge\bar\partial\theta^{\bar J})\nonumber\\
=&\,\bar\partial\alpha_{i_{1}\ldots i_{p}\bar J}\wedge
e_{k}\lrcorner(\theta^{i_{1}}\wedge\cdots\wedge\theta^{i_{p}}\wedge\theta^{\bar J})\nonumber\\
&+\alpha_{i_{1}\ldots i_{p}\bar J}
\sum_{\ell=1}^{p}\sum_{t=1}^{\ell-1}(-1)^{\ell+t}\theta^{i_{1}}\wedge\cdots\wedge\bar\partial\theta^{i_{t}}\wedge\cdots\wedge
\theta^{i_{\ell-1}}\wedge(e_{k}\lrcorner\theta^{i_{\ell}})\wedge\theta^{i_{\ell+1}}\wedge\cdots\wedge\theta^{i_{p}}\wedge\theta^{\bar J}\nonumber\\
&+\alpha_{i_{1}\ldots i_{p}\bar J}
\sum_{\ell=1}^{p}\sum_{t=\ell+1}^{p}(-1)^{\ell+t-1}\theta^{i_{1}}\wedge\cdots\wedge\theta^{i_{\ell-1}}\wedge(e_{k}\lrcorner\theta^{i_{\ell}})
\wedge\theta^{i_{\ell+1}}\wedge\cdots\wedge\bar\partial\theta^{i_{t}}\wedge\cdots\wedge\theta^{i_{p}}\wedge\theta^{\bar J}\nonumber\\
&+\alpha_{i_{1}\ldots i_{p}\bar J}
(-1)^{p+1}e_{k}\lrcorner(\theta^{i_{1}}\wedge\cdots\wedge\theta^{i_{p}}\wedge\bar\partial\theta^{\bar J}).\label{neweq6}
\end{align}
\end{small}
\begin{small}
\begin{align}
\bar\partial e_{k}\lrcorner\alpha
=&\,\bar\partial e_{k}\lrcorner\alpha_{i_{1}\ldots i_{p}\bar J}\theta^{i_{1}}\wedge\cdots\wedge\theta^{i_{p}}\wedge\theta^{\bar J}\nonumber\\
=&\,\alpha_{i_{1}\ldots i_{p}\bar j_{1}\ldots \bar j_{q}}\bar\partial e_{k}\lrcorner(\theta^{i_{1}}\wedge\cdots\wedge\theta^{i_{p}}\wedge\theta^{\bar j_{1}}\wedge\cdots\wedge\theta^{\bar j_{q}})\nonumber\\
=&\,\alpha_{i_{1}\ldots i_{p}\bar J}\sum_{\ell=1}^{p}\theta^{i_{1}}\wedge\cdots\wedge(\bar\partial e_{k}\lrcorner\theta^{i_{\ell}})\wedge\cdots\wedge\theta^{i_{p}}\wedge\theta^{\bar J}\nonumber\\
=&\,\alpha_{i_{1}\ldots i_{p}\bar J}
\sum_{\ell=1}^{p}\theta^{i_{1}}\wedge\cdots\wedge(e_{k}\lrcorner\bar\partial\theta^{i_{\ell}})\wedge\cdots\wedge\theta^{i_{p}}\wedge\theta^{\bar J}\nonumber\\
=&\,\alpha_{i_{1}\ldots i_{p}\bar J}
\sum_{\ell=1}^{p}(-1)^{\ell-1}e_{k}\lrcorner(\theta^{i_{1}}\wedge\cdots\wedge\bar\partial\theta^{i_{\ell}}\wedge\cdots\wedge\theta^{i_{p}}\wedge\theta^{\bar J})\nonumber\\
&-\alpha_{i_{1}\ldots i_{p}\bar J}\sum_{\ell=1}^{p}(-1)^{\ell-1}\sum_{t=1}^{\ell-1}(-1)^{t-1}\theta^{i_{1}}\wedge\cdots\wedge(e_{k}\lrcorner\theta^{i_{t}})\wedge\cdots\wedge
\bar\partial\theta^{i_{\ell}}\wedge\cdots\wedge\theta^{i_{p}}\wedge\theta^{\bar J}\nonumber\\
&-\alpha_{i_{1}\ldots i_{p}\bar J}\sum_{\ell=1}^{p}(-1)^{\ell-1}\sum_{t=\ell+1}^{p}(-1)^{t}\theta^{i_{1}}\wedge\cdots\wedge\bar\partial\theta^{i_{\ell}}\wedge\cdots
(e_{k}\lrcorner\theta^{i_{t}})\wedge\cdots\wedge\theta^{i_{p}}\wedge\theta^{\bar J}\bigr\}\nonumber\\
=&\,\alpha_{i_{1}\ldots i_{p}\bar J}
\sum_{\ell=1}^{p}(-1)^{\ell-1}e_{k}\lrcorner(\theta^{i_{1}}\wedge\cdots\wedge\bar\partial\theta^{i_{\ell}}\wedge\cdots\wedge\theta^{i_{p}}\wedge\theta^{\bar J})\nonumber\\
&-\alpha_{i_{1}\ldots i_{p}\bar J}\sum_{\ell=1}^{p}\sum_{t=1}^{\ell-1}(-1)^{\ell+t}\theta^{i_{1}}\wedge\cdots\wedge(e_{k}\lrcorner\theta^{i_{t}})\wedge\cdots\wedge
\bar\partial\theta^{i_{\ell}}\wedge\cdots\wedge\theta^{i_{p}}\wedge\theta^{\bar J}\nonumber\\
&-\alpha_{i_{1}\ldots i_{p}\bar J}\sum_{\ell=1}^{p}\sum_{t=\ell+1}^{p}(-1)^{\ell+t-1}\theta^{i_{1}}\wedge\cdots\wedge\bar\partial\theta^{i_{\ell}}\wedge\cdots
(e_{k}\lrcorner\theta^{i_{t}})\wedge\cdots\wedge\theta^{i_{p}}\wedge\theta^{\bar J}\nonumber\\
=&\,\alpha_{i_{1}\ldots i_{p}\bar J}
\sum_{\ell=1}^{p}(-1)^{\ell-1}e_{k}\lrcorner(\theta^{i_{1}}\wedge\cdots\wedge\bar\partial\theta^{i_{\ell}}\wedge\cdots\wedge\theta^{i_{p}}\wedge\theta^{\bar J})\nonumber\\
&+\alpha_{i_{1}\ldots i_{p}\bar J}\sum_{\ell=1}^{p}\sum_{t<\ell}(-1)^{\ell+t-1}\theta^{i_{1}}\wedge\cdots\wedge(e_{k}\lrcorner\theta^{i_{t}})\wedge\cdots\wedge
\bar\partial\theta^{i_{\ell}}\wedge\cdots\wedge\theta^{i_{p}}\wedge\theta^{\bar J}\nonumber\\
&+\alpha_{i_{1}\ldots i_{p}\bar J}\sum_{\ell=1}^{p}\sum_{t>\ell}(-1)^{\ell+t}\theta^{i_{1}}\wedge\cdots\wedge\bar\partial\theta^{i_{\ell}}\wedge\cdots
(e_{k}\lrcorner\theta^{i_{t}})\wedge\cdots\wedge\theta^{i_{p}}\wedge\theta^{\bar J}\nonumber\\
=&\,\alpha_{i_{1}\ldots i_{p}\bar J}
\sum_{\ell=1}^{p}(-1)^{\ell-1}e_{k}\lrcorner(\theta^{i_{1}}\wedge\cdots\wedge\bar\partial\theta^{i_{\ell}}\wedge\cdots\wedge\theta^{i_{p}}\wedge\theta^{\bar J})\nonumber\\
&+\alpha_{i_{1}\ldots i_{p}\bar J}\sum_{\ell=1}^{p}\sum_{t>\ell}(-1)^{\ell+t-1}\theta^{i_{1}}\wedge\cdots\wedge(e_{k}\lrcorner\theta^{i_{\ell}})\wedge\cdots\wedge
\bar\partial\theta^{i_{t}}\wedge\cdots\wedge\theta^{i_{p}}\wedge\theta^{\bar J}\quad\textup{(By direct observation.)}\nonumber\\
&+\alpha_{i_{1}\ldots i_{p}\bar J}\sum_{\ell=1}^{p}\sum_{t<\ell}(-1)^{\ell+t}\theta^{i_{1}}\wedge\cdots\wedge\bar\partial\theta^{i_{t}}\wedge\cdots
(e_{k}\lrcorner\theta^{i_{\ell}})\wedge\cdots\wedge\theta^{i_{p}}\wedge\theta^{\bar J}.\label{neweq7}
\end{align}
\end{small}
\begin{small}
\begin{align}
e_{k}\lrcorner\bar\partial\alpha
=&\,e_{k}\lrcorner\bar\partial(\alpha_{i_{1}\ldots i_{p}\bar J}\theta^{i_{1}}\wedge\cdots\wedge\theta^{i_{p}}\wedge\theta^{\bar J})\nonumber\\
=&\,e_{k}\lrcorner(\bar\partial\alpha_{i_{1}\ldots i_{p}\bar J}\wedge
\theta^{i_{1}}\wedge\cdots\wedge\theta^{i_{p}}\wedge\theta^{\bar J})
+\alpha_{i_{1}\ldots i_{p}\bar J}
e_{k}\lrcorner\bar\partial(\theta^{i_{1}}\wedge\cdots\wedge\theta^{i_{p}}\wedge\theta^{\bar J})\nonumber\\
=&\,-\bar\partial\alpha_{i_{1}\ldots i_{p}\bar J}\wedge e_{k}\lrcorner(\theta^{i_{1}}\wedge\cdots\wedge\theta^{i_{p}}\wedge\theta^{\bar J})
+\alpha_{i_{1}\ldots i_{p}\bar J}e_{k}\lrcorner
\sum_{\ell=1}^{p}(-1)^{\ell-1}\theta^{i_{1}}\wedge\cdots\wedge\bar\partial\theta^{i_{\ell}}\wedge\cdots\wedge\theta^{i_{p}}\wedge\theta^{\bar J}\nonumber\\
&+\alpha_{i_{1}\ldots i_{p}\bar J}e_{k}\lrcorner
(-1)^{p+1}(\theta^{i_{1}}\wedge\cdots\wedge\theta^{i_{p}}\wedge\bar\partial\theta^{\bar J})\nonumber\\
=&\,-\bar\partial\alpha_{i_{1}\ldots i_{p}\bar J}\wedge e_{k}\lrcorner(\theta^{i_{1}}\wedge\cdots\wedge\theta^{i_{p}}\wedge\theta^{\bar J})
+\alpha_{i_{1}\ldots i_{p}\bar J}e_{k}\lrcorner
\sum_{\ell=1}^{p}(-1)^{\ell-1}\theta^{i_{1}}\wedge\cdots\wedge\bar\partial\theta^{i_{\ell}}\wedge\cdots\wedge\theta^{i_{p}}\wedge\theta^{\bar J}\nonumber\\
&+\alpha_{i_{1}\ldots i_{p}\bar J}(-1)^{p}e_{k}\lrcorner
(\theta^{i_{1}}\wedge\cdots\wedge\theta^{i_{p}}\wedge\bar\partial\theta^{\bar J}).\label{neweq8}
\end{align}
\end{small}
Comparing (\ref{neweq6}), (\ref{neweq7}) and (\ref{neweq8}) we get
\begin{equation}\label{neweq9}
\bar\partial(e_{k}\lrcorner\,\alpha)=(\bar\partial e_{k})\lrcorner\,\alpha-e_{k}\lrcorner(\bar\partial\alpha).
\end{equation}
For $\phi=\phi^{r}\otimes e_{r}\in A^{0,1}_{J}(M,T^{1,0}M)$, we have
\begin{align*}
\bar\partial(\phi\lrcorner\,\alpha)
=&\,\bar\partial(\phi^{r}\wedge(e_{r}\lrcorner\alpha))\\
=&\,\bar\partial\phi^{r}\wedge(e_{r}\lrcorner\alpha)-\phi^{r}\wedge\bar\partial(e_{r}\lrcorner\alpha)\\
=&\,\bar\partial\phi^{r}\wedge(e_{r}\lrcorner\alpha)-\phi^{r}\wedge((\bar\partial e_{r})\lrcorner\alpha-e_{r}\lrcorner\bar\partial\alpha)&\textup{By (\ref{neweq9})}\\
=&\,\bar\partial\phi^{r}\wedge(e_{r}\lrcorner\alpha)-\phi^{r}\wedge(\bar\partial e_{r})\lrcorner\alpha+\phi^{r}\wedge(e_{r}\lrcorner\bar\partial\alpha)\\
=&\,\bar\partial\phi\lrcorner\,\alpha+\phi\lrcorner\,\bar\partial\alpha.
\end{align*}
We have proved
\begin{equation*}
\bar\partial(\phi\lrcorner\,\alpha)=\bar\partial\phi\lrcorner\,\alpha+\phi\lrcorner\,\bar\partial\alpha.
\end{equation*}
\end{proof}
By Lemma \ref{nlemma1} and $[\bar\mu,i_{\phi}]=0$, we can rewrite Corollary \ref{corollary3} as
\begin{corollary}\label{corollary4}
Let $(M,J)$ be an almost complex manifold, $d$ be the ordinary exterior differential operator over $M$. Then
\begin{equation}\label{eq89}
e^{-i_{\phi}}\circ d\circ e^{i_{\phi}}=d+[\mu,i_{\phi}]+[\partial,i_{\phi}]
-i_{\frac{1}{2}(\mathscr{B}(\phi,\phi)+\mathscr{C}(\phi,\phi))}+i_{\mathrm{MC}(\phi)},
\end{equation}
where
\begin{equation*}
\mathrm{MC}(\phi):=\bar\partial\phi-\frac{1}{2}\mathscr{A}(\phi,\phi)-\frac{1}{3!}(i_{[\phi,\phi]}\phi-i_{\phi}[\phi,\phi]).
\end{equation*}
\end{corollary}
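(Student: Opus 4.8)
The plan is to start from the already-established Corollary \ref{corollary3},
$e^{-i_{\phi}}\circ d\circ e^{i_{\phi}}=d-\mathcal{L}_{\phi}-i_{\frac{1}{2}[\phi,\phi]}-i_{\frac{1}{3!}(i_{[\phi,\phi]}\phi-i_{\phi}[\phi,\phi])}$,
and to rewrite its right-hand side term by term until it matches the asserted expression. Nothing genuinely new is proved: the content is a purely algebraic reorganization that uses the type decomposition $d=\mu+\partial+\bar\partial+\bar\mu$ of (\ref{eq67}), Lemma \ref{nlemma1}, and the decomposition of $[\phi,\phi]$ supplied by Lemma \ref{lemma17}.

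First I would expand the generalized Lie derivative $\mathcal{L}_{\phi}=[i_{\phi},d]$. Since $\phi\in A^{0,1}_{J}(M,T^{1,0}M)$ acts through $i_{\phi}$ as a total-degree-zero operator, the graded bracket is antisymmetric, so $-\mathcal{L}_{\phi}=-[i_{\phi},d]=[d,i_{\phi}]$. Feeding in $d=\mu+\partial+\bar\partial+\bar\mu$ and using bilinearity of the bracket gives $-\mathcal{L}_{\phi}=[\mu,i_{\phi}]+[\partial,i_{\phi}]+[\bar\partial,i_{\phi}]+[\bar\mu,i_{\phi}]$. Here I invoke Lemma \ref{nlemma1} to replace $[\bar\partial,i_{\phi}]=i_{\bar\partial\phi}$, together with the hypothesis $[\bar\mu,i_{\phi}]=0$ (which itself follows from the $C^{\infty}$-linearity of $\bar\mu$ as contraction with the Nijenhuis tensor $N\in A^{0,2}(M,T^{1,0}M)$ and the first identity of Lemma \ref{lemma5}). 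This leaves $-\mathcal{L}_{\phi}=[\mu,i_{\phi}]+[\partial,i_{\phi}]+i_{\bar\partial\phi}$.

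Next I would split the quadratic term. By Lemma \ref{lemma17} we have $[\phi,\phi]=\mathscr{A}(\phi,\phi)+\mathscr{B}(\phi,\phi)+\mathscr{C}(\phi,\phi)$, the three summands lying in $A^{0,2}(M,T^{1,0}M)$, $A^{1,1}(M,T^{1,0}M)$ and $A^{0,2}(M,T^{0,1}M)$ respectively; by linearity of the contraction $i_{\bullet}$ this yields $-i_{\frac{1}{2}[\phi,\phi]}=-i_{\frac{1}{2}\mathscr{A}(\phi,\phi)}-i_{\frac{1}{2}(\mathscr{B}(\phi,\phi)+\mathscr{C}(\phi,\phi))}$. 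Substituting both rewrites into Corollary \ref{corollary3} and again using linearity of $i_{\bullet}$ to collect the leftover contraction terms $i_{\bar\partial\phi}-i_{\frac{1}{2}\mathscr{A}(\phi,\phi)}-i_{\frac{1}{3!}(i_{[\phi,\phi]}\phi-i_{\phi}[\phi,\phi])}=i_{\mathrm{MC}(\phi)}$, which is exactly the definition of $\mathrm{MC}(\phi)$, produces the asserted identity.

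The computation is routine; the only points demanding care are the sign bookkeeping in the graded commutator (confirming that $i_{\phi}$ has even total degree, so that $[d,i_{\phi}]=-[i_{\phi},d]$) and the recognition that the three leftover contraction terms assemble precisely into $i_{\mathrm{MC}(\phi)}$. The genuinely substantive input is Lemma \ref{nlemma1}, identifying $[\bar\partial,i_{\phi}]$ with $i_{\bar\partial\phi}$; once that is in hand, the corollary follows immediately.
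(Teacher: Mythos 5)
Your proposal is correct and follows essentially the same route as the paper's own proof: decompose $d$ and $\mathcal{L}_{\phi}$ by types, use Lemma \ref{nlemma1} to identify $[\bar\partial,i_{\phi}]=i_{\bar\partial\phi}$ together with $[\bar\mu,i_{\phi}]=0$, split $[\phi,\phi]$ into $\mathscr{A}+\mathscr{B}+\mathscr{C}$ via Lemma \ref{lemma17}, and collect the leftover contraction terms into $i_{\mathrm{MC}(\phi)}$. The only difference is that you supply a justification of $[\bar\mu,i_{\phi}]=0$ (tensoriality of $\bar\mu$ as contraction with the Nijenhuis tensor plus the first identity of Lemma \ref{lemma5}), a fact the paper asserts without proof --- a harmless, indeed welcome, addition rather than a different approach.
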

\begin{proof}
\begin{align*}
e^{-i_{\phi}}\circ d\circ e^{i_{\phi}}
=&\,d-\mathcal{L}^{\mu}_{\phi}-\mathcal{L}^{\partial}_{\phi}+i_{\bar\partial\phi-\frac{1}{2}[\phi,\phi]}
-i_{\frac{1}{3!}(i_{[\phi,\phi]}\phi-i_{\phi}[\phi,\phi])}\\
=&\,d+[\mu,i_{\phi}]+[\partial,i_{\phi}]+i_{\bar\partial\phi-\frac{1}{2}[\phi,\phi]}
-i_{\frac{1}{3!}(i_{[\phi,\phi]}\phi-i_{\phi}[\phi,\phi])}\\
=&\,d+[\mu,i_{\phi}]+[\partial,i_{\phi}]-i_{\frac{1}{2}(\mathscr{B}(\phi,\phi)+\mathscr{C}(\phi,\phi))}
+i_{\mathrm{MC}(\phi)}.
\end{align*}
\end{proof}
Now we introduce the \emph{almost complex Maurer-Cartan equation} as
\begin{equation}\label{mcequation}
\mathrm{MC}(\phi)=0.
\end{equation}
We would like to point out that the almost complex Maurer-Cartan equation reduces to the well-known Maurer-Cartan equation on complex manifolds, i.e.
\begin{equation*}
\bar\partial\phi-\frac{1}{2}[\phi,\phi]=0.
\end{equation*}
\subsection{Extended exponential operator and extension formula}
 \cite[Definition 1.1]{zhao2015extension} and \cite{rao2016power} introduce an extension of $e^{i_{\phi}}$,
\begin{equation*}
e^{i_{\phi}|i_{\bar\phi}}:A_{J}^{p,q}(M)\rightarrow A^{p,q}_{\phi}(M).
\end{equation*}
For any $\alpha=\frac{1}{p!q!}\alpha_{i_{1}\cdots i_{p}\bar j_{1}\cdots\bar j_{q}}
\theta^{i_{1}}\wedge\cdots\wedge\theta^{i_{p}}\wedge\theta^{\bar j_{1}}\wedge\cdots\wedge\theta^{\bar j_{q}}$,
\begin{equation*}
e^{i_{\phi}|i_{\bar\phi}}(\alpha)
:=\frac{1}{p!q!}\alpha_{i_{1}\cdots i_{p}\bar j_{1}\cdots\bar j_{q}}
e^{i_{\phi}}(\theta^{i_{1}})\wedge\cdots\wedge e^{i_{\phi}}(\theta^{i_{p}})\wedge e^{i_{\bar\phi}}(\theta^{\bar j_{1}})\wedge\cdots\wedge e^{i_{\bar\phi}}(\theta^{\bar j_{q}}).
\end{equation*}
Since $i_{\phi}$ and $i_{\bar\phi}$ are contractions, this definition is independent of the choice of basis. There are several properties of the map $e^{i_{\phi}|i_{\bar\phi}}$, which have been proved in the complex case in \cite{rao2018several}. The proofs in this paper are basicly the same as \cite{rao2018several}. We give them here just for the readers' convenience and completeness of this manuscript.
\begin{lemma}\label{lemma18}\cite[Lemma 2.9]{rao2018several}
Let $(M,J)$ be an almost complex manifold and $\phi\in A^{0,1}_{J}(M,T^{1,0}M)$ that induces a new almost complex structure $J_{\phi}$ (i.e. $\phi$ is a Beltrami differential), then
$$
e^{i_{\phi}|i_{\bar\phi}}:A_{J}^{p,q}(M)\rightarrow A^{p,q}_{\phi}(M),
$$
is a linear isomorphism.
\end{lemma}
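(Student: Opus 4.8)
The plan is to reduce the statement to the elementary linear-algebra fact that $e^{i_{\phi}|i_{\bar\phi}}$ carries a local coframe of $A^{1}(M;\mathbb{C})$ adapted to $J$ onto one adapted to $J_{\phi}$, so that a linear map sending a basis to a basis is an isomorphism. First I would compute the effect of $e^{i_{\phi}}$ and $e^{i_{\bar\phi}}$ on the coframe. Since $i_{\phi}=\phi^{i}_{\bar j}\theta^{\bar j}\otimes e_{i}$ wedges with $\theta^{\bar j}$ after contracting against the $(1,0)$-vector $e_{i}$, we have $i_{\phi}\theta^{i}=\phi^{i}_{\bar j}\theta^{\bar j}\in A^{0,1}_{J}(M)$, and then $i_{\phi}^{2}\theta^{i}=0$ because $i_{\phi}\theta^{i}$ carries no $(1,0)$-direction left to contract. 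Hence on $(1,0)$-forms the exponential truncates to $e^{i_{\phi}}=I+i_{\phi}$, giving $e^{i_{\phi}}(\theta^{i})=\theta^{i}+i_{\phi}\theta^{i}=\theta^{i}_{\phi}$, which is exactly the new coframe element of (\ref{eq68}); likewise $e^{i_{\bar\phi}}(\theta^{\bar j})=\theta^{\bar j}_{\phi}$. By the construction of $J_{\phi}$ the family $\{\theta^{i}_{\phi}\}_{i=1}^{n}$ spans $A^{1,0}_{J_{\phi}}(M)$ and $\{\theta^{\bar j}_{\phi}\}_{j=1}^{n}$ spans $A^{0,1}_{J_{\phi}}(M)$.

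Next I would read off well-definedness and the target type. By definition $e^{i_{\phi}|i_{\bar\phi}}(\alpha)$ replaces each $\theta^{i_{\ell}}$ by $\theta^{i_{\ell}}_{\phi}$ and each $\theta^{\bar j_{m}}$ by $\theta^{\bar j_{m}}_{\phi}$ in the monomial expansion of $\alpha$ while keeping the coefficient array fixed. Because $i_{\phi}$ and $i_{\bar\phi}$ are contraction operators, and hence point-wise linear, this recipe is independent of the chosen coframe — the same basis-independence used in the discussion after Remark \ref{remark1}. Each image monomial $\theta^{i_{1}}_{\phi}\wedge\cdots\wedge\theta^{i_{p}}_{\phi}\wedge\theta^{\bar j_{1}}_{\phi}\wedge\cdots\wedge\theta^{\bar j_{q}}_{\phi}$ is a wedge of $p$ forms in $A^{1,0}_{J_{\phi}}(M)$ and $q$ forms in $A^{0,1}_{J_{\phi}}(M)$, so it lies in $A^{p,q}_{J_{\phi}}(M)$; thus $e^{i_{\phi}|i_{\bar\phi}}$ indeed maps into $A^{p,q}_{\phi}(M)$, and it is manifestly linear in the coefficients.

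Finally I would establish the isomorphism. Since $\phi$ is a Beltrami differential, $\det(I^{\prime}-\phi\cdot\bar\phi)\neq0$ by (\ref{det1}), so $\{\theta^{i}_{\phi},\theta^{\bar j}_{\phi}\}$ is a genuine local coframe of $A^{1}(M;\mathbb{C})$; consequently the monomials $\theta^{i_{1}}_{\phi}\wedge\cdots\wedge\theta^{\bar j_{q}}_{\phi}$ form a local basis of $A^{p,q}_{J_{\phi}}(M)$, by the usual exterior-algebra count for the decomposition into $A^{1,0}_{J_{\phi}}\oplus A^{0,1}_{J_{\phi}}$. As $e^{i_{\phi}|i_{\bar\phi}}$ sends the $J$-adapted basis monomial $\theta^{i_{1}}\wedge\cdots\wedge\theta^{\bar j_{q}}$ bijectively onto the corresponding $J_{\phi}$-adapted basis monomial, its matrix with respect to these two bases is the identity, so it is a fibrewise linear isomorphism, the inverse being the change of coframe through $\Phi^{-1}$ in (\ref{neq1}). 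The only place the hypothesis enters is precisely the invertibility $\det(I^{\prime}-\phi\cdot\bar\phi)\neq0$, which upgrades the image monomials from a mere spanning set to a basis; this is the step where ``$\phi$ induces a new almost complex structure'' is used, and it is the one mild obstacle to keep track of.
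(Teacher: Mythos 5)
Your proposal is correct and is essentially the paper's own argument, expanded: the paper's proof consists precisely of the observation that $e^{i_{\phi}|i_{\bar\phi}}$ sends $\theta^{i}\mapsto\theta^{i}_{\phi}$ and $\theta^{\bar i}\mapsto\theta^{\bar i}_{\phi}$, so that basis monomials go to basis monomials of $A^{p,q}_{J_{\phi}}(M)$. Your additional details (the truncation $e^{i_{\phi}}=I+i_{\phi}$ on $(1,0)$-forms, basis-independence via point-wise linearity of contractions, and the role of $\det(I^{\prime}-\phi\cdot\bar\phi)\neq0$) are all consistent with the paper's surrounding discussion and fill in exactly what the paper leaves implicit.
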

\begin{proof}
Note that
\begin{displaymath}
e^{i_{\phi}|i_{\bar\phi}}:
\left\{\begin{array}{cc}
A_{J}^{1,0}\rightarrow A_{\phi}^{1,0}&:\theta^{i}\mapsto\theta^{i}_{\phi}\\
A_{J}^{0,1}\rightarrow A_{\phi}^{0,1}&:\theta^{\bar i}\mapsto\theta^{\bar i}_{\phi}
\end{array}\right..
\end{displaymath}
\end{proof}
\begin{lemma}\label{lemma6}
\begin{equation*}
e^{i_{\phi}}((I-\bar\phi\cdot\phi+\bar\phi)\lrcorner\,\theta^{k})
=(I+\phi)\lrcorner\,\theta^{k}=e^{i_{\phi}}(\theta^{k}),
\end{equation*}
and
\begin{equation*}
e^{i_{\phi}}((I-\bar\phi\cdot\phi+\bar\phi)\lrcorner\,\theta^{\bar k})
=(I+\bar\phi)\lrcorner\,\theta^{\bar k}=e^{i_{\bar\phi}}(\theta^{\bar k}).
\end{equation*}
\end{lemma}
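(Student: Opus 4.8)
The plan is to exploit the point-wise linearity of the contraction operators (noted in the remark after \eqref{eq53}), so that it suffices to verify both identities on the basis one-forms $\theta^k$ and $\theta^{\bar k}$. As a preliminary I would record the elementary contractions. Since $\phi=\phi^i_{\bar j}\theta^{\bar j}\otimes e_i$ contracts against $(1,0)$-vectors, one has $i_\phi\theta^k=\phi^k_{\bar j}\theta^{\bar j}$ and $i_\phi\theta^{\bar k}=0$; as the image $\phi^k_{\bar j}\theta^{\bar j}$ is a $(0,1)$-form, it follows that $i_\phi^2=0$ on one-forms, whence $e^{i_\phi}\theta^k=\theta^k+\phi^k_{\bar j}\theta^{\bar j}=(I+\phi)\lrcorner\,\theta^k$ and $e^{i_\phi}\theta^{\bar k}=\theta^{\bar k}$. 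The conjugate statements give $i_{\bar\phi}\theta^{\bar k}=\phi^{\bar k}_j\theta^j$, $i_{\bar\phi}\theta^j=0$, and $e^{i_{\bar\phi}}\theta^{\bar k}=(I+\bar\phi)\lrcorner\,\theta^{\bar k}$. These observations already account for the two right-hand equalities in the statement, so only the left-hand equalities remain to be checked.

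For the first identity I would argue that the operator $I-\bar\phi\cdot\phi+\bar\phi$ collapses to $I$ when applied to $\theta^k$. Indeed, by \eqref{eq201} the contraction $\bar\phi\cdot\phi$ is represented by $\phi^{\bar p}_i\phi^i_{\bar j}\theta^{\bar j}\otimes e_{\bar p}$, and $\bar\phi=\phi^{\bar i}_j\theta^j\otimes e_{\bar i}$; both contract via the $(0,1)$-vectors $e_{\bar p}$, $e_{\bar i}$, which annihilate the $(1,0)$-form $\theta^k$. Hence $(I-\bar\phi\cdot\phi+\bar\phi)\lrcorner\,\theta^k=I\lrcorner\,\theta^k=\theta^k$, and applying $e^{i_\phi}$ yields $e^{i_\phi}(\theta^k)=(I+\phi)\lrcorner\,\theta^k$ by the preliminary computation.

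The substantive identity is the second one. Here I would first compute, using $e_{\bar p}\lrcorner\,\theta^{\bar k}=\delta^k_p$ and $e_{\bar i}\lrcorner\,\theta^{\bar k}=\delta^k_i$,
\[
(I-\bar\phi\cdot\phi+\bar\phi)\lrcorner\,\theta^{\bar k}
=\theta^{\bar k}-\phi^{\bar k}_i\phi^i_{\bar j}\theta^{\bar j}+\phi^{\bar k}_j\theta^j.
\]
Applying $e^{i_\phi}$ term by term, I would use $e^{i_\phi}\theta^{\bar j}=\theta^{\bar j}$ on the first two summands and $e^{i_\phi}\theta^j=\theta^j+\phi^j_{\bar m}\theta^{\bar m}$ on the last, obtaining
\[
\theta^{\bar k}-\phi^{\bar k}_i\phi^i_{\bar j}\theta^{\bar j}+\phi^{\bar k}_j\theta^j+\phi^{\bar k}_j\phi^j_{\bar m}\theta^{\bar m}.
\]
The key point, and the only step requiring care, is that the second and fourth terms cancel: relabeling the dummy indices $j\mapsto i$, $m\mapsto j$ in the last summand turns it into $\phi^{\bar k}_i\phi^i_{\bar j}\theta^{\bar j}$, exactly the negative of the second summand. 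What survives is $\theta^{\bar k}+\phi^{\bar k}_j\theta^j=(I+\bar\phi)\lrcorner\,\theta^{\bar k}=e^{i_{\bar\phi}}\theta^{\bar k}$, as claimed.

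I expect no genuine difficulty beyond this index bookkeeping; the cancellation in the last display is precisely why the quadratic correction $-\bar\phi\cdot\phi$ appears in $I-\bar\phi\cdot\phi+\bar\phi$, and tracking it carefully constitutes the entire content of the lemma.
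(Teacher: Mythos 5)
Your proposal is correct and follows essentially the same route as the paper's own proof: a direct local computation in which $(I-\bar\phi\cdot\phi+\bar\phi)\lrcorner\,\theta^{k}=\theta^{k}$ because the $(0,1)$-vector parts of $\bar\phi\cdot\phi$ and $\bar\phi$ annihilate $(1,0)$-forms, and the quadratic term $-\phi^{\bar k}_{i}\phi^{i}_{\bar j}\theta^{\bar j}$ cancels against the term produced by applying $i_{\phi}$ to $\phi^{\bar k}_{j}\theta^{j}$. The only cosmetic difference is that you isolate the preliminary facts ($i_{\phi}^{2}=0$ on one-forms, hence $e^{i_{\phi}}=(I+\phi)\lrcorner$ there) before the main computation, whereas the paper folds them into the calculation.
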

\begin{proof}
Put $\phi^{i}_{\bar j}\theta^{\bar j}\otimes e_{i}$, $\bar\phi=\phi^{\bar i}_{j}\theta^{j}\otimes e_{\bar i}$. By direct calculation we get
\begin{align*}
e^{i_{\phi}}((I-\bar\phi\cdot\phi+\bar\phi)\lrcorner\,\theta^{k})
=&\,e^{i_{\phi}}((I-\phi^{i}_{\bar j}\theta^{\bar j}\wedge e_{i}\lrcorner (\phi^{\bar p}_{q}\theta^{q}\otimes e_{\bar p})+\phi^{\bar i}_{j}\theta^{j}\otimes e_{\bar i})\lrcorner\theta^{k})\\
=&\,e^{i_{\phi}}((I-\phi^{i}_{\bar j}\phi^{\bar p}_{i}\theta^{\bar j}\otimes e_{\bar p}+\phi^{\bar i}_{j}\theta^{j}\otimes e_{\bar i})\lrcorner\theta^{k})\\
=&\,e^{i_{\phi}}(\theta^{k}),
\end{align*}
and
\begin{align*}
e^{i_{\phi}}((I-\bar\phi\cdot\phi+\bar\phi)\lrcorner\,\theta^{\bar k})
=&\,e^{i_{\phi}}((I-\phi^{i}_{\bar j}\phi^{\bar p}_{i}\theta^{\bar j}\otimes e_{\bar p}+\phi^{\bar i}_{j}\theta^{j}\otimes e_{\bar i})\lrcorner\theta^{\bar k})\\
=&\,(I+\phi)\lrcorner(\theta^{\bar k}-\phi^{i}_{\bar j}\phi^{\bar k}_{i}\theta^{\bar j}
+\phi^{\bar k}_{j}\theta^{j})\\
=&\,\theta^{\bar k}-\phi^{i}_{\bar j}\phi^{\bar k}_{i}\theta^{\bar j}+\phi^{\bar k}_{j}\theta^{j}
+\phi^{p}_{\bar q}\theta^{\bar q}\wedge e_{p}\lrcorner(\phi^{\bar k}_{j}\theta^{j})\\
=&\,\theta^{\bar k}-\phi^{i}_{\bar j}\phi^{\bar k}_{i}\theta^{\bar j}+\phi^{\bar k}_{j}\theta^{j}
+\phi^{j}_{\bar q}\phi^{\bar k}_{j}\theta^{\bar q}\\
=&\,\theta^{\bar k}+\phi^{\bar k}_{j}\theta^{j}\\
=&\,(I+\bar\phi)\lrcorner\theta^{\bar k}\\
=&\,e^{i_{\bar\phi}}(\theta^{\bar k}).
\end{align*}
\end{proof}
\begin{lemma}\cite[Lemma 2.10]{rao2018several}
$e^{i_{\phi}|i_{\bar\phi}}$ is a real operator.
\end{lemma}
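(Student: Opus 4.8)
The plan is to show that $e^{i_{\phi}|i_{\bar\phi}}$ commutes with complex conjugation, that is, to verify the identity $\overline{e^{i_{\phi}|i_{\bar\phi}}(\alpha)}=e^{i_{\phi}|i_{\bar\phi}}(\bar\alpha)$ for every $\alpha\in A^{p,q}_{J}(M)$. This is precisely the assertion that the operator is \emph{real} (it carries real forms to real forms). The whole proof reduces to tracking how the two elementary building blocks transform under conjugation and then doing a short sign bookkeeping.

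First I would record the conjugation behaviour of the generators. Since $\overline{\theta^{i}}=\theta^{\bar i}$ and $\phi^{\bar i}_{j}=\overline{\phi^{i}_{\bar j}}$, Lemma \ref{lemma6} gives $e^{i_{\phi}}(\theta^{k})=(I+\phi)\lrcorner\,\theta^{k}=\theta^{k}+\phi^{k}_{\bar j}\theta^{\bar j}$, whose conjugate is $\theta^{\bar k}+\phi^{\bar k}_{j}\theta^{j}=(I+\bar\phi)\lrcorner\,\theta^{\bar k}=e^{i_{\bar\phi}}(\theta^{\bar k})$, and symmetrically for the antiholomorphic generators. Thus
\[
\overline{e^{i_{\phi}}(\theta^{k})}=e^{i_{\bar\phi}}(\theta^{\bar k}),\qquad
\overline{e^{i_{\bar\phi}}(\theta^{\bar k})}=e^{i_{\phi}}(\theta^{k}).
\]
Next I would conjugate the defining formula for $e^{i_{\phi}|i_{\bar\phi}}(\alpha)$ factor by factor. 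Conjugation replaces the coefficient by $\overline{\alpha_{i_{1}\cdots i_{p}\bar j_{1}\cdots\bar j_{q}}}$ and, by the displayed relations, turns each $e^{i_{\phi}}(\theta^{i_{\ell}})$ into $e^{i_{\bar\phi}}(\theta^{\bar i_{\ell}})$ and each $e^{i_{\bar\phi}}(\theta^{\bar j_{m}})$ into $e^{i_{\phi}}(\theta^{j_{m}})$, leaving the wedge with the $p$ antiholomorphic factors first and the $q$ holomorphic factors second. On the other side, $\bar\alpha$ is the $(q,p)$-form obtained by conjugating coefficients and sending $\theta^{i_{\ell}}\mapsto\theta^{\bar i_{\ell}}$, $\theta^{\bar j_{m}}\mapsto\theta^{j_{m}}$; once written in the standard ``holomorphic-indices-first'' normal form and fed through $e^{i_{\phi}|i_{\bar\phi}}$, it produces the factors $e^{i_{\phi}}(\theta^{j_{m}})$ and $e^{i_{\bar\phi}}(\theta^{\bar i_{\ell}})$.

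The only genuine point of care is the sign from reordering wedge factors, which I expect to be the main (and essentially the sole) obstacle. Each transformed factor $e^{i_{\phi}}(\theta^{j_{m}})$ and $e^{i_{\bar\phi}}(\theta^{\bar i_{\ell}})$ is a one-form, so moving the block of $q$ holomorphic factors past the block of $p$ antiholomorphic factors in the conjugated expression costs $(-1)^{pq}$; the identical factor $(-1)^{pq}$ appears when one puts $\bar\alpha$ into its holomorphic-first normal form before applying the operator. Because the two reorderings yield the same sign, they cancel against one another in the comparison, and both sides collapse to the common expression
\[
\frac{(-1)^{pq}}{p!q!}\,\overline{\alpha_{i_{1}\cdots i_{p}\bar j_{1}\cdots\bar j_{q}}}\,
e^{i_{\phi}}(\theta^{j_{1}})\wedge\cdots\wedge e^{i_{\phi}}(\theta^{j_{q}})
\wedge e^{i_{\bar\phi}}(\theta^{\bar i_{1}})\wedge\cdots\wedge e^{i_{\bar\phi}}(\theta^{\bar i_{p}}).
\]
This establishes $\overline{e^{i_{\phi}|i_{\bar\phi}}(\alpha)}=e^{i_{\phi}|i_{\bar\phi}}(\bar\alpha)$, and since conjugation is additive the identity propagates to all of $A^{p,q}_{J}(M)$, which is what is required.
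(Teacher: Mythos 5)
Your proposal is correct and follows essentially the same route as the paper: the paper's proof consists solely of the instruction to check $\overline{e^{i_{\phi}|i_{\bar\phi}}(\alpha)}=e^{i_{\phi}|i_{\bar\phi}}(\overline{\alpha})$ for $\alpha\in A^{p,q}_{J}(M)$, which is exactly the identity you verify. Your computation — conjugating the generators via $\overline{e^{i_{\phi}}(\theta^{k})}=e^{i_{\bar\phi}}(\theta^{\bar k})$ and observing that the $(-1)^{pq}$ reordering signs on the two sides agree — correctly supplies the details the paper leaves to the reader.
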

\begin{proof}
Check that for any $\alpha\in A^{p,q}_{J}(M)$,
\begin{equation*}
\overline{e^{i_{\phi}|i_{\bar\phi}}(\alpha)}=e^{i_{\phi}|i_{\bar\phi}}(\overline{\alpha}).
\end{equation*}
\end{proof}
\cite[Equation 2.8]{rao2016power} introduces the \emph{simultaneous contraction} (denoted ``$\Finv\,$") on each component of a complex differential form. For example, if $\varphi$ is a contraction operator,
\begin{equation*}
\alpha=\alpha_{I_{p},\bar J_{q}}\theta^{i_{1}}\wedge\cdots\wedge\theta^{i_{p}}\wedge\theta^{\bar j_{1}}\wedge\cdots\wedge\theta^{\bar j_{q}},
\end{equation*}
then
\begin{equation}\label{neweq12}
\varphi\Finv\,\alpha:=\alpha_{i_{1}\cdots i_{p},\bar j_{1}\cdots \bar j_{q}}
(\varphi\lrcorner\,\theta^{i_{1}})\wedge\cdots\wedge(\varphi\lrcorner\,\theta^{i_{p}})\wedge(\varphi\lrcorner\,\theta^{\bar j_{1}})\wedge\cdots\wedge(\varphi\lrcorner\,\theta^{\bar j_{q}}).
\end{equation}
\begin{remark}\label{remark2}
By the definition of the simultaneous contraction $\Finv\,$, one can check that generally
\begin{equation*}
(\varphi+\psi)\Finv\,\alpha\neq\varphi\Finv\,\alpha+\psi\Finv\,\alpha.
\end{equation*}
When $\alpha\in A^{1}(M)$, we have
\begin{equation*}
(\varphi+\psi)\Finv\,\alpha=(\varphi+\psi)\lrcorner\,\alpha=\varphi\lrcorner\,\alpha+\psi\lrcorner\,\alpha.
\end{equation*}
\end{remark}
\begin{lemma}\label{lemma9}
\begin{equation*}
\theta^{\bar j}=e^{i_{\phi}|i_{\bar\phi}}((-\bar\phi\cdot(I^{\prime}-\phi\cdot\bar\phi)^{-1}
+(I^{\prime\prime}-\bar\phi\cdot\phi)^{-1})\lrcorner\,\theta^{\bar j}).
\end{equation*}
\end{lemma}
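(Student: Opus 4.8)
The plan is to read the bracketed matrix as a contraction operator and then push it through $e^{i_{\phi}|i_{\bar\phi}}$ one basis covector at a time. First I would observe, exactly as in Remark \ref{remark1} and the coefficient identities following (\ref{eq53}), that the operator
\[
O:=-\bar\phi\cdot(I^{\prime}-\phi\cdot\bar\phi)^{-1}+(I^{\prime\prime}-\bar\phi\cdot\phi)^{-1}
\]
is precisely the lower block row of $\Phi^{-1}$; that is, as a contraction $O=(\Phi^{-1})^{\bar i}_{\beta}\,\theta^{\beta}\otimes e_{\bar i}$, where $\beta$ ranges over $\{1,\dots,n,\bar 1,\dots,\bar n\}$ (the bottom-left block $-\bar\phi\cdot(I^{\prime}-\phi\cdot\bar\phi)^{-1}$ supplies the coefficients $(\Phi^{-1})^{\bar i}_{j}$ and the bottom-right block $(I^{\prime\prime}-\bar\phi\cdot\phi)^{-1}$ supplies $(\Phi^{-1})^{\bar i}_{\bar j}$). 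Contracting against a single $\theta^{\bar j}$ then annihilates every term except $\bar i=\bar j$, giving
\[
O\lrcorner\,\theta^{\bar j}=(\Phi^{-1})^{\bar j}_{\beta}\theta^{\beta}=(\Phi^{-1})^{\bar j}_{k}\theta^{k}+(\Phi^{-1})^{\bar j}_{\bar k}\theta^{\bar k}.
\]

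Next I would apply $e^{i_{\phi}|i_{\bar\phi}}$ to this $1$-form. Since $e^{i_{\phi}|i_{\bar\phi}}$ is point-wisely linear (it acts only on the covector factors and leaves the function coefficients $(\Phi^{-1})^{\bar j}_{\beta}$ untouched) and sends $\theta^{k}\mapsto\theta^{k}_{\phi}$ and $\theta^{\bar k}\mapsto\theta^{\bar k}_{\phi}$ by Lemma \ref{lemma18}, I obtain
\[
e^{i_{\phi}|i_{\bar\phi}}\bigl(O\lrcorner\,\theta^{\bar j}\bigr)=(\Phi^{-1})^{\bar j}_{k}\theta^{k}_{\phi}+(\Phi^{-1})^{\bar j}_{\bar k}\theta^{\bar k}_{\phi}=(\Phi^{-1})^{\bar j}_{\beta}\theta^{\beta}_{\phi}.
\]

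Finally, the right-hand side is exactly the $\bar j$-th row of the matrix identity (\ref{neq1}), which expresses the original coframe $\{\theta^{\alpha}\}$ in terms of the deformed coframe $\{\theta^{\alpha}_{\phi}\}$ through $\Phi^{-1}$; hence $(\Phi^{-1})^{\bar j}_{\beta}\theta^{\beta}_{\phi}=\theta^{\bar j}$, which is the asserted identity. The only genuine hazard is the type bookkeeping: one must keep the matrix-multiplication convention of the ``$\cdot$'' products aligned with composition of contractions, and make sure the mixed operator $e^{i_{\phi}|i_{\bar\phi}}$ is applied with $e^{i_{\phi}}$ on the $(1,0)$-part and $e^{i_{\bar\phi}}$ on the $(0,1)$-part of $O\lrcorner\,\theta^{\bar j}$. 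Once $O$ has been correctly identified with the bottom rows of $\Phi^{-1}$, the computation is immediate and invokes no identity beyond (\ref{neq1}) and the action of $e^{i_{\phi}|i_{\bar\phi}}$ on the coframe.
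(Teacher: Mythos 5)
Your proposal is correct and is essentially the paper's own proof run in the opposite direction: the paper starts from $\theta^{\bar j}=(\Phi^{-1})^{\bar j}_{\lambda}\theta^{\lambda}_{\phi}$ (the bottom rows of (\ref{neq1})) and expands it into $e^{i_{\phi}|i_{\bar\phi}}$ applied to the contraction, while you start from the right-hand side and collapse it back to $\theta^{\bar j}$ via the same identity. The ingredients coincide exactly — identification of the operator with the lower block row of $\Phi^{-1}$, the coframe action $\theta^{k}\mapsto\theta^{k}_{\phi}$, $\theta^{\bar k}\mapsto\theta^{\bar k}_{\phi}$, point-wise linearity, and (\ref{neq1}) — so no gap remains.
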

\begin{proof}
\begin{align*}
\theta^{\bar j}=&(\Phi^{-1})^{\bar j}_{\lambda}\theta_{\phi}^{\lambda}\\
=&\,(\Phi^{-1})^{\bar j}_{k}\theta_{\phi}^{k}+(\Phi^{-1})^{\bar j}_{\bar\ell}\theta_{\phi}^{\bar\ell}\\
=&\,(-\bar\phi\cdot(I^{\prime}-\phi\cdot\bar\phi)^{-1})^{\bar j}_{k}\theta_{\phi}^{k}
+((I^{\prime\prime}-\bar\phi\cdot\phi)^{-1})^{\bar j}_{\bar\ell}\theta_{\phi}^{\bar\ell}\\
=&\,(-\bar\phi\cdot(I^{\prime}-\phi\cdot\bar\phi)^{-1})^{\bar j}_{k}e^{i_{\phi}}(\theta^{k})
+((I^{\prime\prime}-\bar\phi\cdot\phi)^{-1})^{\bar j}_{\bar\ell}e^{i_{\bar\phi}}(\theta^{\bar\ell})\\
=&\,e^{i_{\phi}}((-\bar\phi\cdot(I^{\prime\prime}-\phi\cdot\bar\phi)^{-1})^{\bar j}_{k}\theta^{k})
+e^{i_{\bar\phi}}(((I^{\prime\prime}-\bar\phi\cdot\phi)^{-1})^{\bar j}_{\bar\ell}\theta^{\bar\ell})\\
=&\,e^{i_{\phi}}(-\bar\phi\cdot(I^{\prime}-\phi\cdot\bar\phi)^{-1}\lrcorner\theta^{\bar j})
+e^{i_{\bar\phi}}((I^{\prime\prime}-\bar\phi\cdot\phi)^{-1}\lrcorner\theta^{\bar j})\\
=&\,e^{i_{\phi}|i_{\bar\phi}}(-\bar\phi\cdot(I^{\prime}-\phi\cdot\bar\phi)^{-1}\lrcorner\theta^{\bar j}+(I^{\prime\prime}-\bar\phi\cdot\phi)^{-1}\lrcorner\theta^{\bar j})\\
=&\,e^{i_{\phi}|i_{\bar\phi}}\bigl((-\bar\phi\cdot(I^{\prime}-\phi\cdot\bar\phi)^{-1}
+(I^{\prime\prime}-\bar\phi\cdot\phi)^{-1})\lrcorner\,\theta^{\bar j}\bigr).
\end{align*}
\end{proof}
As in \cite[p.3003, line -2]{rao2018several}, via $\Finv\,$, the operator $e^{i_{\phi}|i_{\bar\phi}}$ could be expressed as
\begin{equation*}
e^{i_{\phi}|i_{\bar\phi}}=(I+\phi+\bar\phi)\Finv\, .
\end{equation*}
\begin{lemma}\label{lemma12}\cite[Lemma 2.12]{rao2018several}
\begin{equation*}
e^{-i_{\phi}}\circ e^{i_{\phi}|i_{\bar\phi}}=(I-\bar\phi\cdot\phi+\bar\phi)\Finv\,,
\end{equation*}
\begin{equation*}
e^{-i_{\phi}|-i_{\bar\phi}}\circ e^{i_{\phi}}
=(I^{\prime}+(I^{\prime\prime}-\bar\phi\cdot\phi)^{-1}
-\bar\phi\cdot(I^{\prime}-\phi\cdot\bar\phi)^{-1})\Finv\,.
\end{equation*}
\end{lemma}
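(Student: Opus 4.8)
The plan is to reduce both identities to a verification on $1$-forms, using that every operator in sight is multiplicative for the wedge product. Concretely, the contraction $i_\phi$ by $\phi\in A^{0,1}_J(M,T^{1,0}M)$ is a derivation of degree $0$ of the exterior algebra, so its exponential $e^{\pm i_\phi}$ is an algebra automorphism; the extended operators $e^{i_\phi|i_{\bar\phi}}$ and $e^{-i_\phi|-i_{\bar\phi}}$ are multiplicative by their very construction, since they replace each $1$-form factor of a decomposable form; and the simultaneous contraction $\varphi\Finv$ likewise acts factor by factor, hence is multiplicative. Consequently both sides of each identity are point-wise linear algebra homomorphisms from $A^{*,*}_J(M)$ into the complex exterior algebra, and two such homomorphisms coincide as soon as they agree on the degree-one generators $\theta^k$ and $\theta^{\bar k}$. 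Moreover, on $A^1(M)$ the simultaneous contraction collapses to ordinary contraction (Remark \ref{remark2}), so on generators the right-hand sides become $(I-\bar\phi\cdot\phi+\bar\phi)\lrcorner$ and $O\lrcorner$ respectively, where $O=I'+(I''-\bar\phi\cdot\phi)^{-1}-\bar\phi\cdot(I'-\phi\cdot\bar\phi)^{-1}$.

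For the first identity I would test on $\theta^k$ and $\theta^{\bar k}$. Since $e^{i_\phi|i_{\bar\phi}}$ sends $\theta^k\mapsto e^{i_\phi}(\theta^k)$ and $\theta^{\bar k}\mapsto e^{i_{\bar\phi}}(\theta^{\bar k})$, Lemma \ref{lemma6} rewrites each image as $e^{i_\phi}\bigl((I-\bar\phi\cdot\phi+\bar\phi)\lrcorner\theta^k\bigr)$ and $e^{i_\phi}\bigl((I-\bar\phi\cdot\phi+\bar\phi)\lrcorner\theta^{\bar k}\bigr)$. Applying $e^{-i_\phi}$, the inverse automorphism of $e^{i_\phi}$, strips the outer $e^{i_\phi}$ and leaves exactly $(I-\bar\phi\cdot\phi+\bar\phi)\lrcorner\theta^k$ and $(I-\bar\phi\cdot\phi+\bar\phi)\lrcorner\theta^{\bar k}$, which are the values of $(I-\bar\phi\cdot\phi+\bar\phi)\Finv$ on these generators. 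Multiplicativity then propagates the equality to all of $A^{*,*}_J(M)$, settling the first formula.

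For the second identity I again test on generators, now recalling that $e^{-i_\phi|-i_{\bar\phi}}$ denotes the inverse of $e^{i_\phi|i_{\bar\phi}}$, whose existence is Lemma \ref{lemma18}. On a $(1,0)$-generator, $e^{i_\phi}(\theta^k)=(I+\phi)\lrcorner\theta^k=\theta^k_\phi=e^{i_\phi|i_{\bar\phi}}(\theta^k)$, so $e^{-i_\phi|-i_{\bar\phi}}$ returns $\theta^k$; since the last two summands of $O$ annihilate $(1,0)$-forms, $O\lrcorner\theta^k=\theta^k$ as well. On a $(0,1)$-generator, $e^{i_\phi}(\theta^{\bar k})=\theta^{\bar k}$ because $i_\phi$ kills $(0,1)$-forms, and Lemma \ref{lemma9}, which writes $\theta^{\bar k}$ as $e^{i_\phi|i_{\bar\phi}}$ applied to $\bigl((I''-\bar\phi\cdot\phi)^{-1}-\bar\phi\cdot(I'-\phi\cdot\bar\phi)^{-1}\bigr)\lrcorner\theta^{\bar k}$, shows that $e^{-i_\phi|-i_{\bar\phi}}(\theta^{\bar k})$ equals $\bigl((I''-\bar\phi\cdot\phi)^{-1}-\bar\phi\cdot(I'-\phi\cdot\bar\phi)^{-1}\bigr)\lrcorner\theta^{\bar k}$, which is precisely $O\lrcorner\theta^{\bar k}$ since $I'$ annihilates $(0,1)$-forms. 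Again multiplicativity propagates both equalities from generators to the whole algebra.

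The main obstacle is not any single computation but justifying the reduction cleanly: one must verify carefully that $\Finv$ and the two extended exponentials are genuine algebra homomorphisms, so that agreement on $A^1$ forces agreement in all degrees, and one must keep the type and index bookkeeping of the block operators straight. In particular, the delicate point is confirming, via the inverse of $\Phi$ recorded in Remark \ref{remark1} together with the identities (\ref{neq3}) and (\ref{neq2}), that the contraction operators $(I''-\bar\phi\cdot\phi)^{-1}$ and $\bar\phi\cdot(I'-\phi\cdot\bar\phi)^{-1}$ appearing in $O$ are exactly the off-diagonal and diagonal blocks of $\Phi^{-1}$ that Lemma \ref{lemma9} produces, and that each acts between the correct bidegrees. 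Once the multiplicativity and this type-bookkeeping are in place, the two identities follow from Lemmas \ref{lemma6} and \ref{lemma9} with no further work.
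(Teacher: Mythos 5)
Your proposal is correct and takes essentially the same route as the paper's own proof: both identities rest on Lemma \ref{lemma6} (for $e^{-i_{\phi}}\circ e^{i_{\phi}|i_{\bar\phi}}$) and Lemma \ref{lemma9} (for $e^{-i_{\phi}|-i_{\bar\phi}}\circ e^{i_{\phi}}$), propagated from $1$-forms to all $(p,q)$-forms by the factor-by-factor, multiplicative action of $e^{\pm i_{\phi}}$, $e^{i_{\phi}|i_{\bar\phi}}$ and $\Finv\,$. The paper writes this propagation as an explicit computation on the basis expansion $\alpha_{I_{p},\bar J_{q}}\theta^{I_{p}}\wedge\theta^{\bar J_{q}}$, while you phrase it as agreement of pointwise-linear algebra homomorphisms on degree-one generators; this is a purely presentational difference.
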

\begin{proof}
For any $\alpha\in A^{p,q}_{J}(M)$,
\begin{align*}
&e^{i_{\phi}}((I-\bar\phi\cdot\phi+\bar\phi)\Finv\,\alpha)\\
=&\,e^{i_{\phi}}(\alpha_{I_{p},\bar J_{ q}}\cdots\wedge(I-\bar\phi\cdot\phi+\bar\phi)\lrcorner\theta^{i_{p}}\wedge\cdots\wedge (I-\bar\phi\cdot\phi+\bar\phi)\lrcorner\theta^{\bar j_{q}}\wedge\cdots)\\
=&\,\alpha_{I_{p},\bar J_{q}}\cdots\wedge e^{i_{\phi}}((I-\bar\phi\cdot\phi+\bar\phi)\lrcorner\theta^{i_{p}})\wedge\cdots\wedge e^{i_{\phi}}\bigl((I-\bar\phi\cdot\phi+\bar\phi)\lrcorner\theta^{\bar j_{q}}\bigr)\wedge\cdots
\quad (\textup{by Lemma}\,\, \ref{lemma6})\\
=&\,\alpha_{I_{p},\bar J_{q}}\cdots\wedge e^{i_{\phi}}(\theta^{i_{p}})\wedge\cdots
\wedge e^{i_{\bar\phi}}(\theta^{\bar j_{q}})\wedge\cdots\\
=&\,e^{i_{\phi}|i_{\bar\phi}}(\alpha).
\end{align*}
By Lemma \ref{lemma9}, we have
\begin{align*}
e^{i_{\phi}}(\alpha)
=&\,\alpha_{I_{p},\bar J_{q}}e^{i_{\phi}}(\theta^{I_{p}})\wedge\theta^{\bar J_{q}}\\
=&\,\alpha_{I_{p},\bar J_{q}}e^{i_{\phi}}(\theta^{I_{p}})\wedge\cdots
\wedge(e^{i_{\phi}|i_{\bar\phi}}(((I^{\prime\prime}-\bar\phi\cdot\phi)^{-1}
-\bar\phi(I-\phi\cdot\bar\phi)^{-1})\lrcorner\theta^{\bar j_{q}}))\wedge\cdots
\quad (\textup{by Lemma}\,\, \ref{lemma9})\\
=&\,\alpha_{I_{p},\bar J_{q}}e^{i_{\phi}}(\theta^{I_{p}})
\wedge e^{i_{\phi}|i_{\phi}}(((I^{\prime\prime}-\bar\phi\cdot\phi)^{-1}
-\bar\phi\cdot(I^{\prime}-\phi\cdot\bar\phi)^{-1})\Finv\,\theta^{\bar J_{q}})\\
=&\,\alpha_{I_{p},\bar J_{q}}e^{i_{\phi}|i_{\bar\phi}}(\theta^{I_{p}}\wedge
((I^{\prime\prime}-\bar\phi\cdot\phi)^{-1}
-\bar\phi\cdot(I^{\prime}-\phi\cdot\bar\phi)^{-1})\Finv\,\theta^{\bar J_{q}}).
\end{align*}
Thus we get
\begin{align*}
e^{-i_{\phi}|-i_{\bar\phi}}\circ e^{i_{\phi}}(\alpha)
=&\,\alpha_{I_{p},\bar J_{q}}\theta^{I_{p}}\wedge
((I^{\prime\prime}-\bar\phi\cdot\phi)^{-1}-\bar\phi\cdot(I^{\prime}-\phi\cdot\bar\phi)^{-1})\Finv\,\theta^{\bar J_{q}}\\
=&\,\alpha_{I_{p},\bar J_{q}}(I^{\prime}\Finv\,\theta^{I_{p}})\wedge
(I^{\prime}+(I^{\prime\prime}-\bar\phi\cdot\phi)^{-1}-\bar\phi\cdot(I^{\prime}-\phi\cdot\bar\phi)^{-1})\Finv\,\theta^{\bar J_{q}}\\
=&\,\alpha_{I_{p},\bar J_{q}}
(I^{\prime}+(I^{\prime\prime}-\bar\phi\cdot\phi)^{-1}-\bar\phi\cdot(I^{\prime}-\phi\cdot\bar\phi)^{-1})\Finv\,
(\theta^{I_{p}}\wedge\theta^{\bar J_{q}})\\
=&\,(I^{\prime}+(I^{\prime\prime}-\bar\phi\cdot\phi)^{-1}-\bar\phi\cdot(I^{\prime}-\phi\cdot\bar\phi)^{-1})\Finv\,\alpha.
\end{align*}
\end{proof}
Using Lemma \ref{lemma6} and Lemma \ref{lemma12}, we get the following theorem.
\begin{theorem}\label{theorem3}
Let $e^{i_{\phi}|i_{\bar\phi}}$ be the extended exponential operator. For any $\alpha\in A^{p,q}_{J}(M)$, we have
\begin{align}\label{eq65}
d(e^{i_{\phi}|i_{\bar\phi}}(\alpha))
=&\,e^{i_{\phi}|i_{\bar\phi}}
\bigl\{((I^{\prime}+(I^{\prime\prime}-\bar\phi\cdot\phi)^{-1}-\bar\phi\cdot(I^{\prime}-\phi\cdot\bar\phi)^{-1})\nonumber\\
&\Finv\,\bigl(d+[\mu,i_{\phi}]+[\partial,i_{\phi}]-i_{\frac{1}{2}(\mathscr{B}(\phi,\phi)+\mathscr{C}(\phi,\phi))}+i_{\mathrm{MC}(\phi)}\bigr)
\circ(I-\bar\phi\cdot\phi+\bar\phi)\Finv\,\alpha\bigr\}.
\end{align}
\end{theorem}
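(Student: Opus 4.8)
The plan is to obtain (\ref{eq65}) by chaining Corollary \ref{corollary4} together with the two identities of Lemma \ref{lemma12}, reading each of them in the direction that rewrites one exponential operator in terms of the other. Throughout I abbreviate $P:=I^{\prime}+(I^{\prime\prime}-\bar\phi\cdot\phi)^{-1}-\bar\phi\cdot(I^{\prime}-\phi\cdot\bar\phi)^{-1}$ and $Q:=I-\bar\phi\cdot\phi+\bar\phi$, so that the two statements of Lemma \ref{lemma12} read $e^{-i_{\phi}}\circ e^{i_{\phi}|i_{\bar\phi}}=Q\,\Finv\,$ and $e^{-i_{\phi}|-i_{\bar\phi}}\circ e^{i_{\phi}}=P\,\Finv\,$, while the right-hand side of (\ref{eq65}) is $e^{i_{\phi}|i_{\bar\phi}}\circ(P\,\Finv\,)\circ O\circ(Q\,\Finv\,)$ with $O:=d+[\mu,i_{\phi}]+[\partial,i_{\phi}]-i_{\frac12(\mathscr{B}(\phi,\phi)+\mathscr{C}(\phi,\phi))}+i_{\mathrm{MC}(\phi)}$.

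First I would use that $e^{i_{\phi}}$ and $e^{-i_{\phi}}$ are genuine two-sided inverses (since $i_{\phi}$ commutes with itself, $e^{i_{\phi}}\circ e^{-i_{\phi}}=\mathrm{Id}$). Composing the first identity of Lemma \ref{lemma12} on the left by $e^{i_{\phi}}$ then gives the factorization $e^{i_{\phi}|i_{\bar\phi}}=e^{i_{\phi}}\circ(Q\,\Finv\,)$. Substituting this into $d\circ e^{i_{\phi}|i_{\bar\phi}}$ and applying Corollary \ref{corollary4} in the form $d\circ e^{i_{\phi}}=e^{i_{\phi}}\circ O$ yields
\begin{equation*}
d\circ e^{i_{\phi}|i_{\bar\phi}}=d\circ e^{i_{\phi}}\circ(Q\,\Finv\,)=e^{i_{\phi}}\circ O\circ(Q\,\Finv\,).
\end{equation*}

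It remains to turn the leading $e^{i_{\phi}}$ back into $e^{i_{\phi}|i_{\bar\phi}}$. The key point, and the step that needs the most care, is that $e^{-i_{\phi}|-i_{\bar\phi}}$ is \emph{not} the inverse of $e^{i_{\phi}|i_{\bar\phi}}$: a quick check on a basis $(1,0)$-form shows that $e^{i_{\phi}|i_{\bar\phi}}\circ e^{-i_{\phi}|-i_{\bar\phi}}$ acts as contraction by $I^{\prime}-\phi\cdot\bar\phi$ rather than as the identity, so one cannot simply sandwich and cancel. Instead I would invoke the factorization $e^{i_{\phi}}=e^{i_{\phi}|i_{\bar\phi}}\circ(P\,\Finv\,)$, which is exactly the penultimate display in the proof of Lemma \ref{lemma12} (established there via Lemma \ref{lemma9}, before both sides are composed with $e^{-i_{\phi}|-i_{\bar\phi}}$ to produce the stated form of the second identity). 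Substituting this into the previous line gives $d\circ e^{i_{\phi}|i_{\bar\phi}}=e^{i_{\phi}|i_{\bar\phi}}\circ(P\,\Finv\,)\circ O\circ(Q\,\Finv\,)$, which is (\ref{eq65}).

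The argument is thus purely formal once the three building blocks are in place; no further type comparison or Maurer–Cartan input is needed beyond what is already packaged into $O$ (through Lemma \ref{nlemma1} and $[\bar\mu,i_{\phi}]=0$). The only genuine subtlety to flag is the asymmetry just mentioned: the proof works because $e^{\pm i_{\phi}}$ form an inverse pair whereas $e^{i_{\phi}|i_{\bar\phi}}$ does not invert against $e^{-i_{\phi}|-i_{\bar\phi}}$, so the two identities of Lemma \ref{lemma12} must each be used in the specific direction above, and the second one should be read off its proof rather than from its stated form.
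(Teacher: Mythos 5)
Your proof is correct and is essentially the paper's own argument: both amount to conjugating Corollary \ref{corollary4} by the two conversion identities of Lemma \ref{lemma12}. The one place you genuinely diverge is the step that turns the leading $e^{i_{\phi}}$ back into $e^{i_{\phi}|i_{\bar\phi}}$: the paper does this by inserting $e^{i_{\phi}|i_{\bar\phi}}\circ e^{-i_{\phi}|-i_{\bar\phi}}$ as an identity factor and then applying the stated second identity of Lemma \ref{lemma12}, i.e.\ it uses precisely the two-sided invertibility you warn about, whereas you use the factorization $e^{i_{\phi}}=e^{i_{\phi}|i_{\bar\phi}}\circ(P\,\Finv\,)$ extracted from the proof of that lemma. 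Your caution is well founded as a matter of notation: read literally as the $J$-type extended exponential built from $-\phi$ and $-\bar\phi$, the operator $e^{-i_{\phi}|-i_{\bar\phi}}$ satisfies $e^{i_{\phi}|i_{\bar\phi}}\circ e^{-i_{\phi}|-i_{\bar\phi}}(\theta^{i})=(I^{\prime}-\phi\cdot\bar\phi)\lrcorner\,\theta^{i}$, exactly as you compute, so the paper's insertion is legitimate only when $e^{-i_{\phi}|-i_{\bar\phi}}$ is understood as the inverse of the isomorphism of Lemma \ref{lemma18} (equivalently, the extended operator formed with respect to the $J_{\phi}$-type decomposition, which sends $\theta^{i}_{\phi}\mapsto\theta^{i}$ and $\theta^{\bar i}_{\phi}\mapsto\theta^{\bar i}$). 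Under that intended reading, the paper's proof and yours are the same computation performed in a different order; under the literal reading, only yours survives unchanged. So your version proves the statement with the same ingredients while being insulated against this ambiguity, at no extra cost.
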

\begin{proof}
Considering (\ref{eq89}), we check by direct calculations.
\begin{align}
&d(e^{i_{\phi}|i_{\bar\phi}}(\alpha))\nonumber\\
=&\,d\circ e^{i_{\phi}}\circ e^{-i_{\phi}}\circ e^{i_{\phi}|i_{\bar\phi}}(\alpha)\nonumber\\
=&\,e^{i_{\phi}}\circ (d+[\mu,i_{\phi}]+[\partial,i_{\phi}]-i_{\frac{1}{2}(\mathscr{B}(\phi,\phi)+\mathscr{C}(\phi,\phi))}
+i_{\mathrm{MC}(\phi)})\circ e^{-i_{\phi}}\circ e^{i_{\phi}|i_{\bar\phi}}(\alpha)\nonumber\\
=&\,e^{i_{\phi}|i_{\bar\phi}}\circ e^{-i_{\phi}|-i_{\bar\phi}}\circ e^{i_{\phi}}\circ (d+[\mu,i_{\phi}]+[\partial,i_{\phi}]-i_{\frac{1}{2}(\mathscr{B}(\phi,\phi)+\mathscr{C}(\phi,\phi))}
+i_{\mathrm{MC}(\phi)})\circ e^{-i_{\phi}}\circ e^{i_{\phi}|i_{\bar\phi}}(\alpha)\nonumber\\
=&\,e^{i_{\phi}|i_{\bar\phi}}
((I^{\prime}+(I^{\prime\prime}-\bar\phi\cdot\phi)^{-1}-\bar\phi\cdot(I^{\prime}-\phi\cdot\bar\phi)^{-1})\Finv\,
(d+[\mu,i_{\phi}]+[\partial,i_{\phi}]-i_{\frac{1}{2}(\mathscr{B}(\phi,\phi)+\mathscr{C}(\phi,\phi))}
+i_{\mathrm{MC}(\phi)})\nonumber\\
&\circ(I-\bar\phi\cdot\phi+\bar\phi)\Finv\,\alpha).\quad(\textup{By Lemma \ref{lemma12}})\nonumber
\end{align}
\end{proof}
\section{Decomposition of the extension formula}\label{sec4}
\subsection{Decomposition type one}
Note that (\ref{eq52})
\begin{equation*}
[\nabla,i_{\varphi}]\circ i_{\psi}=i_{\psi}\circ[\nabla,i_{\varphi}]-i_{[\varphi,\psi]},
\end{equation*}
could be decomposed via types into the following four equations.
\begin{align*}
[\mu,i_{\varphi}]\circ i_{\psi}&=i_{\psi}\circ[\mu,i_{\varphi}]-i_{\mathscr{B}(\varphi,\psi)+\mathscr{C}(\varphi,\psi)},\\
[\nabla^{1,0},i_{\varphi}]\circ i_{\psi}&=i_{\psi}\circ[\nabla^{1,0},i_{\varphi}]-i_{\mathscr{A}(\varphi,\psi)},\\
[\nabla^{0,1},i_{\varphi}]\circ i_{\psi}&=i_{\psi}\circ[\nabla^{0,1},i_{\varphi}],\\
[\bar\mu,i_{\varphi}]\circ i_{\psi}&=i_{\psi}\circ[\bar\mu,i_{\varphi}].
\end{align*}
Using these equations and calculating as the former section, we can prove the following lemma.
\begin{lemma}\label{lemma8}
Let $(M,J)$ be an almost complex manifold, $\phi\in A^{0,1}_{J}(M,T^{1,0}M)$ be a Beltrami differential, $E\rightarrow M$ be a vector bundle on $M$ and $\nabla$ be a connection of $E$. Then on $A^{\ast,\ast}_{J}(M,E)$ we have
\begin{align*}
[\mu,i^{k}_{\phi}]=&\,ki^{k-1}_{\phi}\circ[\mu,i_{\phi}]
-\frac{k(k-1)}{2}i^{k-2}_{\phi}\circ i_{(\mathscr{B}(\phi,\phi)+\mathscr{C}(\phi,\phi))}\\
&-\frac{k(k-1)(k-2)}{3!}i^{k-3}_{\phi}\circ
i_{(i_{\mathscr{C}(\phi,\phi)}\phi-i_{\phi}\mathscr{B}(\phi,\phi))},\\
[\nabla^{1,0},i^{k}_{\phi}]
=&\,ki^{k-1}_{\phi}\circ[\nabla^{1,0},i_{\phi}]
-\frac{k(k-1)}{2}i^{k-2}_{\phi}\circ i_{\mathscr{A}(\phi,\phi)},\\
[\nabla^{0,1},i^{k}_{\phi}]
=&\,ki^{k-1}_{\phi}\circ[\nabla^{0,1},i_{\phi}],\\
[\bar\mu,i^{k}_{\phi}]
=&\,ki^{k-1}_{\phi}\circ[\bar\mu,i_{\phi}].
\end{align*}
\end{lemma}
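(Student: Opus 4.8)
The plan is to run, separately for each of the four type-components $\mu$, $\nabla^{1,0}$, $\nabla^{0,1}$, $\bar\mu$ of $\nabla$, the very induction that established Proposition \ref{proposition1}, feeding in the bidegree-decomposed forms of \eqref{eq52} displayed just before the statement in place of \eqref{eq52} itself. The engine of that induction is the purely formal derivation identity
\begin{equation*}
[D, i^{k+1}_{\phi}] = [D, i^{k}_{\phi}]\circ i_{\phi} + i^{k}_{\phi}\circ[D, i_{\phi}],
\end{equation*}
valid for an arbitrary operator $D$ and hence for each of $\mu$, $\nabla^{1,0}$, $\nabla^{0,1}$, $\bar\mu$. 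So the recursion driving the proof of \eqref{eq49} is available verbatim for every component; only the correction terms produced when commuting across $i_{\phi}$ differ, and they differ exactly according to type.

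First I would dispose of $\nabla^{0,1}$ and $\bar\mu$. Their base relations are $[\nabla^{0,1},i_{\phi}]\circ i_{\phi} = i_{\phi}\circ[\nabla^{0,1},i_{\phi}]$ and $[\bar\mu,i_{\phi}]\circ i_{\phi} = i_{\phi}\circ[\bar\mu,i_{\phi}]$, carrying no correction term; the induction therefore produces neither a quadratic nor a cubic remainder and collapses to $[\nabla^{0,1},i^{k}_{\phi}] = k\,i^{k-1}_{\phi}\circ[\nabla^{0,1},i_{\phi}]$, and likewise for $\bar\mu$.

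For $\nabla^{1,0}$ the base relation carries a single correction, $[\nabla^{1,0},i_{\phi}]\circ i_{\phi} = i_{\phi}\circ[\nabla^{1,0},i_{\phi}] - i_{\mathscr{A}(\phi,\phi)}$, so the induction generates the quadratic term $-\tfrac{k(k-1)}{2}\,i^{k-2}_{\phi}\circ i_{\mathscr{A}(\phi,\phi)}$ exactly as in Proposition \ref{proposition1}. The thing to verify is that no cubic term survives: in the $\nabla$-induction a cubic term arose by commuting $i_{[\phi,\phi]}$ past $i_{\phi}$ through Lemma \ref{lemma11}, but here the analogous object is $i_{\mathscr{A}(\phi,\phi)}$ with $\mathscr{A}(\phi,\phi)\in A^{0,2}_{J}(M,T^{1,0}M)$, and Lemma \ref{lemma5} gives $[i_{\mathscr{A}(\phi,\phi)},i_{\phi}]=0$. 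Hence the recursion terminates at second order, yielding the stated formula.

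Finally the $\mu$ case is the structural twin of Proposition \ref{proposition1}. Its base relation $[\mu,i_{\phi}]\circ i_{\phi} = i_{\phi}\circ[\mu,i_{\phi}] - i_{\mathscr{B}(\phi,\phi)+\mathscr{C}(\phi,\phi)}$ gives the quadratic term $-\tfrac{k(k-1)}{2}\,i^{k-2}_{\phi}\circ i_{\mathscr{B}(\phi,\phi)+\mathscr{C}(\phi,\phi)}$; commuting this correction past $i_{\phi}$ via Lemma \ref{lemma11} (equation \eqref{eq51}) produces precisely $i_{(i_{\mathscr{C}(\phi,\phi)}\phi - i_{\phi}\mathscr{B}(\phi,\phi))}$, the $\mathscr{A}$-part dropping out by Lemma \ref{lemma5}, and this is the cubic correction. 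To close the recursion I would invoke Lemma \ref{lemma3}, using that $i_{\mathscr{C}(\phi,\phi)}\phi - i_{\phi}\mathscr{B}(\phi,\phi)\in A^{0,2}_{J}(M,T^{1,0}M)$, so that the cubic term commutes with $i_{\phi}$ and the expansion stabilizes at third order. The net recursion coincides with that of \eqref{eq49} after replacing $i_{[\phi,\phi]}$ by $i_{\mathscr{B}(\phi,\phi)+\mathscr{C}(\phi,\phi)}$ and $i_{(i_{[\phi,\phi]}\phi - i_{\phi}[\phi,\phi])}$ by $i_{(i_{\mathscr{C}(\phi,\phi)}\phi - i_{\phi}\mathscr{B}(\phi,\phi))}$. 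The only genuine work throughout is the bidegree bookkeeping, and the main obstacle is simply confirming the two type facts that make each row close on itself: that the cubic correction in the $\mu$-row indeed lands in $A^{0,2}_{J}(M,T^{1,0}M)$ so Lemma \ref{lemma3} applies, and that the $\mathscr{A}$-correction in the $\nabla^{1,0}$-row is annihilated by Lemma \ref{lemma5}; granting these, the four inductions are word-for-word copies of the proof of Proposition \ref{proposition1}.
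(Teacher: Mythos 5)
Your proposal is correct and follows essentially the same route as the paper: the paper likewise decomposes (\ref{eq52}) by type into the four base relations and then reruns the induction of Proposition \ref{proposition1} componentwise. The three closing facts you identify are exactly the paper's (\ref{eq61}), (\ref{eq62}) and (\ref{eq63}), obtained there from Lemmas \ref{lemma5}, \ref{lemma11} and \ref{lemma3} just as in your argument.
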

\begin{proof}
 By Lemma \ref{lemma5}, Lemma \ref{lemma11}, the fact that
\begin{equation*}
\mathscr{A}(\phi,\phi)\in A_{J}^{0,2}(M,T^{1,0}M),
\end{equation*}
 and
\begin{equation*}
\mathscr{B}(\phi,\phi)\in A^{1,1}_{J}(M,T^{1,0}M),\quad \mathscr{C}(\phi,\phi)\in A^{0,2}_{J}(M,T^{0,1}M),
\end{equation*}
we have
\begin{equation}\label{eq61}
i_{\mathscr{A}(\phi,\phi)}\circ i_{\phi}=i_{\phi}\circ i_{\mathscr{A}(\phi,\phi)},
\end{equation}
and
\begin{align}\label{eq62}
i_{(\mathscr{B}(\phi,\phi)+\mathscr{C}(\phi,\phi))}\circ i_{\phi}-i_{\phi}\circ i_{(\mathscr{B}(\phi,\phi)+\mathscr{C}(\phi,\phi))}
=&\,i_{i_{(\mathscr{B}(\phi,\phi)+\mathscr{C}(\phi,\phi))}\phi
-i_{\phi}(\mathscr{B}(\phi,\phi)+\mathscr{C}(\phi,\phi))}\nonumber\\
=&\,i_{(i_{\mathscr{C}(\phi,\phi)}\phi-i_{\phi}\mathscr{B}(\phi,\phi))}.
\end{align}
By Lemma \ref{lemma3} we have
\begin{equation}\label{eq63}
i_{(i_{\mathscr{C}(\phi,\phi)}\phi-i_{\phi}\mathscr{B}(\phi,\phi))}\circ i_{\phi}
=i_{\phi}\circ i_{(i_{\mathscr{C}(\phi,\phi)}\phi-i_{\phi}\mathscr{B}(\phi,\phi))}.
\end{equation}
Comparing the proof of Proposition \ref{proposition1}, the key point of the proof is the commutators (\ref{eq61}), (\ref{eq62}) and (\ref{eq63}). Following the proof of Proposition \ref{proposition1}, we can prove the lemma.
\end{proof}
Using Lemma \ref{lemma8}, we can prove the following theorem, which is a decomposition of (\ref{eq64}) according to types.
\begin{theorem}\label{theorem4}
Let $(M,J)$ be an almost complex manifold, $\phi\in A^{0,1}_{J}(M,T^{1,0}M)$ be a Beltrami differential, $E\rightarrow M$ be a vector bundle on $M$ and $\nabla$ be a connection of $E$. We have
\begin{align*}
e^{-i_{\phi}}\circ\mu\circ e^{i_{\phi}}
&=\mu-\mathcal{L}^{\mu}_{\phi}-i_{\frac{1}{2}(\mathscr{B}(\phi,\phi)+\mathscr{C}(\phi,\phi))}
-i_{\frac{1}{3!}(i_{\mathscr{C}(\phi,\phi)}\phi-i_{\phi}\mathscr{B}(\phi,\phi))},\\
e^{-i_{\phi}}\circ\nabla^{1,0}\circ e^{i_{\phi}}
&=\nabla^{1,0}-\mathcal{L}^{\nabla^{1,0}}_{\phi}-i_{\frac{1}{2}\mathscr{A}(\phi,\phi)},\\
e^{-i_{\phi}}\circ\nabla^{0,1}\circ e^{i_{\phi}}
&=\nabla^{0,1}-\mathcal{L}^{\nabla^{0,1}}_{\phi},\\
e^{-i_{\phi}}\circ\bar\mu\circ e^{i_{\phi}}
&=\bar\mu-\mathcal{L}^{\bar\mu}_{\phi}.
\end{align*}
\end{theorem}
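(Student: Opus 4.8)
The plan is to run the summation argument from the proof of Theorem \ref{theorem1} four times in parallel, once for each summand in the type decomposition $\nabla=\mu+\nabla^{1,0}+\nabla^{0,1}+\bar\mu$. The key point is that Lemma \ref{lemma8} is precisely the type-refined analogue of Proposition \ref{proposition1}: it supplies the commutator of each homogeneous piece of $\nabla$ with the multi-contraction $i^{k}_{\phi}$. Consequently the passage from $[\,\cdot\,,i^{k}_{\phi}]$ to $[\,\cdot\,,e^{i_{\phi}}]$ is purely formal and identical in each of the four cases.

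First I would treat $\mu$, which carries all three terms. Starting from the first identity of Lemma \ref{lemma8} and dividing by $k!$, I get
\begin{align*}
\frac{1}{k!}[\mu,i^{k}_{\phi}]
=&\,\frac{1}{(k-1)!}i^{k-1}_{\phi}\circ[\mu,i_{\phi}]
-\frac{1}{(k-2)!}i^{k-2}_{\phi}\circ i_{\frac{1}{2}(\mathscr{B}(\phi,\phi)+\mathscr{C}(\phi,\phi))}\\
&-\frac{1}{(k-3)!}i^{k-3}_{\phi}\circ i_{\frac{1}{3!}(i_{\mathscr{C}(\phi,\phi)}\phi-i_{\phi}\mathscr{B}(\phi,\phi))}.
\end{align*}
Writing $[\mu,i^{k}_{\phi}]=\mu\circ i^{k}_{\phi}-i^{k}_{\phi}\circ\mu$ and summing over $k\ge 0$ (a finite sum, since the dimension of $M$ is finite) collapses the three series into
\begin{equation*}
\mu\circ e^{i_{\phi}}-e^{i_{\phi}}\circ\mu
=e^{i_{\phi}}\circ[\mu,i_{\phi}]
-e^{i_{\phi}}\circ i_{\frac{1}{2}(\mathscr{B}(\phi,\phi)+\mathscr{C}(\phi,\phi))}
-e^{i_{\phi}}\circ i_{\frac{1}{3!}(i_{\mathscr{C}(\phi,\phi)}\phi-i_{\phi}\mathscr{B}(\phi,\phi))}.
\end{equation*}
Composing with $e^{-i_{\phi}}$ on the left and using $[\mu,i_{\phi}]=-\mathcal{L}^{\mu}_{\phi}$ (which follows from the definition (\ref{eq301}) since $|\phi|=1$) gives the first displayed equation.

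The remaining three identities follow identically from the corresponding lines of Lemma \ref{lemma8}, the only difference being that fewer terms are present. For $\nabla^{1,0}$ only the linear and quadratic terms appear: the quadratic defect $i_{\mathscr{A}(\phi,\phi)}$ commutes with $i_{\phi}$ by (\ref{eq61}), so no cubic term is generated, and one obtains $\nabla^{1,0}-\mathcal{L}^{\nabla^{1,0}}_{\phi}-i_{\frac{1}{2}\mathscr{A}(\phi,\phi)}$. For $\nabla^{0,1}$ and $\bar\mu$ only the linear term survives, yielding $\nabla^{0,1}-\mathcal{L}^{\nabla^{0,1}}_{\phi}$ and $\bar\mu-\mathcal{L}^{\bar\mu}_{\phi}$ respectively.

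I do not expect a genuine obstacle here: all the combinatorics and the commutator identities among $i_{\phi}$, $i_{\mathscr{A}(\phi,\phi)}$, $i_{\mathscr{B}(\phi,\phi)}$ and $i_{\mathscr{C}(\phi,\phi)}$ have already been absorbed into Lemma \ref{lemma8} (via Lemmas \ref{lemma5}, \ref{lemma11} and \ref{lemma3}). The only things to verify in the present proof are the elementary factorial manipulation turning $\tfrac{k(k-1)\cdots(k-j+1)}{k!}$ into $\tfrac{1}{(k-j)!}$, the resulting reindexing of the exponential series, and the sign identification of $[\,\cdot\,,i_{\phi}]$ with $-\mathcal{L}^{\,\cdot\,}_{\phi}$ for each of the four operators. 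Summing the four resulting equations recovers Theorem \ref{theorem1}, which is a useful consistency check.
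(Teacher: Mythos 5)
Your proposal is correct and follows essentially the same route as the paper, which likewise obtains Theorem \ref{theorem4} by feeding each line of Lemma \ref{lemma8} into the division-by-$k!$ and summation procedure used in the proof of Theorem \ref{theorem1}. The factorial bookkeeping, the sign identification $[\,\cdot\,,i_{\phi}]=-\mathcal{L}^{\,\cdot\,}_{\phi}$ from (\ref{eq301}), and the final consistency check (summing the four identities recovers Theorem \ref{theorem1}) are all sound.
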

For the exterior differential $d$ we also have the following corollary.
\begin{corollary}\label{corollary5}
Let $(M,J)$ be an almost complex manifold and $\phi\in A_{J}^{0,1}(M,T^{1,0}M)$ be a Beltrami differential. We have
\begin{align*}
e^{-i_{\phi}}\circ\mu\circ e^{i_{\phi}}
&=\mu-\mathcal{L}^{\mu}_{\phi}-i_{\frac{1}{2}(\mathscr{B}(\phi,\phi)+\mathscr{C}(\phi,\phi))}
-i_{\frac{1}{3!}(i_{\mathscr{C}(\phi,\phi)}\phi-i_{\phi}\mathscr{B}(\phi,\phi))},\\
e^{-i_{\phi}}\circ\partial\circ e^{i_{\phi}}
&=\partial-\mathcal{L}^{\partial}_{\phi}-i_{\frac{1}{2}\mathscr{A}(\phi,\phi)},\\
e^{-i_{\phi}}\circ\bar\partial\circ e^{i_{\phi}}
&=\bar\partial-\mathcal{L}^{\bar\partial}_{\phi},\\
e^{-i_{\phi}}\circ\bar\mu\circ e^{i_{\phi}}
&=\bar\mu-\mathcal{L}^{\bar\mu}_{\phi}.
\end{align*}
\end{corollary}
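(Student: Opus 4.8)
The plan is to obtain this corollary as an immediate specialization of Theorem \ref{theorem4}, exactly in the way Corollary \ref{corollary3} was deduced from Theorem \ref{theorem1}. First I would take $E$ to be the trivial real line bundle $M\times\mathbb{R}$ and choose the connection $\nabla$ to be the ordinary exterior differential operator $d$. With this choice the four identities of Theorem \ref{theorem4} become statements purely about $d$ and its type-components, so no new analytic computation is required beyond identifying those components correctly.

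The key step is to match the type decomposition $\nabla=\mu+\nabla^{1,0}+\nabla^{0,1}+\bar\mu$ used in Theorem \ref{theorem4} with the decomposition (\ref{eq67}), namely $d=\mu+\partial+\bar\partial+\bar\mu$. Since $\partial$ is by definition the $(1,0)$-part of $d$ and $\bar\partial$ its $(0,1)$-part, setting $\nabla=d$ forces $\nabla^{1,0}=\partial$ and $\nabla^{0,1}=\bar\partial$, while the $(2,-1)$- and $(-1,2)$-parts $\mu$ and $\bar\mu$ are literally the same operators in both decompositions. Consequently the generalized Lie derivatives decompose compatibly as well: writing $\mathcal{L}^{\nabla}_{\phi}=[i_{\phi},\nabla]$ and substituting $\nabla=d$ gives $\mathcal{L}^{\nabla^{1,0}}_{\phi}=\mathcal{L}^{\partial}_{\phi}$ and $\mathcal{L}^{\nabla^{0,1}}_{\phi}=\mathcal{L}^{\bar\partial}_{\phi}$, with the $\mu$- and $\bar\mu$-pieces unchanged.

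Substituting these identifications into the four equations of Theorem \ref{theorem4} then yields the four equations of the corollary line by line: the $\mu$-equation and the $\bar\mu$-equation are copied verbatim, the $\nabla^{1,0}$-equation becomes the stated $\partial$-equation with its $i_{\frac{1}{2}\mathscr{A}(\phi,\phi)}$-term intact, and the $\nabla^{0,1}$-equation becomes the $\bar\partial$-equation. I do not anticipate any genuine obstacle here, since the entire content already resides in Lemma \ref{lemma8} and Theorem \ref{theorem4}; the only thing to verify is the bookkeeping that the type-components of $d$ coincide with $\mu,\partial,\bar\partial,\bar\mu$, which is immediate from the definitions recalled around (\ref{eq67}).
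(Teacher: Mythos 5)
Your proposal is correct and coincides with the paper's (implicit) proof: the paper presents Corollary \ref{corollary5} as the immediate specialization of Theorem \ref{theorem4} to the trivial bundle with $\nabla=d$, under which $\nabla^{1,0}=\partial$, $\nabla^{0,1}=\bar\partial$, and the $\mu$, $\bar\mu$ components agree with those in the decomposition (\ref{eq67}). Your bookkeeping of the type components and of the corresponding generalized Lie derivatives is exactly the intended argument, mirroring how Corollary \ref{corollary3} follows from Theorem \ref{theorem1}.
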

\subsection{Decomposition type two}
In this subsection, we decompose (\ref{eq65}) according to the types of both sides. One should keep in mind Remark \ref{remark1} and the fact that contractions are pointwisely linear.
\begin{equation*}
\bar\phi\cdot(I^{\prime}-\phi\cdot\bar\phi)^{-1}\in A_{J}^{1,0}(M,T^{0,1}M),\quad
(I^{\prime\prime}-\bar\phi\cdot\phi)^{-1}\in A_{J}^{0,1}(M,T^{0,1}M).
\end{equation*}
Denote
\begin{align*}
O_{1}&=I^{\prime}+(I^{\prime\prime}-\bar\phi\cdot\phi)^{-1}-\bar\phi\cdot(I^{\prime}-\phi\cdot\bar\phi)^{-1},\\
O_{2}&=d+[\mu,i_{\phi}]+[\partial,i_{\phi}]-i_{\frac{1}{2}(\mathscr{B}(\phi,\phi)+\mathscr{C}(\phi,\phi))}+i_{\mathrm{MC}(\phi)},\\
O_{3}&=I-\bar\phi\cdot\phi+\bar\phi.
\end{align*}
We list the types of the operators in Table \ref{tub1}. The leftmost column in the tabulation is the type of the corresponding operator.
\renewcommand\arraystretch{2} 
\begin{table}\footnotesize\tabcolsep 16pt
\centering
\caption{Types of operators}
\begin{tabular}{|c|c|c|c|c|}
  \hline
  Type &$O_{1}$ & $O_{2}$ & $O_{3}$ & Total type  \\
  \hline
  $(2,-1)$&                      &$\mu$                                                  &                      &$(3,-2)$\\
  \hline
  $(1,0)$ &                      &$\partial+[\mu,i_{\phi}]$                                             &                      &$(2,-1)$\\
  \hline
  $(1,-1)$&$-\bar\phi\cdot(I^{\prime}-\phi\cdot\bar\phi)^{-1}$ &                  &  $\bar\phi$                     &$(1,0)$  \\
  \hline
  $(0,1)$ &                      &$\bar\partial+[\partial,i_{\phi}]-i_{\frac{1}{2}(\mathscr{B}(\phi,\phi)+\mathscr{C}(\phi,\phi))}$    &&$(0,1)$\\
  \hline
  $(0,0)$ &$I^{\prime}+(I^{\prime\prime}-\bar\phi\cdot\phi)^{-1}$ &                           & $I-\bar\phi\cdot\phi$                &$(-1,2)$ \\
  \hline
  $(-1,2)$&                      &$\bar\mu+i_{\mathrm{MC}(\phi)}$ & & $(4,-3)$\\
  \hline
\end{tabular}
\label{tub1}
\end{table}
The rightmost column in the tabulation lists all the possible types of the composition operator $O_{1}\circ O_{2}\circ O_{3}$. Considering Remark \ref{remark1}, we should be careful with the types of the decomposition of $O_{1}\circ O_{2}\circ O_{3}$, or similarly of
\begin{equation*}
d(e^{i_{\phi}|i_{\bar\phi}}(\alpha))
=e^{i_{\phi}|i_{\bar\phi}}(O_{1}\Finv\,(O_{2}(O_{3}\Finv\,\alpha))).
\end{equation*}
Since $I\in A^{0,1}(M,T^{0,1})\oplus A^{1,0}(M,T^{1,0})$, $\bar\phi\cdot\phi\in A^{0,1}(M,T^{0,1})$ and $\bar\phi\in A^{1,0}(M,T^{0,1})$, we know that
\begin{small}
\begin{align*}
&(I-\bar\phi\cdot\phi+\bar\phi)\Finv\,\alpha\\
=&\,\alpha_{i_{1}\ldots i_{p}\bar j_{1}\ldots \bar j_{q}}(I-\bar\phi\cdot\phi+\bar\phi)\lrcorner\theta^{i_{1}}\wedge\cdots\wedge
(I-\bar\phi\cdot\phi+\bar\phi)\lrcorner\theta^{i_{p}}\wedge(I-\bar\phi\cdot\phi+\bar\phi)\lrcorner\theta^{\bar j_{1}}\wedge\cdots\wedge(I-\bar\phi\cdot\phi+\bar\phi)\lrcorner\theta^{\bar j_{q}}\\
=&\,\alpha_{i_{1}\ldots i_{p}\bar j_{1}\ldots \bar j_{q}}\theta^{i_{1}}\wedge\cdots\wedge
\theta^{i_{p}}\wedge(I-\bar\phi\cdot\phi+\bar\phi)\lrcorner\theta^{\bar j_{1}}\wedge\cdots\wedge(I-\bar\phi\cdot\phi+\bar\phi)\lrcorner\theta^{\bar j_{q}}\\
=&\,\alpha_{i_{1}\ldots i_{p}\bar j_{1}\ldots \bar j_{q}}\theta^{i_{1}}\wedge\cdots\wedge\theta^{i_{p}}
\wedge((I-\bar\phi\cdot\phi)^{\bar j_{1}}_{\bar \ell_{1}}\theta^{\bar\ell_{1}}+\phi^{\bar j_{1}}_{k_{1}}\theta^{k_{1}})\wedge\cdots\wedge
((I-\bar\phi\cdot\phi)^{\bar j_{q}}_{\bar \ell_{q}}\theta^{\bar\ell_{q}}+\phi^{\bar j_{q}}_{k_{q}}\theta^{k_{q}}).
\end{align*}
\end{small}
So we get
\begin{equation*}
(I-\bar\phi\cdot\phi+\bar\phi)\Finv\,\alpha
=\sum_{s=0}^{q}P_{J}^{p+s,q-s}((I-\bar\phi\cdot\phi+\bar\phi)\Finv\,\alpha),
\end{equation*}
where
\begin{align*}
&P_{J}^{p+s,q-s}((I-\bar\phi\cdot\phi+\bar\phi)\Finv\,\alpha)\\
=&\alpha_{i_{1}\ldots i_{p}\bar j_{1}\ldots \bar j_{q}}\sum_{1\leq r_{1}<\cdots r_{s}\leq q}\delta^{1\cdots q}_{r_{1}\cdots r_{s}r^{c}_{1}\cdots r^{c}_{q-s}}
\phi^{\bar j_{r_{1}}}_{k_{1}}\cdots\phi^{\bar j_{r_{s}}}_{k_{s}}(I-\bar\phi\cdot\phi)^{\bar j_{r^{c}_{1}}}_{\bar\ell_{q-s}}\cdots(I-\bar\phi\cdot\phi)^{\bar j_{r^{c}_{q-s}}}_{\bar\ell_{q}}\\
&\theta^{i_{1}}\wedge\cdots\wedge\theta^{i_{p}}\wedge
\theta^{k_{1}}\wedge\cdots\wedge\theta^{k_{s}}\wedge\theta^{\bar\ell_{q-s}}\wedge\cdots\wedge\theta^{\bar\ell_{q}},
\end{align*}
where $r^{c}_{1}<\ldots<r^{c}_{q-s}$ and $\{r_{1},\cdots,r_{s},r^{c}_{1},\cdots r^{c}_{q-s}\}=\{1,\ldots,q\}$.
Now we need to calculate
\begin{equation*}
(I^{\prime}+(I^{\prime\prime}-\bar\phi\cdot\phi)^{-1}-\bar\phi\cdot(I^{\prime}-\phi\cdot\bar\phi)^{-1}\bigr)\Finv\,.
\end{equation*}
Note that
\begin{equation*}
(I^{\prime\prime}-\bar\phi\cdot\phi)^{-1}\in A_{J}^{0,1}(M,T^{0,1}),\quad
\bar\phi\cdot(I^{\prime}-\phi\cdot\bar\phi)^{-1}\in A_{J}^{1,0}(M,T^{0,1}).
\end{equation*}
Formally we have
\begin{align*}
(I^{\prime}+(I^{\prime\prime}-\bar\phi\cdot\phi)^{-1}-\bar\phi\cdot(I^{\prime}-\phi\cdot\bar\phi)^{-1})\Finv\,\alpha
=\sum_{s=0}^{q}P_{J}^{p+s,q-s}((I^{\prime}+(I^{\prime\prime}-\bar\phi\cdot\phi)^{-1}-\bar\phi\cdot(I^{\prime}-\phi\cdot\bar\phi)^{-1})\Finv\,\alpha).
\end{align*}
The precise calculation is
\begin{small}
\begin{align*}
&(I^{\prime}+(I^{\prime\prime}-\bar\phi\cdot\phi)^{-1}-\bar\phi\cdot(I^{\prime}-\phi\cdot\bar\phi)^{-1})\Finv\,\alpha\\
=&\,\alpha_{i_{1}\ldots i_{p}\bar j_{1}\ldots\bar j_{q}}
(I^{\prime}+(I^{\prime\prime}-\bar\phi\cdot\phi)^{-1}-\bar\phi\cdot(I^{\prime}-\phi\cdot\bar\phi)^{-1})\lrcorner\theta^{i_{1}}\wedge\cdots
\wedge(I^{\prime}+(I^{\prime\prime}-\bar\phi\cdot\phi)^{-1}-\bar\phi\cdot(I^{\prime}-\phi\cdot\bar\phi)^{-1})\lrcorner\theta^{i_{p}}\\
&\wedge(I^{\prime}+(I^{\prime\prime}-\bar\phi\cdot\phi)^{-1}-\bar\phi\cdot(I^{\prime}-\phi\cdot\bar\phi)^{-1})
\lrcorner\,\theta^{\bar j_{1}}
\wedge\cdots\wedge(I^{\prime}+(I^{\prime\prime}-\bar\phi\cdot\phi)^{-1}-\bar\phi\cdot(I^{\prime}-\phi\cdot\bar\phi)^{-1})
\lrcorner\,\theta^{\bar j_{q}}\\
=&\,\alpha_{i_{1}\ldots i_{p}\bar j_{1}\ldots\bar j_{q}}
\theta^{i_{1}}\wedge\cdots\wedge\theta^{i_{p}}\\
&\wedge(((I^{\prime\prime}-\bar\phi\cdot\phi)^{-1})^{\bar j_{1}}_{\bar \ell_{1}}\theta^{\bar\ell_{1}}
-(\bar\phi\cdot(I^{\prime}-\phi\cdot\bar\phi)^{-1})^{\bar j_{1}}_{k_{1}}\theta^{k_{1}})
\wedge\cdots\wedge
(((I^{\prime\prime}-\bar\phi\cdot\phi)^{-1})^{\bar j_{q}}_{\bar \ell_{q}}\theta^{\bar\ell_{q}}
-(\bar\phi\cdot(I^{\prime}-\phi\cdot\bar\phi)^{-1})^{\bar j_{q}}_{k_{q}}\theta^{k_{q}}),\\
\end{align*}
\end{small}
and
\begin{align*}
&P_{J}^{p+s,q-s}((I^{\prime}+(I^{\prime\prime}-\bar\phi\cdot\phi)^{-1}-\bar\phi\cdot(I^{\prime}-\phi\cdot\bar\phi)^{-1})\Finv\,\alpha)\\
=&\,(-1)^{s}\alpha_{i_{1}\ldots i_{p}\bar j_{1}\ldots \bar j_{q}}\sum_{1\leq r_{1}<\cdots<r_{s}\leq q}\delta^{1\cdots q}_{r_{1}\cdots r_{s}r^{c}_{1}\cdots r^{c}_{q-s}}
(\bar\phi\cdot(I^{\prime}-\phi\cdot\bar\phi)^{-1})^{\bar j_{r_{1}}}_{k_{1}}\cdots
(\bar\phi\cdot(I^{\prime}-\phi\cdot\bar\phi)^{-1})^{\bar j_{r_{s}}}_{k_{s}}\\
&\cdot(I^{\prime\prime}-\bar\phi\cdot\phi)^{\bar j_{r^{c}_{1}}}_{\bar\ell_{q-s}}
\cdots(I^{\prime\prime}-\bar\phi\cdot\phi)^{\bar j_{r^{c}_{q-s}}}_{\bar\ell_{q}}
\theta^{i_{1}}\wedge\cdots\wedge\theta^{i_{p}}\wedge
\theta^{k_{1}}\wedge\cdots\wedge\theta^{k_{s}}\wedge\theta^{\bar\ell_{q-s}}\wedge\cdots\wedge\theta^{\bar\ell_{q}}.
\end{align*}
where $r^{c}_{1}<\ldots<r^{c}_{q-s}$ and $\{r_{1},\cdots,r_{s},r^{c}_{1},\cdots r^{c}_{q-s}\}=\{1,\ldots,q\}$. Now we can compute $O_{1}\circ O_{2}\circ O_{3}$.
\begin{small}
\begin{align*}
&O_{1}\circ O_{2}\circ O_{3}(\alpha)\\
=&\,(I^{\prime}+(I^{\prime\prime}-\bar\phi\cdot\phi)^{-1}-\bar\phi\cdot(I^{\prime}-\phi\cdot\bar\phi)^{-1})\Finv\,
\bigl(d+[\mu,i_{\phi}]+[\partial,i_{\phi}]-i_{\frac{1}{2}(\mathscr{B}(\phi,\phi)+\mathscr{C}(\phi,\phi))}+i_{\mathrm{MC}(\phi)}\bigr)
\circ(I-\bar\phi\cdot\phi+\bar\phi)\Finv\,\alpha\\
=&\,(I^{\prime}+(I^{\prime\prime}-\bar\phi\cdot\phi)^{-1}-\bar\phi\cdot(I^{\prime}-\phi\cdot\bar\phi)^{-1})\Finv\,
\bigl(\underbrace{\mu}_{2,-1}+\underbrace{\partial+[\mu,i_{\phi}]}_{1,0}
+\underbrace{\bar\partial+[\partial,i_{\phi}]-i_{\frac{1}{2}(\mathscr{B}(\phi,\phi)+\mathscr{C}(\phi,\phi))}}_{0,1}
+\underbrace{\bar\mu+i_{\mathrm{MC}(\phi)}}_{-1,2}\bigr)\\
&\circ\sum_{s=0}^{q}P_{J}^{p+s,q-s}((I-\bar\phi\cdot\phi+\bar\phi)\Finv\,\alpha)\\
=&\,(I^{\prime}+(I^{\prime\prime}-\bar\phi\cdot\phi)^{-1}-\bar\phi\cdot(I^{\prime}-\phi\cdot\bar\phi)^{-1})\Finv
\underbrace{\mu}_{2,-1}
\sum_{s=0}^{q}P_{J}^{p+s,q-s}((I^{\prime\prime}-\bar\phi\cdot\phi+\bar\phi)\Finv\,\alpha)\\
&+(I^{\prime}+(I^{\prime\prime}-\bar\phi\cdot\phi)^{-1}-\bar\phi\cdot(I^{\prime}-\phi\cdot\bar\phi)^{-1})\Finv
\underbrace{\partial+[\mu,i_{\phi}]}_{1,0}
\sum_{s=0}^{q}P_{J}^{p+s,q-s}((I-\bar\phi\cdot\phi+\bar\phi)\Finv\,\alpha)\\
&+(I^{\prime}+(I^{\prime\prime}-\bar\phi\cdot\phi)^{-1}-\bar\phi\cdot(I-\phi\cdot\bar\phi)^{-1})\Finv\,
(\underbrace{\bar\partial+[\partial,i_{\phi}]-i_{\frac{1}{2}(\mathscr{B}(\phi,\phi)+\mathscr{C}(\phi,\phi))}}_{0,1})
\sum_{s=0}^{q}P_{J}^{p+s,q-s}((I-\bar\phi\cdot\phi+\bar\phi)\Finv\,\alpha)\\
&+(I^{\prime}+(I^{\prime\prime}-\bar\phi\cdot\phi)^{-1}-\bar\phi\cdot(I^{\prime}-\phi\cdot\bar\phi)^{-1})\Finv
(\underbrace{\bar\mu+i_{\mathrm{MC}(\phi)}}_{-1,2})
\sum_{s=0}^{q}P_{J}^{p+s,q-s}((I-\bar\phi\cdot\phi+\bar\phi)\Finv\,\alpha)\\
=&\,\sum_{s=0}^{q}(I^{\prime}+(I^{\prime\prime}-\bar\phi\cdot\phi)^{-1}-\bar\phi\cdot(I^{\prime}-\phi\cdot\bar\phi)^{-1})\Finv
\underbrace{\mu P_{J}^{p+s,q-s}((I-\bar\phi\cdot\phi+\bar\phi)\Finv\,\alpha)}_{(p+s+2,q-s-1)}\\
&+\sum_{s=0}^{q}(I^{\prime}+(I^{\prime\prime}-\bar\phi\cdot\phi)^{-1}-\bar\phi\cdot(I^{\prime}-\phi\cdot\bar\phi)^{-1})\Finv
\underbrace{(\partial+[\mu,i_{\phi}]) P_{J}^{p+s,q-s}((I-\bar\phi\cdot\phi+\bar\phi)\Finv\,\alpha)}_{(p+s+1,q-s)}\\
&+\sum_{s=0}^{q}(I^{\prime}+(I^{\prime\prime}-\bar\phi\cdot\phi)^{-1}-\bar\phi\cdot(I^{\prime}-\phi\cdot\bar\phi)^{-1})\Finv
\underbrace{(\bar\partial+[\partial,i_{\phi}]-i_{\frac{1}{2}(\mathscr{B}(\phi,\phi)+\mathscr{C}(\phi,\phi))})
P_{J}^{p+s,q-s}((I-\bar\phi\cdot\phi+\bar\phi)\Finv\,\alpha)}_{p+s,q-s+1}\\
&+\sum_{s=0}^{q}(I^{\prime}+(I^{\prime\prime}-\bar\phi\cdot\phi)^{-1}-\bar\phi\cdot(I^{\prime}-\phi\cdot\bar\phi)^{-1})\Finv
\underbrace{(\bar\mu+i_{\mathrm{MC}(\phi)})
P_{J}^{p+s,q-s}((I-\bar\phi\cdot\phi+\bar\phi)\Finv\,\alpha)}_{p+s-1,q-s+2}\\
=&\,\sum_{s=0}^{q}\sum_{t=0}^{q-s-1}P_{J}^{p+s+2+t,q-s-1-t}(I^{\prime}+(I^{\prime\prime}-\bar\phi\cdot\phi)^{-1}
-\bar\phi\cdot(I^{\prime}-\phi\cdot\bar\phi)^{-1})\Finv
\underbrace{\mu P_{J}^{p+s,q-s}((I-\bar\phi\cdot\phi+\bar\phi)\Finv\,\alpha)}_{(p+s+2,q-s-1)}\\
&+\sum_{s=0}^{q}\sum_{t=0}^{q-s}P_{J}^{p+s+1+t,q-s-t}(I^{\prime}+(I^{\prime\prime}-\bar\phi\cdot\phi)^{-1}-\bar\phi\cdot(I^{\prime}-\phi\cdot\bar\phi)^{-1})\Finv
\underbrace{(\partial+[\mu,i_{\phi}]) P_{J}^{p+s,q-s}((I-\bar\phi\cdot\phi+\bar\phi)\Finv\,\alpha)}_{(p+s+1,q-s)}\\
&+\sum_{s=0}^{q}\sum_{t=0}^{q-s+1}P_{J}^{p+s+t,q-s+1-t}(I^{\prime}+(I^{\prime\prime}-\bar\phi\cdot\phi)^{-1}
-\bar\phi\cdot(I^{\prime}-\phi\cdot\bar\phi)^{-1})\Finv\\
&\underbrace{(\bar\partial+[\partial,i_{\phi}]-i_{\frac{1}{2}(\mathscr{B}(\phi,\phi)+\mathscr{C}(\phi,\phi))})
P_{J}^{p+s,q-s}((I-\bar\phi\cdot\phi+\bar\phi)\Finv\,\alpha)}_{p+s,q-s+1}\\
&+\sum_{s=0}^{q}\sum_{t=0}^{q-s+2}P_{J}^{p+s-1+t,q-s+2-t}(I^{\prime}+(I^{\prime\prime}-\bar\phi\cdot\phi)^{-1}
-\bar\phi\cdot(I^{\prime}-\phi\cdot\bar\phi)^{-1})\Finv\\
&\underbrace{(\bar\mu+i_{\mathrm{MC}(\phi)})
P_{J}^{p+s,q-s}((I-\bar\phi\cdot\phi+\bar\phi)\Finv\,\alpha)}_{p+s-1,q-s+2}.
\end{align*}
\end{small}
By the above equation, we know that the $J^{2,-1}$-term is
\begin{small}
\begin{align*}
&(O_{1}\circ O_{2}\circ O_{3}(\alpha))_{J}^{2,-1}\\
=&\,\sum_{s+t=0}P_{J}^{p+s+2+t,q-s-1-t}(I^{\prime}+(I^{\prime\prime}-\bar\phi\cdot\phi)^{-1}-\bar\phi\cdot(I^{\prime}-\phi\cdot\bar\phi)^{-1})\Finv\,
\underbrace{\mu P_{J}^{p+s,q-s}((I-\bar\phi\cdot\phi+\bar\phi)\Finv\,\alpha)}_{(p+s+2,q-s-1)}\\
&+\sum_{s+t=1}P_{J}^{p+s+1+t,q-s-t}(I^{\prime}+(I^{\prime\prime}-\bar\phi\cdot\phi)^{-1}-\bar\phi\cdot(I^{\prime}-\phi\cdot\bar\phi)^{-1})\Finv\,
\underbrace{(\partial+[\mu,i_{\phi}]) P_{J}^{p+s,q-s}((I-\bar\phi\cdot\phi+\bar\phi)\Finv\,\alpha)}_{(p+s+1,q-s)}\\
&+\sum_{s+t=2}P_{J}^{p+s+t,q-s+1-t}(I^{\prime}+(I^{\prime\prime}-\bar\phi\cdot\phi)^{-1}-\bar\phi\cdot(I^{\prime}-\phi\cdot\bar\phi)^{-1})\Finv\,
\underbrace{(\bar\partial+[\partial,i_{\phi}]-i_{\frac{1}{2}(\mathscr{B}(\phi,\phi)+\mathscr{C}(\phi,\phi))})
P_{J}^{p+s,q-s}((I-\bar\phi\cdot\phi+\bar\phi)\Finv\,\alpha)}_{p+s,q-s+1}\\
&+\sum_{s+t=3}P_{J}^{p+s-1+t,q-s+2-t}(I^{\prime}+(I^{\prime\prime}-\bar\phi\cdot\phi)^{-1}-\bar\phi\cdot(I^{\prime}-\phi\cdot\bar\phi)^{-1})\Finv\,
\underbrace{(\bar\mu+i_{\mathrm{MC}(\phi)})
P_{J}^{p+s,q-s}((I-\bar\phi\cdot\phi+\bar\phi)\Finv\,\alpha)}_{p+s-1,q-s+2}\bigr\}\\
=&\,\sum_{s+t=0}P_{J}^{p+2,q-1}(I^{\prime}+(I^{\prime\prime}-\bar\phi\cdot\phi)^{-1}-\bar\phi\cdot(I^{\prime}-\phi\cdot\bar\phi)^{-1})\Finv\,
\underbrace{\mu P_{J}^{p+s,q-s}((I-\bar\phi\cdot\phi+\bar\phi)\Finv\,\alpha)}_{(p+s+2,q-s-1)}\\
&+\sum_{s+t=1}P_{J}^{p+2,q-1}(I^{\prime}+(I^{\prime\prime}-\bar\phi\cdot\phi)^{-1}-\bar\phi\cdot(I^{\prime}-\phi\cdot\bar\phi)^{-1})\Finv\,
\underbrace{(\partial+[\mu,i_{\phi}]) P_{J}^{p+s,q-s}((I-\bar\phi\cdot\phi+\bar\phi)\Finv\,\alpha)}_{(p+s+1,q-s)}\\
&+\sum_{s+t=2}P_{J}^{p+2,q-1}(I^{\prime}+(I^{\prime\prime}-\bar\phi\cdot\phi)^{-1}-\bar\phi\cdot(I^{\prime}-\phi\cdot\bar\phi)^{-1})\Finv\,
\underbrace{(\bar\partial+[\partial,i_{\phi}]-i_{\frac{1}{2}(\mathscr{B}(\phi,\phi)+\mathscr{C}(\phi,\phi))})
P_{J}^{p+s,q-s}((I-\bar\phi\cdot\phi+\bar\phi)\Finv\,\alpha)}_{p+s,q-s+1}\\
&+\sum_{s+t=3}P_{J}^{p+2,q-1}(I^{\prime}+(I^{\prime\prime}-\bar\phi\cdot\phi)^{-1}-\bar\phi\cdot(I^{\prime}-\phi\cdot\bar\phi)^{-1})\Finv\,
\underbrace{(\bar\mu+i_{\mathrm{MC}(\phi)})
P_{J}^{p+s,q-s}((I-\bar\phi\cdot\phi+\bar\phi)\Finv\,\alpha)}_{p+s-1,q-s+2}\bigr\}\\
=&\,P_{J}^{p+2,q-1}(I^{\prime}+(I^{\prime\prime}-\bar\phi\cdot\phi)^{-1}-\bar\phi\cdot(I^{\prime}-\phi\cdot\bar\phi)^{-1})\Finv\,
\underbrace{\mu P_{J}^{p,q}((I-\bar\phi\cdot\phi+\bar\phi)\Finv\,\alpha)}_{(p+2,q-1)}\\
&+P_{J}^{p+2,q-1}(I^{\prime}+(I^{\prime\prime}-\bar\phi\cdot\phi)^{-1}-\bar\phi\cdot(I^{\prime}-\phi\cdot\bar\phi)^{-1})\Finv\,
\underbrace{(\partial+[\mu,i_{\phi}]) P_{J}^{p+1,q-1}((I-\bar\phi\cdot\phi+\bar\phi)\Finv\,\alpha)}_{(p+2,q-1)}\\
&+P_{J}^{p+2,q-1}(I^{\prime}+(I^{\prime\prime}-\bar\phi\cdot\phi)^{-1}-\bar\phi\cdot(I^{\prime}-\phi\cdot\bar\phi)^{-1})\Finv\,
\underbrace{(\partial+[\mu,i_{\phi}]) P_{J}^{p,q}((I-\bar\phi\cdot\phi+\bar\phi)\Finv\,\alpha)}_{(p+1,q)}\\
&+P_{J}^{p+2,q-1}(I^{\prime}+(I^{\prime\prime}-\bar\phi\cdot\phi)^{-1}-\bar\phi\cdot(I^{\prime}-\phi\cdot\bar\phi)^{-1})\Finv\,
\underbrace{(\bar\partial+[\partial,i_{\phi}]-i_{\frac{1}{2}(\mathscr{B}(\phi,\phi)+\mathscr{C}(\phi,\phi))})
P_{J}^{p+2,q-2}((I-\bar\phi\cdot\phi+\bar\phi)\Finv\,\alpha)}_{p+2,q-1}\\
&+P_{J}^{p+2,q-1}(I^{\prime}+(I^{\prime\prime}-\bar\phi\cdot\phi)^{-1}-\bar\phi\cdot(I^{\prime}-\phi\cdot\bar\phi)^{-1})\Finv\,
\underbrace{(\bar\partial+[\partial,i_{\phi}]-i_{\frac{1}{2}(\mathscr{B}(\phi,\phi)+\mathscr{C}(\phi,\phi))})
P_{J}^{p+1,q-1}((I-\bar\phi\cdot\phi+\bar\phi)\Finv\,\alpha)}_{p+1,q}\\
&+P_{J}^{p+2,q-1}(I^{\prime}+(I^{\prime\prime}-\bar\phi\cdot\phi)^{-1}-\bar\phi\cdot(I^{\prime}-\phi\cdot\bar\phi)^{-1})\Finv\,
\underbrace{(\bar\partial+[\partial,i_{\phi}]-i_{\frac{1}{2}(\mathscr{B}(\phi,\phi)+\mathscr{C}(\phi,\phi))})
P_{J}^{p,q}((I-\bar\phi\cdot\phi+\bar\phi)\Finv\,\alpha)}_{p,q+1}\\
&+P_{J}^{p+2,q-1}(I^{\prime}+(I^{\prime\prime}-\bar\phi\cdot\phi)^{-1}-\bar\phi\cdot(I^{\prime}-\phi\cdot\bar\phi)^{-1})\Finv\,
\underbrace{(\bar\mu+i_{\mathrm{MC}(\phi)})
P_{J}^{p+3,q-3}((I-\bar\phi\cdot\phi+\bar\phi)\Finv\,\alpha)}_{p+2,q}\\
&+P_{J}^{p+2,q-1}(I^{\prime}+(I^{\prime\prime}-\bar\phi\cdot\phi)^{-1}-\bar\phi\cdot(I^{\prime}-\phi\cdot\bar\phi)^{-1})\Finv\,
\underbrace{(\bar\mu+i_{\mathrm{MC}(\phi)})
P_{J}^{p+2,q-2}((I-\bar\phi\cdot\phi+\bar\phi)\Finv\,\alpha)}_{p+1,q}\\
&+P_{J}^{p+2,q-1}(I^{\prime}+(I^{\prime\prime}-\bar\phi\cdot\phi)^{-1}-\bar\phi\cdot(I^{\prime}-\phi\cdot\bar\phi)^{-1})\Finv\,
\underbrace{(\bar\mu+i_{\mathrm{MC}(\phi)})
P_{J}^{p+1,q-1}((I-\bar\phi\cdot\phi+\bar\phi)\Finv\,\alpha)}_{p,q+1}\\
&+P_{J}^{p+2,q-1}(I^{\prime}+(I^{\prime\prime}-\bar\phi\cdot\phi)^{-1}-\bar\phi\cdot(I^{\prime}-\phi\cdot\bar\phi)^{-1})\Finv\,
\underbrace{(\bar\mu+i_{\mathrm{MC}(\phi)})
P_{J}^{p,q}((I-\bar\phi\cdot\phi+\bar\phi)\Finv\,\alpha)}_{p-1,q+2}.
\end{align*}
\end{small}
The $J^{1,0}$-term is
\begin{small}
\begin{align*}
&(O_{1}\circ O_{2}\circ O_{3}(\alpha))_{J}^{1,0}\\
=&\,\sum_{s+t=0}P_{J}^{p+s+1+t,q-s-t}(I^{\prime}+(I^{\prime\prime}-\bar\phi\cdot\phi)^{-1}-\bar\phi\cdot(I^{\prime}-\phi\cdot\bar\phi)^{-1})\Finv\,
\underbrace{(\partial+[\mu,i_{\phi}]) P_{J}^{p+s,q-s}((I-\bar\phi\cdot\phi+\bar\phi)\Finv\,\alpha)}_{(p+s+1,q-s)}\\
&+\sum_{s+t=1}P_{J}^{p+s+t,q-s+1-t}(I^{\prime}+(I^{\prime\prime}-\bar\phi\cdot\phi)^{-1}-\bar\phi\cdot(I^{\prime}-\phi\cdot\bar\phi)^{-1})\Finv\\
&\underbrace{(\bar\partial+[\partial,i_{\phi}]-i_{\frac{1}{2}(\mathscr{B}(\phi,\phi)+\mathscr{C}(\phi,\phi))})
P_{J}^{p+s,q-s}((I-\bar\phi\cdot\phi+\bar\phi)\Finv\,\alpha)}_{p+s,q-s+1}\\
&+\sum_{s+t=2}P_{J}^{p+s-1+t,q-s+2-t}(I^{\prime}+(I^{\prime\prime}-\bar\phi\cdot\phi)^{-1}-\bar\phi\cdot(I^{\prime}-\phi\cdot\bar\phi)^{-1})\Finv\,
\underbrace{(\bar\mu+i_{\mathrm{MC}(\phi)})
P_{J}^{p+s,q-s}((I-\bar\phi\cdot\phi+\bar\phi)\Finv\,\alpha)}_{p+s-1,q-s+2}\\
=&\,P_{J}^{p+1,q}(I^{\prime}+(I^{\prime\prime}-\bar\phi\cdot\phi)^{-1}-\bar\phi\cdot(I^{\prime}-\phi\cdot\bar\phi)^{-1})\Finv\,
\underbrace{(\partial+[\mu,i_{\phi}]) P_{J}^{p,q}((I-\bar\phi\cdot\phi+\bar\phi)\Finv\,\alpha)}_{(p+1,q)}\\
&+P_{J}^{p+1,q}(I^{\prime}+(I^{\prime\prime}-\bar\phi\cdot\phi)^{-1}-\bar\phi\cdot(I^{\prime}-\phi\cdot\bar\phi)^{-1})\Finv\,
\underbrace{(\bar\partial+[\partial,i_{\phi}]-i_{\frac{1}{2}(\mathscr{B}(\phi,\phi)+\mathscr{C}(\phi,\phi))})
P_{J}^{p+1,q-1}((I-\bar\phi\cdot\phi+\bar\phi)\Finv\,\alpha)}_{p+1,q}\\
&+P_{J}^{p+1,q}(I^{\prime}+(I^{\prime\prime}-\bar\phi\cdot\phi)^{-1}-\bar\phi\cdot(I^{\prime}-\phi\cdot\bar\phi)^{-1})\Finv\,
\underbrace{(\bar\partial+[\partial,i_{\phi}]-i_{\frac{1}{2}(\mathscr{B}(\phi,\phi)+\mathscr{C}(\phi,\phi))})
P_{J}^{p,q}((I-\bar\phi\cdot\phi+\bar\phi)\Finv\,\alpha)}_{p,q+1}\\
&+P_{J}^{p+1,q}(I^{\prime}+(I^{\prime\prime}-\bar\phi\cdot\phi)^{-1}-\bar\phi\cdot(I^{\prime}-\phi\cdot\bar\phi)^{-1})\Finv\,
\underbrace{(\bar\mu+i_{\mathrm{MC}(\phi)})
P_{J}^{p+2,q-2}((I-\bar\phi\cdot\phi+\bar\phi)\Finv\,\alpha)}_{p+1,q}\\
&+P_{J}^{p+1,q}(I^{\prime}+(I^{\prime\prime}-\bar\phi\cdot\phi)^{-1}-\bar\phi\cdot(I^{\prime}-\phi\cdot\bar\phi)^{-1})\Finv\,
\underbrace{(\bar\mu+i_{\mathrm{MC}(\phi)})
P_{J}^{p+1,q-1}((I-\bar\phi\cdot\phi+\bar\phi)\Finv\,\alpha)}_{p,q+1}\\
&+P_{J}^{p+1,q}(I^{\prime}+(I^{\prime\prime}-\bar\phi\cdot\phi)^{-1}-\bar\phi\cdot(I^{\prime}-\phi\cdot\bar\phi)^{-1})\Finv\,
\underbrace{(\bar\mu+i_{\mathrm{MC}(\phi)})
P_{J}^{p,q}((I-\bar\phi\cdot\phi+\bar\phi)\Finv\,\alpha)}_{p-1,q+2}.
\end{align*}
\end{small}
The $J^{0,1}$-term is
\begin{small}
\begin{align*}
&(O_{1}\circ O_{2}\circ O_{3}(\alpha))_{J}^{0,1}\\
=&\,\sum_{s=0}^{q}\sum_{t=0}^{q-s+1}P_{J}^{p+s+t,q-s+1-t}(I^{\prime}+(I^{\prime\prime}-\bar\phi\cdot\phi)^{-1}-\bar\phi\cdot(I^{\prime}-\phi\cdot\bar\phi)^{-1})\Finv\\
&\underbrace{(\bar\partial+[\partial,i_{\phi}]-i_{\frac{1}{2}(\mathscr{B}(\phi,\phi)+\mathscr{C}(\phi,\phi))})
P_{J}^{p+s,q-s}((I-\bar\phi\cdot\phi+\bar\phi)\Finv\,\alpha)}_{p+s,q-s+1}\\
&+\sum_{s=0}^{q}\sum_{t=0}^{q-s+2}P_{J}^{p+s-1+t,q-s+2-t}(I^{\prime}+(I^{\prime\prime}-\bar\phi\cdot\phi)^{-1}-\bar\phi\cdot(I^{\prime}-\phi\cdot\bar\phi)^{-1})\Finv\,
\underbrace{(\bar\mu+i_{\mathrm{MC}(\phi)})
P_{J}^{p+s,q-s}((I-\bar\phi\cdot\phi+\bar\phi)\Finv\,\alpha)}_{p+s-1,q-s+2}\\
=&\,\sum_{s+t=0}P_{J}^{p+s+t,q-s+1-t}(I^{\prime}+(I^{\prime\prime}-\bar\phi\cdot\phi)^{-1}-\bar\phi\cdot(I^{\prime}-\phi\cdot\bar\phi)^{-1})\Finv\,
\underbrace{(\bar\partial+[\partial,i_{\phi}]-i_{\frac{1}{2}(\mathscr{B}(\phi,\phi)+\mathscr{C}(\phi,\phi))})
P_{J}^{p+s,q-s}((I-\bar\phi\cdot\phi+\bar\phi)\Finv\,\alpha)}_{p+s,q-s+1}\\
&+\sum_{s+t=1}^{q-s+2}P_{J}^{p+s-1+t,q-s+2-t}(I^{\prime}+(I^{\prime\prime}-\bar\phi\cdot\phi)^{-1}-\bar\phi\cdot(I^{\prime}-\phi\cdot\bar\phi)^{-1})\Finv\,
\underbrace{(\bar\mu+i_{\mathrm{MC}(\phi)})
P_{J}^{p+s,q-s}((I-\bar\phi\cdot\phi+\bar\phi)\Finv\,\alpha)}_{p+s-1,q-s+2}\\
=&\,P_{J}^{p,q+1}(I^{\prime}+(I^{\prime\prime}-\bar\phi\cdot\phi)^{-1}-\bar\phi\cdot(I^{\prime}-\phi\cdot\bar\phi)^{-1})\Finv\,
\underbrace{(\bar\partial+[\partial,i_{\phi}]-i_{\frac{1}{2}(\mathscr{B}(\phi,\phi)+\mathscr{C}(\phi,\phi))})
P_{J}^{p,q}((I-\bar\phi\cdot\phi+\bar\phi)\Finv\,\alpha)}_{p,q+1}\\
&+P_{J}^{p,q+1}(I^{\prime}+(I^{\prime\prime}-\bar\phi\cdot\phi)^{-1}-\bar\phi\cdot(I^{\prime}-\phi\cdot\bar\phi)^{-1})\Finv\,
\underbrace{(\bar\mu+i_{\mathrm{MC}(\phi)})
P_{J}^{p+1,q-1}((I-\bar\phi\cdot\phi+\bar\phi)\Finv\,\alpha)}_{p,q+1}\\
&+P_{J}^{p,q+1}(I^{\prime}+(I^{\prime\prime}-\bar\phi\cdot\phi)^{-1}-\bar\phi\cdot(I^{\prime}-\phi\cdot\bar\phi)^{-1})\Finv\,
\underbrace{(\bar\mu+i_{\mathrm{MC}(\phi)})
P_{J}^{p,q}((I-\bar\phi\cdot\phi+\bar\phi)\Finv\,\alpha)}_{p-1,q+2}.
\end{align*}
\end{small}
The $J^{-1,2}$-term is
\begin{equation*}
(O_{1}\circ O_{2}\circ O_{3}(\alpha))_{J}^{-1,2}
=P_{J}^{p-1,q+2}(I^{\prime}+(I^{\prime\prime}-\bar\phi\cdot\phi)^{-1}-\bar\phi\cdot(I^{\prime}-\phi\cdot\bar\phi)^{-1})\Finv\,
\underbrace{(\bar\mu+i_{\mathrm{MC}(\phi)})
P_{J}^{p,q}((I-\bar\phi\cdot\phi+\bar\phi)\Finv\,\alpha)}_{p-1,q+2}.
\end{equation*}
The LHS of (\ref{eq65}) is decomposed as
\begin{equation*}
d(e^{i_{\phi}|i_{\bar\phi}}(\sigma))
=\mu_{\phi}(e^{i_{\phi}|i_{\bar\phi}}(\sigma))+\partial_{\phi}(e^{i_{\phi}|i_{\bar\phi}}(\sigma))
+\partial_{\phi}(e^{i_{\phi}|i_{\bar\phi}}(\sigma))+\bar\mu_{\phi}(e^{i_{\phi}|i_{\bar\phi}}(\sigma)).
\end{equation*}
By comparing types of both sides, we have the following theorem.
\begin{theorem}\label{theorem2}
Let $(M,J)$ be an almost complex manifold and $d=\mu+\partial+\bar\partial+\bar\mu$ be the decomposition of the exterior differential operator with respect to $J$. Let $\phi\in A^{0,1}_{J}(M,T^{1,0}M)$ be a Beltrami differential on $M$ that generates a new almost complex structure $J_{\phi}$ on $M$ and $d=\mu_{\phi}+\partial_{\phi}+\bar\partial_{\phi}+\bar\mu_{\phi}$ be the decomposition of the exterior differential operator with respect to $J_{\phi}$. Then we have the following equations.
\begin{align*}
\mu_{\phi}\circ e^{i_{\phi}|i_{\bar\phi}}
=&\,e^{i_{\phi}|i_{\bar\phi}}((O_{1}\circ O_{2}\circ O_{3})_{J}^{2,-1}),\\
\partial_{\phi}\circ e^{i_{\phi}|i_{\bar\phi}}
=&\,e^{i_{\phi}|i_{\bar\phi}}((O_{1}\circ O_{2}\circ O_{3})_{J}^{1,0}),\\
\bar\partial_{\phi}\circ e^{i_{\phi}|i_{\bar\phi}}
=&\,e^{i_{\phi}|i_{\bar\phi}}((O_{1}\circ O_{2}\circ O_{3})_{J}^{0,1}),\\
\bar\mu_{\phi}\circ e^{i_{\phi}|i_{\bar\phi}}
=&\,e^{i_{\phi}|i_{\bar\phi}}((O_{1}\circ O_{2}\circ O_{3})_{J}^{-1,2}).
\end{align*}
\end{theorem}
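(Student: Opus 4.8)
The plan is to obtain all four identities at once by projecting the single equation of Theorem~\ref{theorem3} onto the bidegree components taken with respect to the deformed structure $J_{\phi}$. For $\alpha\in A^{p,q}_{J}(M)$, Theorem~\ref{theorem3} reads
\[
d\bigl(e^{i_{\phi}|i_{\bar\phi}}(\alpha)\bigr)=e^{i_{\phi}|i_{\bar\phi}}\bigl((O_{1}\circ O_{2}\circ O_{3})(\alpha)\bigr),
\]
so it suffices to compare the $J_{\phi}$-homogeneous pieces of the two sides.

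First I would analyze the left-hand side. By Lemma~\ref{lemma18} the operator $e^{i_{\phi}|i_{\bar\phi}}$ preserves bidegree, in the sense that $e^{i_{\phi}|i_{\bar\phi}}(\alpha)\in A^{p,q}_{J_{\phi}}(M)$. Decomposing $d=\mu_{\phi}+\partial_{\phi}+\bar\partial_{\phi}+\bar\mu_{\phi}$ with respect to $J_{\phi}$ then splits $d(e^{i_{\phi}|i_{\bar\phi}}(\alpha))$ into exactly four $J_{\phi}$-homogeneous summands, namely $\mu_{\phi}, \partial_{\phi}, \bar\partial_{\phi}, \bar\mu_{\phi}$ applied to $e^{i_{\phi}|i_{\bar\phi}}(\alpha)$, which lie in the $J_{\phi}$-bidegrees $(p+2,q-1)$, $(p+1,q)$, $(p,q+1)$ and $(p-1,q+2)$ respectively.

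Next I would treat the right-hand side. The form $(O_{1}\circ O_{2}\circ O_{3})(\alpha)$ is a priori of mixed $J$-type, and the explicit computations preceding this statement organize it into its $J$-homogeneous components $(O_{1}\circ O_{2}\circ O_{3}(\alpha))_{J}^{2,-1}$, $(O_{1}\circ O_{2}\circ O_{3}(\alpha))_{J}^{1,0}$, $(O_{1}\circ O_{2}\circ O_{3}(\alpha))_{J}^{0,1}$ and $(O_{1}\circ O_{2}\circ O_{3}(\alpha))_{J}^{-1,2}$, of $J$-bidegrees $(p+2,q-1)$, $(p+1,q)$, $(p,q+1)$ and $(p-1,q+2)$. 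Applying $e^{i_{\phi}|i_{\bar\phi}}$ and again invoking Lemma~\ref{lemma18}, each such $J$-homogeneous piece of $J$-bidegree $(a,b)$ is carried to a $J_{\phi}$-homogeneous form of the same bidegree $(a,b)$; hence the four images are $J_{\phi}$-homogeneous in precisely the bidegrees $(p+2,q-1)$, $(p+1,q)$, $(p,q+1)$ and $(p-1,q+2)$.

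Finally, because the decomposition of a differential form into its $J_{\phi}$-bidegree components is unique, I would match the summand of each fixed $J_{\phi}$-bidegree across the two sides: the $(p+2,q-1)$ pieces yield $\mu_{\phi}\circ e^{i_{\phi}|i_{\bar\phi}}=e^{i_{\phi}|i_{\bar\phi}}((O_{1}\circ O_{2}\circ O_{3})_{J}^{2,-1})$, and the $(p+1,q)$, $(p,q+1)$, $(p-1,q+2)$ pieces give the remaining three identities. The hard part is not this final matching, which is immediate, but the bookkeeping already carried out in the displays above: one must be sure that the $J$-type expansion of $O_{1}\circ O_{2}\circ O_{3}$ is exhaustive, and that every cross term between the mixed-type constituents of $O_{1}$, $O_{2}$ and $O_{3}$ has been collected into the correct total bidegree, so that the four listed $J$-components are genuinely all of them.
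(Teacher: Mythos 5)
Your proposal is correct and is essentially the paper's own argument: the paper likewise starts from Theorem \ref{theorem3}, decomposes the left-hand side as $d=\mu_{\phi}+\partial_{\phi}+\bar\partial_{\phi}+\bar\mu_{\phi}$, uses that $e^{i_{\phi}|i_{\bar\phi}}:A^{p,q}_{J}(M)\rightarrow A^{p,q}_{J_{\phi}}(M)$ preserves bidegree (Lemma \ref{lemma18}), and matches $J_{\phi}$-homogeneous components on both sides. The bookkeeping you flag as the remaining work is exactly what the paper carries out in the long displays computing the $J$-type components of $O_{1}\circ O_{2}\circ O_{3}$ before stating the theorem.
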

Due to the limitation of the space of this paper and for the readers' convenience, we omit the explicit expressions of the terms of $O_{1}\circ O_{2}\circ O_{3}$ in this theorem.
\section{Applications}\label{sec6}
\subsection{\texorpdfstring{$(n,0)$}{(n,0)}-forms}
Let $\Omega\in A^{n,0}_{J}(M)$. The $J^{0,1}$-term of $O_{1}\circ O_{2}\circ O_{3}(\Omega)$ is
\begin{small}
\begin{align*}
&(O_{1}\circ O_{2}\circ O_{3}(\Omega))_{J}^{0,1}\\
=&\,P_{J}^{n,1}(I^{\prime}+(I^{\prime\prime}-\bar\phi\cdot\phi)^{-1}-\bar\phi\cdot(I^{\prime}-\phi\cdot\bar\phi)^{-1})\Finv\,
\underbrace{(\bar\partial+[\partial,i_{\phi}]-i_{\frac{1}{2}(\mathscr{B}(\phi,\phi)+\mathscr{C}(\phi,\phi))})
P_{J}^{n,0}((I-\bar\phi\cdot\phi+\bar\phi)\Finv\,\Omega)}_{(n,1)}\\
&+P_{J}^{n,1}(I^{\prime}+(I^{\prime\prime}-\bar\phi\cdot\phi)^{-1}-\bar\phi\cdot(I^{\prime}-\phi\cdot\bar\phi)^{-1})\Finv\,
\underbrace{(\bar\mu+i_{\mathrm{MC}(\phi)})
P_{J}^{n+1,-1}((I-\bar\phi\cdot\phi+\bar\phi)\Finv\,\Omega)}_{(n,1)}\\
&+P_{J}^{n,1}(I^{\prime}+(I^{\prime\prime}-\bar\phi\cdot\phi)^{-1}-\bar\phi\cdot(I^{\prime}-\phi\cdot\bar\phi)^{-1})\Finv\,
\underbrace{(\bar\mu+i_{\mathrm{MC}(\phi)})
P_{J}^{n,0}((I^{\prime\prime}-\bar\phi\cdot\phi+\bar\phi)\Finv\,\Omega)}_{(n-1,2)}\\
=&\,P_{J}^{n,1}(I^{\prime}+(I^{\prime\prime}-\bar\phi\cdot\phi)^{-1}-\bar\phi\cdot(I^{\prime}-\phi\cdot\bar\phi)^{-1})\Finv\,
\underbrace{(\bar\partial+[\partial,i_{\phi}]-i_{\frac{1}{2}(\mathscr{B}(\phi,\phi)+\mathscr{C}(\phi,\phi))})(\Omega)}_{(n,1)}\\
&+P_{J}^{n,1}(I^{\prime}+(I^{\prime\prime}-\bar\phi\cdot\phi)^{-1}-\bar\phi\cdot(I^{\prime}-\phi\cdot\bar\phi)^{-1})\Finv\,
\underbrace{(\bar\mu+i_{\mathrm{MC}(\phi)})(\Omega)}_{(n-1,2)}\\
=&\,P_{J}^{n,1}(I^{\prime}+(I^{\prime\prime}-\bar\phi\cdot\phi)^{-1}-\bar\phi\cdot(I^{\prime}-\phi\cdot\bar\phi)^{-1})\Finv\,
(\underbrace{\bar\partial\Omega+\partial(\phi\lrcorner\Omega)-\frac{1}{2}\mathscr{B}(\phi,\phi)\lrcorner\Omega}_{(n,1)}
+\underbrace{\bar\mu\Omega+\mathrm{MC}(\phi)\lrcorner\Omega}_{(n-1,2)}).
\end{align*}
\end{small}
The $J^{-1,2}$-term of $O_{1}\circ O_{2}\circ O_{3}(\Omega)$ is
\begin{align*}
&(O_{1}\circ O_{2}\circ O_{3}(\Omega))_{J}^{-1,2}\\
=&\,P_{J}^{n-1,2}(I^{\prime}+(I^{\prime\prime}-\bar\phi\cdot\phi)^{-1}-\bar\phi\cdot(I^{\prime}-\phi\cdot\bar\phi)^{-1})\Finv\,
\underbrace{(\bar\mu+i_{\mathrm{MC}(\phi)})
P_{J}^{n,0}((I-\bar\phi\cdot\phi+\bar\phi)\Finv\,\Omega)}_{(n-1,2)}\\
=&\,P_{J}^{n-1,2}(I^{\prime}+(I^{\prime\prime}-\bar\phi\cdot\phi)^{-1}-\bar\phi\cdot(I^{\prime}-\phi\cdot\bar\phi)^{-1})\Finv\,
\underbrace{(\bar\mu+i_{\mathrm{MC}(\phi)})(\Omega)}_{(n-1,2)}\\
=&\,P_{J}^{n-1,2}(I^{\prime}+(I^{\prime\prime}-\bar\phi\cdot\phi)^{-1}-\bar\phi\cdot(I^{\prime}-\phi\cdot\bar\phi)^{-1})\Finv\,
\underbrace{(\bar\mu\Omega+\mathrm{MC}(\phi)\lrcorner\Omega}_{(n-1,2)}).
\end{align*}
For $\Theta=\Theta_{1\cdots n\bar j}\theta^{1}\wedge\cdots\wedge\theta^{n}\wedge\theta^{\bar j}\in A_{J}^{n,1}(M)$ we have
\begin{align}
&P_{J}^{n,1}((I^{\prime}+(I^{\prime\prime}-\bar\phi\cdot\phi)^{-1}
-\bar\phi\cdot(I^{\prime}-\phi\cdot\bar\phi)^{-1})\Finv\,\Theta)\nonumber\\
=&\,\alpha_{1\ldots n\bar j}(I^{\prime\prime}-\bar\phi\cdot\phi)^{\bar j}_{\bar\ell}
\theta^{1}\wedge\cdots\wedge\theta^{n}\wedge\theta^{\bar\ell}\nonumber\\
=&\,(-1)^{n}(I^{\prime\prime}-\bar\phi\cdot\phi)^{-1}\lrcorner\Theta.\label{equ51}
\end{align}
For $\Xi=\Xi_{i_{1}\cdots i_{n-1}\bar j_{1}\bar j_{2}}\theta^{i_{1}}\wedge\cdots\wedge\theta^{i_{n-1}}\wedge\theta^{\bar j_{1}}\wedge\theta^{\bar j_{2}}\in A_{J}^{n-1,2}(M)$, by direct calculation we have
\begin{align*}
&(I^{\prime\prime}-\bar\phi\cdot\phi)^{-1}\lrcorner(\bar\phi\cdot(I^{\prime}-\phi\cdot\bar\phi)^{-1}\lrcorner\Xi)\\
=&\,((I^{\prime\prime}-\bar\phi\cdot\phi)^{-1})_{\bar j}^{\bar k}\theta^{\bar j}\otimes e_{\bar k}\lrcorner
\bigl((\bar\phi\cdot(I^{\prime}-\phi\cdot\bar\phi)^{-1})^{\bar p}_{q}\theta^{q}\otimes e_{\bar p}\lrcorner
(\Xi_{i_{1}\cdots i_{n-1}\bar j_{1}\bar j_{2}}\theta^{i_{1}}\wedge\theta^{i_{n-1}}\wedge\theta^{\bar j_{1}}\wedge\theta^{\bar j_{2}})\bigr)\\
=&\,((I^{\prime\prime}-\bar\phi\cdot\phi)^{-1})_{\bar j}^{\bar k}\theta^{\bar j}\otimes e_{\bar k}\lrcorner
\bigl((\bar\phi\cdot(I^{\prime}-\phi\cdot\bar\phi)^{-1})^{\bar p}_{q}(-1)^{n-1} \delta^{\bar j_{1}}_{\bar p}
\Xi_{i_{1}\cdots i_{n-1}\bar j_{1}\bar j_{2}}\theta^{q}\wedge\theta^{i_{1}}\wedge\theta^{i_{n-1}}\wedge\theta^{\bar j_{2}})\\
&+(\bar\phi\cdot(I^{\prime}-\phi\cdot\bar\phi)^{-1})^{\bar p}_{q}(-1)^{n} \delta_{\bar p}^{\bar j_{2}}
\Xi_{i_{1}\cdots i_{n-1}\bar j_{1}\bar j_{2}}\theta^{q}\wedge\theta^{i_{1}}\wedge\theta^{i_{n-1}}\wedge\theta^{\bar j_{1}})\bigr)\\
=&\,((I^{\prime\prime}-\bar\phi\cdot\phi)^{-1})_{\bar j}^{\bar k}\theta^{\bar j}\otimes e_{\bar k}\lrcorner
\bigl((\bar\phi\cdot(I^{\prime}-\phi\cdot\bar\phi)^{-1})^{\bar j_{1}}_{q}
\Xi_{i_{1}\cdots i_{n-1}\bar j_{1}\bar j_{2}}
\theta^{i_{1}}\wedge\theta^{i_{n-1}}\wedge\theta^{q}\wedge\theta^{\bar j_{2}})\\
&-(\bar\phi\cdot(I^{\prime}-\phi\cdot\bar\phi)^{-1})^{\bar j_{2}}_{q}
\Xi_{i_{1}\cdots i_{n-1}\bar j_{1}\bar j_{2}}
\theta^{i_{1}}\wedge\theta^{i_{n-1}}\wedge\theta^{q}\wedge\theta^{\bar j_{1}}\bigr)\\
=&\,((I^{\prime\prime}-\bar\phi\cdot\phi)^{-1})_{\bar j}^{\bar k}\theta^{\bar j}\otimes e_{\bar k}\lrcorner
\bigl((\bar\phi\cdot(I^{\prime}-\phi\cdot\bar\phi)^{-1})^{\bar j_{1}}_{q}
\Xi_{i_{1}\cdots i_{n-1}\bar j_{1}\bar j_{2}}
\theta^{i_{1}}\wedge\theta^{i_{n-1}}\wedge\theta^{q}\wedge\theta^{\bar j_{2}})\\
&-((I^{\prime\prime}-\bar\phi\cdot\phi)^{-1})_{\bar j}^{\bar k}\theta^{\bar j}\otimes e_{\bar k}\lrcorner
\bigl((\bar\phi\cdot(I^{\prime}-\phi\cdot\bar\phi)^{-1})^{\bar j_{2}}_{q}
\Xi_{i_{1}\cdots i_{n-1}\bar j_{1}\bar j_{2}}
\theta^{i_{1}}\wedge\theta^{i_{n-1}}\wedge\theta^{q}\wedge\theta^{\bar j_{1}}\bigr)\\
=&\,(-1)^{n}((I^{\prime\prime}-\bar\phi\cdot\phi)^{-1})_{\bar j}^{\bar k}\delta^{\bar j_{2}}_{\bar k}(\bar\phi\cdot(I-\phi\cdot\bar\phi)^{-1})^{\bar j_{1}}_{q}
\Xi_{i_{1}\cdots i_{n-1}\bar j_{1}\bar j_{2}}
\theta^{\bar j}\wedge\theta^{i_{1}}\wedge\theta^{i_{n-1}}\wedge\theta^{q}\\
&-(-1)^{n}((I^{\prime\prime}-\bar\phi\cdot\phi)^{-1})_{\bar j}^{\bar k} \delta_{\bar k}^{j_{1}}
((\bar\phi\cdot(I-\phi\cdot\bar\phi)^{-1})^{\bar j_{2}}_{q}
\Xi_{i_{1}\cdots i_{n-1}\bar j_{1}\bar j_{2}}
\theta^{\bar j}\wedge\theta^{i_{1}}\wedge\theta^{i_{n-1}}\wedge\theta^{q}\\
=&\,(\bar\phi\cdot(I^{\prime}-\phi\cdot\bar\phi)^{-1})^{\bar j_{1}}_{q}
((I^{\prime\prime}-\bar\phi\cdot\phi)^{-1})_{\bar j}^{\bar j_{2}}
\Xi_{i_{1}\cdots i_{n-1}\bar j_{1}\bar j_{2}}
\theta^{i_{1}}\wedge\theta^{i_{n-1}}\wedge\theta^{q}\wedge\theta^{\bar j}\\
&-((I^{\prime\prime}-\bar\phi\cdot\phi)^{-1})_{\bar j}^{\bar j_{1}}
((\bar\phi\cdot(I^{\prime}-\phi\cdot\bar\phi)^{-1})^{\bar j_{2}}_{q}
\Xi_{i_{1}\cdots i_{n-1}\bar j_{1}\bar j_{2}}
\wedge\theta^{i_{1}}\wedge\theta^{i_{n-1}}\wedge\theta^{q}\wedge\theta^{\bar j},
\end{align*}
and consequently
\begin{align}
&P_{J}^{n,1}((I^{\prime}+(I^{\prime\prime}-\bar\phi\cdot\phi)^{-1}
-\bar\phi\cdot(I^{\prime}-\phi\cdot\bar\phi)^{-1})\Finv\,\Xi)\nonumber\\
=&\,-\Xi_{i_{1}\ldots i_{n-1}\bar j_{1}\bar j_{2}}(\bar\phi\cdot(I^{\prime}-\phi\cdot\bar\phi)^{-1})^{\bar j_{1}}_{q}(I^{\prime\prime}-\bar\phi\cdot\phi)^{\bar j_{2}}_{\bar j}
\theta^{i_{1}}\wedge\cdots\wedge\theta^{i_{n-1}}\wedge\theta^{q}\wedge\theta^{\bar j}\nonumber\\
&-\Xi_{i_{1}\ldots i_{n-1}\bar j_{1}\bar j_{2}}
(I^{\prime\prime}-\bar\phi\cdot\phi)^{\bar j_{1}}_{\bar j}
(\bar\phi\cdot(I^{\prime}-\phi\cdot\bar\phi)^{-1})^{\bar j_{2}}_{q}
\theta^{i_{1}}\wedge\cdots\wedge\theta^{i_{n-1}}\wedge\theta^{\bar j}\wedge\theta^{q}\nonumber\\
=&\,-\Xi_{i_{1}\ldots i_{n-1}\bar j_{1}\bar j_{2}}(\bar\phi\cdot(I^{\prime}-\phi\cdot\bar\phi)^{-1})^{\bar j_{1}}_{q}
(I-\bar\phi\cdot\phi)^{\bar j_{2}}_{\bar j}
\theta^{i_{1}}\wedge\cdots\wedge\theta^{i_{n-1}}\wedge\theta^{q}\wedge\theta^{\bar j}\nonumber\\
&+\Xi_{i_{1}\ldots i_{n-1}\bar j_{1}\bar j_{2}}
(I^{\prime\prime}-\bar\phi\cdot\phi)^{\bar j_{1}}_{\bar\ell}(\bar\phi\cdot(I^{\prime}-\phi\cdot\bar\phi)^{-1})^{\bar j_{2}}_{q}
\theta^{i_{1}}\wedge\cdots\wedge\theta^{i_{n-1}}\wedge\theta^{k}\wedge\theta^{\bar j}\nonumber\\
=&\,-(I-\bar\phi\cdot\phi)^{-1}\lrcorner(\bar\phi\cdot(I^{\prime}-\phi\cdot\bar\phi)^{-1}\lrcorner\,\Xi).\label{equ52}
\end{align}
We can calculate the $P^{n-1,2}_{J}$ projection of $\Xi$ directly as follows.
\begin{align}
&P_{J}^{n-1,2}((I^{\prime}+(I^{\prime\prime}-\bar\phi\cdot\phi)^{-1}
-\bar\phi\cdot(I^{\prime}-\phi\cdot\bar\phi)^{-1})\Finv\,\Xi)\nonumber\\
=&\,P_{J}^{n-1,2}(\Xi_{i_{1}\cdots i_{n-1}\bar j_{1}\bar j_{2}}
(I^{\prime}+(I^{\prime\prime}-\bar\phi\cdot\phi)^{-1}
-\bar\phi\cdot(I^{\prime}-\phi\cdot\bar\phi)^{-1})\lrcorner\,\theta^{i_{1}}
\wedge\cdots\nonumber\\
&\wedge(I^{\prime}+(I^{\prime\prime}-\bar\phi\cdot\phi)^{-1}
-\bar\phi\cdot(I^{\prime}-\phi\cdot\bar\phi)^{-1})\lrcorner\,\theta^{i_{n-1}}
\wedge(I^{\prime}+(I^{\prime\prime}-\bar\phi\cdot\phi)^{-1}
-\bar\phi\cdot(I^{\prime}-\phi\cdot\bar\phi)^{-1})\lrcorner\,\theta^{\bar j_{1}}\nonumber\\
&\wedge(I^{\prime}+(I^{\prime\prime}-\bar\phi\cdot\phi)^{-1}
-\bar\phi\cdot(I^{\prime}-\phi\cdot\bar\phi)^{-1})\lrcorner\,\theta^{\bar j_{2}})\nonumber\\
=&\,\Xi_{i_{1}\cdots i_{n-1}\bar j_{1}\bar j_{2}}
\theta^{i_{1}}\wedge\cdots\wedge\theta^{i_{n-1}}
\wedge((I^{\prime\prime}-\bar\phi\cdot\phi)^{-1})\lrcorner\,\theta^{\bar j_{1}}
\wedge((I^{\prime\prime}-\bar\phi\cdot\phi)^{-1})\lrcorner\,\theta^{\bar j_{2}}\nonumber\\
=&\,(I^{\prime}+(I^{\prime\prime}-\bar\phi\cdot\phi)^{-1})\Finv\,\Xi.\label{equ55}
\end{align}
Thus
\begin{align}
\bar\partial_{\phi}e^{i_{\phi}|i_{\bar\phi}}(\Omega)
=&\,e^{i_{\phi}|i_{\bar\phi}}\circ P_{J}^{n,1}(I^{\prime}+(I^{\prime\prime}-\bar\phi\cdot\phi)^{-1}
-\bar\phi\cdot(I^{\prime}-\phi\cdot\bar\phi)^{-1})\Finv\,
(\underbrace{\bar\partial\Omega+\partial(\phi\lrcorner\Omega)
-\frac{1}{2}\mathscr{B}(\phi,\phi)\lrcorner\Omega}_{(n,1)})\nonumber\\
&+e^{i_{\phi}|i_{\bar\phi}}\circ P_{J}^{n,1}(I^{\prime}+(I^{\prime\prime}-\bar\phi\cdot\phi)^{-1}
-\bar\phi\cdot(I^{\prime}-\phi\cdot\bar\phi)^{-1})\Finv\,
(\underbrace{\bar\mu\Omega+\mathrm{MC}(\phi)\lrcorner\Omega}_{(n-1,2)})\nonumber\\
=&\,e^{i_{\phi}|i_{\bar\phi}}\circ(-1)^{n}(I^{\prime\prime}-\bar\phi\cdot\phi)^{-1}\lrcorner\,
(\underbrace{\bar\partial\Omega+\partial(\phi\lrcorner\Omega)
-\frac{1}{2}\mathscr{B}(\phi,\phi)\lrcorner\Omega}_{(n,1)})\quad\textup{by (\ref{equ51})}\nonumber\\
&-e^{i_{\phi}|i_{\bar\phi}}\circ(I^{\prime\prime}-\bar\phi\cdot\phi)^{-1}\lrcorner\bigl(
(\bar\phi\cdot(I^{\prime}-\phi\cdot\bar\phi)^{-1})\lrcorner\,
(\underbrace{\bar\mu\Omega+\mathrm{MC}(\phi)\lrcorner\Omega}_{(n-1,2)})\bigr),\quad
\textup{by (\ref{equ52})}\label{equ53}\\
\bar\mu_{\phi}e^{i_{\phi}|i_{\bar\phi}}(\Omega)
=&\,e^{i_{\phi}|i_{\bar\phi}}\circ P_{J}^{n-1,2}(I^{\prime}+(I^{\prime\prime}-\bar\phi\cdot\phi)^{-1}-\bar\phi\cdot(I^{\prime}-\phi\cdot\bar\phi)^{-1})\Finv\,
(\underbrace{\bar\mu\Omega+\mathrm{MC}(\phi)\lrcorner\Omega}_{n-1,2})\nonumber\\
=&\,e^{i_{\phi}|i_{\bar\phi}}\circ(I^{\prime}+(I^{\prime\prime}-\bar\phi\cdot\phi)^{-1})\Finv\,
\underbrace{(\bar\mu\Omega+\mathrm{MC}(\phi)\lrcorner\Omega}_{n-1,2}.\quad\textup{by (\ref{equ55})}\label{equ50}
\end{align}
If $\phi$ satisfies the almost complex Maurer-Cartan equation, (\ref{equ53}) and (\ref{equ50}) reduce to
\begin{align}
\bar\partial_{\phi}e^{i_{\phi}|i_{\bar\phi}}(\Omega)
=&\,e^{i_{\phi}|i_{\bar\phi}}\circ(-1)^{n}(I^{\prime\prime}-\bar\phi\cdot\phi)^{-1}\lrcorner\,
(\bar\partial\Omega+\partial(\phi\lrcorner\Omega)
-\frac{1}{2}\mathscr{B}(\phi,\phi)\lrcorner\Omega)\nonumber\\
&-e^{i_{\phi}|i_{\bar\phi}}\circ(I^{\prime\prime}-\bar\phi\cdot\phi)^{-1}\lrcorner\bigl(
(\bar\phi\cdot(I^{\prime}-\phi\cdot\bar\phi)^{-1})\lrcorner\,(\bar\mu\Omega)\bigr),\label{equ40}\\
\bar\mu_{\phi}e^{i_{\phi}|i_{\bar\phi}}(\Omega)
=&\,e^{i_{\phi}|i_{\bar\phi}}\circ(I^{\prime}+(I^{\prime\prime}-\bar\phi\cdot\phi)^{-1})\Finv\,
(\bar\mu\Omega).\label{equ41}
\end{align}
\begin{proposition}\label{proposition2}
Let $(M,J)$ be an almost complex manifold and $\phi$ be a Beltrami differential that satisfies the almost complex Maurer-Cartan equation. For any smooth $J^{n,0}$-form $\Omega$, $e^{i_{\phi}|i_{\bar\phi}}(\Omega)$ is a $\bar\partial_{\phi}$-closed $J_{\phi}^{n,0}$-form if and only if
\begin{equation*}
(-1)^{n}(\bar\partial\Omega+\partial(\phi\lrcorner\Omega)-\frac{1}{2}\mathscr{B}(\phi,\phi)\lrcorner\Omega)
-\bar\phi\cdot(I^{\prime}-\phi\cdot\bar\phi)^{-1}\lrcorner\bar\mu\Omega=0.
\end{equation*}
\end{proposition}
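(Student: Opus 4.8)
The plan is to extract the proposition directly from the reduced extension formula (\ref{equ40}), which already incorporates the standing hypothesis that $\phi$ satisfies the almost complex Maurer-Cartan equation. First I would dispose of the ``$J_{\phi}^{n,0}$-form'' half of the statement, which is automatic: by Lemma \ref{lemma18} the operator $e^{i_{\phi}|i_{\bar\phi}}$ restricts to an isomorphism $A^{n,0}_{J}(M)\to A^{n,0}_{\phi}(M)$, so $e^{i_{\phi}|i_{\bar\phi}}(\Omega)$ is of type $(n,0)$ with respect to $J_{\phi}$ with no further work. Hence the real content is the equivalence between $\bar\partial_{\phi}e^{i_{\phi}|i_{\bar\phi}}(\Omega)=0$ and the displayed scalar identity, and for this I would begin from (\ref{equ40}).

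Next I would factor the right-hand side of (\ref{equ40}). Both summands share the common prefactor $e^{i_{\phi}|i_{\bar\phi}}\circ(I^{\prime\prime}-\bar\phi\cdot\phi)^{-1}\lrcorner$, so by linearity of the contraction I can write
\begin{align*}
\bar\partial_{\phi}e^{i_{\phi}|i_{\bar\phi}}(\Omega)
={}&e^{i_{\phi}|i_{\bar\phi}}\circ(I^{\prime\prime}-\bar\phi\cdot\phi)^{-1}\lrcorner\\
&\Bigl[(-1)^{n}\bigl(\bar\partial\Omega+\partial(\phi\lrcorner\Omega)-\tfrac{1}{2}\mathscr{B}(\phi,\phi)\lrcorner\Omega\bigr)
-\bar\phi\cdot(I^{\prime}-\phi\cdot\bar\phi)^{-1}\lrcorner\bar\mu\Omega\Bigr].
\end{align*}
A short type check shows this is well posed: the first bracketed term is of type $(n,1)$; since $\bar\mu\Omega$ is of type $(n-1,2)$ and $\bar\phi\cdot(I^{\prime}-\phi\cdot\bar\phi)^{-1}\in A^{1,0}_{J}(M,T^{0,1})$ raises the bidegree by $(1,-1)$, the second term is again $(n,1)$; and the outer operator $(I^{\prime\prime}-\bar\phi\cdot\phi)^{-1}\in A^{0,1}_{J}(M,T^{0,1})$ preserves type $(n,1)$. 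Thus the whole bracket is an honest $(n,1)$-form to which the common factor applies uniformly.

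The argument then reduces to two invertibility facts. The map $e^{i_{\phi}|i_{\bar\phi}}$ is injective by Lemma \ref{lemma18}, and the contraction $(I^{\prime\prime}-\bar\phi\cdot\phi)^{-1}\lrcorner$ is an isomorphism of $A^{n,1}_{J}(M)$: as in the computation behind (\ref{equ51}), on an $(n,1)$-form it acts only on the single antiholomorphic index, where it is multiplication by the matrix $(I^{\prime\prime}-\bar\phi\cdot\phi)^{-1}$, invertible precisely because $\det(I^{\prime}-\phi\cdot\bar\phi)\neq0$ (equivalently $\det(I^{\prime\prime}-\bar\phi\cdot\phi)\neq0$) by the Beltrami hypothesis, with inverse $(I^{\prime\prime}-\bar\phi\cdot\phi)\lrcorner$. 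Composing these two isomorphisms, $\bar\partial_{\phi}e^{i_{\phi}|i_{\bar\phi}}(\Omega)=0$ if and only if the bracketed $(n,1)$-form vanishes, i.e.
\[
(-1)^{n}\bigl(\bar\partial\Omega+\partial(\phi\lrcorner\Omega)-\tfrac{1}{2}\mathscr{B}(\phi,\phi)\lrcorner\Omega\bigr)
-\bar\phi\cdot(I^{\prime}-\phi\cdot\bar\phi)^{-1}\lrcorner\bar\mu\Omega=0,
\]
which is exactly the asserted condition.

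All steps are routine once (\ref{equ40}) is granted; the heavy lifting—deriving (\ref{equ40}) itself together with the contraction identities (\ref{equ51})–(\ref{equ55})—has already been carried out before the statement, so no new computation is needed. The one point demanding genuine care is the bidegree bookkeeping in the second paragraph: I must be sure that both summands land in the same type $(n,1)$ before pulling out and then cancelling the common factor $(I^{\prime\prime}-\bar\phi\cdot\phi)^{-1}\lrcorner$, since the simultaneous contraction $\Finv\,$ is not additive in general (see Remark \ref{remark2}); the type analysis above is what licenses treating it as an ordinary linear contraction here.
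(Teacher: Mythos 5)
Your proof is correct and takes essentially the same route as the paper's: the paper's own proof is a one-liner invoking (\ref{equ40}) together with the pointwise invertibility of $(I^{\prime}-\phi\cdot\bar\phi)$ (equivalently $(I^{\prime\prime}-\bar\phi\cdot\phi)$), which is precisely your factoring-plus-injectivity argument written out in detail. The additional care you take — the type bookkeeping, citing Lemma \ref{lemma18} for the isomorphism and the $(n,0)$-type claim, and noting that the ordinary contraction $\lrcorner$ (unlike $\Finv\,$) is additive so the common prefactor may be pulled out — only makes explicit what the paper leaves implicit.
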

\begin{proof}
By (\ref{equ40}) the fact that $(I^{\prime}-\phi\cdot\bar\phi)$ (and equivalently $(I^{\prime\prime}-\bar\phi\cdot\phi)$) is pointwisely invertible we know that $\bar\partial_{\phi}e^{i_{\phi}|i_{\bar\phi}}(\Omega)=0$ is equivalent to
\begin{equation*}
(-1)^{n}(\bar\partial\Omega+\partial(\phi\lrcorner\Omega)-\frac{1}{2}\mathscr{B}(\phi,\phi)\lrcorner\Omega)
-\bar\phi\cdot(I^{\prime}-\phi\cdot\bar\phi)^{-1}\lrcorner\bar\mu\Omega=0.
\end{equation*}
\end{proof}
In the integrable case, Proposition \ref{proposition2} reduces to \cite[Proposition 5.1]{MR3302118}.
\begin{proposition}\cite[Proposition 5.1]{MR3302118}
Let $(M,J,N_{J}=0)$ be a complex manifold and $\phi$ be a Beltrami differential that satisfies the Maurer-Cartan equation. For any smooth $J^{n,0}$-form $\Omega$, $e^{i_{\phi}|i_{\bar\phi}}(\Omega)$ is a $\bar\partial_{\phi}$-closed $J_{\phi}^{n,0}$-form if and only if
\begin{equation*}
\bar\partial\Omega+\partial(\phi\lrcorner\Omega)=0.
\end{equation*}
\end{proposition}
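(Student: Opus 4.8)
The plan is to obtain this integrable statement as a direct specialization of Proposition \ref{proposition2}, tracking which of the ``almost complex correction terms'' collapse once $N_J=0$. First I would record the effect of integrability on the pieces of the exterior differential. By the computations of Section \ref{sec3} we have $\bar\mu^{i}_{\bar j\bar k}=\frac14 N^{i}_{\bar j\bar k}$ and $\mu^{\bar i}_{jk}=\frac14 N^{\bar i}_{jk}$, so the hypothesis $N_J=0$ forces $\mu=\bar\mu=0$; in particular $d=\partial+\bar\partial$ and $\bar\mu\,\Omega=0$ for every form $\Omega$.

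Next I would show that the two quadratic corrections occurring in the criterion of Proposition \ref{proposition2} vanish. From the decomposition of $[\varphi,\psi]$ recorded after Lemma \ref{lemma17}, for $\varphi=\rho\otimes X$ and $\psi=\tau\otimes Y$ one has $\mathscr{B}(\varphi,\psi)=\rho\wedge(X\lrcorner\mu\tau)\otimes Y-(Y\lrcorner\mu\rho)\wedge\tau\otimes X$, and every summand carries a factor of $\mu$; hence $\mu=0$ gives $\mathscr{B}(\phi,\phi)=0$ by bilinearity. Likewise $\mathscr{C}(\varphi,\psi)=\rho\wedge\tau\otimes[X,Y]^{0,1}$ is built from the $(0,1)$-part of $[X,Y]$ with $X,Y\in T^{1,0}M$; conjugating the identity $N(e_{\bar j},e_{\bar k})=-4[e_{\bar j},e_{\bar k}]^{1,0}$ shows $N_J=0$ makes $T^{1,0}M$ involutive, so $[X,Y]^{0,1}=0$ and $\mathscr{C}(\phi,\phi)=0$. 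Consequently the cubic term, which reorganizes as $\tfrac{1}{3!}(i_{\mathscr{C}(\phi,\phi)}\phi-i_{\phi}\mathscr{B}(\phi,\phi))$, also vanishes, so $\mathrm{MC}(\phi)=\bar\partial\phi-\tfrac12\mathscr{A}(\phi,\phi)=\bar\partial\phi-\tfrac12[\phi,\phi]$. Thus the almost complex Maurer-Cartan equation assumed here coincides with the ordinary one, and the hypothesis of Proposition \ref{proposition2} is exactly met.

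With these vanishings in hand, I would substitute into the equivalence of Proposition \ref{proposition2}. The term $\tfrac12\mathscr{B}(\phi,\phi)\lrcorner\Omega$ drops out because $\mathscr{B}(\phi,\phi)=0$, while the term $\bar\phi\cdot(I^{\prime}-\phi\cdot\bar\phi)^{-1}\lrcorner\,\bar\mu\,\Omega$ drops out because $\bar\mu\,\Omega=0$. Hence the stated criterion collapses to $(-1)^{n}(\bar\partial\Omega+\partial(\phi\lrcorner\Omega))=0$, and since $(-1)^{n}$ is invertible this is precisely $\bar\partial\Omega+\partial(\phi\lrcorner\Omega)=0$. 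That $e^{i_{\phi}|i_{\bar\phi}}(\Omega)$ is a $J_{\phi}^{n,0}$-form is automatic from Lemma \ref{lemma18}, so the $\bar\partial_{\phi}$-closedness is the only condition to analyze.

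I do not expect a genuine obstacle: the argument is a bookkeeping specialization, and the only point needing care is the honest verification that each correction term is governed by $\mu$, by $\bar\mu$, or by the involutivity of $T^{1,0}M$, so that all of them vanish simultaneously under $N_J=0$. The mildly delicate step is checking that $\mathscr{C}(\phi,\phi)$ vanishes, which rests on identifying the vanishing of the Nijenhuis tensor with the closedness of $T^{1,0}M$ under the Lie bracket, exactly as exhibited by the computation of $N(e_{\bar j},e_{\bar k})$ in Section \ref{sec3}.
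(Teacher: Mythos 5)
Your proposal is correct and follows essentially the same route as the paper, which proves this proposition in one line by noting that integrability forces $\mu=0$ (equivalently $\bar\mu=0$) and $\mathscr{B}(\phi,\phi)=0$, and then specializing Proposition \ref{proposition2}. Your additional checks — that $\mathscr{C}(\phi,\phi)=0$ via involutivity of $T^{1,0}M$, that the cubic term $\frac{1}{3!}(i_{\mathscr{C}(\phi,\phi)}\phi-i_{\phi}\mathscr{B}(\phi,\phi))$ vanishes, and hence that the almost complex Maurer--Cartan equation coincides with the ordinary one so the hypotheses genuinely match — are points the paper handles separately (in the remark following Corollary \ref{corollary4}) rather than inside the proof, so your write-up is a more self-contained version of the same argument.
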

\begin{proof}
Note that in the integrable case we have $\mu=0$ (equivalently $\bar\mu=0$) and $\mathscr{B}(\phi,\phi)=0$.
\end{proof}
By the definition of the Dolbeault cohomology on almost complex manifolds, $[[\Omega]]\in H^{n,0}_{\bar\partial}(M)$ implies that $\bar\mu\Omega=0$ and
\begin{equation*}
\bar\partial[\Omega]=[\bar\partial\Omega]=0 \Leftrightarrow \bar\partial\Omega=\bar\mu\rho=0.
\end{equation*}
Thus by (\ref{equ40}) and (\ref{equ41}) we have
\begin{align}
\bar\partial_{\phi}e^{i_{\phi}|i_{\bar\phi}}(\Omega)
=&\,(-1)^{n}e^{i_{\phi}|i_{\bar\phi}}\circ (I^{\prime\prime}-\bar\phi\cdot\phi)^{-1}
\lrcorner(\partial(\phi\lrcorner\Omega)-\frac{1}{2}\mathscr{B}(\phi,\phi)\lrcorner\Omega),\label{equ38}\\
\bar\mu_{\phi}e^{i_{\phi}|i_{\bar\phi}}(\Omega)
=&\,e^{i_{\phi}|i_{\bar\phi}}\circ(I^{\prime}+(I^{\prime\prime}-\bar\phi\cdot\phi)^{-1})\Finv\,
(\bar\mu\Omega)=0.\label{equ39}
\end{align}
By (\ref{equ38}) and (\ref{equ39}) we have the following theorem.
\begin{theorem}\label{theorem5}
Let $(M,J)$ be an almost complex manifold, $\phi\in A^{0,1}_{J}(M,T^{1,0}M)$ be a Beltrami differential which generates a new almost complex structure $J_{\phi}$ on $M$. Assume that $\phi$ satisfies the almost complex Maurer-Cartan equation. Then for any $[[\Omega]]\in H^{n,0}_{\mathrm{Dol},J}(M)$, $[[e^{i_{\phi}|i_{\bar\phi}}(\Omega)]]\in H^{n,0}_{\mathrm{Dol},J_{\phi}}(M)$ if and only if
\begin{equation*}
\partial(\phi\lrcorner\Omega)-\frac{1}{2}\mathscr{B}(\phi,\phi)\lrcorner\Omega=0.
\end{equation*}
\end{theorem}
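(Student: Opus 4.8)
The plan is to obtain the theorem essentially for free from the two identities (\ref{equ38}) and (\ref{equ39}), which already carry all the analytic content; the remaining work is to unpack the two cohomology memberships correctly and to justify cancelling a single contraction operator. First I would record what the hypotheses and the conclusion mean in terms of the decomposition $d=\mu+\partial+\bar\partial+\bar\mu$. For a $J^{n,0}$-form $\Omega$ one automatically has $\mu\Omega=0$ and $\partial\Omega=0$, since there are no forms of holomorphic degree exceeding $n$, so $d\Omega=\bar\partial\Omega+\bar\mu\Omega$ with $\bar\partial\Omega\in A^{n,1}_J(M)$ and $\bar\mu\Omega\in A^{n-1,2}_J(M)$. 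By the definition of the $(n,0)$-Dolbeault cohomology recalled before the theorem (and in Section \ref{sec7}), the hypothesis $[[\Omega]]\in H^{n,0}_{\mathrm{Dol},J}(M)$ amounts exactly to the two conditions $\bar\mu\Omega=0$ and $\bar\partial\Omega=0$; it is precisely these that reduce the general formulas (\ref{equ40}) and (\ref{equ41}) to (\ref{equ38}) and (\ref{equ39}).

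Next I would dispose of the $\bar\mu_\phi$-part of the target membership. Membership $[[e^{i_\phi|i_{\bar\phi}}(\Omega)]]\in H^{n,0}_{\mathrm{Dol},J_\phi}(M)$ requires that $e^{i_\phi|i_{\bar\phi}}(\Omega)$ be a $J_\phi^{n,0}$-form which is both $\bar\mu_\phi$-closed and $\bar\partial_\phi$-closed. The type requirement is free, since by Lemma \ref{lemma18} the operator $e^{i_\phi|i_{\bar\phi}}$ preserves types, whence $e^{i_\phi|i_{\bar\phi}}(\Omega)\in A^{n,0}_{J_\phi}(M)$; and by (\ref{equ39}) we get $\bar\mu_\phi e^{i_\phi|i_{\bar\phi}}(\Omega)=0$ automatically once $\bar\mu\Omega=0$. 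Thus the entire statement collapses to the equivalence between $\bar\partial_\phi e^{i_\phi|i_{\bar\phi}}(\Omega)=0$ and the displayed equation. Here I would invoke (\ref{equ38}), which presents $\bar\partial_\phi e^{i_\phi|i_{\bar\phi}}(\Omega)$ as the image under $(-1)^n\,e^{i_\phi|i_{\bar\phi}}\circ\big((I^{\prime\prime}-\bar\phi\cdot\phi)^{-1}\lrcorner-\big)$ of the $(n,1)$-form $\partial(\phi\lrcorner\Omega)-\tfrac12\mathscr{B}(\phi,\phi)\lrcorner\Omega$. Since $e^{i_\phi|i_{\bar\phi}}$ is a linear isomorphism (Lemma \ref{lemma18}), the vanishing of the left side is equivalent to the vanishing of $(I^{\prime\prime}-\bar\phi\cdot\phi)^{-1}\lrcorner\big(\partial(\phi\lrcorner\Omega)-\tfrac12\mathscr{B}(\phi,\phi)\lrcorner\Omega\big)$.

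The final, and only genuinely delicate, step is to cancel the contraction by $(I^{\prime\prime}-\bar\phi\cdot\phi)^{-1}$. Because $\phi$ induces a new almost complex structure we have $\det(I^{\prime}-\phi\cdot\bar\phi)\neq0$ by (\ref{det1}), so $(I^{\prime\prime}-\bar\phi\cdot\phi)$ is pointwise invertible; consequently the contraction operator $(I^{\prime\prime}-\bar\phi\cdot\phi)^{-1}\lrcorner-$ restricts to an invertible endomorphism of $A^{n,1}_J(M)$, acting only on the single antiholomorphic index by the invertible matrix $(I^{\prime\prime}-\bar\phi\cdot\phi)^{-1}$ and having inverse $(I^{\prime\prime}-\bar\phi\cdot\phi)\lrcorner-$. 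Therefore the contracted $(n,1)$-form vanishes if and only if $\partial(\phi\lrcorner\Omega)-\tfrac12\mathscr{B}(\phi,\phi)\lrcorner\Omega=0$, which is the claim. I expect that no computation beyond (\ref{equ38}) and (\ref{equ39}) is needed, and that all the care resides in (i) correctly translating the two Dolbeault memberships into the pair of closedness conditions and (ii) the pointwise-invertibility argument that lets one strip off $(I^{\prime\prime}-\bar\phi\cdot\phi)^{-1}$.
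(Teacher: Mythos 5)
Your proposal is correct and follows essentially the same route as the paper's own proof: you unpack both $(n,0)$-Dolbeault memberships into $\bar\mu$- (resp. $\bar\mu_{\phi}$-) and $\bar\partial$- (resp. $\bar\partial_{\phi}$-) closedness, apply (\ref{equ38}) and (\ref{equ39}) together with the fact that $e^{i_{\phi}|i_{\bar\phi}}$ is a linear isomorphism, and then strip off the contraction by $(I^{\prime\prime}-\bar\phi\cdot\phi)^{-1}$ using its pointwise invertibility, exactly as the paper does. Your extra details (the degree argument giving $\mu\Omega=\partial\Omega=0$, and the explicit inverse $(I^{\prime\prime}-\bar\phi\cdot\phi)\lrcorner\,$ on $A^{n,1}_{J}(M)$) are consistent with, and merely spell out, the paper's justification.
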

\begin{proof}
By (\ref{equ38}) and (\ref{equ39}) we know that $[[e^{i_{\phi}|i_{\bar\phi}}(\Omega)]]\in H^{n,0}_{\mathrm{Dol},J_{\phi}}(M)$ if and only if
\begin{equation}\label{equ54}
(I^{\prime\prime}-\bar\phi\cdot\phi)^{-1}\lrcorner(\partial(\phi\lrcorner\Omega)-\frac{1}{2}\mathscr{B}(\phi,\phi)\lrcorner\Omega)=0.
\end{equation}
By the fact that $(I^{\prime}-\phi\cdot\bar\phi)$ is pointwisely invertible we know that (\ref{equ54}) is equivalent to
\begin{equation*}
\partial(\phi\lrcorner\Omega)-\frac{1}{2}\mathscr{B}(\phi,\phi)\lrcorner\Omega=0.
\end{equation*}
\end{proof}
\subsection{\texorpdfstring{$(n,q)$}{(n,q)}-forms}
Let $\Xi\in A_{J}^{n,q}(M)$. The $J^{0,1}$-term of $O_{1}\circ O_{2}\circ O_{3}(\Xi)$ is
\begin{small}
\begin{align}
&(O_{1}\circ O_{2}\circ O_{3}(\Xi))_{J}^{0,1}\nonumber\\
=&P_{J}^{n,q+1}(I^{\prime}+(I^{\prime\prime}-\bar\phi\cdot\phi)^{-1}-\bar\phi\cdot(I^{\prime}-\phi\cdot\bar\phi)^{-1})\Finv
\underbrace{(\bar\partial+[\partial,i_{\phi}]-i_{\frac{1}{2}(\mathscr{B}(\phi,\phi)+\mathscr{C}(\phi,\phi))})
P_{J}^{n,q}((I-\bar\phi\cdot\phi+\bar\phi)\Finv\Xi)}_{n,q+1}\nonumber\\
&+P_{J}^{n,q+1}(I^{\prime}+(I^{\prime\prime}-\bar\phi\cdot\phi)^{-1}-\bar\phi\cdot(I^{\prime}-\phi\cdot\bar\phi)^{-1})\Finv
\underbrace{(\bar\mu+i_{\mathrm{MC}(\phi)})
P_{J}^{n+1,q-1}((I-\bar\phi\cdot\phi+\bar\phi)\Finv\Xi)}_{n,q+1}\nonumber\\
&+P_{J}^{n,q+1}(I^{\prime}+(I^{\prime\prime}-\bar\phi\cdot\phi)^{-1}-\bar\phi\cdot(I^{\prime}-\phi\cdot\bar\phi)^{-1})\Finv
\underbrace{(\bar\mu+i_{\mathrm{MC}(\phi)})
P_{J}^{n,q}((I-\bar\phi\cdot\phi+\bar\phi)\Finv\Xi)}_{n-1,q+2}\nonumber\\
=&P_{J}^{n,q+1}(I^{\prime}+(I^{\prime\prime}-\bar\phi\cdot\phi)^{-1}-\bar\phi\cdot(I^{\prime}-\phi\cdot\bar\phi)^{-1})\Finv
\underbrace{(\bar\partial+[\partial,i_{\phi}]-i_{\frac{1}{2}(\mathscr{B}(\phi,\phi)+\mathscr{C}(\phi,\phi))})
P_{J}^{n,q}((I-\bar\phi\cdot\phi+\bar\phi)\Finv\Xi)}_{n,q+1}\nonumber\\
&+P_{J}^{n,q+1}(I^{\prime}+(I^{\prime\prime}-\bar\phi\cdot\phi)^{-1}-\bar\phi\cdot(I^{\prime}-\phi\cdot\bar\phi)^{-1})\Finv
\underbrace{(\bar\mu+i_{\mathrm{MC}(\phi)})
P_{J}^{n,q}((I-\bar\phi\cdot\phi+\bar\phi)\Finv\Xi)}_{n-1,q+2}\nonumber\\
=&P_{J}^{n,q+1}(I^{\prime}+(I^{\prime\prime}-\bar\phi\cdot\phi)^{-1}-\bar\phi\cdot(I^{\prime}-\phi\cdot\bar\phi)^{-1})\Finv
\underbrace{(\bar\partial+[\partial,i_{\phi}]-i_{\frac{1}{2}(\mathscr{B}(\phi,\phi)+\mathscr{C}(\phi,\phi))})
((I-\bar\phi\cdot\phi)\Finv\Xi)}_{n,q+1}\nonumber\\
&+P_{J}^{n,q+1}(I^{\prime}+(I^{\prime\prime}-\bar\phi\cdot\phi)^{-1}-\bar\phi\cdot(I^{\prime}-\phi\cdot\bar\phi)^{-1})\Finv
\underbrace{(\bar\mu+i_{\mathrm{MC}(\phi)})((I-\bar\phi\cdot\phi)\Finv\Xi)}_{n-1,q+2}\nonumber\\
=&(I^{\prime}+(I^{\prime\prime}-\bar\phi\cdot\phi)^{-1})\Finv
(\bar\partial+[\partial,i_{\phi}]-i_{\frac{1}{2}(\mathscr{B}(\phi,\phi)+\mathscr{C}(\phi,\phi))})(I-\bar\phi\cdot\phi)\Finv\Xi\nonumber\\
&-(I^{\prime}+(I^{\prime\prime}-\bar\phi\cdot\phi)^{-1})\Finv(\bar\phi\cdot(I^{\prime}-\phi\cdot\bar\phi)^{-1})\lrcorner
(\bar\mu+i_{\mathrm{MC}(\phi)})(I-\bar\phi\cdot\phi)\Finv\Xi\nonumber\\
=&(I^{\prime}+(I^{\prime\prime}-\bar\phi\cdot\phi)^{-1})\Finv\nonumber\\
&\bigl((\bar\partial+[\partial,i_{\phi}]-i_{\frac{1}{2}(\mathscr{B}(\phi,\phi)+\mathscr{C}(\phi,\phi))})(I-\bar\phi\cdot\phi)\Finv\Xi
-(\bar\phi\cdot(I^{\prime}-\phi\cdot\bar\phi)^{-1})\lrcorner(\bar\mu+i_{\mathrm{MC}(\phi)})(I-\bar\phi\cdot\phi)\Finv\Xi\bigr)\nonumber\\
=&(I^{\prime}+(I^{\prime\prime}-\bar\phi\cdot\phi)^{-1})\Finv
\bigl((\bar\partial+[\partial,i_{\phi}]-i_{\frac{1}{2}(\mathscr{B}(\phi,\phi)+\mathscr{C}(\phi,\phi))}
-i_{\bar\phi\cdot(I^{\prime}-\phi\cdot\bar\phi)^{-1}}\circ(\bar\mu+i_{\mathrm{MC}(\phi)}))(I-\bar\phi\cdot\phi)\Finv\Xi\bigr).\label{equa1}
\end{align}
\end{small}
Now we can prove following theorem for smooth $(n,q)$-forms.
\begin{theorem}\label{theorem6}
Let $(M,J)$ be an almost complex manifold and $\phi$ be a Beltrami differential that induces a new almost complex structure. For any smooth $\Xi\in A^{n,q}_{J}(M)$, $e^{i_{\phi}|i_{\bar\phi}}(\Xi)\in A^{n,q}_{\phi}(M)$ is $\bar\partial_{\phi}$-closed if and only if
\begin{small}
\begin{equation*}
(I^{\prime}+(I^{\prime\prime}-\bar\phi\cdot\phi)^{-1})\Finv
\bigl((\bar\partial+[\partial,i_{\phi}]-i_{\frac{1}{2}(\mathscr{B}(\phi,\phi)+\mathscr{C}(\phi,\phi))}
-i_{\bar\phi\cdot(I^{\prime}-\phi\cdot\bar\phi)^{-1}}\circ(\bar\mu+i_{\mathrm{MC}(\phi)}))(I-\bar\phi\cdot\phi)\Finv\Xi\bigr)=0.
\end{equation*}
\end{small}
\end{theorem}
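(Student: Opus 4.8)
The plan is to obtain the statement as an immediate consequence of the type-decomposed extension formula of Theorem \ref{theorem2}, the isomorphism property of $e^{i_{\phi}|i_{\bar\phi}}$ from Lemma \ref{lemma18}, and the explicit evaluation of the $J^{0,1}$-component recorded in (\ref{equa1}).

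First I would apply Theorem \ref{theorem2} to $\Xi\in A^{n,q}_{J}(M)$, which gives
\[
\bar\partial_{\phi}\bigl(e^{i_{\phi}|i_{\bar\phi}}(\Xi)\bigr)=e^{i_{\phi}|i_{\bar\phi}}\bigl((O_{1}\circ O_{2}\circ O_{3}(\Xi))_{J}^{0,1}\bigr).
\]
Because $e^{i_{\phi}|i_{\bar\phi}}:A^{p,q}_{J}(M)\to A^{p,q}_{\phi}(M)$ is a linear isomorphism (Lemma \ref{lemma18}), the right-hand side vanishes if and only if its argument does. Hence $e^{i_{\phi}|i_{\bar\phi}}(\Xi)$ is $\bar\partial_{\phi}$-closed precisely when $(O_{1}\circ O_{2}\circ O_{3}(\Xi))_{J}^{0,1}=0$, and the whole proof reduces to identifying this $(0,1)$-component.

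The second step is to specialize the general $J^{0,1}$-term of Section \ref{sec4} to a form of maximal holomorphic degree $p=n$. The crucial simplification is that any operator raising the holomorphic degree kills an $(n,\cdot)$-form: thus $O_{3}\Finv\,\Xi=(I-\bar\phi\cdot\phi+\bar\phi)\Finv\,\Xi$ collapses to $(I-\bar\phi\cdot\phi)\Finv\,\Xi\in A^{n,q}_{J}(M)$, and every projection $P_{J}^{n+s,q-s}$ with $s>0$, together with the intermediate $(n+1,q-1)$ term, vanishes. This kills all but two summands of the double sum: the $(0,1)$-part $\bar\partial+[\partial,i_{\phi}]-i_{\frac{1}{2}(\mathscr{B}(\phi,\phi)+\mathscr{C}(\phi,\phi))}$ of $O_{2}$ acting on the surviving $(n,q)$ piece (producing $(n,q+1)$), and the $(-1,2)$-part $\bar\mu+i_{\mathrm{MC}(\phi)}$ acting on it (producing $(n-1,q+2)$). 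Reading off Table \ref{tub1}, only the $(0,0)$-component $I^{\prime}+(I^{\prime\prime}-\bar\phi\cdot\phi)^{-1}$ of $O_{1}$ returns an $(n,q+1)$-form to its own type, whereas the $(1,-1)$-component $-\bar\phi\cdot(I^{\prime}-\phi\cdot\bar\phi)^{-1}$ is what carries the $(n-1,q+2)$-form into $(n,q+1)$; expressing the latter as $-i_{\bar\phi\cdot(I^{\prime}-\phi\cdot\bar\phi)^{-1}}\circ(\bar\mu+i_{\mathrm{MC}(\phi)})$ and factoring out the common simultaneous contraction by $I^{\prime}+(I^{\prime\prime}-\bar\phi\cdot\phi)^{-1}$ yields exactly (\ref{equa1}). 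Combining with the first step completes the proof.

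The main obstacle is the bookkeeping in this second step: one must verify precisely which of the many $P_{J}^{a,b}$ projections survive once $p=n$, and then use the point-wise linearity of contractions together with the matrix identities of Remark \ref{remark1} (the analogues of (\ref{equ51})--(\ref{equ55}) for arbitrary $q$) to recombine the surviving projected pieces of $O_{1}$ into a single simultaneous contraction with the contraction $i_{\bar\phi\cdot(I^{\prime}-\phi\cdot\bar\phi)^{-1}}$ factored through the $(-1,2)$-operator. Once (\ref{equa1}) is in hand, the stated equivalence is purely formal via Theorem \ref{theorem2} and Lemma \ref{lemma18}.
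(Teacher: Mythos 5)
Your proposal is correct and takes essentially the same route as the paper: Theorem \ref{theorem2} together with the isomorphism property of $e^{i_{\phi}|i_{\bar\phi}}$ (Lemma \ref{lemma18}) reduces the statement to the vanishing of $(O_{1}\circ O_{2}\circ O_{3}(\Xi))_{J}^{0,1}$, and your specialization to $p=n$ — discarding all $P_{J}^{n+s,\cdot}$ projections with $s>0$, keeping only the $(0,1)$- and $(-1,2)$-parts of $O_{2}$, and recombining the $(0,0)$- and $(1,-1)$-components of $O_{1}$ into $(I^{\prime}+(I^{\prime\prime}-\bar\phi\cdot\phi)^{-1})\Finv$ and $-i_{\bar\phi\cdot(I^{\prime}-\phi\cdot\bar\phi)^{-1}}\circ(\bar\mu+i_{\mathrm{MC}(\phi)})$ — is exactly the paper's derivation of (\ref{equa1}).
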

\begin{proof}
Direct consequence of (\ref{equa1}), the fact that $e^{i_{\phi}|i_{\bar\phi}}$ is an isomorphism and
\begin{equation*}
\bar\partial_{\phi}e^{i_{\phi}|i_{\bar\phi}}(\Xi)=e^{i_{\phi}|i_{\bar\phi}}((O_{1}\circ O_{2}\circ O_{3}(\Xi))_{J}^{0,1}).
\end{equation*}
\end{proof}
\section{Appendix: Dolbeault cohomology on almost complex manifolds}\label{sec7}

This part is basically taken form \cite{cirici2018dolbeault} for the self-containment of this manuscript and the readers' convenience.

Let $(M,J)$ be an almost complex manifold. The exterior differential operator is
$$
d=\mu+\partial+\bar\partial+\bar\mu.
$$
\begin{displaymath}
\xymatrix{
A^{p-1,q-1}&A^{p,q} \ar@{=>}[ddl]^{\mu} & A^{p,q+1}\ar@{=>}[ddl]^{\mu} & A^{p,q+2}\ar@{=>}[ddl]^{\mu}\\
A^{p+1,q-1}\ar[rru]^{\bar\mu} &A^{p+1,q} \ar[rru]^{\bar\mu} \ar@{=>}[ddl]^{\mu} & A^{p+1,q+1}\ar@{=>}[ddl]^{\mu}&  A^{p+1,q+2} \ar@{=>}[ddl]^{\mu}\\
A^{p+2,q-1}\ar[rru]^{\bar\mu}&A^{p+2,q}\ar[rru]^{\bar\mu} & A^{p+2,q+1} & A^{p+2,q+2}\\
A^{p+2,q-1}\ar[rru]^{\bar\mu}&A^{p+2,q}\ar[rru]^{\bar\mu} & A^{p+2,q+1} & A^{p+2,q+2}
}.
\end{displaymath}
By $d^{2}=0$, and splitting via types, we have
\begin{align*}
\mu^{2}&=0\quad (4,-2),\\
\mu\partial+\partial\mu&=0\quad (3,-1),\\
\mu\bar\partial+\bar\partial\mu+\partial^{2}&=0\quad (2,0),\\
\mu\bar\mu+\partial\bar\partial+\bar\partial\partial+\bar\mu\mu&=0\quad (1,1),\\
\bar\mu\partial+\partial\bar\mu+\bar\partial^{2}&=0 \quad (0,2),\\
\bar\mu\bar\partial+\bar\partial\bar\mu&=0 \quad (-1,3),\\
\bar\mu^{2}&=0\quad (-2,4).
\end{align*}
By $\bar\mu^{2}=0$, we get a chain
\begin{equation*}
\cdots\rightarrow A^{p+1,q-2}\stackrel{\bar\mu}{\longrightarrow} A^{p,q}\stackrel{\bar\mu}{\longrightarrow} A^{p-1,q+2}\rightarrow\cdots.
\end{equation*}
We define the $\bar\mu$-cohomology of $(M,J)$ as
\begin{equation*}
H^{p,q}_{\bar\mu}(M):=\frac{\mathrm{Ker}\,(\bar\mu:A^{p,q}\rightarrow A^{p-1,q+2})}{\mathrm{Im}\,(\bar\mu:A^{p+1,q-2}\rightarrow A^{p,q})}.
\end{equation*}
By $\bar\partial\bar\mu+\bar\mu\bar\partial=0$, we know that
\begin{equation*}
\bar\partial:H^{p,q}_{\bar\mu}(M)\rightarrow H^{p,q+1}_{\bar\mu}(M),
\end{equation*}
is a well-defined map. Explicitly, if $[\varphi]\in H^{p,q}_{\bar\mu}(M)$, $\varphi\in A^{p,q}(M)$ such that $\bar\mu\varphi=0$, and $\psi=\varphi+\bar\mu\rho$, then
\begin{equation*}
\bar\mu\bar\partial\varphi=-\bar\partial\bar\mu\varphi=0,
\end{equation*}
i.e. $\bar\partial\varphi\in\mathrm{Ker}\,\bar\mu$, and
\begin{equation*}
\bar\partial\psi=\bar\partial\varphi+\bar\partial\bar\mu\rho=\bar\partial\varphi-\bar\mu\bar\partial\rho.
\end{equation*}
Hence we know that
\begin{equation*}
\bar\partial\psi-\bar\partial\varphi=-\bar\mu\bar\partial\rho\in\mathrm{Im}\,\bar\mu,
\end{equation*}
or $[\bar\partial\psi]=[\bar\partial\varphi]$. The equation $\bar\mu\partial+\partial\bar\mu+\bar\partial^{2}=0$ implies that $-\bar\partial^{2}$ is homotopic to zero (or null homotopic), with respect to the (chain) differential $\bar\mu$ and the chain homotopy $\partial$, precisely
\begin{displaymath}
\xymatrix{
\ar[r] &A^{p+1,q-2}\ar[r]^{\bar\mu} & A^{p,q}\ar[dl]_{\partial} \ar[r]^{\bar\mu} \ar@{.>}[d]^{f}& A^{p-1,q+2}\ar[dl]_{\partial}\ar[r] & \\
\ar[r] &A^{p+1,q} \ar[r]^{\bar\mu} & A^{p,q+2} \ar[r]^{\bar\mu} & A^{p-1,q+4}\ar[r]&
}.
\end{displaymath}
By this graph, we see that the map $f:=\bar\mu\partial+\partial\bar\mu$ is a null-homotopic chain map, i.e.
\begin{equation*}
f:(A^{p,\ast},\bar\mu)\rightarrow(A^{p+1,\ast},\bar\mu),
\end{equation*}
and then we know that the induced maps of the cohomology groups
\begin{equation*}
f_{\ast}:H^{p,\ast}_{\bar\mu}(M)\rightarrow H^{p+1,\ast}_{\bar\mu}(M),
\end{equation*}
are zero, i.e.
\begin{equation*}
-\bar\partial^{2}:H^{p,q}_{\bar\mu}(M)\rightarrow H^{p,q+1}_{\bar\mu}(M)\rightarrow H^{p,q+2}_{\bar\mu}(M),
\end{equation*}
is zero. This means that
\begin{equation*}
\cdots\rightarrow H^{p,q-1}_{\bar\mu}(M)\stackrel{\bar\partial}{\longrightarrow} H^{p,q}_{\bar\mu}(M)\stackrel{\bar\partial}{\longrightarrow} H^{p,q+1}_{\bar\mu}(M)\rightarrow\cdots,
\end{equation*}
is a chain complex. Using this chain complex, we define the Dolbeault cohomology of $(M,J)$ as
\begin{equation*}
H^{p,q}_{\mathrm{Dol}}(M):=H^{q}(H^{p,\ast}_{\bar\mu}(M),\bar\partial)=\frac{\mathrm{Ker}\,(\bar\partial :H^{p,q}_{\bar\mu}(M)\rightarrow H^{p,q+1}_{\bar\mu}(M))}{\mathrm{Im}\,(\bar\partial: H^{p,q-1}_{\bar\mu}(M)\rightarrow H^{p,q}_{\bar\mu}(M))}.
\end{equation*}
To emphasise the almost complex structure $J$, we also denote $H^{p,q}_{\mathrm{Dol},J}(M)\triangleq H^{p,q}_{\mathrm{Dol}}(M)$. By \cite[Porposition 4.12]{cirici2018dolbeault} we know that $H^{p,0}_{\mathrm{Dol}}(M),H^{p,n}_{\mathrm{Dol}}(M)$ ($0\leq p\leq n$) are finite dimensional when $M$ is compact.

\section*{Acknowledgement}
Hai-Sheng Liu would like to express his great gratitude to Sheng Rao and Quan-Ting Zhao for their sincerely advices and kindhearted help on both mathematical and non-mathematical aspects of this manuscript. Hai-Sheng Liu would also like to thank Wei Xia for his useful discussions. We are grateful to the two anonymous referees for useful comments and suggestions.

Fu is supported by NSFC grants 11871016.

\end{document}